\DeclareMathAlphabet{\myscr}{U}{BOONDOX-calo}{m}{n}
\SetMathAlphabet{\myscr}{bold}{U}{BOONDOX-calo}{b}{n}
\theoremstyle{plain}
\newtheorem{theorem}{Theorem}[section]
\newtheorem*{theorem*}{Theorem}
\newtheorem{corollary}[theorem]{Corollary}
\newtheorem{lemma}[theorem]{Lemma}
\newtheorem{proposition}[theorem]{Proposition}
\newtheorem{conjecture}[theorem]{Conjecture}
\theoremstyle{definition}
\newtheorem{definition}[theorem]{Definition}
\newtheorem{example}[theorem]{Example}
\theoremstyle{remark}
\newtheorem{remark}[theorem]{Remark}
\newtheorem{observation}[theorem]{Observation}
\newtheorem{notation}[theorem]{Notation}
\numberwithin{equation}{section}
\DeclareMathOperator{\C}{\mathbb{C}}
\DeclareMathOperator{\E}{\mathbb{E}}
\DeclareMathOperator{\EX}{\mathbb{E}} 
\DeclareMathOperator{\F}{\mathbb{F}}
\DeclareMathOperator{\N}{\mathbb{N}}
\DeclareMathOperator{\PR}{\mathbb{P}} 
\DeclareMathOperator{\R}{\mathbb{R}}
\DeclareMathOperator{\Ss}{\mathbb{S}}
\DeclareMathOperator{\T}{\mathbb{T}}
\DeclareMathOperator{\W}{\mathbb{W}}
\DeclareMathOperator{\Z}{\mathbb{Z}}
\DeclarePairedDelimiterX{\inner}[1]{\langle}{\rangle}{#1}
\DeclarePairedDelimiterX{\brackets}[1]{[}{]}{#1}
\DeclarePairedDelimiterX{\braces}[1]{\{}{\}}{#1}
\DeclarePairedDelimiterX{\prn}[1]{(}{)}{#1}
\DeclarePairedDelimiterX{\abs}[1]{|}{|}{#1}
\newcommand{\enquote}[1]{``{#1}"} 
\newcommand{\defeq}{\overset{\mathrm{def}}{=\joinrel=}}
\DeclareRobustCommand{\bbone}{\text{\usefont{U}{bbold}{m}{n}1}}
\def\wrt{with respect to }
\def\multiset#1#2{\ensuremath{\left(\kern-.3em\left(\genfrac{}{}{0pt}{}{#1}{#2}\right)\kern-.3em\right)}}
\newcommand{\immerse}{\looparrowright}
\newcommand{\mucgBFr}{\mathcal{M}u\mathcal{CG}_B\prn*{F_r}}
\newcommand{\pilab}{\pi_1^{\textup{lab}}}
\newcommand{\subgrpfg}{\mathfrak{subgrp}_{f.g.}\prn*{F_r}}
\newcommand{\pibar}{\overline{\pi}}
\newcommand{\spivarbar}[1]{s\overline{\pi}_{#1}}
\def\DecompB{\textup{Decomp}_{B}}
\newcommand{\GLvarFq}[1]{\textup{GL}_{#1}(\F_q)}
\newcommand{\GLnFq}{\textup{GL}_n(\F_q)}
\newcommand{\GL}{\textup{GL}}
\newcommand{\PSL}{\textup{PSL}}
\newcommand{\Inter}{\textup{Inter}}
\newcommand{\Interinj}{\textup{Inter}^{\textup{inj}}}
\newcommand{\Aut}{\textup{Aut}}
\newcommand{\rk}{\textup{rk}}
\newcommand{\redrank}{\overline{\rk}}
\newcommand{\Hom}{\textup{Hom}}
\newcommand{\stab}{\textup{stab}}
\newcommand{\acts}{\curvearrowright} 
\newcommand{\alphaSimUHomFSn}{\alpha\sim U(\Hom(\textbf{F}, S_n))}
\newcommand{\alphaSimUHomFVar}[1]{\alpha\sim U(\Hom(\mathbf{F}, {#1}))}
\newcommand{\EHtoFVarActsVar}[2]{\EX_{H\to \textbf{F}}\brackets*{{#1}\acts {#2}}}
\newcommand{\EHtoFSnActsND}{\EHtoFVarActsVar{S_n}{\binom{[n]}{d}}}
\newcommand{\EHtoFSnActsNDG}{\EHtoFVarActsVar{S_n}{[n]_d / G}}
\newcommand{\Img}{\textup{Img}}
\newcommand{\src}{\mathfrak{s}}
\newcommand{\tar}{\mathfrak{t}}
\newcommand{\hp}{\myscr{h}} 
\newcommand{\FF}{\textbf{F}}
\newcommand{\bigast}{%
  \mathop{\vcenter{\hbox{\scalebox{1.6}{$\ast$}}}}\limits
}
\def\two{{\color{red} 2 }}
\title{Probabilistic Hanna Neumann Conjectures}
\author{Yotam Shomroni}
\date{\today}
\begin{document}
\maketitle

\begin{abstract}
    We develop a theory of polymatroids
    on Stallings core graphs, which provides
    a new technique for proving
    lower bounds on stable invariants 
    of words and subgroups in free groups $F$,
    and for upper bounds on their probability
    for mapping, under a random homomorphism 
    from $F$ to a finite group $G$,
    into some subgroup of $G$.
    As a result, we prove the gap conjecture
    on the stable $K$-primitivity rank by
    Ernst-West, Puder and Seidel, 
    prove a conjecture of Reiter about
    the number of solutions to a system
    of equations in a finite group action,
    and give a unified proof of the
    "rank-1 Hanna Neumann conjecture" by Wise
    and its higher rank analogue.
    We further show that the stable compressed 
    rank and its $q$-analogue coincide with the
    decay rate of many-words measure on 
    stable actions of finite simple groups
    of large rank.
    Finally, we conjecture an analogue of
    the Hanna Neumann conjecture over fields,
    and suggest that every 
    finite group action is associated
    to some version of the 
    HNC.
\end{abstract}


\tableofcontents

\section{Introduction}
\label{section_intro}

We develop a theory of polymatroids over Stallings graphs.
We use it to prove gap theorems for stable invariants of 
words and subgroups in free groups, thus resolving 
a conjecture by Reiter \cite{reiter19} about finite group actions,
and a conjecture
by Ernst-West, Puder and Seidel \cite[Appendix]{puder2023stable} about 
the $q$-stable primitivity rank 
$s\pi_q$ (Definition~\ref{def_stable_K_primitivity_rank}) in free group algebras,
which implies a $q$-analog of Wise's $w$-cycle conjecture \cite{wise2005coherence}.
Another advantage of our method is that it gives a new, uniform proof
for the known gap theorems for the stable primitivity rank 
$s\pi$ (Definition~\ref{def_stable_primitivity_rank}) defined 
by Wilton \cite[Definition 10.6]{wilton2022rational}.
The gap $s\pi(H)\ge 1$ for non-abelian groups $H$ is an important 
special case of the strengthened Hanna Neumann conjecture (SHNC) 
by Walter Neumann \cite{neumann2006intersections};
We also propose a $K$-analog for the SHNC (Conjecture~\ref{conj_q_analog_HNC}), 
which is defined over any field $K$,
and is stronger than the original SHNC.

Specifically, let $\textbf{F}$ be a free group, and $H\le \textbf{F}$ a finitely generated subgroup.
Let $s$ be either $s\pi$ or $s\pi_K$.
If $H=\inner{w}$ is cyclic and generated by a proper power $w=u^k$ 
(for $u\in \textbf{F}$ and $k\ge 2$), 
it is known that $s(w)=0$. We prove that in every other case, $s(H)\ge 1$.
Some applications of our main theorem
are summarized in Table~\ref{tab:conjectures}:




\begin{table}[ht]
    \centering
    \renewcommand{\arraystretch}{1.2} 
    \begin{tabular}{lll} 
        \toprule
        $\rk(H)$ & $s\pi$ & $s\pi_K$ \\
        \midrule
        $=1$
        & Wise's $w$-cycle conjecture 
        \cite{wise2005coherence}; 
        & Conjectured in 
        \cite[Appendix]{puder2023stable}. \\
        & proved by 
        \cite{louder2014stacking}, 
        \cite{helfer2016counting}. & \\
        \addlinespace 
        $>1$
        & A special case of 
        the SHNC. 
        & New; a special case of the $K$-SHNC. \\
        \bottomrule
    \end{tabular}
    \caption{Context of our results 
    within the literature.}
    \label{tab:conjectures}
\end{table}

Another problem that is central in this paper is that of 
computing the probability that a random homomorphism 
$\alphaSimUHomFVar{G}$ from a free group to a finite group 
maps a specific subgroup $H\le \textbf{F}$ to the stabilizer 
in $G$ of a point in certain actions.
Specifically, 
for representation-stable actions of simple finite group, 
we compute the 
exact decay rate of this probability as the rank of 
the group tends to infinity (Theorems~\ref{thm_Sn_spibard}, \ref{thm_spibarqd}).

Along the proof of our main theorem, we provide a lemma (Lemma~\ref{lemma_stackable_subgroup})
that is interesting on its own right, regarding stackable
subgroups of $\textbf{F}$ (in the sense of \cite{louder2014stacking}):
we show that every non-abelian subgroup $H\le \textbf{F}$ has a 
non-abelian subgroup $S\le H$ which is stackable over $\textbf{F}$
(and in fact, $S$ can have arbitrary rank).
Another interesting feature of our proof is a surprising relation
to locally recoverable error correcting codes.
Besides our new results, we tell the story of an unknown conjecture from an unpublished 
master's thesis, that turned out to generalize 
(a slightly weaker version of) a conjecture that challenged 
dozens of mathematicians for more than half a century:

\begin{wrapfigure}{r}{0.48\textwidth}
\vspace{-1em}
\centering
\begin{tikzpicture}[scale=0.68]
    \fill[cyan!30] (-3,0) circle (2.5);
    \fill[red!20] (0,0) circle (2.5);
    \fill[cyan!50] (3,0) circle (2.5);

    \begin{scope}
        \clip (-3,0) circle (2.5);
        \clip (0,0) circle (2.5);
        \fill[purple!20] (-1.5,0) circle (2.5);
    \end{scope}
    \begin{scope}
        \clip (3,0) circle (2.5);
        \clip (0,0) circle (2.5);
        \fill[purple!30] (1.5,0) circle (2.5);
    \end{scope}

    \draw[thick] (-3,0) circle (2.5) node[left=15pt] {\scriptsize HNC};
    \draw[thick] (0,0) circle (2.5);
    \draw[thick] (3,0) circle (2.5) node[right=15pt] {\scriptsize $q\textup{-HNC}$};

    \node at (0,1.6) {\scriptsize $\textup{Reiter}(G)$};

    \node at (-1.6,0) {\scriptsize
        $\begin{array}{c}
        S_n \\
        \Updownarrow \\
        s\pi \ge 1
        \end{array}$};

    \node at (1.5,0) {\scriptsize
        $\begin{array}{c}
        \textup{GL}_n(\mathbb{F}_q) \\
        \Updownarrow \\
        s\pi_q \ge 1
        \end{array}$};
\end{tikzpicture}
\vspace{-1em}
\end{wrapfigure}

The famous Hanna Neumann conjecture (HNC) about free groups 
was open for fifty-four years.  
We relate it to a recent conjecture of Asael Reiter about 
random subgroups of finite groups $G$,
which we prove for every $G$.
The HNC corresponds to the case 
$G = S_n$ (for large $n$), and 
by changing $G$ from $S_n$ to $\textup{GL}_n(\F_q)$, 
we get a $q$-analog of the HNC.

We summarize the invariants\footnote{
    Here, an \enquote{invariant} is a function which is $\Aut(\textbf{F})$-invariant,
    and is known or conjectured to be also $\Aut(\hat{\textbf{F}})$-invariant,
    where $\hat{\textbf{F}}$ is the profinite completion of $\textbf{F}$.
}
 of words and subgroups of $\textbf{F}$
appearing in this paper in 
Figure~\ref{fig_cube_of_invariants_results};
We explain more about this cube 
of invariants in Figure~\ref{fig_invariants_cube}.

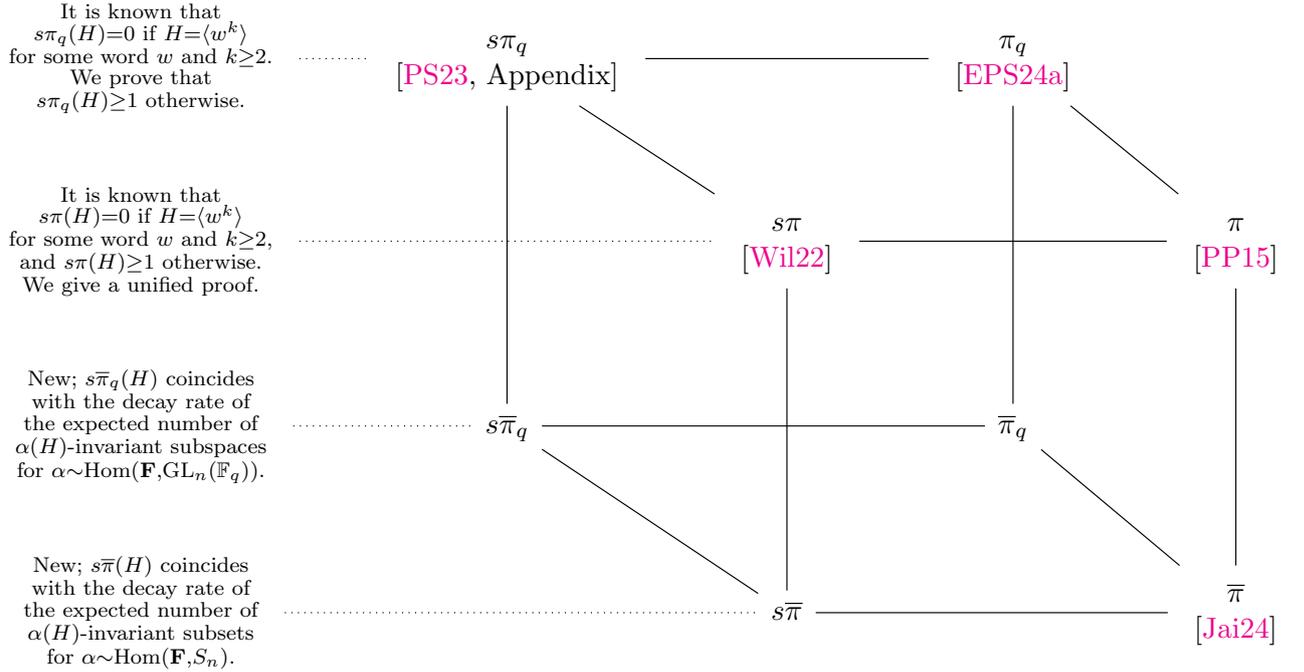
\begin{figure}
\[\begin{tikzcd}[ampersand replacement=\&]
	\begin{array}{c} {\scriptstyle \substack{\textup{It is known that} \\s\pi_q(H)=0\textup{ if }H=\langle w^k\rangle \\ \textup{for some word }w \textup{ and }k\ge 2. \\ \textup{We prove that}\\ s\pi_q(H)\ge 1 \textup{ otherwise.}}} \end{array} 
    \& \begin{array}{c} s\pi_q\\ \text{\cite[Appendix]{puder2023stable}} \end{array} 
    \&\& 
    \begin{array}{c} \pi_q\\ \text{\cite{ernst2024word}} \end{array} \\
	\begin{array}{c} {\scriptstyle \substack{\textup{It is known that} \\s\pi(H)=0\textup{ if }H=\langle w^k\rangle \\ \textup{for some word }w \textup{ and }k\ge 2, \\ \textup{and } s\pi(H)\ge 1 \textup{ otherwise.}\\ \textup{We give a unified proof.}}} \end{array} 
    \&\& \begin{array}{c} s\pi\\ \text{\cite{wilton2022rational}} \end{array} 
    \&\& \begin{array}{c} \pi\\ \text{\cite{PP15}} \end{array} \\
	\begin{array}{c} {\scriptstyle \substack{\textup{New; } s\overline{\pi}_q(H)\textup{ coincides}\\ \textup{with the decay rate of}\\ \textup{the expected number of}\\ \alpha(H)\textup{-invariant subspaces}\\ \textup{for }\alpha\sim \textup{Hom}(\FF,\textup{GL}_n(\mathbb{F}_q)).}} \end{array} 
    \& {s\overline{\pi}_q} \&\& 
    {\overline{\pi}_q} \\
	\begin{array}{c} {\scriptstyle \substack{\textup{New; } s\overline{\pi}(H)\textup{ coincides}\\ \textup{with the decay rate of}\\ \textup{the expected number of}\\ \alpha(H)\textup{-invariant subsets}\\ \textup{for }\alpha\sim \textup{Hom}(\FF,S_n).}} \end{array} 
    \&\& {s\overline{\pi}} \&\& 
    \begin{array}{c} \overline{\pi}\\ \text{\cite{jaikin2024free}} \end{array}
	\arrow[dotted, no head, from=1-1, to=1-2]
	\arrow[no head, from=1-2, to=1-4]
	\arrow[no head, from=1-2, to=2-3]
	\arrow[no head, from=1-2, to=3-2]
	\arrow[no head, from=1-4, to=2-5]
	\arrow[no head, from=1-4, to=3-4]
	\arrow[dotted, no head, from=2-1, to=2-3]
	\arrow[no head, from=2-3, to=2-5]
	\arrow[no head, from=2-3, to=4-3]
	\arrow[no head, from=2-5, to=4-5]
	\arrow[dotted, no head, from=3-1, to=3-2]
	\arrow[no head, from=3-2, to=3-4]
	\arrow[no head, from=3-2, to=4-3]
	\arrow[no head, from=3-4, to=4-5]
	\arrow[dotted, no head, from=4-1, to=4-3]
	\arrow[no head, from=4-3, to=4-5]
\end{tikzcd}\]
\caption{New results about 
stable invariants in $\textbf{F}$.}
\label{fig_cube_of_invariants_results}
\end{figure}

\subsection{The Hanna Neumann conjecture}

    Let $\mathbf{F}$ be a (fixed) free group and 
    $H, J \le \textbf{F}$ finitely generated subgroups. 
    By the Nielsen-Schreier theorem, $H$ and $J$ are also free.
    In \cite{howson1954intersection}, Howson proved that the 
    intersection $H\cap J$ is finitely generated, and gave a bound 
    on its rank:
    \[ \rk(H\cap J)\le \two\rk(H)\rk(J)-\rk(H)-\rk(J)+1. \]
        
    Assuming that $H$ and $J$ are non-trivial, 
    Hanna Neumann 
    \cite{neumann1957intersection}
    improved the bound into
    \begin{equation}
        \label{eq_HannaNeumannThm}
        \rk(H\cap J) - 1 \le \two (\rk(H)-1)(\rk(J)-1),
    \end{equation}
    and conjectured that in fact, the \two is redundant:
    \begin{equation}
        \label{eq:HNC}
        \rk(H\cap J) - 1 \le (\rk(H)-1)(\rk(J)-1).
    \end{equation}
    This conjecture \eqref{eq:HNC} has become known as the 
    \textbf{Hanna Neumann Conjecture} (HNC).
    The conjectured bound is tight:
    
    \begin{example}
        If $H_n\defeq \ker(\inner{x,y}\to \Z/n)$ where 
        $x, y\mapsto 1$, then $\rk(H_n) = n + 1$ 
        (for example, $H_2 = \inner{x^2,xy,y^2}$), and 
        if $\textup{gcd}(m,n)=1$ then $H_n\cap H_m = H_{nm}$.
    \end{example}

The HNC has a long history of partial results, 
including work by 
Burns~\cite{burns1971intersection}, 
Neumann~\cite{neumann2006intersections}, 
Tardos~\cite{tardos1992intersection}, 
Dicks~\cite{dicks1994equivalence}, 
Arzhantseva~\cite{arzhantseva2000property}, 
Dicks and Formanek~\cite{dicks2001rank}, 
Khan~\cite{cleary2002combinatorial}, 
Meakin and Weil~\cite{meakin2002subgroups}, 
Ivanov~\cite{ivanov2001intersecting}, 
Wise~\cite{wise2005coherence}, 
and Dicks and Ivanov~\cite{dicks2008intersection}.



    After 54 years, the HNC was finally proved in 2011
    independently by Friedman \cite{friedman2015sheaves} and 
    Mineyev \cite{mineyev2012submultiplicativity} 
    (in the same week!).
    In fact, they proved Walter Neumann's \textbf{strengthened} 
    conjecture \cite{neumann2006intersections}.
    Both proofs are highly non-trivial: 
    Friedman used sheaves on graphs (in a 100 pages long paper!),
    and Mineyev used Hilbert modules. 
    Both proofs were simplified by Dicks (see e.g.\ \cite{dicks2012joel}). 
    In \ref{conj_q_analog_HNC}, we propose an analog of the HNC for free group algebras.

    Walter Neumann's strengthened conjecture 
    (SHNC) is better described using \textbf{graphs}:
    Let $\Gamma$ be a finite connected graph with a 
    distinguished vertex $v_0$.    
    Its fundamental group $\pi_1(\Gamma, v_0)$ 
    is free of rank $1 - |V(\Gamma)| + |E(\Gamma)|$.
    If we label the edges using letters 
    $B=\{x,y,\ldots\}$ such that 
    no two incident edges
    have the same label and direction, the labeling gives an
    embedding $\pi_1(\Gamma, v_0)\hookrightarrow
    \textbf{F}=\textup{Free}(\{x,y,\ldots\})$:
    Indeed, the labeling 
    encodes an immersion (that is, a locally injective map) to the
    bouquet $\Omega_B$, which is the graph with a single vertex 
    and $|B|$ edges, which correspond to the letters in $B$.
    We identify $\pi_1(\Omega_B)$ with the free group 
    on the letters $B$, so that an immersion $\Gamma\immerse \Omega_B$
    corresponds to a monomorphism
    $\pi_1(\Gamma, v_0)\hookrightarrow \textbf{F}=\textup{Free}(B)$.
    For example, in Figure~\ref{fig_example_counting_graph_morphisms}, 
    the graph $\Gamma$ has 
    $\pi_1(\Gamma, v_0) = \inner{xyx, yx^2} \le \textup{Free}(\{x,y\})$. 
    By prunning hanging trees\footnote{
        When considering a graph $\Gamma$ with a basepoint $v_0\in V$,
        it is common to allow only $v_0$ to remain a leaf. 
    } and removing connected components which are trees,
    one gets a subgraph with no leaves: the \textbf{core} of $\Gamma$.
    Stallings \cite{Stallings1983TopologyOF} showed that (finite) core graphs
    (with no base point) are in bijection with conjugacy classes\footnote{
        Core graphs with a (possibly leaf) basepoint
        are in bijection with finitely generated subgroups of $\textbf{F}$.
    } of finitely generated subgroups of $\textbf{F}=\textup{Free}(B)$.
    Hanany and Puder \cite{HP22} considered not necessarily connected
    core graphs (without a base point).
    Here we mostly follow \cite{HP22}, and call these graphs $B$\textbf{-core graphs}.
    Before stating the SHNC, we give it another motivation:
    counting $B$-core graph \textbf{morphisms}, which are graph morphisms that 
    preserve edge labels and directions, or equivalently,
    commute with the immersions to $\Omega_B$.

\subsection{The stable compressed rank}

    Let $d\in \N$ and $\Gamma$ be a 
    $B$-core graph.
    Suppose we want another $B$-core graph 
    $\Delta$ with $d$
    different 
    morphisms
    $\Gamma \to \Delta$. 
    How complicated does $\Delta$ have to be?
    Complication is measured by Euler characteristic 
    ($\chi=|V|-|E|$), or equivalently, by the rank of the 
    fundamental group ($1-\chi$).

    \begin{example}
    \label{example_counting_graph_morphisms}
    In Figure~\ref{fig_example_counting_graph_morphisms}, 
    the graph $\Gamma$ has 2 
    morphisms
    to each of the 
    graphs $\Delta$ and $\Delta'$, sending $v_0\in V(\Gamma)$ 
    to either $u_1, u_2\in V(\Delta)$ or 
    to $u'_1, u'_2\in V(\Delta')$. 
    The graph $\Delta'$ is simpler, as $\chi(\Delta)=-3$ and 
    $\chi(\Delta')=-2$.
    No simpler graph has 2 
    morphisms from $\Gamma$.
   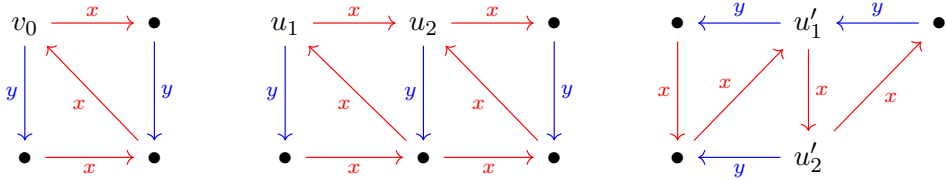
\begin{figure}[ht]
\centering
\begin{tikzcd}[ampersand replacement=\&, row sep=1.1cm, column sep=1.1cm]
	{v_0} \& \bullet \& {u_1} \& {u_2} \& \bullet \& \bullet \& {u'_1} \& \bullet \\
	\bullet \& \bullet \& \bullet \& \bullet \& \bullet \& \bullet \& {u'_2}
	\arrow[red, "x", from=1-1, to=1-2]
	\arrow[blue, "y"', from=1-1, to=2-1]
	\arrow[blue, "y", from=1-2, to=2-2]
	\arrow[red, "x", from=1-3, to=1-4]
	\arrow[blue, "y"', from=1-3, to=2-3]
	\arrow[red, "x", from=1-4, to=1-5]
	\arrow[blue, "y"', from=1-4, to=2-4]
	\arrow[blue, "y", from=1-5, to=2-5]
	\arrow[red, "x"', from=1-6, to=2-6]
	\arrow[blue, "y"', from=1-7, to=1-6]
	\arrow[red, "x", from=1-7, to=2-7]
	\arrow[blue, "y"', from=1-8, to=1-7]
	\arrow[red, "x"', from=2-1, to=2-2]
	\arrow[red, "x", from=2-2, to=1-1]
	\arrow[red, "x"', from=2-3, to=2-4]
	\arrow[red, "x", from=2-4, to=1-3]
	\arrow[red, "x"', from=2-4, to=2-5]
	\arrow[red, "x", from=2-5, to=1-4]
	\arrow[red, "x", from=2-6, to=1-7]
	\arrow[red, "x"', from=2-7, to=1-8]
	\arrow[blue, "y", from=2-7, to=2-6]
\end{tikzcd}
\caption{$\Gamma$ (left), $\Delta$ (middle) and $\Delta'$ (right)}
\label{fig_example_counting_graph_morphisms}
\end{figure}
\end{example}


\begin{definition}[\cite{HP22}]
Given two $B$-core graphs $\Gamma$ and $\Delta$, 
their fiber product over $\Omega_B$ is a new $B$-labeled graph:
Its vertices are $V(\Gamma)\times V(\Delta)$,
and its $b$-labeled edges are $E_b(\Gamma)\times E_b(\Delta)$.
The \textbf{pullback} $\Gamma\times_{\Omega_B}\Delta$ 
of $\Gamma$ and $\Delta$ 
is defined as the core of their fiber product.\footnote{
    Is is possible that the pullback is the empty $B$-core graph, 
    with no vertices.
}
\end{definition}
See Figure~\ref{fig_pullback_of_labeled_graphs} 
for an example of pullback;  
removed tree components are denoted by 
white vertices and dotted edges.
Note that every connected component $C$ of the pullback 
has $\chi(C)\le 0$.
\begin{figure}[ht]
\centering
\begin{tikzcd}[ampersand replacement=\&]
	\& \bullet \& \bullet \& \bullet \& \bullet \\
	\bullet \& \circ \& \bullet \& \bullet \& \circ \\
	\bullet \& \circ \& \circ \& \bullet \& \bullet \\
	\bullet \& \bullet \& \bullet \& \circ \& \circ
	\arrow["{\color{red}x}"{swap},  red, from=1-2, to=1-3]
	\arrow["{\color{red}x}"{swap},  red, from=1-3, to=1-4]
	\arrow["{\color{blue}y}"',      blue, bend right=12, from=1-4, to=1-2]
	\arrow["{\color{red}x}"{swap},  red, from=1-4, to=1-5]
	\arrow["{\color{blue}y}"',      blue, bend right=12, from=1-5, to=1-3]
	\arrow["{\color{red}x}"',       red, from=2-1, to=3-1]
	\arrow["{\color{red}x}"{description}, red, dotted,   from=2-2, to=3-3]
	\arrow["{\color{red}x}"{description}, red,           from=2-3, to=3-4]
	\arrow["{\color{red}x}"{description}, red,           from=2-4, to=3-5]
	\arrow["{\color{blue}y}"',      blue, from=3-1, to=4-1]
	\arrow["{\color{blue}y}"{description}, blue, from=3-4, to=4-2]
	\arrow["{\color{blue}y}"{description}, blue, from=3-5, to=4-3]
	\arrow["{\color{red}x}",        red, bend left=12, from=4-1, to=2-1]
	\arrow["{\color{red}x}"{description}, red, from=4-2, to=2-3]
	\arrow["{\color{red}x}"{description}, red, from=4-3, to=2-4]
	\arrow["{\color{red}x}"{description}, red, dotted,   from=4-4, to=2-5]
\end{tikzcd}
\caption{Pullback of labeled graphs}
\label{fig_pullback_of_labeled_graphs}
\end{figure}
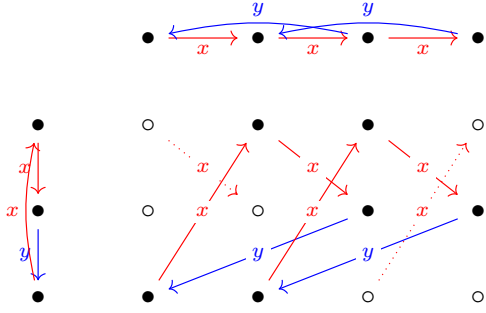

Given two pointed labeled connected graphs 
$\Gamma$ and $\Delta$, closed paths
in the pullback $\Gamma\times_{\Omega_B}\Delta$ 
correspond to pairs of closed paths in $\Gamma$ and $\Delta$
\enquote{reading the same word}, so
    \[ \pi_1(\Gamma\times_{\Omega_B} \Delta, (u_0, v_0)) 
    = \pi_1(\Gamma, u_0) \cap \pi_1(\Delta, v_0). \] 
(This holds for the fiber product, and is defined for the pullback 
if it contains $(u_0, v_0)$; in this case the intersection is 
not trivial).
Given two $B$-core graphs $\Gamma$ and $\Delta$, 
the number of morphisms 
$\Gamma \to \Delta$ is equal to the number of sections of 
$\Gamma$ in $\Gamma\times_{\Omega_B}\Delta$,
that is, right inverses 
$\Gamma\to \Gamma\times_{\Omega_B}\Delta$
of the projection 
$\Gamma\times_{\Omega_B}\Delta \to \Delta$.
Indeed, any section $\Gamma\to \Gamma\times_{\Omega_B} \Delta$ 
can be composed with the projection 
$\Gamma\times_{\Omega_B}\Delta \to \Delta$.
Conversely, given a morphism $\phi\colon \Gamma \to \Delta$, 
$\{(x, \phi(x))\}_{x\in V(\Gamma)}$ is a section of $\Gamma$ 
in $\Gamma\times_{\Omega_B}\Delta$.
We are now ready to state the strengthened Hanna Neumann 
conjecture (SHNC):
    \begin{theorem}[Friedman-Mineyev]
        \label{thm_friedman_mineyev}
        For any two $B$-core graphs 
        $\Gamma$ and $\Delta$,
      \[
          -\chi(\Gamma\times_{\Omega_B}\Delta) \;\le\; \chi(\Gamma)\cdot\chi(\Delta).
      \]
    \end{theorem}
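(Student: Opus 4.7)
The plan is to translate the statement into sheaf-theoretic language in the spirit of Friedman. First I would reduce to connected $\Gamma$ and $\Delta$, using additivity of Euler characteristic under disjoint unions; in that case, writing $H = \pi_1(\Gamma)$ and $J = \pi_1(\Delta)$ viewed as subgroups of $\mathbf{F}$ via the immersions $\Gamma \immerse \Omega_B$ and $\Delta \immerse \Omega_B$, the connected components of the pullback are indexed by double cosets $H x J \in H \backslash \mathbf{F} / J$ and carry fundamental group $H \cap x J x^{-1}$. The claim becomes
\[
\sum_{x \in H \backslash \mathbf{F} / J} \max\bigl(\rk(H \cap xJx^{-1}) - 1,\; 0\bigr) \;\le\; (\rk H - 1)(\rk J - 1).
\]

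Next I would attach to each $B$-core graph a sheaf of vector spaces $\mathcal{F}_\Gamma$ on the bouquet $\Omega_B$: the stalk at the vertex is the free $k$-space on $V(\Gamma)$, the stalk at edge $b$ is the free $k$-space on $E_b(\Gamma)$, and the restriction maps are the source/target maps of the immersion. Then $-\chi(\Gamma) = \dim H^1(\mathcal{F}_\Gamma) - \dim H^0(\mathcal{F}_\Gamma)$, and the pullback construction $\Gamma \times_{\Omega_B} \Delta$ corresponds to the tensor product sheaf $\mathcal{F}_\Gamma \otimes \mathcal{F}_\Delta$. Introducing Friedman's \emph{maximum excess} $\mathrm{m.e.}(\mathcal{F}) := \max_{\mathcal{F}' \le \mathcal{F}} (\dim H^1(\mathcal{F}') - \dim H^0(\mathcal{F}'))$, one checks that $\mathrm{m.e.}(\mathcal{F}_\Gamma)$ equals $-\chi$ of the core of $\Gamma$, and similarly for the tensor product. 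The whole theorem then reduces to the submultiplicative bound
\[
\mathrm{m.e.}(\mathcal{F}_\Gamma \otimes \mathcal{F}_\Delta) \;\le\; \mathrm{m.e.}(\mathcal{F}_\Gamma) \cdot \mathrm{m.e.}(\mathcal{F}_\Delta).
\]

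The main obstacle is precisely this submultiplicativity: naive Euler characteristic arguments via the defining exact sequence of a sheaf yield only a factor-of-$2$ loss, recovering exactly Hanna Neumann's original inequality \eqref{eq_HannaNeumannThm}. Removing the factor of $2$ requires showing that the supremum defining $\mathrm{m.e.}$ is attained on a \emph{tight} subsheaf and that tightness is preserved under tensor product, which is the technical heart of Friedman's argument. An attractive alternative — much more in the spirit of the present paper — would be to construct a polymatroid on $E(\Gamma) \times E(\Delta)$ (or on the vertex set of the pullback) whose rank function bounds $-\chi$ of the core of the pullback from above and decomposes multiplicatively over $\Gamma$ and $\Delta$; identifying such a polymatroid and verifying its multiplicative behavior is where I expect the real difficulty to lie, and it is what the machinery of polymatroids over Stallings graphs developed in this paper is designed to address.
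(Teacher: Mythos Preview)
The paper does \emph{not} give its own proof of this theorem. Theorem~\ref{thm_friedman_mineyev} is stated and attributed to Friedman and Mineyev as a known background result; the paper cites \cite{friedman2015sheaves} and \cite{mineyev2012submultiplicativity} for the proofs and \cite{dicks2012joel} for simplifications. So there is no ``paper's own proof'' to compare against.

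Your sketch of Friedman's sheaf-theoretic approach is a fair outline of how his argument is organised, and you correctly locate the hard step (submultiplicativity of maximum excess) and correctly note that you have not carried it out. As a proof proposal it is therefore honestly incomplete rather than wrong.

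Your final paragraph, however, rests on a misconception worth flagging. The polymatroid machinery in this paper does \emph{not} establish the full SHNC, and is not claimed to. What the $\Gamma$-polymatroid theorem yields (via Theorem~\ref{thm_spi_gap_and_slightly_more} and the discussion around Definition~\ref{def_spibard}) is the special case in which $\Gamma\times_{\Omega_B}\Delta$ contains a $d$-covering of $\Gamma$: then $-\chi(\Delta)\ge d$. This is exactly the bound $s\pibar_d(H)\ge 1$, which the paper explicitly records as a \emph{consequence} of Friedman--Mineyev, and for which it gives an independent proof. The stronger invariant $s\pibar_{\mathrm{SHNC}}$ of \eqref{eq_Ivanov_stable_compressed_rank} satisfies $s\pibar_{\mathrm{SHNC}}(H)\le s\pibar_d(H)$, and bounding it below by $1$ is the full SHNC; the paper does not claim this follows from its methods. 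So the polymatroid route you are hoping for is not, as far as this paper goes, a route to Theorem~\ref{thm_friedman_mineyev}.
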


To relate the SHNC to the 
counting problem of graph morphisms,
note that there are $d$ different graph morphisms 
$\Gamma\to \Delta$ if and only if 
$\Gamma\times_{\Omega_B} \Delta$ 
contains $d$ sections of $\Gamma$, in which case 
\[ d\cdot -\chi(\Gamma) 
\le -\chi(\Gamma\times_{\Omega_B} \Delta) 
\le -\chi(\Gamma)\cdot -\chi(\Delta).\]
If $\chi(\Gamma)\neq 0$, we get
$d\le -\chi(\Delta)$: the graph $\Delta$ 
cannot be too simple.
We encode this 
by
\begin{equation}
    \label{eq_trivial_stable_compressed_rank}
    \begin{split}
        \spivarbar{d}^{\textup{triv}}(\Gamma) 
    &\defeq \min 
    \braces*{ \frac{-\chi(\Delta)}{d}\middle|
    \begin{array}{ll}
        & \Delta \textup{ is a }B\textup{-core graph,}\\
        & \textup{and } \abs*{\Hom(\Gamma, \Delta)} \ge d
    \end{array}} \\
    &= \min 
        \braces*{ \frac{-\chi(\Delta)}{d} \middle| 
        \begin{array}{ll}
            & \Delta \textup{ is a }B\textup{-core graph, and }
            \Gamma\times_{\Omega_B} \Delta \\
            & 
        \textup{contains the trivial }d\textup{-covering of }\Gamma
        \end{array}
    }   
    \ge 1.
    \end{split}
\end{equation}
Here by a \textbf{trivial $d$-covering} of $\Gamma$ we mean
    $d$ disjoint copies of $\Gamma$.
More generally, we define:
\begin{definition}
    \label{def_spibard}
    Let $H = \pi_1(\Gamma) \le \textbf{F} = \textup{Free}(B)$ 
    be finitely generated free groups.
    The $d$-\textbf{stable compressed rank} of $H$ is
    \begin{equation*}
        \spivarbar{d}(H)\defeq \min 
        \braces*{ \frac{-\chi(\Delta)}{d} \middle| 
        \begin{array}{ll}
            & \Delta \textup{ is a }B\textup{-core graph, and }\\
            & \Gamma\times_{\Omega_B} \Delta 
        \textup{ contains a }d\textup{-covering of }\Gamma
        \end{array}
    }.  
    \end{equation*}
\end{definition}
By Theorem~\ref{thm_friedman_mineyev}, 
if $H$ is non-abelian, then $\spivarbar{d}(H)\ge 1$
(and otherwise, clearly $\spivarbar{d}(H)=0$).
Let us examine the edge cases $d=1$ and $d\to\infty$:
In \cite[Corollary 1.5]{jaikin2024free}, Jaikin-Zapirain defined an invariant
of groups that measures how \enquote{compressed} the group is:
\begin{equation}
    \label{eq_compressed_rank}
    \overline{\pi}(H) \defeq \min\braces*{\rk(J): 
    \,\, H\le J\le \textbf{F}}.
\end{equation}
By definition, a subgroup $H\le \mathbf{F}$ is \textbf{compressed} if
$\overline{\pi}(H) = \rk(H)$.
The name \enquote{stable compressed rank} comes from the identity
$\spivarbar{1}(H) = \overline{\pi}(H) - 1$.
On the other extreme, the limit as $d\to\infty$ is related to 
the stable primitivity rank $s\pi$, that was defined for words by Wilton 
\cite[Definition 10.6]{wilton2022rational} 
and is generalized to non-abelian groups in
Definition~\ref{def_stable_primitivity_rank} below, by
    \[\lim_{d\to\infty} \spivarbar{d}(H) 
    = \min\{ \rk(H)-1, s\pi(H) \}. \]
    
\begin{example}
Denote $\mathbf{F} = F_r\defeq \inner{x_1, \ldots, x_r}$. 
For every $d\in \N$, $\spivarbar{d}(F_r) = r-1$.
More generally, if $[F_r: H] < \infty$, 
then $\spivarbar{d}(H) = r-1$.
\end{example}

\begin{example}
Generalizing 
Example~\ref{example_counting_graph_morphisms}, 
$\spivarbar{d}(\inner{xyx, yx^2}) = 1$. 
Indeed, it suffices to construct a 
    graph $\Delta$ of Euler characteristic $-d$ 
    with $d$ different 
    graph morphisms $\Gamma\to \Delta$:
\begin{center}
\begin{tikzcd}[ampersand replacement=\&]
  \bullet \& \bullet \&\& \bullet \& \bullet \& \bullet \& \cdots \& \bullet \\
  \bullet \& \bullet \&\& \bullet \& \bullet \& \bullet \& \cdots \& \bullet 
  \arrow["y",                                   blue, from=1-2, to=1-1]
  \arrow["x"',                                  red,  from=1-1, to=2-1]
  \arrow[""{name=0, anchor=center, inner sep=0}, "x", red,  from=1-2, to=2-2]
  \arrow["y",                                   blue, from=1-5, to=1-4]
  \arrow[""{name=1, anchor=center, inner sep=0}, "x"{description}, red, from=1-4, to=2-4]
  \arrow["y",                                   blue, from=1-6, to=1-5]
  \arrow["x"{description},                      red,  from=1-5, to=2-5]
  \arrow["y",                                   blue, from=1-7, to=1-6]
  \arrow["x"{description},                      red,  from=1-6, to=2-6]
  \arrow["y",                                   blue, from=1-8, to=1-7]
  \arrow["x"{description},                      red,  from=1-8, to=2-8]
  \arrow["x"{description},                      red,  from=2-1, to=1-2]
  \arrow["y"{description},                      blue, from=2-2, to=2-1]
  \arrow["x"{description},                      red,  from=2-4, to=1-5]
  \arrow["y",                                   blue, from=2-5, to=2-4]
  \arrow["x"{description},                      red,  from=2-5, to=1-6]
  \arrow["y",                                   blue, from=2-6, to=2-5]
  \arrow["x"{description},                      red,  from=2-6, to=1-7]
  \arrow["y",                                   blue, from=2-7, to=2-6]
  \arrow["x"{description},                      red,  from=2-7, to=1-8]
  \arrow["y",                                   blue, from=2-8, to=2-7]
  \arrow["{d\textrm{ morphisms}}", shorten <=13pt, shorten >=13pt,
         Rightarrow, from=0, to=1]
\end{tikzcd} 
\end{center}
\end{example}

Keeping in mind the notation $H=\pi_1(\Gamma)$,
Ivanov \cite{ivanov2018intersection} showed that the 
closely related invariant
\begin{equation}
    \label{eq_Ivanov_stable_compressed_rank}
    s\pibar_{\textup{SHNC}}(H)
    \defeq \inf\braces*{ \frac{\chi(\Gamma)\cdot \chi(\Delta)}
    {-\chi(\Gamma\times_{\Omega_B}\Delta)} \middle|
    \Delta \textup{ is a labeled graph}}
\end{equation}
is rational, 
by showing that the infimum is attained
(so it is a minimum),
assuming $1<\rk(H)<\infty$. 
This invariant also plays a role 
in Friedman's proof of the SHNC, where
it is shown to be an invariant of the commensurability class
of $H=\pi_1(\Gamma)$ in $\textbf{F}=\pi_1(\Omega_B)$ 
\cite[Lemma 3.3]{dicks2012joel}.
Observe that for every 
$d\in \N$ 
and $H\le \textbf{F}$ 
with $1<\rk(H)<\infty$,
\[ 1\le s\pibar_{\textup{SHNC}}(H)
\le s\pibar_d(H)
\le \overline{\pi}(H) - 1
\le \min(\rk(\textbf{F}),\rk(H))-1.\]

What about words?
clearly $\spivarbar{d}(\inner{w}) = 0$ 
for every word $w$.
Denote by $\Gamma_w$ the
$B$-core graph of $\inner{w}^{\textbf{F}}$,
which is topologically $\Ss^1$.
In \cite[Conjecture 3.3]{wise2003nonpositive},
Wise conjectured that 
for every non-power word $w\in \mathbf{F}$,
if $\Delta$ is a $B$-core graph
and $\Gamma_w\times_{\Omega_B}\Delta$
contains a $d$-covering of $\Gamma_w$,
then $\beta_1(\Delta) \ge d$, and dedicated
the paper 
\cite[Conjecture 1.1]{wise2005coherence}
to this conjecture.
Wise also proved (assuming the SHNC)
that $\beta_1(\Delta) \ge d/{\red 2}$,
similarly to Nuemann's theorem in 
\cite{neumann1957intersection}.
By removing parts of $\Delta$ that are 
covered at most once by the $d$-covering of
$\Gamma_w$, one gets a stronger conjecture,
which was solved in both 
\cite{louder2014stacking, helfer2016counting}
independently:
\begin{theorem}[Wilton-Louder, Helfer-Wise]
    \label{thm_Wilton_Louder_Helfer_Wise}
    Let $w\in \mathbf{F}$ be a non-power word,
    $\Gamma_w$ its $B$-core graph,
    and $\Delta$ a $B$-core graph such that
    $\Gamma_w\times_{\Omega_B}\Delta$
    contains a $d$-covering $\tilde{\Gamma}_w$ of $\Gamma_w$, 
    that covers every edge of $\Delta$ at least twice
    through the projection $p_{\Delta}\colon\tilde{\Gamma}_w\to\Delta$.
    Then $\chi(\Delta) \le -d.$
\end{theorem}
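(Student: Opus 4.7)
My plan is to follow the stacking technique of Louder--Wilton \cite{louder2014stacking}. Since $w$ is a non-power, every connected cover of $\Gamma_w$ is itself a cycle labeling a power of $w$, so decompose $\tilde{\Gamma}_w=\bigsqcup_{i=1}^{m} C_i$ where $C_i$ labels $w^{k_i}$ and $\sum_i k_i = d$. Let $n=|w|$ and write $p$ for $p_\Delta$.

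The first step is Euler-characteristic bookkeeping. Factor $p$ as a sequence of Stallings folds interspersed with \emph{vertex-only identifications} (identifications of two distinct vertices that do not force any edge identification). Folds preserve $\chi$, while each vertex-only identification decreases $\chi$ by $1$. Starting from $\chi(\tilde{\Gamma}_w)=0$, this makes $-\chi(\Delta)$ equal to the number of vertex-only identifications performed, so it suffices to exhibit at least $d$ of them.

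The second step is to build a stacking $\sigma\colon\tilde{\Gamma}_w\hookrightarrow \Delta\times[0,1]$ satisfying $\pi_\Delta\circ\sigma = p$. Such a $\sigma$ exists because $\tilde{\Gamma}_w$ is a one-manifold and $p$ is an immersion: at each edge $e$ of $\Delta$, totally order the $f_e\ge 2$ preimages by height, consistently with how the cycles $C_i$ enter and leave each vertex $v\in V(\Delta)$. Walking along each $C_i$ and recording the points where $C_i$ drops out of the topmost position produces a finite set of \emph{top events}; a top event at $v$ forces two preimages of $v$ to be glued with no adjacent edge fold, i.e., it is precisely a vertex-only identification of Step~1.

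The third and hardest step is to show that each $C_i$ supports at least $k_i$ top events; summing over $i$ then yields at least $d$ vertex-only identifications. Here the non-power hypothesis enters: the label $w^{k_i}$ of $C_i$ has minimal period exactly $n$, so $C_i$ carries $k_i$ disjoint fundamental $w$-segments, and an exchange argument forces a new top event in each of them --- if some fundamental segment contributed no top event, one could cyclically rotate the strand inside that segment to strictly lower it, contradicting its being topmost somewhere. \textbf{The main obstacle} is precisely this per-segment counting: if $w$ were a proper power $v^\ell$, the same argument would yield only $k_i/\ell$ top events per cycle, giving just the weaker bound $\chi(\Delta)\le -d/\ell$ of \cite{wise2005coherence}. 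So the whole delicacy is leveraging the non-power assumption to turn the naive "one top event per cycle" lower bound into "one per fundamental domain", producing the missing factor of $\ell$.
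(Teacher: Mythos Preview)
Your proposal has a genuine gap, and it is located precisely where you think the argument is easy. In Step~2 you assert that a stacking $\sigma\colon\tilde{\Gamma}_w\hookrightarrow\Delta\times\R$ exists ``because $\tilde{\Gamma}_w$ is a one-manifold and $p$ is an immersion''. This is false: the double cover $S^1\to S^1$ is an immersion of a one-manifold that admits no stacking, since an embedded circle in the annulus $S^1\times\R$ projects with degree $0$ or $\pm 1$. Being a one-manifold immersion is not enough; some global obstruction must vanish. What actually makes the stacking exist here is the non-power hypothesis: by \cite[Lemma~16]{louder2014stacking} the cycle $\Gamma_w$ stacks over the bouquet $\Omega_B$ precisely when $w$ is not a proper power, and since $\tilde{\Gamma}_w\subseteq\Gamma_w\times_{\Omega_B}\Delta$, that stacking pulls back to a stacking of $\tilde{\Gamma}_w$ over $\Delta$ (send $(x,y)\mapsto(y,h(x))$). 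So the non-power hypothesis enters at Step~2, not Step~3.

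This undercuts Step~3 entirely. Your ``per-fundamental-segment exchange argument'' is meant to extract the non-power hypothesis at the counting stage, but once the stacking is obtained correctly the fundamental-domain structure is already baked in: the height of an edge of $\tilde{\Gamma}_w$ depends only on its image in $\Gamma_w$, so the combinatorics of ``top events'' is really combinatorics on $\Gamma_w$, not on $\tilde{\Gamma}_w$. Your Step~1 bookkeeping is also off --- $p$ is an immersion, so there are no Stallings folds to perform; the honest identity is $-\chi(\Delta)=(\text{vertex identifications})-(\text{edge identifications})$, and both terms are large.

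The paper's route makes this shift of viewpoint explicit: rather than stacking $\tilde{\Gamma}_w$ over $\Delta$, it works entirely on $\Gamma_w$. One defines a $\Gamma_w$-polymatroid $\hp$ by $\hp^V(U)=|\{v\in V(\Delta):\exists u\in U,\ (v,u)\in P\}|$ (and similarly on edges), so that $\chi(\hp)=\chi(\Delta)$ and $\hp^b(\{e\})\ge d$ for every edge $e$. The ``covered at least twice'' hypothesis is exactly compactness of $\hp$. Then one uses the stacking of $\Gamma_w$ over $\Omega_B$ (this is the non-power step) together with the tree-bounds Lemma~\ref{lemma_tree_bounds}: the $\sigma$-maximal vertex has marginal gain $0$ by compactness, and the $\sigma$-minimal edge is the unique non-tree edge of the cycle, giving $\chi(\Delta)=\chi(\hp)\le-\hp^b(\{e\})\le-d$ (Corollary~\ref{corollary_Gamma_poly_thm_for_words} and Theorem~\ref{thm_spi_gap_and_slightly_more}). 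No per-segment counting is needed.
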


In Theorem~\ref{thm_spiK_gap}, we give an analog 
of this theorem for modules over free group algebras.
The assumption that $w$ is not a power is 
necessary, otherwise $\Delta$ 
could be a cycle.
By replacing the geometric condition of 
covering every edge at least twice
by the stronger basis-independent condition 
of \textbf{algebraicity},
Wilton \cite[Definition 10.6]{wilton2022rational} 
defined the stable primitivity rank $s\pi$ of words.
We give here a generalization to
not-necessarily cyclic subgroups, which was 
defined by Puder and the author.
Following \cite{HP22}, a morphism 
$\eta\colon \Gamma\to \Delta$ of
$B$-core graphs is called
\textbf{algebraic} if for every 
connected component $\Delta_0$ of $\Delta$,
there is no non-trivial free splitting
$ \pi_1(\Delta_0) = J * K$
such that for every connected component $C$ of
$\eta^{-1}(\Delta_0)$, the subgroup
$(\eta|_{C})_*(\pi_1(C))$ 
(which is defined,
without choosing a base point, 
only
up to conjugacy)
is conjugate
to a subgroup of $J$ or of $K$.

\begin{definition}
    \label{def_stable_primitivity_rank}
    Let $H\le \mathbf{F}$ be a finitely 
    generated subgroup with 
    Stallings core graph $\Gamma$.
    The \textbf{stable primitivity rank} of 
    $H$ is $\inf_{d\in\N} s\pi_d(\Gamma)$, where
    \[
        s\pi_d(\Gamma) \defeq \min 
        \braces*{ \frac{-\chi(\Delta)}{d} \middle| 
        \begin{array}{ll}
            & \Delta \textup{ is a connected }B\textup{-core graph,}\\
            & \textup{there is a }d\textup{-covering }\tilde{\Gamma}\textup{ of }
            \Gamma \textup{ in } \Gamma\times_{\Omega_B}\Delta, \\
            & \textup{and the projection } p_\Delta\colon 
            \tilde{\Gamma} \to \Delta \textup{ is}\\
            & \textup{algebraic and not an isomorphism}.
        \end{array}
    }.
    \]
\end{definition}
When $H=\inner{w}$ is cyclic,
this algebraicity condition implies 
that every edge of $\Delta$ is covered 
at least twice by $\tilde{\Gamma}$
\cite[Lemma 4.1]{puder2015expansion},
so Theorem~\ref{thm_Wilton_Louder_Helfer_Wise}
implies that $s\pi(w) \ge 1$ 
for every non-power $w\in \textbf{F}$.
It is also clear that 
$\spivarbar{d}(H) \le s\pi_d(H)$, so
if $H$ is non-abelian, then
$s\pi(H)\ge 1$. 
As $s\pi(w^k)=0$ for every $k\ge 2$,
we get a \textbf{gap} 
$\Img(s\pi) \cap [0,1] = \{0,1\}$.
Our main technical result, the 
$\Gamma$\textbf{-polymatroid theorem}
(Theorem~\ref{thm_Gamma_polymatroid_theorem}),
generalizes this phenomenon.

\subsection{Stackings}

In \cite[Definition 7]{louder2014stacking}, Louder and Wilton defined \textbf{stackings}
of graphs:\footnote{
    In \cite[Definition 7]{louder2014stacking} the stacked graph 
    is required to be a disjoint union of circles.
    We omit this restriction.
}

\begin{definition}
    \label{def_stacking_topological}
    Let $\eta\colon \Gamma\to\Omega$ be a continuous map between graphs.
    A \textbf{stacking} of $\eta$ is an embedding 
    $\hat{\eta}\colon \Gamma\hookrightarrow \Omega\times \R$
    into the trivial $\R$-bundle 
    $p_{\Omega}\colon\Omega\times \R\twoheadrightarrow\Omega$, 
    such that $p_{\Omega}\circ \hat{\eta}=\eta$.
\end{definition}

If a map $\eta\colon \Gamma\to\Omega$ admits some 
stacking, we say it is \textbf{stackable}.
One of the components in the proof of the 
$\Gamma$-polymatroid theorem is the following 
lemma, which is interesting on its own:
\begin{lemma}
    \label{lemma_stackable_subgroup}
    Let $\eta\colon \Gamma\immerse\Omega$ be 
    an immersion of connected graphs with 
    negative Euler characteristics. Then 
    there exists another connected graph 
    $\Sigma$ with negative Euler characteristic,
    and an immersion $\nu\colon\Sigma\immerse\Gamma$,
    such that $\eta\circ\nu\colon\Sigma\immerse\Delta$
    is stackable.
\end{lemma}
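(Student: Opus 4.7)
The plan is to construct $\Sigma$ as the Stallings core graph of a rank-$2$ subgroup of $\pi_1(\Gamma)$ generated by two conjugates of a single non-power element, and to stack it by gluing two copies of the Louder-Wilton stacking supplied by Theorem~\ref{thm_Wilton_Louder_Helfer_Wise}.

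First, since $\chi(\Gamma)<0$, the group $\pi_1(\Gamma)$ is non-abelian free, and the immersion $\eta$ induces an injection $\eta_*\colon\pi_1(\Gamma)\hookrightarrow\pi_1(\Omega)$. I would pick $w\in\pi_1(\Gamma)$ whose image $\eta_*(w)$ is not a proper power in $\pi_1(\Omega)$: for any two elements $a,b\in\pi_1(\Gamma)$ whose images fail to commute in $\pi_1(\Omega)$, the element $w=ab^n$ works for all large $n$, because proper powers form a thin subset of $\pi_1(\Omega)$. The Louder-Wilton construction underlying Theorem~\ref{thm_Wilton_Louder_Helfer_Wise} then yields a stacking $\hat{\sigma}\colon\Gamma_{\eta_*(w)}\hookrightarrow \Omega\times\R$ of the cycle of $\eta_*(w)$.

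Next, I would pick $g\in\pi_1(\Gamma)\setminus\inner{w}$, which exists since $\pi_1(\Gamma)$ has rank at least $2$, and set $S=\inner{w,\,g w g^{-1}}\le\pi_1(\Gamma)$. Because the centralizer of the non-power $\eta_*(w)$ in $\pi_1(\Omega)$ equals $\inner{\eta_*(w)}$ and $\eta_*(g)$ lies outside it, $S$ is free of rank exactly $2$. Its Stallings graph $\Sigma$ consists of two copies of $\Gamma_{\eta_*(w)}$ joined by an immersed path representing $g$, so $\chi(\Sigma)=-1<0$; the inclusion $S\le\pi_1(\Gamma)$ produces the required immersion $\nu\colon\Sigma\immerse\Gamma$.

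The main obstacle is stacking $\eta\circ\nu$. My plan is to place two translates of $\hat{\sigma}$ at heights $0$ and $M$ for large $M$, making the two stacked cycles vertically disjoint, and then draw the bridging arc for $g$ in the vertical band between them. To avoid crossings with either cycle, I would additionally arrange for $g$ to be a non-power in $\pi_1(\Omega)$ (again generic), so that Louder-Wilton yields an individual stacking of $g$'s cycle; slicing this stacking provides an embedded arc realizing $g$. If the endpoints do not match up smoothly with the two $\hat{\sigma}$-copies, I would replace $g$ by $w^k g w^{\ell}$ for large $k,\ell$, which conjugates $S$ inside $\pi_1(\Gamma)$ (thus preserves the immersion class $\nu\colon\Sigma\immerse\Gamma$) and appends long $w$-tails at each end of the bridge that merge into the existing cycle stackings along their periodic structure; choosing $M$ larger than the vertical range of the resulting arc completes the embedding. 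The arbitrary-rank strengthening alluded to in the introduction then follows by iteration: attach further conjugates $g_j w g_j^{-1}$ at successively shifted height bands, each step lowering $\chi$ by $1$ while preserving the stacking.
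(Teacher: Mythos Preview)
Your approach is genuinely different from the paper's, and the idea of building $\Sigma$ from conjugates of a single non-power word is appealing. Unfortunately, the stacking step has a real gap.

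The core problem is that placing the two $w$-cycles at disjoint height bands and threading the $g$-arc between them does \emph{not} produce a stacking. Combinatorially (Definition~\ref{def_stacking_combinatorial}), a stacking is a global ordering on $V(\Sigma)$ making every source and target map $\src,\tar\colon E_b(\Sigma)\to V(\Sigma)$ monotone. With the order ``cycle\textsubscript{1} $<$ arc $<$ cycle\textsubscript{2}'', consider the first arc edge, say with label $b$, leaving the junction vertex $p_1\in$ cycle\textsubscript{1} and entering the arc. Any $b$-edge $e$ of cycle\textsubscript{1} whose source is $\sigma_w$-above $p_1$ has $\src(e)>\src(e')$ but $\tar(e)<\tar(e')$ (since $\tar(e)$ is still in cycle\textsubscript{1} while $\tar(e')$ is in the arc). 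Monotonicity fails. Topologically this is the same issue: the arc must leave cycle\textsubscript{1} at a point of specified $\Omega$-coordinate, and there is no reason that point sits at the top of the stacked cycle for the relevant letter.

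Your proposed fixes do not resolve this. ``Slicing'' a stacking of the $g$-cycle gives an embedded arc, but says nothing about avoiding the two $w$-cycles, whose images in $\Omega$ may coincide arbitrarily with the image of the $g$-path. Replacing $g$ by $w^k g w^{\ell}$ does not change the subgroup $\langle w, gwg^{-1}\rangle$, hence does not change $\Sigma$ or the immersion $\nu$ at all; the arc still attaches to the cycles at the same junction vertices with the same first and last letters, so the obstruction above persists. (There is also a smaller issue: the Stallings core graph of $\langle w, gwg^{-1}\rangle$ over $\Omega$ is the \emph{folded} barbell, which need not be two disjoint cycles joined by an arc.)

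For comparison, the paper sidesteps gluing entirely. It works algebraically with the derived series $\FF=\FF_0\ge\FF_1\ge\cdots$ to produce a subgroup $K\le H$ that is a free factor of some $\FF_{n+1}$ (via commutators $[u^m,v^n]$ with $\bar u,\bar v$ independent in $\FF_n^{\mathrm{ab}}$; see Propositions~\ref{prop_diff_n_implies_commutator_max_n} and~\ref{prop_lin_indep_implies_basis} and Theorem~\ref{thm_exist_subgrp_Z_tower}). This gives a $\Z$-tower of groups for $K$ over $\FF$, hence a $\Z$-tower of graphs (Corollary~\ref{corollary_Z_tower}), and $\Z$-towers are stackable by \cite[Lemma~15]{louder2014stacking}. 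The stacking is thus inherited from a structural decomposition rather than assembled by hand.
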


\subsection{Reiter's conjecture}

Choose two permutations $\sigma,\tau\in S_n$ 
independently and uniformly at random.
Dixon \cite{Dixon1969} proved 
$\PR\prn*{\inner*{\sigma,\tau}\supseteq A_n}\to_{n\to\infty} 1$,
confirming a conjecture of Netto,
and conjectured that
\begin{equation}
    \label{eq_Dixon_conjecture}
    \PR\prn*{\inner*{\sigma,\tau}\supseteq A_n} 
    = 1 - n^{-1} + O(n^{-2})\qquad(n\to\infty),
\end{equation}
which was proved by Babai \cite{Babai1989}.
The main obstruction for generating $A_n$ 
is the event that both permutations 
have a common fixed point,
providing the $n^{-1}$ term.\footnote{
In fact, Dixon \cite{dixon2005asymptotics} proved 
that for every $k\ge 1$, as $n\to\infty$,
\begin{equation*}
    \begin{split}
        \PR\prn*{\inner*{\sigma,\tau}\supseteq A_n} 
    &= \PR\prn*{\inner*{\sigma,\tau} \textup{ is transitive}} + O(1.1^{-n})\\
    &= \PR\prn*{\textup{There is no }\inner{\sigma,\tau}
    \textup{-invariant set of size }\le k} + O(n^{-k-1})\\
    &= 1-\frac{1}{n}-\frac{1}{n^2}-\frac{4}{n^3}-\frac{23}{n^4}-\ldots
    \end{split}
\end{equation*}
}

Reiter \cite{reiter19} aimed to generalize Dixon's result to
show that even if $\sigma$ and $\tau$ are replaced by
non-commuting free words $w_1(\sigma, \tau), w_2(\sigma, \tau)$ 
(here $w_1, w_2\in F_2$), still 
$\PR\prn*{\inner*{w_i(\sigma, \tau)}_{i=1}^2\supseteq A_n}
\to_{n\to\infty} 1$, and more generally,
that if $H\le \mathbf{F}$ has $1 < \rk(H) < \infty$ and 
$\alphaSimUHomFSn$ is a random homomorphism, then 
$\PR\prn*{\alpha(H)\supseteq A_n} \to_{n\to\infty} 1$.
For example, if $H = \inner{x,yxy^{-1}}$, then
$\alpha(H) = \inner{\alpha(x), \alpha(y)\alpha(x)\alpha(y)^{-1}}$
is a random subgroup generated by two random conjugate permutations.
As observed by Puder, this generalization
follows from \cite{chen2024new} - 
see Appendix~\ref{appendix_random_words_gen_A_n}.

Since the main obstruction for generating $A_n$ is 
having a small invariant set,
Reiter's approach was to bound the expected number of
common invariant subsets of size $d$ of 
$\alpha(H)$ (and thus to bound the
probability of having such a small invariant set);
we denote this expectation by $\EHtoFSnActsND$:
\begin{definition}
    \label{def_EHtoFSnActsND}
    Let $H\le \mathbf{F}$ be a finitely generated subgroup of the free group $\mathbf{F} = \textup{Free}(B)$.
    For $d\in \N$, we denote the expected number of common invariant sets of size $d$ of $\alpha(H)$ as
    \[
        \EHtoFSnActsND \defeq \E_{\alpha\sim\textup{Unif}(\Hom(\mathbf{F}, S_n))}
        \brackets*{\abs*{\binom{[n]}{d}^{\alpha(H)}}}.
    \]
\end{definition}

\begin{example}
For $H=\mathbf{F}=F_r$, the random subgroup
$\inner{\sigma_1, \ldots, \sigma_r} = \alpha(F_r) \le S_n$
is generated by $r$ independent uniformly random 
permutations.
Each $d$-subset of $[n]$ has probability
$\binom{n}{d}^{-r}$ to be invariant under 
$\alpha(F_r)$, so
\[ \EHtoFSnActsND = \binom{n}{d}^{1-r} \]
and in particular
$\PR(\textup{There is an invariant set of size }d) = O(n^{(1-r)d})$.
\end{example}

  \begin{wrapfigure}{r}{0.48\textwidth}
    \centering
    \begin{tabular}{c|c}
      $H$ & $\EHtoFVarActsVar{S_n}{[n]}$ \\ \hline
      $\mathbf{F}$ & $n^{-2}$ \\[2pt]
      $\inner{x,y}$ & $n^{-1}$ \\[2pt]
      $\inner{xy^{-1},x^{3},y^{3}}$ & $n^{-1}$ \\[2pt]
      $\inner{xyx,yx^{2}}$ & $2\,n^{-1}$ \\[2pt]
      $\inner{[x,y],z^{210}}$ & $16\,(n-1)^{-1}$ \\
    \end{tabular}
    \caption{Examples where $\mathbf F=\inner{x,y,z}$.}
    \label{fig_EHtoFSnActsN}
  \end{wrapfigure}

It follows from \cite{PP15}, 
and is explained 
in 
Example~\ref{example_Sn_pibar}
below,
  that 
for $d=1$, the expected number of common fixed points of $\alpha(H)$ is
\[ \EHtoFVarActsVar{S_n}{[n]}
= n^{1-\overline{\pi}(H)}\cdot (|\overline{\textup{Crit}}(H)| + O(n^{-1})). \]
where $\overline{\textup{Crit}}(H)\defeq \{J\le \textbf{F}: H\le J 
\textup{ and } \rk(J) = \overline{\pi}(H)\}$ is finite. 
See Figure~\ref{fig_EHtoFSnActsN} 
for examples.

  \begin{wrapfigure}{r}{0.48\textwidth}
    \centering
\begin{tabular}{c|c}
    $H$ & $\E_{\alpha}[|\binom{[n]}{d}^{\alpha(H)}|]$ \\
         \hline
         \textbf{F} & $\binom{n}{d}^{-2}$ \\
         $\inner{x,y}$ & $\binom{n}{d}^{-1}$ \\
         $\inner{x,y^2}$ & $(d+1)\binom{n}{d}^{-1}$ \\
         $\inner{[x,y], z}$ & $(1+\varepsilon)\binom{n}{d}^{-1}$ 
\end{tabular}
    \caption{Examples where $\mathbf F=\inner{x,y,z}$. \\
    Here
    $\varepsilon 
    = \sum_{k=1}^d \frac{1}{\binom{n}{k} - \binom{n}{k-1}}.$ 
    }
    \label{fig_EHtoFSnActsND}
  \end{wrapfigure}

For larger $d$, the problem of computing 
$\EHtoFVarActsVar{S_n}{\binom{[n]}{d}}$ is more difficult
(and have more complicated solutions - see 
Figure~\ref{fig_EHtoFSnActsND}).
Still, whenever $d$ is fixed (and $n\to\infty$),
Reiter was able to prove that
for non-abelian subgroups $H\le \textbf{F}$,
the probability of having an invariant set of size $d$
decays to $0$:
    \begin{theorem}
        [\cite{reiter19}]
        \label{thm_reiter_Sn}
        For every $d\in\N$, $\EHtoFSnActsND = O\prn*{n^{-d/\two}}.$
    \end{theorem}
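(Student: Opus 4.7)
The plan is to combine the Stallings core-graph formalism with the (unstrengthened) Hanna Neumann inequality \eqref{eq_HannaNeumannThm}, whose factor of $2$ is exactly what accounts for the $1/2$ in the exponent $d/2$. First, by linearity of expectation and the $S_n$-transitivity on $\binom{[n]}{d}$,
\[
    \EHtoFSnActsND \;=\; \binom{n}{d}\cdot \PR\prn*{\alpha(H) \subseteq S_d \times S_{n-d}},
\]
so since $\binom{n}{d} = \Theta(n^d)$ for fixed $d$, it suffices to prove that $\PR(\alpha(H)\subseteq S_d\times S_{n-d}) = O(n^{-3d/2})$.

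Next, I would interpret a uniform $\alpha\in\Hom(\mathbf{F},S_n)$ as a uniformly random $n$-sheeted covering $\Omega_n\twoheadrightarrow \Omega_B$ of the bouquet, and let $\Gamma$ be the Stallings core graph of $H$. The event $\alpha(H)\subseteq S_d\times S_{n-d}$ says exactly that $[d]\subseteq[n]$ is an $\alpha(H)$-invariant subset, which translates, via the pullback $\Gamma\times_{\Omega_B}\Omega_n$, to the condition that the restriction to $V(\Gamma)\times[d]$ is a degree-$d$ sub-covering $\tilde\Gamma$ of $\Gamma$. I would then decompose the probability by the isomorphism class of $\tilde\Gamma \to \Gamma$: for fixed $\rk(H)$ and $d$, only finitely many covering types occur, and for each one a standard Stallings-graph enumeration in the style of Puder--Parzanchevski expresses the contribution as a combinatorial constant times $n^{-e}$, where $e$ is a linear function of the vertex/edge counts of $\tilde\Gamma$ and the rank of $\mathbf{F}$.

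To conclude, I would invoke \eqref{eq_HannaNeumannThm} applied to an overgroup $J\le \mathbf{F}$ naturally attached to each isomorphism class --- concretely, a finitely generated subgroup $J\supseteq H$ whose Stallings graph dominates $\tilde\Gamma$, such as the $\mathbf{F}$-stabilizer in $\mathbf{F}$ of the invariant $d$-set realizing the class. Since $H$ is non-abelian and $\tilde\Gamma$ is a degree-$d$ covering, \eqref{eq_HannaNeumannThm} (and its tight degree-dependence) forces the uniform lower bound $e\ge 3d/2$ on each contributing class, and summing the finitely many terms yields $\PR(\alpha(H)\subseteq S_d\times S_{n-d}) = O(n^{-3d/2})$.

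I expect the main obstacle to lie in the precise combinatorial bookkeeping: identifying the correct overgroup $J$ for each isomorphism class and verifying that \eqref{eq_HannaNeumannThm} yields exactly the exponent $3d/2$ in each case. M\"obius inversion on the poset of finitely generated subgroups of $\mathbf{F}$ containing $H$ is the natural tool, but care is required for disconnected covering types and for the automorphism-group constants, which control whether the bound is tight or loose on a given class.
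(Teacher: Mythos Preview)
Your route is different from Reiter's, which the paper describes: Reiter deduces this from the very general Theorem~\ref{thm_reiter_general}, which bounds $|\mathcal{O}(f)|\cdot\PR_\alpha(\Gamma,f)\le |X|^{-1/2}$ for \emph{any} transitive action $G\acts X$ and any system of equations $(\Gamma,f)$. Specializing to $X=\binom{[n]}{d}$ and using that the number of $S_n$-orbits on $X^{V(\Gamma)}$ is $O_{n\to\infty}(1)$, one reads off $\EHtoFSnActsND=O(\binom{n}{d}^{-1/2})=O(n^{-d/2})$ directly from \eqref{eq_EHtoFGActsX_to_sum_O_P_O}; no Hanna Neumann enters. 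What your outline is really sketching is the paper's route to the sharper Theorem~\ref{thm_Sn_spibard}: expand the expectation over $\beta\in\Hom(H,S_d)$ and over quotient $B$-core graphs $\Delta$ of the $d$-cover $\Gamma^\beta$, observe each term is $\Theta(n^{\chi(\Delta)})$, and bound $-\chi(\Delta)$ via a Hanna Neumann inequality on the pullback $\Gamma\times_{\Omega_B}\Delta\supseteq\Gamma^\beta$.

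Two points in your writeup would not go through as stated. First, your candidate for the second subgroup is wrong: any overgroup $J\supseteq H$, in particular the $\alpha$-preimage of $S_d\times S_{n-d}$, has $H\cap J=H$, so \eqref{eq_HannaNeumannThm} is vacuous there. The correct partner is $\pi_1(\Delta)$ with $\Delta$ the quotient of $\Gamma^\beta$ under the $\alpha$-valid labeling, and $\pi_1(\Delta)$ need not contain $H$; the $d$-cover $\Gamma^\beta$ then embeds into $\Gamma\times_{\Omega_B}\Delta$ (injectivity on every fiber follows from $p$-efficiency at one fiber together with connectedness of $\Gamma$). Second, the single-intersection bound \eqref{eq_HannaNeumannThm} does not control $-\chi(\Gamma\times_{\Omega_B}\Delta)$, since that pullback has one component per double coset $Hg\pi_1(\Delta)$; you need the strengthened (summed-over-conjugates) version $-\chi(\Gamma\times_{\Omega_B}\Delta)\le 2\,\chi(\Gamma)\chi(\Delta)$, from which $-\chi(\Gamma^\beta)=d\cdot(-\chi(\Gamma))$ yields $-\chi(\Delta)\ge d/2$ and hence $\EHtoFSnActsND=O(n^{-d/2})$ directly. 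With these two corrections your approach does work.
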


Similarly to Hanna Neumann, 
Reiter conjectured that the \two factor can be removed. 
We show that this is not a coincidence: 
This is the same $\two$ from Hanna Neumann's theorem 
\eqref{eq_HannaNeumannThm}!

    \begin{theorem}
        \label{thm_Sn_spibard}
        For every $d\in \N$, as $n\to\infty$,
        \[ \EHtoFSnActsND = 
        \Theta\prn{\binom{n}{d}^{-\spivarbar{d}(H)}}. \]
    \end{theorem}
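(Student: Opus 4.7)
The plan is to extend the Puder--Parzanchevski expansion for expected fixed-point counts (the case $d=1$, which gives the $\overline{\pi}(H)$ asymptotics in Example~\ref{example_Sn_pibar}) to general $d$. Each $\alpha(H)$-invariant $d$-subset $S\subseteq [n]$ determines an $H$-action on $S$, which is the same data as a $d$-covering $\tilde{\Gamma}_S\to\Gamma$; this cover immerses into the Schreier graph $G_\alpha$, and I would associate to the pair $(\alpha, S)$ the smallest $B$-core subgraph $\Delta(\alpha,S)\subseteq G_\alpha$ such that $\Gamma\times_{\Omega_B}\Delta(\alpha,S)$ contains $\tilde\Gamma_S$, with its base fiber mapping to $S$. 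Stratifying the expectation by the isomorphism type of $\Delta(\alpha,S)$ and applying Möbius inversion over the poset of $B$-core graphs would yield an expansion
\[
    \EHtoFSnActsND \;=\; \sum_{\Delta} c_\Delta \cdot \EX_\alpha\!\left[\#\bigl\{\text{injective }B\text{-labeled embeddings }\Delta\hookrightarrow G_\alpha\bigr\}\right],
\]
where $\Delta$ ranges over $B$-core graphs such that $\Gamma\times_{\Omega_B}\Delta$ contains a $d$-covering of $\Gamma$, and $c_\Delta$ is an $n$-independent Möbius-corrected coefficient.

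For each such $\Delta$ with $v=|V(\Delta)|$ vertices and $e_b=|E_b(\Delta)|$ edges of label $b$, a standard calculation using the independence and uniformity of the permutations $\{\alpha(b)\}_{b\in B}$ gives
\[
    \EX_\alpha\!\left[\#\text{ embeddings of }\Delta\text{ in }G_\alpha\right] \;=\; \frac{n^{(v)}}{\prod_{b\in B} n^{(e_b)}} \;=\; \Theta\!\left(n^{\chi(\Delta)}\right),
\]
where $n^{(k)}=n(n-1)\cdots(n-k+1)$, since the edges of $\Delta$ labeled $b$ impose $e_b$ self-consistent constraints on the uniform permutation $\alpha(b)$. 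By Definition~\ref{def_spibard}, the maximum of $\chi(\Delta)$ over valid $\Delta$ equals $-d\cdot\spivarbar{d}(H)$, attained by some minimizer $\Delta^*$; the dominant stratum therefore contributes $\Theta\!\left(n^{-d\cdot\spivarbar{d}(H)}\right)=\Theta\!\left(\binom{n}{d}^{-\spivarbar{d}(H)}\right)$, using $\binom{n}{d}=\Theta(n^d)$ for fixed $d$.

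The main obstacle is controlling the Möbius sum. For the lower bound I must verify that the coefficient $c_{\Delta^*}$ is strictly positive, which I would ensure by choosing $\Delta^*$ to be minimal under $B$-core inclusion among minimizers of $-\chi/d$, so that no strictly smaller stratum cancels the leading contribution. For the upper bound, since there are infinitely many $B$-core graphs with each fixed value of $\chi$, one cannot bound the sum term by term; instead, the Möbius cancellations must be used to reduce the sum to a controlled contribution from finitely many non-cancelling strata, in direct analogy with the subgroup Möbius inversion underlying the Puder--Parzanchevski formula \cite{PP15}. This convergence-via-cancellation is the primary technical hurdle of the proof.
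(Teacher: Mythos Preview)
Your approach is essentially the paper's, but you have inserted an unnecessary M\"obius step that creates the very obstacles you then worry about.

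The paper proceeds via Proposition~\ref{prop_common_fixed_mod_G_and_Com} (with $G=S_d$, $X=[d]$, $Y=[n]$), giving
\[
\EHtoFSnActsND=\frac{1}{d!}\sum_{\beta\in\Hom(H,S_d)}\EX_\alpha\bigl[|\Interinj(\alpha\restriction_H,\beta)|\bigr],
\]
and then Lemma~\ref{lemma_induction_convolution_Com} expresses each summand as
\[
\sum_{(\eta_1,\eta_2)\in\DecompB^2(\Gamma^\beta\to\Omega_B)}\bbone\{\eta_1\text{ is $p$-efficient}\}\cdot L^B_{\eta_2}(n),
\]
a \emph{finite} sum of \emph{non-negative} terms, each $\Theta(n^{\chi(\Img\eta_1)})$ by Proposition~\ref{prop_LB}. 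Your stratification variable $\Delta(\alpha,S)$ is exactly $\Img(\eta_1)$ here: the image of the $d$-cover $\tilde\Gamma_S=\Gamma^\beta$ under a $p$-efficient surjection. Since $\Gamma^\beta$ has $d\cdot|V(\Gamma)|$ vertices and $|\Hom(H,S_d)|<\infty$, the set of possible $\Delta(\alpha,S)$ is finite, bounded in terms of $d$ and $\Gamma$ alone. Your worry that ``there are infinitely many $B$-core graphs with each fixed value of $\chi$'' is misplaced: the relevant $\Delta$ are quotients of finitely many fixed graphs, not arbitrary graphs satisfying the pullback condition in Definition~\ref{def_spibard}.

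With finiteness and non-negativity in hand, both bounds are immediate. The upper bound is the maximum term times a constant. For the lower bound, take $\Delta^*$ realizing the minimum in Definition~\ref{def_spibard}; the $d$-cover $\tilde\Gamma\subseteq\Gamma\times_{\Omega_B}\Delta^*$ projects to $\Delta^*$ injectively on each fiber of $p$ (vertices of the pullback are pairs $(v,u)$, so distinct points of $p^{-1}(v_0)$ have distinct $\Delta^*$-coordinates), hence $p$-efficiently. Replacing $\Delta^*$ by the image of this projection only decreases $-\chi$, and yields an actual term in the sum of order $n^{\chi(\Delta^*)}$. There is no M\"obius coefficient $c_{\Delta^*}$ whose sign needs checking, and no convergence issue to resolve.
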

In particular, the $\Aut(\mathbf{F})$-invariant 
function $\spivarbar{d}$ of subgroups of $\mathbf{F}$
is also $\Aut(\hat{\mathbf{F}})$-invariant, where 
$\hat{\mathbf{F}}$ is the profinite completion of $\mathbf{F}$.
Following \cite[Definition 1.3]{puder2023stable}, we say that
$\spivarbar{d}$ is \textbf{profinite} for every $d\in \N$.
The expectation $\EHtoFSnActsND$ is 
naturally generalized to arbitrary
finite group actions:
\begin{definition}
    \label{def_EHtoFVarActsVar}
    Let $H\le \mathbf{F}$ be a finitely generated subgroup.
    Let $G$ be a finite group acting on a set $X$.
    Given a random homomorphism $\alpha\colon \mathbf{F}\to G$,
    we denote the expected number of common fixed points 
    of $\alpha(H)$ in $X$ by
    $\EHtoFVarActsVar{G}{X}$.
\end{definition}

The proof of Theorem~\ref{thm_Sn_spibard}
gives a similar formula for 
every series $(S_n\acts X_n)_{n\in\N}$ of
transitive actions of $S_n$ on sets with 
polynomial growth (that is, $|X_n|=n^{O(1)}$).
For example, 
denote by $[n]_d$ the set of $d$-tuples of 
distinct elements of $[n]$; then
\begin{equation}
    \label{eq_EHtoFSnActsTuples}
    \EHtoFVarActsVar{S_n}{[n]_d} = 
    \Theta\prn*{n^
    {-d\cdot \spivarbar{d}^{\textup{triv}}(H)}}.
\end{equation}

\subsection{Systems of equations over group actions}

Now we adopt a less topological perspective, and define 
$B$-graphs; they generalize $B$-core graphs,
which are just the core graphs of $B$-graphs.


\begin{definition}
    \label{def_B_graph}
    Let $B$ be a finite set.
    A $B$\textbf{-graph} $\Gamma$ 
    consists of
    a finite set $V(\Gamma)$ 
    of vertices, 
    and for every $b\in B$, a  
    set $E_b(\Gamma)$ of 
    $b$-labeled edges, 
    and two 
    injective functions
    $\src, \tar\colon E_b(\Gamma)\to V(\Gamma)$
    called \textbf{source}
    and \textbf{target}.
    (We use the same notation
    $\src,\tar$ for every $b\in B$).
\end{definition}



The proof of Theorem~\ref{thm_reiter_Sn}
given in \cite{reiter19} is very
general: Reiter observed that
for a finitely generated subgroup
$H=\pi_1(\Gamma, v_0)
\le \pi_1(\Omega_B) = \textbf{F}$
and a finite group action 
$G\acts X$, 
for every homomorphism
$\alpha\in\Hom(\textbf{F},G)$,
there is a bijective correspondence
between $\alpha(H)$-fixed points
$x_0\in X$ and functions
$f\colon V(\Gamma)\to X$
mapping $f(v_0) = x_0$ that are
$\alpha$\textbf{-valid}, that is,
for every $b\in B$ and $e\in E_b(\Gamma)$,
\begin{equation}
    \label{eq_valid_func}
    \alpha(b).f(\src(e))=f(\tar(e)).
\end{equation}

The pair $(\Gamma, f)$ can be 
regarded as a system of equations:
The validity constraints 
\eqref{eq_valid_func}
can be seen as equations with 
variables $\{\alpha(b)\}_{b\in B}$ 
and constants $f(V)\subseteq X$;
see Figure~\ref{fig_Sn_pairs}
for example.
A substitution
$\alpha\colon B\to G$
may or may not satisfy the 
system of equations;
let $\PR_{\alpha}(\Gamma, f)$ denote
the probability over a random 
$\alpha\sim \textup{Unif}(G^B)$
of satisfying $(\Gamma,f)$.
The group $G$ acts diagonally on
$X^{V(\Gamma)}$, and acts diagonally on
$G^B$ by conjugation.
Note that for $g\in G$,
$\alpha$ satisfies $(\Gamma,f)$
if and only if
$\alpha'=g\alpha g^{-1}$ satisfies 
$(\Gamma, g.f)$, which is given by 
the equations 
\[\alpha'(b).gf(\src(e))=gf(\tar(e)).\]
Therefore $\PR_{\alpha}(\Gamma,f)
=\PR_{\alpha}(\Gamma,gf)$
depends only on the orbit
$\mathcal{O}(f)\subseteq X^{V(\Gamma)}$
under the diagonal action of $G$.
Reiter's observation then gives
\begin{equation}
    \label{eq_EHtoFGActsX_to_sum_O_P_O}
\end{equation}
\[  \EHtoFVarActsVar{G}{X}
= \sum_{f\colon V(\Gamma)\to X}
\PR_{\alphaSimUHomFVar{G}}
(f\textup{ is }
\alpha\textup{-valid})
= \sum_{\mathcal{O}\in 
X^{V(\Gamma)}/G} |\mathcal{O}|
\cdot \PR(\mathcal{O}),\]
where 
$\PR(\mathcal{O})\defeq 
\PR_{\alpha}(\Gamma, f)$ for some
arbitrary representative
$f\in \mathcal{O}$.
In the special case where
the action is $S_n\acts [n]$,
these notions played a key role
in \cite{PP15}: 
\begin{example}
    \label{example_Sn_pibar}
    For 
$n\ge k\in \N$, denote by 
$(n)_k\defeq n(n-1)\cdots (n-k+1)$
the falling factorial.
Then for a system of equations 
that is encoded by 
a $B$-core graph $\Gamma$
and 
$f\colon V(\Gamma)\to [n]$,
let $\Delta$ be the graph obtained
from $\Gamma$ by first gluing 
together preimages of $f$, 
and then gluing together 
$b$-labeled edges with the same
source and target.
If $f$ is valid for some 
$\alpha\in \Hom(\textbf{F},G)$,
then $\Delta$ is a $B$-core graph.\footnote{
Indeed, if, say, $\tar_{\Delta}$ 
was not injective,
there would be $b\in B$ and 
$e,e'\in E_b(\Gamma)$ such that 
$f(\tar(e))=f(\tar(e'))$ but
$f(\src(e))\neq f(\src(e'))$,
contradicting the validity
constraint
$\alpha(b).f(\src(e))=f(\tar(e))$.
}
Now $f$ factors as 
$f\colon V(\Gamma)\twoheadrightarrow
V(\Delta)\hookrightarrow [n]$,
and we get
$|\mathcal{O}(f)|
=(n)_{|V(\Delta)|}$
and
$\PR_{\alpha}(\Gamma, f)^{-1}
=\prod_{b\in B} (n)_{|E_b(\Delta)|}.$
It follows that 
\begin{equation}
    \label{eq_contribution_Sn_n}
    \PR_{\alpha}(\Gamma, f)
\cdot |\mathcal{O}(f)|
= n^{\chi(\Delta)} \cdot (1+O(1/n)).
\end{equation}

Since, for $n\ge |V(\Gamma)|$, 
the number of orbits in the
diagonal action 
$S_n\acts [n]^{V(\Gamma)}$
is the Bell number\footnote{
    The Bell number $B_k$ is the number of equivalence relations on 
    $\{1,\ldots, k\}$. For $f\colon V(\Gamma)\to [n]$, the orbit $S_n.f$
    is $\braces*{f'\colon V(\Gamma)\to [n]
    \,\,\middle|\,\, 
    \forall u,v\in V(\Gamma):\, 
    f(u)=f(v)\iff f'(u)=f'(v)}$.
} 
$B_{|V(\Gamma)|} = O(1)_{n\to\infty}$,
the formula~\eqref{eq_EHtoFGActsX_to_sum_O_P_O} gives
\begin{equation}
    \label{eq_S_n_fixed_points_pibar}
    \EHtoFVarActsVar{S_n}{[n]}
    =\Theta(n^{1-\pibar(H)}).
\end{equation}
\end{example}

Another well-understood case is
the action of $G=\GLnFq$
on $X=\F_q^n\setminus\{0\}$:
\begin{example}
    \label{example_GLn_pibarK}
    By \cite[Section 2.1]{ernst2024word},
for a fixed prime power $q$ and 
$n\to\infty$,
\begin{equation}
    \label{eq_contribution_GLn_Fqn}
    \PR_{\alpha}(\Gamma, f)\cdot 
|\mathcal{O}(f)|
= q^{n(1-\rk(I))}\cdot (1+O(1/q^n))
\end{equation}

where $I$ is the right ideal 
in the free group algebra 
$\F_q[\textbf{F}]$ 
generated by the 
linear dependencies between the
vectors $f(V(\Gamma))$.
Similarly to the case of $S_n$,
for $n\ge |V(\Gamma)|$,
the number of orbits in the 
diagonal action 
$\GLnFq\acts 
(\F_q^{n})^{V(\Gamma)}$ 
is the $q$-Bell number,\footnote{
The $q$-Bell number is defined as the number of 
$\F_q$-linear subspaces of 
$\F_q^{|V(\Gamma)|}$.    
To describe the orbit of $f\colon V(\Gamma)\to \F_q^n$, identify $f$ with 
    $f\in \Hom_{\F_q}(\F_q^{V(\Gamma)},\F_q^n)$; then 
    $\GLnFq.f=\{f': \ker(f)=\ker(f')\}$.
}
which is $O(1)_{n\to\infty}$.
We get 
\[\EHtoFVarActsVar{\GLnFq}{
    \F_q^n\setminus\{0\}
} = \Theta\prn*{q^{n(1-\pibar_q^{\textup{triv}}(H))}}\]
where $\pibar_q^{\textup{triv}}(H)$ 
is the minimal rank of a 
proper right ideal 
$I\triangleleft \F_q[\textbf{F}]$
containing 
$\{1-h\}_{h\in H}$.
\end{example}

As another example, note that if 
$G$ is an abelian group acting
transitively on $X$, and $\Gamma$ 
is connected, then for every $f$,
$|\mathcal{O}(f)|=|X|$ 
and $\PR_{\alpha}(\Gamma, f)
\in \{0, |X|^{-r}\}$
where $r$ is the number of 
variables.
Finally, for $H=\inner{x^k}$, we have
$\PR_{\alpha}(\Gamma, f)\cdot 
|\mathcal{O}(f)| \in\{0,1\}$ for every $f$.
Despite this variety of different 
behaviors, Reiter \cite{reiter19}
managed to give a general bound:
\begin{theorem}
    [{\cite{reiter19}}]
        \label{thm_reiter_general}
    Let $\Gamma$ be a connected $B$-core graph with $\chi(\Gamma)<0$.
    Then for every finite group
    $G$ acting on a set $X$,
    and a system of equations $(\Gamma,f)$,
    \[
    \PR_{\alpha}(\Gamma, f)\cdot 
|\mathcal{O}(f)| 
\le |X|^{-1/\two}.
\]
\end{theorem}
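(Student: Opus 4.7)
The plan is to first re-express the LHS using stabilizer indices, then reduce to a counting of valid labelings in $\mathcal{O}(f)$, and finally extract the exponent $-1/2$ via a spanning-tree decomposition combined with a Cauchy--Schwarz doubling argument.

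First, I would write the LHS using stabilizers. For a consistent $(\Gamma,f)$, set $H = \bigcap_{v\in V(\Gamma)}\mathrm{Stab}_G(f(v))$, and for each label $b\in B$ appearing in $\Gamma$, let $H_b\subset G$ be the coset of $\bigcap_{e\in E_b(\Gamma)}\mathrm{Stab}_G(f(\src(e)))$ consisting of those $\alpha(b)\in G$ that satisfy all $b$-constraints $\alpha(b)\cdot f(\src(e))=f(\tar(e))$; when the system is inconsistent, the LHS is $0$ and there is nothing to prove. A direct computation gives
\[
\PR_\alpha(\Gamma,f)\cdot|\mathcal{O}(f)| \;=\; \frac{\prod_b |H_b|}{|G|^{|B|-1}\,|H|} \;=\; |\mathcal{O}(f)|^{1-|B|}\prod_b [H_b:H],
\]
so the LHS equals $N\defeq \EX_\alpha\bigl|\{f'\in\mathcal{O}(f):f'\text{ is }\alpha\text{-valid}\}\bigr|$, the expected number of $\alpha$-valid labelings in the orbit of $f$.

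Next, I would fix a spanning tree $T\subset\Gamma$. Since $\chi(\Gamma)<0$, the set $E(\Gamma)\setminus E(T)$ of extra edges has cardinality $1-\chi(\Gamma)\ge 2$. For any $\alpha$ and base value $f'(v_0)\in X$, the values of $f'$ at the remaining vertices are uniquely determined by walking along $T$; each extra edge beyond $T$ then imposes an additional independent constraint on $\alpha$. This is where the hypothesis $\chi(\Gamma)<0$ enters: at least two independent extra-edge constraints are available. To extract the exponent $-1/2$, I would apply Cauchy--Schwarz:
\[
N^2 \;\le\; \EX_\alpha\bigl|\{\text{valid }f'\in\mathcal{O}(f)\}\bigr|^2 \;=\; \EX_\alpha\bigl|\{\text{pairs of valid }f'\text{s in }\mathcal{O}(f)\}\bigr|.
\]
A pair $(f_1',f_2')$ of $\alpha$-valid labelings is precisely a single $\alpha$-valid labeling $V(\Gamma)\to X\times X$ under the diagonal $G$-action on $X\times X$; the RHS then decomposes as $\sum_{\mathcal{O}'}|\mathcal{O}'|\cdot\PR_\alpha(\mathcal{O}')$ over $G$-orbits $\mathcal{O}'$ in $\mathcal{O}(f)\times\mathcal{O}(f)\subset(X\times X)^{V(\Gamma)}$. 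The same spanning-tree analysis, now applied to the doubled action with the two extra-edge constraints operating on the larger set $X\times X$, should yield $N^2\le |X|^{-1}$ and hence $N\le |X|^{-1/2}$.

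The hard part will be the last step: a naive sum over all $G$-orbits $\mathcal{O}'\subset\mathcal{O}(f)\times\mathcal{O}(f)$ produces an extra factor of $|H\backslash G/H|$ (the number of such orbits, which can be as large as $|\mathcal{O}(f)|$), threatening to blow up the bound. Overcoming this will require carefully pairing each extra edge of $\Gamma$ between the two copies -- analogous to Hanna Neumann's original pairing argument producing the classical $\two$ in \eqref{eq_HannaNeumannThm} -- so that the double sum telescopes to $|X|^{-1}$. Executing this combinatorial bookkeeping uniformly across all finite groups $G$ and actions $G\acts X$, including intransitive actions with complex stabilizer-intersection structure, is the technical heart of Reiter's proof in \cite{reiter19}.
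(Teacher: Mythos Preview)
Your reduction of the left-hand side to $N=\EX_\alpha Y$ with $Y=|\{f'\in\mathcal{O}(f):f'\text{ is }\alpha\text{-valid}\}|$ is correct, and Cauchy--Schwarz does give $N^2\le\EX_\alpha Y^2$. The gap is that the final claim ``the double sum telescopes to $|X|^{-1}$'' is not just hard to execute---it is false. Take $G=S_3$ acting on $X=[3]$ and let $\Gamma$ be the bouquet of two loops (one vertex $v_0$, two edges labeled $a,b$), so $\chi(\Gamma)=-1$. With $f(v_0)=1$ one computes $N=3\cdot(2/6)^2=1/3$, while
\[
\EX_\alpha Y^2 \;=\; \sum_{x,y\in[3]}\PR_\alpha\bigl(\alpha(a),\alpha(b)\text{ both fix }x\text{ and }y\bigr)
\;=\; 3\cdot\tfrac{1}{9}+6\cdot\tfrac{1}{36}\;=\;\tfrac{1}{2}.
\]
Thus $\EX_\alpha Y^2=\tfrac12>|X|^{-1}=\tfrac13$, so no amount of bookkeeping on the orbit decomposition can make $\EX_\alpha Y^2\le|X|^{-1}$. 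Cauchy--Schwarz is simply too lossy here: the diagonal contribution to $\EX Y^2$ alone already equals $N$, and the off-diagonal terms only add more. Whatever Reiter's argument does to produce the factor $1/2$, it is not a second-moment bound on $Y$.

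The paper itself does not reproduce Reiter's proof; it only cites \cite{reiter19} for this statement. The paper's own contribution is to remove the $\two$ entirely (Theorem~\ref{thm_equations_in_group_actions}) via the $\Gamma$-polymatroid machinery: one encodes $\log|\mathcal{O}(f)|\cdot\PR_\alpha(\Gamma,f)$ as the Euler characteristic $\chi(\hp)$ of the Chan--Yeung polymatroid $\hp(U)=\log[G:\bigcap_{u\in U}\mathrm{Stab}(f(u))]$, and then bounds $\chi(\hp)\le-\hp(\{e\})=-\log|X|$ using Shearer's inequality together with the existence of a stackable non-abelian subgroup (Lemma~\ref{lemma_stackable_subgroup}). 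None of this involves doubling or second moments. If you want the weaker $|X|^{-1/2}$ bound by a more elementary route, you would need a different idea than squaring---perhaps a direct submodularity argument exploiting that $\chi(\Gamma)<0$ forces some vertex of degree $\ge 3$, or a single-copy spanning-tree analysis that handles the two surplus edges asymmetrically.
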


Reiter also conjectured that the 
$\two$ can be replaced by $1$,
which is tight (as in the 
example \eqref{eq_contribution_Sn_n}).
In view of Theorem~\ref{thm_Sn_spibard},
this conjecture can be seen as a 
vast generalization of the gap $s\pibar(H)\ge 1$
for non-abelian groups, which is a variant of
the Hanna Neumann conjecture.

If $\chi(\Gamma)=0$, that is,
$\pi_1(\Gamma)=\Z$ corresponds to a 
word $w\in \textbf{F}$, there are
many systems $f$ of equations on $\Gamma$
where 
$ \PR_{\alpha}(\Gamma, f)\cdot 
|\mathcal{O}(f)| $
does not decay as $|X|$ grows,
as in the example 
\eqref{eq_contribution_Sn_n}.
Can we generalize Wise's
\enquote{rank-1 Hanna Neumann conjecture}
in a similar manner?

\begin{definition}
    Let $V$ be a set, 
    $G\acts X$ a 
    group action, and let
    $f\colon V\to X$.
    Then
    $f$ is called 
    \textbf{locally recoverable} if 
    for every 
    $v\in V$, 
    \[ \stab_{G}(f(v))
    \le \bigcap_{u\in V
    \setminus\{v\}}
    \stab_{G}(f(u)).\]
\end{definition}

In the case where $V=V(\Gamma)$
for a Stallings core graph
and $\PR_{\alpha}(\Gamma, f) > 0$,
we have an equivalent formulation:
The system of equations 
$(\Gamma, f)$ is locally 
recoverable if for every $v\in V(\Gamma)$,
the restriction map 
$\mathcal{O}(f) \to 
\mathcal{O}\prn*{
    f\restriction_{
        V(\Gamma)\setminus \{v\}}}$
is bijective.
Intuitively, a locally recoverable
system of equations is a system in
which for every $v\in V(\Gamma)$,
knowing all the $G$-relations 
between the values $f(V)$ enables
recovering the value of 
$f(v)$ by looking only at the 
other values $f(V\setminus \{v\})$.
\begin{itemize}
\item In Example~\ref{example_Sn_pibar},
$f$ is locally recoverable if 
and only if every number $f(v)$
appears at least twice, that is,
no vertex has a unique $f$ value.

\item In Example~\ref{example_GLn_pibarK}
In the example 
\eqref{eq_contribution_GLn_Fqn}
with $\GLnFq\acts \F_q^n\setminus\{0\}$,
$f$ is locally recoverable if 
and only if every vector $f(v)$
appears in a linear dependency with
other vectors from $f(V\setminus \{v\})$.
\end{itemize}

The term 
\enquote{locally recoverable}
is derived from the corresponding 
concept in the theory of 
error-correcting codes,
defined in 
\cite{papailiopoulos2011degrees}
(see also \cite{gopalan2012locality}).\footnote{
    This concept is also called (locally)
    \textbf{repairable} or \textbf{correctable}
    in the literature.
}
Given a set $V$ of indices and a 
finite field $\F_q$, a \textbf{
    linear error correcting code}
(or just a \textbf{code}) is defined 
as a linear subspace of $\F_q^V$.
This topic is ubiquitous in the 
literature;
see e.g.\ \cite{peterson1961error}.

\begin{definition}
    A code $\mathcal{C}\le \F_q^V$
    is \textbf{locally recoverable}
    if for every index $v\in V$
    and every codeword 
    $c=(c_v)_{v\in V}\in \mathcal{C}$,
    the symbol $c_v$ is uniquely 
    determined by 
    $(c_u)_{u\in V\setminus \{v\}}$.
\end{definition}



Equivalently, 
$\mathcal{C}\le \F_q^V$
is locally recoverable if for 
every index $v\in V$ there is 
a vector 
$\phi\in \mathcal{C}^{\bot}
\le \F_q^V$
with $\phi_v\neq 0$.
For the system of equations 
$(\Gamma, f)$ in 
the example 
\eqref{eq_contribution_GLn_Fqn},
we may identify the function
$f\colon V\to \F_q^n$ with the 
linear function 
$f\colon \F_q^V\to \F_q^n$,
and then define a code 
$\mathcal{C}\defeq \Img(f^T)
\le \F_q^V$
(so that 
$\mathcal{C}^{\bot}=\ker(f)$).
Then the system of equations
$(\Gamma,f)$ 
is locally recoverable
if and only if the code 
$\mathcal{C}\le \F_q^{V(\Gamma)}$
is locally recoverable.

Note that the gap $s\pi(H)\notin (0,1)$ for every finitely generated
subgroup $H\le \textbf{F}$, had until now different proofs 
for the cases of $\rk(H)=1$ (Theorem~\ref{thm_Wilton_Louder_Helfer_Wise}) 
and $\rk(H) > 1$ (Theorem~\ref{thm_friedman_mineyev}).
The following theorem gives a new, unified proof for both cases
(see Theorem~\ref{thm_spi_gap_and_slightly_more}).
Moreover, it proves Reiter's general conjecture.
Let $\Gamma$ be a connected $B$-graph, and let $H\defeq \pi_1(\Gamma, v_0)$
for some $v_0\in V(\Gamma)$.

\begin{theorem}
    \label{thm_equations_in_group_actions}
    Let $G\acts X$ be a finite
    transitive group action, and let
    $f\colon V(\Gamma)\to X$.
    If either 
    \begin{itemize}
        \item $\rk(H)>1$, or
        \item $H=\inner{w}$ for a 
        non-power $w\in \textbf{F}$,
        and $f$ is locally recoverable,
    \end{itemize}
    then $|\mathcal{O}(f)|
    \cdot \PR_{\alpha}(\Gamma,f)
    \le |X|^{-1}$.
\end{theorem}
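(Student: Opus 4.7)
The plan is to convert $\lvert\mathcal{O}(f)\rvert\cdot\PR_\alpha(\Gamma,f)$ into an entropy-style inequality about indices of intersections of point stabilizers in $G$, and then to extract the bound from the $\Gamma$-polymatroid theorem (Theorem~\ref{thm_Gamma_polymatroid_theorem}). For $U\subseteq V(\Gamma)$, set $K_U\defeq \bigcap_{u\in U}\stab_G(f(u))$ and $\rho(U)\defeq \log[G:K_U]$. Since $G$ acts transitively on $X$, all point stabilizers $\stab_G(x)$ are conjugate of common index $\lvert X\rvert$, so $\rho(\{v\})=\log\lvert X\rvert$ for every $v\in V(\Gamma)$. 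Using that $\src$ is injective on each $E_b$ (so $\lvert E_b\rvert=\lvert S_b\rvert$ for $S_b\defeq\src(E_b)$), one obtains
\begin{equation*}
    \lvert\mathcal{O}(f)\rvert = [G:K_V],\qquad
    \PR_\alpha(\Gamma,f) = \prod_{b\in B} [G:K_{S_b}]^{-1},
\end{equation*}
assuming the probability is nonzero (otherwise the claim is trivial). Hence the desired bound is equivalent to
\begin{equation*}
    \rho(V) + \rho(\{v_0\}) \;\le\; \sum_{b\in B}\rho(S_b).
\end{equation*}

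I would then recognize $\rho$ as a $\Gamma$-polymatroid. Submodularity of $\rho$ is automatic, because $\rho(U)$ is the Shannon entropy (up to a fixed scaling) of the marginal on coordinates $U$ of the uniform distribution on $\mathcal{O}(f)\subseteq X^{V(\Gamma)}$. The graph structure enters through the observation that, for each $b\in B$ and each $U\subseteq S_b$, the validity constraints $\alpha(b).f(\src(e))=f(\tar(e))$ give the conjugation identity $\alpha(b)K_U\alpha(b)^{-1} = K_{\tar(\src^{-1}(U)\cap E_b)}$, and hence the $\rho$-invariance
\begin{equation*}
    \rho(U) = \rho\bigl(\tar(\src^{-1}(U)\cap E_b)\bigr).
\end{equation*}
Together with the vertex-transitivity relation $\rho(\{v\})=\log\lvert X\rvert$, these data fit $\rho$ into the $\Gamma$-polymatroid framework developed earlier in the paper.

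In the case $\rk(H) > 1$, applying Theorem~\ref{thm_Gamma_polymatroid_theorem} to $\rho$ yields the inequality directly: the strict negativity of $\chi(\Gamma)$ provides exactly the slack that absorbs the extra $\rho(\{v_0\})$ term on the left-hand side. For the cyclic case $H=\langle w\rangle$, $\Gamma=\Gamma_w$ has $\chi=0$ and local recoverability must carry the argument. Local recoverability is precisely the condition $\stab_G(f(v))\le K_{V\setminus\{v\}}$ for every $v$, which unpacks to $K_V=K_{V\setminus\{v\}}$, i.e.\ $\rho(V)=\rho(V\setminus\{v\})$. This is the polymatroid analogue of the algebraicity condition of Definition~\ref{def_stable_primitivity_rank}, and on a cycle it plays the role of the ``every edge of $\Delta$ is covered at least twice'' hypothesis in Theorem~\ref{thm_Wilton_Louder_Helfer_Wise}; feeding it as extra input into the polymatroid theorem (or, equivalently, reducing to Theorem~\ref{thm_Wilton_Louder_Helfer_Wise} via the folded quotient of $(\Gamma,f)$) closes the cyclic case.

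The main obstacle will be making the correspondence between local recoverability and the algebraicity hypothesis of Theorem~\ref{thm_Wilton_Louder_Helfer_Wise} fully rigorous, so that a single polymatroid argument genuinely subsumes both the high-rank (SHNC-style) and rank-$1$ (Wise / Louder--Wilton / Helfer--Wise) regimes. Subsidiary but nontrivial points are the non-iterative nature of local recoverability (local recoverability at $v$ in $V$ does not automatically descend to local recoverability of $f|_{V\setminus\{v'\}}$), the reduction from an arbitrary connected $B$-graph $\Gamma$ to its Stallings core, and the bookkeeping when some $E_b$ is empty or when $\PR_\alpha(\Gamma,f)=0$ and the bound is vacuous.
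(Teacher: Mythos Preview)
Your approach is the paper's: build the Chan--Yeung polymatroid $\hp(U)=\log[G:\bigcap_{u\in U}\stab_G(f(u))]$, check it is a lossless $\Gamma$-polymatroid (invariance via any valid $\alpha_0$), identify local recoverability with compactness of $\hp^V$, and apply Theorem~\ref{thm_Gamma_polymatroid_theorem} directly to get $\chi(\hp)\le -\hp^b(\{e\})=-\log|X|$. Your listed obstacles dissolve once you notice that compactness is \emph{already} the hypothesis in the cyclic case of Theorem~\ref{thm_Gamma_polymatroid_theorem}, so no detour through Theorem~\ref{thm_Wilton_Louder_Helfer_Wise}, no iterated local recoverability, and no passage to the core are needed.
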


For any connected $B$-graph $\Gamma$ with
$\chi(\Gamma)\ge 0$, and any $f\colon V(\Gamma)\to X$,
we always have the weaker bound
\begin{equation}
    \label{eq_Gamma_poly_for_powers}
    |\mathcal{O}(f)|
    \cdot \PR_{\alpha}(\Gamma,f)
    \le |X|^{\chi(\Gamma)},
\end{equation}
but it is not as useful.

\subsection{A $q$-analogue of the HNC}

We have seen in 
Examples~\ref{example_Sn_pibar},\ref{example_GLn_pibarK}
that 
Theorem~\ref{thm_equations_in_group_actions}
is useful especially when
applied to sequences of group actions 
$G_n\acts X_n$ with the property that
$|X_n|\to_{n\to\infty}\infty$ but
for every $V\in \N$,
the number of orbits in the 
diagonal $V$-power
is bounded:
$|X_n^V / G| = O(1)_{n\to\infty}$.
It turns out that such 
sequences are quite rare:
the following theorem is a 
simple corrolary 
from the Cameron-Kantor conjecture
\cite{cameron19792}
that was proved by 
Liebeck-Shalev 
{\cite{liebeck2005minimal}}.
In \cite{cameron19792}, 
an action $G\acts X$ is called
\textbf{standard} if there are 
$n,d\in \N$ and a finite field
$\F_q$ such that either
\begin{itemize}
    \item $G$ is $S_n$ or $A_n$, acting on 
    $X=\binom{[n]}{d}$, or
    \item $G$ is a classical group 
    of Lie type of rank $n$ over $\F_q$ 
    (like $\textup{GL}_n(\F_q)$) 
    acting on the Grassmanian 
    $X=\textbf{Gr}_d(\F_q^n)$, 
    that is, the set of 
    $d$-dimensional subspaces of 
    $\F_q^n$, or on pairs of 
    complements subspaces of 
    dimensions $(d,n-d)$.
\end{itemize}
\begin{theorem}
    \label{thm_liebeck_shalev_cameron_kantor}
    Let 
    $(G_n)_{n=1}^{\infty}$ be 
    almost simple finite groups 
    acting primitively on sets 
    $(X_n)_{n=1}^{\infty}$ 
    where $|X_n|\to_{n\to\infty}\infty$. 
    If for every $V\in \N$, 
    $|X_n^V / G_n| =O(1)_{n\to\infty}$, 
    then for every large enough $n$,
    $G_n\acts X_n$ is standard,
    with $d=O(1)_{n\to\infty}$.
\end{theorem}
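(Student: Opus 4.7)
The plan is to combine a simple orbit-counting lower bound with the Cameron--Kantor conjecture, in the form proved by Liebeck--Shalev \cite{liebeck2005minimal}: there is an absolute constant $b$ such that every primitive almost simple permutation group whose action is \emph{non-standard} admits a base of size at most $b$ (i.e., an ordered tuple with trivial joint stabilizer). Recall that a base of size $b$ produces a $G$-orbit in $X^b$ of full size $|G|$, so its existence implies $|G|\le |X|^b$.

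First, I would extract an unconditional lower bound from the hypothesis. By Burnside's lemma (or just the contribution of the identity to it), for every $V\in \N$,
\[
\abs*{X_n^V / G_n} \;\ge\; \frac{|X_n|^V}{|G_n|}.
\]
Together with $\abs*{X_n^V/G_n}=O(1)_{n\to\infty}$, this yields constants $c_V>0$ with $|G_n|\ge c_V\,|X_n|^V$ for every $V$ and all large $n$.

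Second, suppose for contradiction that $G_n\acts X_n$ is non-standard for infinitely many $n$. By Liebeck--Shalev, for each such $n$ there is a base in $X_n^b$, so $|G_n|\le |X_n|^b$. Taking $V=b+1$ in the previous step gives
\[
c_{b+1}\,|X_n|^{b+1}\;\le\;|G_n|\;\le\;|X_n|^b,
\]
so $|X_n|\le c_{b+1}^{-1}$, contradicting $|X_n|\to\infty$. Hence $G_n\acts X_n$ is standard for all sufficiently large $n$.

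Finally, to bound $d_n$ in the standard case, I would use the elementary orbit structure on pairs. For $G_n\in\{S_n,A_n\}$ acting on $\binom{[n]}{d_n}$ (with $d_n\le n/2$, up to swapping $d_n\leftrightarrow n-d_n$), the orbits on $X_n^2$ are indexed by the intersection size $|A\cap B|\in\{0,1,\ldots,d_n\}$; for the Grassmannian or complement-pair actions of classical Lie type groups they are indexed analogously by $\dim(U\cap V)$. In every standard case, $\abs*{X_n^2/G_n}\ge \min(d_n,n-d_n)+1$, so the hypothesis applied with $V=2$ forces $d_n=O(1)_{n\to\infty}$. The main obstacle in this plan is really just invoking Cameron--Kantor; once that deep classification-based result is in hand, the deduction is entirely elementary.
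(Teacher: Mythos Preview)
Your proposal is correct and is precisely the argument the paper alludes to when it calls the theorem ``a simple corollary from the Cameron--Kantor conjecture \ldots\ proved by Liebeck--Shalev''; the paper gives no further proof. One minor point: in the final step, saying the orbits on $X_n^2$ are \emph{indexed} by $|A\cap B|$ or $\dim(U\cap V)$ is not literally accurate for $A_m$, for classical groups other than $\GL$, or for the complement-pair actions (there can be additional invariants), but all you need is the lower bound $|X_n^2/G_n|\ge \min(d_n,m_n-d_n)+1$, which does hold in every standard case since each value of the intersection size/dimension is realized.
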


We have seen in 
Theorem~\ref{thm_Sn_spibard}
that the case 
$S_n\acts \binom{[n]}{d}$
corresponds to the $d$-stable 
compressed rank $s\pibar_d$.
Naturally, the case of 
$\GLnFq\acts\textbf{Gr}_d(\F_q^n)$
corresponds to the $q$-analog of $s\pibar_d$:

\begin{definition}
    \label{def_spibarqd}
    Let $\F_q$ be a finite field,
    $H\le \textbf{F}$ a finitely
    generated subgroup, and $d\in \N$.
    We define the $d$\textbf{-stable}
    $q$\textbf{-compressed rank }
    of $H$ as 
    \begin{equation}
    s\pibar_{q,d}(H)\defeq \min\braces*{
    \frac{\rk(N)}{d}-1\middle|\,\,
    N\le \F_q[\textbf{F}]^d, 
    \,\,\dim_{\F_q} (
    \F_q[H]^d / N\cap \F_q[H]^d) 
    = d}
\end{equation}  
where $N$ runs over f.g.\ 
submodules of 
the free $\F_q[\textbf{F}]$ 
right module 
$\F_q[\textbf{F}]^d$.
\end{definition}

A theorem analogoues to the 
Nielsen-Schreier theorem,
which is contributed to both 
\cite{cohn1964free} and 
\cite{lewin1969free}, 
states that for a field $K$,
a submodule of a free 
$K[\textbf{F}]$-module is free, 
and has a well-defined rank;
thus $s\pibar_{q,d}$ 
is well defined.

\begin{theorem}
    \label{thm_spibarqd}
    In the same settings of 
    Definition~\ref{def_spibarqd},
    for fixed $d$ and $q$,
    \[ \EHtoFVarActsVar{\GLnFq}{\textbf{Gr}_d(\F_q^n)}
    = \Theta\prn*{q^{-nd\cdot s\overline{\pi}_{q,d}(H)}}\quad \textup{as } n\to\infty. \]
\end{theorem}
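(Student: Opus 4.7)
The plan is to mirror the proof of Theorem~\ref{thm_Sn_spibard} with $\GLnFq$ replacing $S_n$, and to generalize Example~\ref{example_GLn_pibarK} from $d=1$ to arbitrary $d$.

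Let $\Gamma$ denote the Stallings core graph of $H$ with basepoint $v_0$. Using the bijection between $\alpha(H)$-invariant $d$-subspaces $W\le \F_q^n$ and $\alpha$-valid assignments $f\colon V(\Gamma)\to \textbf{Gr}_d(\F_q^n)$ with $W=f(v_0)$, I first rewrite
\[
\EHtoFVarActsVar{\GLnFq}{\textbf{Gr}_d(\F_q^n)}
= \sum_{\mathcal{O}\in \textbf{Gr}_d(\F_q^n)^{V(\Gamma)}/\GLnFq} |\mathcal{O}|\cdot \PR(\mathcal{O}).
\]
I then classify each contributing orbit $\mathcal{O}$ algebraically. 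For a valid pair $(f,\alpha)$, fixing a basis of $W$ and propagating under the $\alpha$-action of $\textbf{F}$ yields a surjective right $\F_q[\textbf{F}]$-module homomorphism $\F_q[\textbf{F}]^d \twoheadrightarrow \sum_{v\in V(\Gamma)} f(v)\subseteq \F_q^n$, whose kernel $N\le \F_q[\textbf{F}]^d$ is free of well-defined rank by Cohn--Lewin and satisfies $\dim_{\F_q}\prn*{\F_q[H]^d/(N\cap \F_q[H]^d)}=d$---precisely the condition of Definition~\ref{def_spibarqd} encoding that $W$ is a $d$-dimensional $\alpha(H)$-invariant subspace. The $\GLnFq$-orbit of $f$ is then determined, up to $O(1)_{n\to\infty}$ combinatorial choices, by the isomorphism class of $N$ (independent of $n$).

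The heart of the proof is showing that for each such orbit type,
\[
|\mathcal{O}|\cdot \PR(\mathcal{O}) = \Theta\prn*{q^{-nd\cdot(\rk(N)/d - 1)}}.
\]
The orbit size contributes a factor $q^{nm}$, where $m = \dim_{\F_q}\sum_v f(v)$ is the total spanning dimension, since one is embedding a fixed $m$-dimensional template configuration of $d$-subspaces into $\F_q^n$. The probability $\PR(\mathcal{O})$ contributes $q^{-n\sum_{b\in B} s_b}$, where $s_b$ is the $\F_q$-dimension spanned by the sources of $b$-labeled edges, since $\alpha(b)$ is forced on an $s_b$-dimensional subspace and the admissible measure shrinks accordingly. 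An $\F_q[\textbf{F}]$-module version of the Euler-characteristic identity $|V|-\sum_b |E_b| = \chi(\Gamma)$, applied inside $\F_q[\textbf{F}]^d/N$, should turn the difference $m - \sum_b s_b$ into $d - \rk(N)$, yielding the claimed exponent.

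Summing over orbit types, Theorem~\ref{thm_liebeck_shalev_cameron_kantor} ensures the number of relevant orbit types is $O(1)_{n\to\infty}$, so the total is dominated by the orbit(s) minimizing $\rk(N)/d-1$, which equals $s\overline{\pi}_{q,d}(H)$ by definition. The hardest part will be the exact computation of $|\mathcal{O}|\cdot \PR(\mathcal{O})$: although the $d=1$ case \eqref{eq_contribution_GLn_Fqn} provides a template, for arbitrary $d$ one must show that the free generators of $N$ impose genuinely independent linear constraints on the random $\alpha$, and that the induced orbit of subspace configurations is counted with the correct stabilizer corrections. This is where the paper's polymatroid machinery---a quantitative Cohn--Lewin/Nielsen--Schreier identity for $\F_q[\textbf{F}]$-modules on Stallings graphs---should do the bulk of the work.
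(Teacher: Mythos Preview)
Your overall strategy---parameterize the count by submodules $N\le\F_q[\FF]^d$ satisfying the codimension-$d$ condition of Definition~\ref{def_spibarqd}, show each contributes $\Theta(q^{n(d-\rk N)})$, and take the minimum---matches the paper. But the paper organizes the decomposition differently, and two points in your proposal are off.

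\textbf{Different decomposition.} Rather than Reiter's orbit sum over $f\colon V(\Gamma)\to\textbf{Gr}_d(\F_q^n)$, the paper first writes $\textbf{Gr}_d(\F_q^n)\cong\Hom^{\textup{inj}}(\F_q^d,\F_q^n)/\GL_d(\F_q)$ and applies Proposition~\ref{prop_common_fixed_mod_G_and_Com} to get
\[
\EHtoFVarActsVar{\GLnFq}{\textbf{Gr}_d(\F_q^n)}=\frac{1}{|\GL_d(\F_q)|}\sum_{\beta\in\Hom(H,\GL_d(\F_q))}\E_\alpha\bigl|\Interinj(\alpha,\beta)\bigr|.
\]
This replaces your sum over Grassmannian-valued orbits (which are awkward to parameterize) by a finite sum over $\beta$, and for each $\beta$ the intertwiners are linear maps $\F_q^d\to\F_q^n$ whose trajectories along $\T_B(H)$ can be grouped by the module $N\supseteq M_\beta^{\FF}$ of linear relations they satisfy, exactly as in \cite[\S2]{ernst2024word}. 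The efficiency condition (that $\mathcal{E}_d$ stays independent mod $N$) encodes injectivity of the intertwiner, recovering the constraint in \eqref{eq_def_spibarKd_Alternatively}. This cleanly produces $\E_\alpha|\Interinj(\alpha,\beta)|=\Theta(q^{-n\cdot s_\beta^{\textup{inj}}(H)})$ and then $s\pibar_{q,d}(H)=\frac{1}{d}\min_\beta s_\beta^{\textup{inj}}(H)$.

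\textbf{Two issues in your version.} First, your map $\F_q[\FF]^d\twoheadrightarrow\sum_v f(v)$ and its kernel $N$ depend on $\alpha$, not just on the $\GLnFq$-orbit of $f$: two pairs $(\alpha,f)$ and $(\alpha',f)$ in the same orbit can satisfy different global linear relations, since the kernel sees all of $\FF$, not just the finite tree $\T_B(H)$. The paper avoids this by restricting to modules $N$ supported on $\T_B(H)^d$, which is exactly what the trajectory method of \cite{ernst2024word} tracks. Second, the computation of $L_{B,N,d}(q^n)=q^{n(d-\rk N)}(1+O(q^{-n}))$ does \emph{not} use the polymatroid machinery at all---that is reserved for the gap theorems. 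The counting here is pure Lewin/Cohn--Schreier combinatorics on $\F_q[\FF]$-modules, lifted from \cite[\S2--3]{ernst2024word}. Finiteness of the sum likewise comes for free (finitely many $\beta$, finitely many $N$ supported on a fixed finite forest), so Theorem~\ref{thm_liebeck_shalev_cameron_kantor} is not needed.
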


Let us replace $\F_q$ by 
an arbitrary field $K$.
The definition of $s\pibar_{q,d}$ naturally
extends to $s\pibar_{K,d}$.
For example, 
$s\pibar_{K,1}(H)$
    is defined as $\pibar_K(H)-1$, 
    where $\pibar_K(H)$ is the 
    $K$\textbf{-compressed rank}:

\begin{definition}
    \label{def_K_compressed_rank}
    $\pibar_K(H)
            \defeq\min\braces*{
        \rk(M)\,\,\middle|\,\, 
        M\le K[\textbf{F}],\,\,\,
        \dim_K K[H]/(K[H]\cap M)=1
        }$
    where $M$ runs over right ideals of 
    $K[\textbf{F}]$.
\end{definition}

We prove in 
Theorem~\ref{thm_easier_spiK_gap}
that $s\pibar_{K,d}(H)\ge 1$
for $H\le \textbf{F}$ with $1<\rk(H)<\infty$.
By \cite[Theorem 4:  The Schreier formula]{lewin1969free}, 
for every 
$M$ as in the 
definition~\eqref{eq_def_N_md}
we have 
\[\rk(M\cap K[H]^d) = 
d\cdot (\rk(H)-1),\]
where $M\cap K[H]^d$
is considered as a right
$K[H]$-module.
Hence, the bound 
$s\pibar_{K,d}(H)\ge 1$
can be reformulated as 
\begin{equation}
    \label{eq_q_HNC}
\rk(M\cap K[H]^d)/d    
- 1
\le \prn*{
    \rk(M)/d
     - 1}
 \cdot (\rk(H) - 1)
\end{equation}
which resembles the HNC
\eqref{eq:HNC}.

A submodule $M\le N$ is called 
\textbf{algebraic} 
(in \cite[Corollary 3]{ernst2024word}) 
or \textbf{dense} 
(in \cite{cohn1985book}) 
if $M$ is not contained in any 
proper free summand of $N$.
We propose the following conjecture
as a $K$-analog of the HNC:
\begin{conjecture}
    [K-HNC]
        \label{conj_q_analog_HNC}
    Let $d\in \N$, 
    $H\le \textbf{F}$ 
    non-trivial f.g.\
    subgroup and 
    $M\le K[\textbf{F}]^d$ an 
    algebraic f.g.\ submodule. Then
    \[
\rk(M\cap K[H]^d)/d-1
\le \prn*{
    \rk(M)/d-1}
 \cdot (\rk(H) - 1).\]
\end{conjecture}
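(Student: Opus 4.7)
The plan is to extend the $\Gamma$-polymatroid technique underlying Theorem~\ref{thm_easier_spiK_gap} from the gap regime (lower bound by $1$) to the full multiplicative inequality required by the conjecture. First I would pass from modules to graphs via the dictionary of \cite{jaikin2024free} and \cite{ernst2024word}: a f.g.\ submodule $M\le K[\mathbf{F}]^d$ corresponds to a finite $K$-decorated $B$-graph $\Delta$ together with a morphism $\eta\colon\Delta\to \Omega_B$, where $\rk(M)/d-1$ is encoded as a weighted Euler characteristic $-\chi_K(\Delta)$. The assumption that $M$ is algebraic (in the sense of \cite{cohn1985book}) should translate precisely to graph algebraicity of $\eta$, exactly as in Definition~\ref{def_stable_primitivity_rank} and as already used implicitly in Theorem~\ref{thm_spiK_gap}.

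Next, let $\Gamma$ be the Stallings core graph of $H$. The intersection $M\cap K[H]^d$ corresponds to the $K$-decorated pullback $\Gamma\times_{\Omega_B}\Delta$, and the conjecture then becomes an SHNC-type Euler characteristic bound
\[
-\chi_K(\Gamma\times_{\Omega_B}\Delta)\;\le\;(-\chi(\Gamma))\cdot(-\chi_K(\Delta)).
\]
For $K=\F_q$, I would verify this bound through Theorem~\ref{thm_spibarqd}: fix $n$ large and express both sides via the asymptotics of $\EHtoFVarActsVar{\GL_n(\F_q)}{\textbf{Gr}_\bullet(\F_q^n)}$, then apply the $\Gamma$-polymatroid theorem (Theorem~\ref{thm_Gamma_polymatroid_theorem}) to the $K$-linear polymatroid on $V(\Gamma)$ whose rank function records the fiberwise $K$-dimensions of $\Delta$. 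Lemma~\ref{lemma_stackable_subgroup} supplies a non-abelian stackable subgraph $\Sigma\le \Gamma$ that anchors the polymatroid induction, exactly as in the proof of Theorem~\ref{thm_easier_spiK_gap}, and the resulting inequality should be multiplicative in both $-\chi(\Gamma)$ and $-\chi_K(\Delta)$ rather than additive.

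The main obstacle is that the $\Gamma$-polymatroid theorem, as proved here, yields only the gap $s\overline{\pi}_{K,d}(H)\ge 1$; upgrading to a multiplicative factor $\rk(H)-1$ on the right-hand side is genuinely new input. One natural attack is to iterate the polymatroid bound along a sequence of Stallings folds realizing $H$ and to show that the accumulated error vanishes — the same sort of loss-free accumulation that Friedman and Mineyev achieved for the classical SHNC via sheaves and Hilbert modules respectively, as surveyed in \cite{dicks2012joel}. For $K=\F_q$, the probabilistic interpretation through $\EHtoFVarActsVar{\GL_n(\F_q)}{\textbf{Gr}_d(\F_q^n)}$ may allow bypassing $\ell^2$-methods entirely by letting $n\to\infty$ and extracting sharp constants from the polymatroid, but for a general field $K$ I would expect to need a von Neumann regular closure argument in the spirit of \cite{jaikin2024free}, applied fiberwise over the $\Gamma$-polymatroid produced by our method.
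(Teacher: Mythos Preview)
The statement you are attempting to prove is labeled a \emph{Conjecture} in the paper, not a theorem: the paper does not prove it and explicitly presents it as open. The paper only establishes special cases --- the gap $s\overline{\pi}_{K,d}(H)\ge 1$ (Theorem~\ref{thm_easier_spiK_gap}) and the rank-$1$ analogue of Wise's conjecture (the theorem immediately following Conjecture~\ref{conj_q_analog_HNC}) --- and notes that already the $d=1$ case with $M=I_{\mathbf F}(J)$ an augmentation ideal recovers the classical HNC, which required the Friedman--Mineyev machinery.

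Your proposal is therefore not a proof but a research outline, and you yourself identify the gap: ``upgrading to a multiplicative factor $\rk(H)-1$ on the right-hand side is genuinely new input.'' This is the entire content of the conjecture. The $\Gamma$-polymatroid theorem bounds $\chi(\hp)\le -\hp^b(\{e\})$ for a \emph{single} edge $e$; it does not see the factor $-\chi(\Gamma)=\rk(H)-1$, and nothing in the paper's toolkit produces that multiplicative sharpening. Your suggestion to ``iterate the polymatroid bound along a sequence of Stallings folds'' and hope ``the accumulated error vanishes'' is precisely the kind of loss-free accumulation that made the classical SHNC hard for fifty years; there is no reason to expect it to go through here without a genuinely new idea. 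The probabilistic route via $\GL_n(\F_q)$ likewise only recovers the gap (Theorem~\ref{thm_spibarqd}), not the full inequality.
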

The original Hanna Neumann Conjecture is a special case of
the $K$-HNC (Conjecture~\ref{conj_q_analog_HNC}), 
for any single field $K$:
For $J\le \textbf{F}$, 
denote its augmentation ideal by
$ I_{\textbf{F}}(J)\defeq 
\textup{span}_{K[\textbf{F}]} 
\{1-j\}_{j\in J}.$
It is known that
$I_{\textbf{F}}(J) \cap K[H] = I_H(J\cap H)$. 
By \cite[Proposition 3.1]{ernst2024word},  
$\rk(I_{\textbf{F}}(J)) = \rk(J)$. 
Every non-zero 
right ideal
of $K[\textbf{F}]$ is algebraic,
so when $m=1$ and $N = I_{\textbf{F}}(J)$, 
Conjecture~\ref{conj_q_analog_HNC} is equivalent to 
$\rk(J\cap H)-1\le 
(\rk(J)-1)\cdot (\rk(H)-1)$.
In particular,
since the original Hanna Neumann 
Conjecture is tight,
Conjecture~\ref{conj_q_analog_HNC} 
is tight as well.


In the rank-1 case, we prove the following $K$-analog of Wise's conjecture:
\begin{theorem}
    Let $m,d\in \N$, 
    let $w\in \textbf{F}$ be a non-power, and 
    let $M\le K[\textbf{F}]^m$ be a submodule.
    Suppose that $M\cap K[\inner{w}]^m$ 
    has co-dimension $d$ in $K[\inner{w}]^m$ over $K$.
    Then $\rk(M)\ge d$.
\end{theorem}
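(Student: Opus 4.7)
The natural approach is to deduce the statement directly from Theorem~\ref{thm_spiK_gap}, which the paper establishes as the $K$-analog of the Wilton--Louder / Helfer--Wise theorem (Theorem~\ref{thm_Wilton_Louder_Helfer_Wise}) for modules over free group algebras. In the graph-theoretic setting, the rank-$1$ Wise $w$-cycle conjecture follows from its \enquote{covered at least twice} strengthening, and the plan is to mirror this derivation algebraically.

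First I would unpack the hypothesis. Let $N\defeq M\cap K[\inner{w}]^m$. Since $K[\inner{w}]\cong K[t,t^{-1}]$ is a PID and $\dim_K K[\inner{w}]^m/N=d<\infty$, the structure theorem for modules over a PID forces $N$ to be free of full $K[\inner{w}]$-rank $m$, and gives a decomposition $K[\inner{w}]^m/N\cong\bigoplus_i K[\inner{w}]/(p_i)$ with $\sum_i\deg(p_i)=d$. This torsion quotient is the algebraic incarnation of a $d$-fold cover $\tilde{\Gamma}_w\to\Gamma_w$ of the Stallings core cycle of $\inner{w}$, and $\Gamma_w$ is a genuine simple cycle precisely because $w$ is non-power. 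The rank of $M$ as a right $K[\textbf{F}]$-module is well-defined by the Cohn--Lewin theorem invoked earlier in the paper.

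Second, I would apply Theorem~\ref{thm_spiK_gap} to the pair $(M,\tilde{\Gamma}_w)$. The non-power hypothesis is exactly what forces the algebraic analog of the condition \enquote{$p_\Delta$ covers every edge at least twice}: for non-power $w$, a nontrivial torsion summand of $K[\inner{w}]^m/N$ cannot collapse to a lower-degree cover with high multiplicity on a shorter cycle. Theorem~\ref{thm_spiK_gap} then yields $\rk(M)\ge d$.

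The main obstacle is precisely this translation of the abstract codimension-$d$ hypothesis into the combinatorial input consumed by Theorem~\ref{thm_spiK_gap}; this is where non-powerness of $w$ pays off, paralleling exactly why the topological \enquote{every edge covered twice} assumption is needed to upgrade Wise's inequality to the Wilton--Louder / Helfer--Wise theorem. A secondary issue is algebraicity of $M$: if $M\subseteq F$ for some proper free summand $F\le K[\textbf{F}]^m$, one first passes to the smallest such $F$ (which reduces $m$ while keeping $N\subseteq F\cap K[\inner{w}]^m$ and hence keeping $d$ unchanged), after which the main argument applies inside $F$.
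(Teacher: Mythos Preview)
There is a genuine gap, and in fact the statement as written is false. Take $m=1$ and $M=(w^{d}-1)K[\textbf{F}]$ for any $d\ge 2$: using the decomposition $K[\textbf{F}]=\bigoplus_{t}K[\inner{w}]\,t$ over a right transversal and the fact that $K[\inner{w}]$ is a domain, one checks $M\cap K[\inner{w}]=(w^{d}-1)K[\inner{w}]$, which has $K$-codimension $d$; yet $\rk(M)=1$ since $K[\textbf{F}]$ is a domain. This is precisely the $K$-analog of taking $\Delta$ to be the core graph of $\inner{w^{d}}$: the pullback $\Gamma_w\times_{\Omega_B}\Delta$ is a connected $d$-cover of $\Gamma_w$ mapping isomorphically onto $\Delta$, so every edge of $\Delta$ is covered exactly once and the \enquote{at least twice} hypothesis of Theorem~\ref{thm_Wilton_Louder_Helfer_Wise} fails. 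Your assertion that non-powerness of $w$ alone forces the algebraic analog of \enquote{$p_\Delta$ covers every edge at least twice} is therefore incorrect.

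The step where your outline breaks is the appeal to Theorem~\ref{thm_spiK_gap}. The bound $s\pi_K(\inner{w})\ge 1$ constrains only those $N$ that are \emph{algebraic and non-split} over an $\inner{w}$-module $M_0$ (Definition~\ref{def_stable_K_primitivity_rank}); in the counterexample one has $M_0:=(M\cap K[\inner{w}])\otimes_{K[\inner{w}]}K[\textbf{F}]=M$, so $M$ is trivially split over $M_0$ and Theorem~\ref{thm_spiK_gap} says nothing. Your algebraicity reduction is also misdirected: you shrink the ambient $K[\textbf{F}]^m$ to a free summand containing $M$, whereas the relevant condition is that $M_0$ lie in no proper free summand of $M$. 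The paper's actual rank-$1$ argument (Section~\ref{section_analysis_of_spi_K}: the lemma producing the elements $f_{v,i}$ from the algebraic and non-split hypotheses, feeding condition (ii) of Theorem~\ref{thm_easier_spiK_gap}, together with the reductions $\xi'_{m,d},\xi''_{m,d}$) uses those hypotheses essentially and yields the stronger conclusion $\rk(N)-m\ge d$; the unadorned statement you were given appears to be missing them.
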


\subsection{The stable $K$-primitivity rank}

Ernst-West, Puder and Seidel 
\cite{ernst2024word}
defined a $K$-analog $\pi_K$ 
of the primitivity 
rank $\pi$
(\cite[Definition 1.5]{ernst2024word}).
They also defined a $K$-analog 
$s\pi_K$ 
\cite[Appendix]{puder2023stable} 
to 
Wilton's stable primitivity rank 
$s\pi$.
We extend the definition of $s\pi_K$
given in \cite[Appendix]{puder2023stable} 
to every finitely generated subgroup 
$H\le \textbf{F}$.

\begin{definition}
    Let $H\le \mathbf{F}$ be a f.g.\ subgroup. 
    An $H$-module is a submodule $M$ of $K[\mathbf{F}]^m$ (for some $m\in \N$), 
    with a basis contained in $K[H]^m$. Equivalently, 
    $M = (M\cap K[H]^m) \otimes_{K[H]} K[\mathbf{F}]$.
    The \textbf{degree} of $M$ is defined as the co-dimension of $M\cap K[H]^m$ in $K[H]^m$ over $K$.
\end{definition}



\begin{definition}
\label{def_split_efficient}
    Let $M\le K[\mathbf{F}]^m$ be an $H$-module of finite degree. 
    An intermediate module $M\le N\le K[\mathbf{F}]^m$ is called 
    \begin{itemize}
        \item \textbf{split \wrt }$M$, if there exist decompositions $M = M' \oplus M'', N = M' \oplus N''$ such that $M', M''$ are $H$-modules, $M'\neq 0$, $M''\subseteq N''$. Otherwise, it is called \textbf{non-split}.
        \item \textbf{non-efficient \wrt }$M$, if there exists an intermediate $H$-module $M'$ between $M\lneqq M' \le N$. Otherwise, it is called \textbf{efficient}.
    \end{itemize}
\end{definition}

The following definition is a 
straight-forward generalization of 
\cite[Definition A.2]{puder2023stable}:

\begin{definition}
\label{def_stable_K_primitivity_rank}
Let $H\le \mathbf{F}$ be a 
f.g.\ subgroup. Its stable $K$-primitivity rank is
\[ s\pi_K(H)\defeq \inf\braces*{
    \frac{\rk(N)-m}
    {\deg(M)} 
    \middle| \begin{array}{cc}
    m\in \Z_{\ge 1}, M\le K[\mathbf{F}]^m 
    \textup{ is an }H
    \textup{-module of finite degree, }\\ 
    N\le K[\mathbf{F}]^m 
    \textup{ is algebraic over }M, 
    \textup{ efficient and non-split.}
\end{array}}. \]
\end{definition}

Clearly, if $H=\inner{w}$ is cyclic and $w=u^k \,\,\,(u\in\textbf{F},\,\,k\ge2)$ is a power, then 
$s\pi_K(H)=0$.
\begin{theorem}
    \label{thm_spiK_gap}
    In every other case, $s\pi_K(H)\ge 1.$
\end{theorem}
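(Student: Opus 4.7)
The plan is to deduce Theorem~\ref{thm_spiK_gap} as a specialization of the $\Gamma$-polymatroid theorem (Theorem~\ref{thm_Gamma_polymatroid_theorem}) to the setting of free group algebra modules, with Lemma~\ref{lemma_stackable_subgroup} supplying the geometric input needed for the cyclic case. The overall strategy is to encode the triple $(H, M, N)$ as polymatroid data on the Stallings core graph $\Gamma$ of $H$, and to show that the three hypotheses "algebraic, efficient, non-split" in Definition~\ref{def_stable_K_primitivity_rank} translate precisely into the polymatroid hypotheses of Theorem~\ref{thm_Gamma_polymatroid_theorem}.

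Concretely, given an $H$-module $M \le K[\textbf{F}]^m$ of finite degree $d$ and an extension $N$ as in the definition, I consider the right $K[\textbf{F}]$-module quotient $Q \defeq K[\textbf{F}]^m / N$. Restricting along $K[H] \hookrightarrow K[\textbf{F}]$ and using that $M$ is an $H$-module of degree $d$ produces a finite-dimensional $K$-subspace of $Q$ coming from $K[H]^m/(K[H]^m \cap M)$. Tracking the $K$-dimensions of fibers along paths in $\Gamma$ and the $K$-linear transition maps induced by the letters $b \in B$ yields a polymatroid structure on the edge set of $\Gamma$ whose total rank equals $\rk(N)$ (in the Cohn-Lewin sense) and whose "deficiency" encodes $d$. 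Here the Cohn-Lewin free module theorem is needed to make sense of $\rk(N)$ over an arbitrary field $K$.

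Next, I would match up the hypotheses. "Algebraic over $M$" (not contained in a proper free summand of $K[\textbf{F}]^m$ containing $M$) translates to the polymatroid being fully supported on $\Gamma$; "efficient" (no intermediate $H$-module between $M$ and $N$) forces minimality of the polymatroid on its support; and "non-split" rules out decomposing the polymatroid into independent summands over a partition of the edges of $\Gamma$. These are exactly the conditions under which the $\Gamma$-polymatroid theorem delivers the bound $\rk(N) - m \ge d$, i.e. $s\pi_K(H) \ge 1$.

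The main obstacle is the rank-one case $H = \inner{w}$ with $w$ a non-power, where $\chi(\Gamma) = 0$ and the polymatroid bound cannot come directly from an Euler-characteristic argument. Here Lemma~\ref{lemma_stackable_subgroup} plays a crucial role: the efficient and non-split conditions on $N$ combine to force the $K$-analogue of the "every edge covered at least twice" property from Theorem~\ref{thm_Wilton_Louder_Helfer_Wise}, and the stackability of the associated immersion $\Sigma \to \Omega_B$ lets one pair up contributions of edges in the polymatroid and extract the bound $\rk(N) - m \ge d$, extending Louder-Wilton and Helfer-Wise to the module setting. Verifying that the three module-theoretic hypotheses are exactly what the polymatroid theorem requires, and that the stacking-based pairing argument survives the passage from subgroups to arbitrary finitely generated modules over $K[\textbf{F}]$, is where the technical bulk of the proof will lie.
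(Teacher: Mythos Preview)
Your high-level plan---build a $\Gamma$-polymatroid from $N$ and invoke Theorem~\ref{thm_Gamma_polymatroid_theorem}---is the paper's approach, but your matching of the module-theoretic hypotheses to polymatroid conditions is off in several places, and two substantial pieces of the argument are missing.

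First, the translation of conditions. In the paper's construction (proof of Theorem~\ref{thm_easier_spiK_gap}), the polymatroid is $\hp(U)=\dim_K\textup{span}_K\{e_iv+N:i\le d,\,v\in U\}$ on $V(\Gamma)$. \emph{Efficiency} is what makes each $\hp(\{v\})=d$ (via Corollary~\ref{corollary_normal_form_for_bases}), not ``minimality on support''. \emph{Algebraicity} and \emph{non-splitness} are not used separately to constrain the polymatroid; for $\rk(H)>1$ they play no role at all in the bound, and for $H=\inner{w}$ they are used \emph{together} to produce, for each $(v,i)\in\T_w\times[d]$, an element $f_{v,i}\in N\setminus M$ with $e_iv\in\textup{supp}(f_{v,i})\subseteq\mathcal{E}_d\cdot\T_w$. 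This is condition (ii) of Theorem~\ref{thm_easier_spiK_gap}, and it is precisely what translates to \emph{compactness} of the polymatroid. Also, Lemma~\ref{lemma_stackable_subgroup} is used inside the proof of Theorem~\ref{thm_Gamma_polymatroid_theorem} for the $\rk(H)>1$ case, not for the cyclic case; for $H=\inner{w}$ the stackability of $\Gamma_w$ itself comes directly from \cite[Lemma~16]{louder2014stacking}.

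Second, two missing steps. (a) The polymatroid construction above lives in $K[\textbf{F}]^d$, not $K[\textbf{F}]^m$; the paper spends several propositions (the maps $\xi'_{m,d},\xi''_{m,d}$ in Section~\ref{section_analysis_of_spi_K}) reducing from arbitrary $m$ to $m=d$ while preserving $\rk(N)-m$. Your proposal does not address this. (b) The real technical heart of the cyclic case is the lemma in Section~\ref{section_analysis_of_spi_K} that derives the elements $f_{v,i}$ from ``algebraic and non-split'': this uses the exploration machinery of \cite{ernst2024word} to analyze the last exposed vertex along $\mathcal{E}_d\cdot[v,wv]_w$ and argue that the corresponding step can be neither free (by algebraicity) nor a coincidence (by non-splitness, since otherwise $\nu_\beta(w,d)\cdot v$ would extend to a basis of $N$), hence is forced. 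Your proposal gestures at ``efficient and non-split combine to force the $K$-analogue of every edge covered at least twice'' but names the wrong pair of conditions and does not identify this exploration-and-forced-step argument, which is where the work actually lies.
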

Besides confirming a conjecture of Ernst-West, Puder and Seidel from \cite[Appendix]{puder2023stable},
this theorem is interesting because of its proof, which retroactively explains the meaning of each of the 
constraints in the definition of $s\pi_K$, and thus hints this is the \enquote{correct} analog of $s\pi$.

\subsection{Invariants and word measures}
\FloatBarrier








In Figure~\ref{fig_invariants_cube},
we examine again the 
cube of invariants 
(of words and subgroups in $\textbf{F}$)
from Figure~\ref{fig_cube_of_invariants_results}:
every face of the cube has a role
in the computation of word measures 
of characters in finite groups.
Let $G$ be a finite group, 
$\chi\colon G\to\C$ a character,
and $w\in\textbf{F}$ a word.
The $w$\textbf{-expectation} of 
$\chi$ is defined as 
\begin{equation}
    \label{eq_def_word_measure}
    \E_w[\chi]\defeq 
\E_{\alphaSimUHomFVar{G}}[\chi(\alpha(w))]. 
\end{equation}
In the case where $\chi$ is a 
permutation character, 
this definition~\eqref{eq_def_word_measure}
is a special case of 
Definition~\ref{def_EHtoFVarActsVar}.
In the beginning of the 
introduction of \cite{puder2023stable} 
(and more formally in (2.1) ibid.), 
Puder and the author defined, 
for a word $w\in \textbf{F}$ and a sequence 
$\chi=(\chi_n)_{n=1}^\infty$ of 
characters of finite groups 
$(G_n)_{n=1}^\infty$, the decay rate
\[ \beta(w,\chi)\defeq 
\liminf_{n\to\infty} \frac{-\log |\E_w[\chi_n]|}{\log(\dim\chi_n)}
\quad\quad
(\textup{so that }
\E_w[\chi_n]=O\prn*{
    \dim(\chi_n)^{-\beta(w,\chi)}
}
)
.\]
The deep connections between invariants of words and 
$\beta(w,\chi)$, for various families 
of groups and stable\footnote{
    A stable character is a sequence of characters 
    that \enquote{eventually stabilizes}; See \cite{puder2023stable} for the exact meaning. 
    } characters,
are presented in 
\cite{puder2023stable}; 
here we only give a very brief overview.

\newcommand{\faceswatch}[1]{\tikz[baseline=-0.6ex]\fill[#1] (0,0) rectangle (0.18,0.18);}

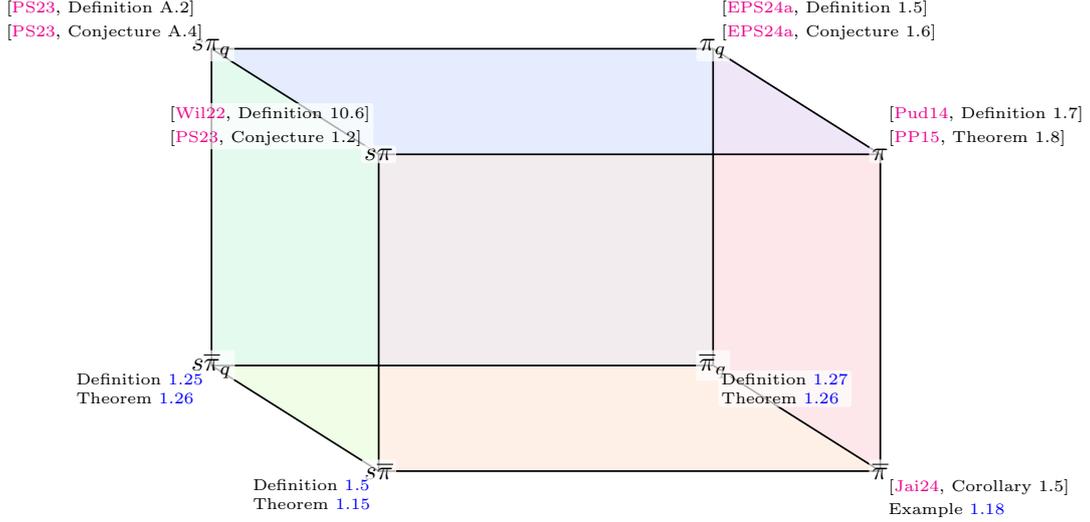
\begin{figure}[t]
\centering
\begin{tikzpicture}[
  line join=round, line cap=round,
  edge/.style={draw=black, line width=0.65pt},
  backedge/.style={draw=black, line width=0.65pt, opacity=0.35}, 
  sym/.style={
    font=\small,
    inner sep=1.2pt,
    rounded corners=1pt,
    fill=white,
    fill opacity=0.70,
    text opacity=1
  },
  ref/.style={
    font=\tiny,
    align=left,
    inner sep=1.2pt,
    rounded corners=1pt,
    fill=white,
    fill opacity=0.70,
    text opacity=1
  }
]

\def\W{6.6}    
\def\H{4.2}    
\def\dx{-2.2}  
\def\dy{1.4}   
\def\op{0.22}  

\coordinate (A) at (0,0);        
\coordinate (B) at (\W,0);       
\coordinate (C) at (\W,\H);      
\coordinate (D) at (0,\H);       

\coordinate (E) at ($(A)+(\dx,\dy)$); 
\coordinate (F) at ($(B)+(\dx,\dy)$); 
\coordinate (G) at ($(C)+(\dx,\dy)$); 
\coordinate (H) at ($(D)+(\dx,\dy)$); 

\path[fill=cyan!25,    fill opacity=\op, draw=none] (E)--(F)--(G)--(H)--cycle;
\path[fill=magenta!25, fill opacity=\op, draw=none] (B)--(F)--(G)--(C)--cycle;
\path[fill=yellow!25,  fill opacity=\op, draw=none] (A)--(B)--(F)--(E)--cycle;

\path[fill=green!25,   fill opacity=\op, draw=none] (A)--(D)--(H)--(E)--cycle;
\path[fill=blue!25,    fill opacity=\op, draw=none] (D)--(C)--(G)--(H)--cycle;
\path[fill=red!25,     fill opacity=\op, draw=none] (A)--(B)--(C)--(D)--cycle;

\draw[edge]     (A)--(B)--(C)--(D)--cycle;      
\draw[edge] (E)--(F)--(G)--(H)--cycle;      
\draw[edge]     (A)--(E) (B)--(F) (C)--(G) (D)--(H);


\node[sym] at (D) {$s\pi$};
\node[sym] at (C) {$\pi$};
\node[sym] at (A) {$s\pibar$};
\node[sym] at (B) {$\pibar$};

\node[sym] at (H) {$s\pi_q$};
\node[sym] at (G) {$\pi_q$};
\node[sym] at (E) {$s\pibar_q$};
\node[sym] at (F) {$\pibar_q$};

\node[ref, anchor=south east] at ($(D)+(-0.06,0.06)$)
  {\shortstack[l]{\cite[Definition~10.6]{wilton2022rational}\\
  \cite[Conjecture~1.2]{puder2023stable}}};

\node[ref, anchor=south west] at ($(C)+(0.06,0.06)$)
  {\shortstack[l]{\cite[Definition~1.7]{Puder_2014}\\
  \cite[Theorem~1.8]{PP15}}};

\node[ref, anchor=north east] at ($(A)+(-0.06,-0.06)$)
  {\shortstack[l]{Definition~\ref{def_spibard}\\Theorem~\ref{thm_Sn_spibard}}};

\node[ref, anchor=north west] at ($(B)+(0.06,-0.06)$)
  {\shortstack[l]{\cite[Corollary~1.5]{jaikin2024free}\\
  Example~\ref{example_Sn_pibar}}};

\node[ref, anchor=south east] at ($(H)+(-0.06,0.06)$)
  {\shortstack[l]{\cite[Definition~A.2]{puder2023stable}\\
  \cite[Conjecture~A.4]{puder2023stable}}};

\node[ref, anchor=south west] at ($(G)+(0.06,0.06)$)
  {\shortstack[l]{\cite[Definition~1.5]{ernst2024word}\\
  \cite[Conjecture~1.6]{ernst2024word}}};

\node[ref, anchor=north east] at ($(E)+(-0.06,-0.06)$)
  {\shortstack[l]{Definition~\ref{def_spibarqd}\\
  Theorem~\ref{thm_spibarqd}}};

\node[ref, anchor=north west] at ($(F)+(0.06,-0.06)$)
  {\shortstack[l]{Definition~\ref{def_K_compressed_rank}\\
  Theorem~\ref{thm_spibarqd}}};

\end{tikzpicture}

\caption{A cube of invariants of words and subgroups in $\textbf{F}$.}
\label{fig_invariants_cube}
\end{figure}

In Figure~\ref{fig_invariants_cube}, 
each invariant is assoiciated with 
citations of its
original definition and the theorem or 
conjecture that links it to word measures
of stable characters in finite groups ($S_n$ or $\GLnFq$).
Assuming the validity of the 
conjectures from 
Figure~\ref{fig_invariants_cube},
\begin{itemize}
\item [{\faceswatch{red!25}}]
The invariants 
$\overline{\pi}, \pi, s\overline{\pi}, s\pi$ 
(on the front face) are defined using 
$B$-graphs, and equal $\inf_{\chi\in \mathcal{I}} \beta(\cdot, \chi)$
for some characters $\mathcal{I}$ of $S_n$.
\item [{\faceswatch{cyan!25}}]
The invariants 
$\overline{\pi}_q, \pi_q, s\overline{\pi}_q, s\pi_q$ 
(on the back face)
are defined using $\F_q[\textbf{F}]$-modules, 
and equal $\inf_{\chi\in \mathcal{I}} \beta(\cdot, \chi)$
for some characters $\mathcal{I}$ of $\GLnFq$.
\item [{\faceswatch{blue!25}}] 
The invariants $\pi, \pi_q, s\pi, s\pi_q$ 
(on the top face)
equal $\inf_{\chi\in \mathcal{I}} \beta(\cdot, \chi)$
for stable sequences $\mathcal{I}$ of 
\textbf{irreducible} characters
(which are defined only on words).
\item [{\faceswatch{yellow!25}}] The invariants 
$\overline{\pi}, \overline{\pi}_q, s\overline{\pi}, s\overline{\pi}_q$ 
(on the bottom face)
equal $\inf_{\chi\in \mathcal{I}} \beta(\cdot, \chi)$
for stable sequences $\mathcal{I}$ of \textbf{permutation} characters
(which are defined also for subgroups, by counting common fixed points; 
recall Definition~\ref{def_EHtoFVarActsVar}). 
\item [{\faceswatch{magenta!25}}]
The invariants 
$\pi, \overline{\pi}, \pi_q, \overline{\pi}_q$ 
(on the right face)
equal $\beta(\cdot, \chi)$
for specific, low dimensional characters $\chi$.
\item [{\faceswatch{green!25}}] The invariants 
$s\pi, s\pibar, s\pi_q, s\overline{\pi}_q$ 
(on the left face)
equal $\inf_{\chi\in \mathcal{I}} \beta(\cdot, \chi)$ where 
$\mathcal{I}$ is the set of \textbf{all} non-trivial stable characters (of the corresponding sequence of groups).
In this paper the focus is on these 
stable invariants.
\end{itemize}

Another feature of the cube of invariants
is that each edge represents an 
inequality, that holds \enquote{pointwise}
for every word and subgroup:
\begin{itemize}
    \item A left-right edge between
    an invariant $\pi_\star\in
    \{\pibar,\pi,\pibar_q,\pi_q\}$
    and its stable version $s\pi_\star$ 
    corresponds to the inequality 
    $s\pi_\star(H)\le \pi_\star(H)-1$.
    It was conjectured in
    \cite{wilton2021stable} and
    \cite[Conjecture 4.7]{puder2023stable}
    that $s\pi(w)=\pi(w)-1$ for every
    $w\in \textbf{F}$.
    We conjecture that in fact, all of the 
    left-right edges are equalities,
    also for non-abelian subgroups $H$
    (Conjectures~\ref{conj_pi_stable},\ref{conj_pibar_stable}).
    \item An up-down edge between 
    a primitivity invariant $\pi_\star\in
    \{\pi,\pi_q,s\pi,s\pi_q\}$
    and its compressed version $\pibar_\star$
    corresponds to $\pibar_\star(H)\le \pi_\star(H)$.
    \item The only non-trivial inequalities 
    are those corresponding to edges 
    between an invariant $\pi^\star\in
    \{\pibar,\pi,s\pibar,s\pi\}$ and 
    its $K$-analogue $\pi^\star_K$: 
    see \cite[Proposition 1.8]{ernst2024word}
    for the inequality $\pi_q(w)\le \pi(w)$.
    It was conjectured in 
    \cite[Conj. 1.9]{ernst2024word} that
    $\pi_q(w)=\pi(w)$ for every $w$,
    and was conjectured in 
    \cite[Appendix]{puder2023stable}
    that $s\pi_q(w)=s\pi(w)$.
    As before, we conjecture
    that all of these edges are,
    in fact, equalities, also for 
    non-abelian subgroups
    (Conjecture~\ref{conj_piK_stable}).
\end{itemize}


Thanks to Theorem~\ref{thm_equations_in_group_actions},
and despite Theorem~\ref{thm_liebeck_shalev_cameron_kantor},
we believe that many more sequences of group actions 
(and possibly even all finite group actions)
give rise to stable invariants of words and subgroups in $\textbf{F}$
of similar nature to the invariants discussed above, 
that correspond to new \enquote{Hanna Neumann type} conjectures. 
 For example, let $T_n$ be the rooted binary tree with $2^n$ leaves. 
 Its automorphism group is the iterated wreath product $G_n\defeq \Z/2\wr Z/2\wr\ldots \wr Z/2$ ($n$ times). 
Reiter's theorem for the action of $G_n$ on the set $X_n$ of leaves of $T_n$ gives 
$\EHtoFVarActsVar{G_n}{X_n}=\frac{\textup{poly(n)}}{2^{n/\two}}$,
and the $\two$ is redundant by Theorem~\ref{thm_equations_in_group_actions}.
What is the corresponding invariant, and \enquote{binary tree version} of the HNC?

\subsection{Overview of the paper}

In Section~\ref{section_Gamma_poly_Thm_and_Apps}, we formulate 
our main techinal result (Theorem~\ref{thm_Gamma_polymatroid_theorem}),
and show how it implies Theorem~\ref{thm_equations_in_group_actions},
gives a new, unified proof (Theorem~\ref{thm_spi_gap_and_slightly_more}),
and implies Theorem~\ref{thm_easier_spiK_gap}:
an easier version of Theorem~\ref{thm_spiK_gap}.
In Section~\ref{section_proof_of_Gamma_poly_thm}, we prove Theorem~\ref{thm_Gamma_polymatroid_theorem}.
In Section~\ref{section_analysis_of_spi_K}, we complete the proof of Theorem~\ref{thm_spiK_gap}.
In Section~\ref{section_counting_fixed_sub_spaces} we prove our fixed point theorems, 
Theorems~\ref{thm_Sn_spibard} and \ref{thm_spibarqd}.

\section{The $\Gamma$-Polymatroid Theorem and its Applications}
\label{section_Gamma_poly_Thm_and_Apps}

In this section we formulate the $\Gamma$-Polymatroid Theorem 
(Theorem~\ref{thm_Gamma_polymatroid_theorem}),
and show how it implies Theorem~\ref{thm_equations_in_group_actions}
about equations of group actions,
gives a unified proof (Theorem~\ref{thm_spi_gap_and_slightly_more})
for the gap in $\Img(s\pi)$, 
and implies Theorem~\ref{thm_easier_spiK_gap}:
an easier version of Theorem~\ref{thm_spiK_gap} about the gap in $\Img(s\pi_K)$,
which we will upgrade to the full Theorem~\ref{thm_spiK_gap} in 
Section~\ref{section_analysis_of_spi_K}.

Polymatroids were defined in \cite{edmonds1970submodular};
see also \cite{schrijver2003combinatorial}
for a comprehensive reference.

\begin{definition}
    A \textbf{polymatroid} 
    on a set $V$ is a function
    $\hp\colon 2^{V}\to \R$ satisyfing 
    $\hp(\emptyset)=0$, which is increasing 
    (if $A\subseteq B$ then $\hp(A)\le \hp(B)$) 
    and submodular:
    $\hp(A)
    +\hp(B)
    \ge \hp(A\cup B)
    +\hp(A\cap B)$. 
\end{definition}

The following definition of morphism of polymatroids 
is not entirely standard. 
It is described for matroids
in \cite{heunen2018category},
in \cite[Definition 1.1]{brandenburg2024quotients}, 
in \cite[Definition 1.1]{eur2020logarithmic}, and 
in \cite{FrankTardos1988GeneralizedPolymatroids}
under the name strong maps,\footnote{
    As opposed to \textbf{weak maps}, which are functions 
    $\phi\colon V_1\to V_2 $ satisfying
    $\hp_2(\phi(A))
    \le \hp_1(A)$ for every $A\subseteq V_1$ 
    \cite[Defintion 3.1]{Lucas1975WeakMaps}.
} which originates in \cite{crapo1967structure} and \cite{Higgs1968StrongMaps} 
(with very different formulations, however):
\begin{definition}
Let $\hp_1, \hp_2$ be 
polymatroids on sets $V_1, V_2$ respectively, that is, 
$\hp_i\colon 2^{V_i}\to \R$.
A \textbf{morphism} 
$\phi\colon \hp_1\to \hp_2$ 
is a function $\phi\colon V_1\to V_2 $ 
such that for every $U\subseteq U'\subseteq V_1$,
\[ \hp_1(U')
-\hp_1(U)
\ge \hp_2(\phi(U'))
-\hp_1(\phi(U)).
 \]
If $\hp_2(\phi(U))=\hp_1(U)$
for every $U\subseteq V_1$, 
we say that $\phi$ is \textbf{lossless}.
\end{definition}

Recall from Definition~\ref{def_B_graph}
that a $B$-graph $\Gamma$ consists of a set $V(\Gamma)$ 
of vertices, 
and for every $b\in B$, a set $E_b(\Gamma)$ of 
$b$-labeled edges and injections 
(source and target)
$\src, \tar\colon E_b(\Gamma)\to V(\Gamma)$.

\begin{definition}
    A $\Gamma$\textbf{-polymatroid} $\hp$
    is a collection of polymatroids 
    $\hp^V$ on $V(\Gamma)$ and 
    $\hp^b$ on $E_b(\Gamma)$ for every 
    $b\in B$, such that the injections 
    $\src,\tar\colon E_b(\Gamma)\rightrightarrows V(\Gamma)$ 
    are morphisms. 
    If $\src,\tar$ are lossless for every $b\in B$,
    then $\hp$ is called lossless.
\end{definition}

Note that in a lossless $\Gamma$-polymatroid,
$\{\hp^b\}_{b\in B}$ are determined by $\hp^V$, 
so an equivalent definition for a lossless $\Gamma$-polymatroid
is a polymatroid $\hp^V$ on $V(\Gamma)$ which is $B$-invariant,
that is, for every $b\in B$ and $U\subseteq E_b(\Gamma)$,
$\hp^V(\src(U))=\hp^V(\tar(U))$.
The theory of lossless $\Gamma$-polymatroids is simpler
than the general one, and suffices for all the 
applications presented in the introduction,
so the reader may keep this special case in mind;
however, the general theory does give some strengthenings
(e.g.\ Theorem~\ref{thm_spi_gap_and_slightly_more}).

\begin{definition}
    Let $\hp$ be a $\Gamma$-polymatroid. 
    We define its Euler characteristic via
    \[ \chi(\hp)\defeq \hp^V(V(\Gamma))-\sum_{b\in B} \hp^b(E_b(\Gamma)). \]
\end{definition}

\begin{definition}
    [{\cite[Lemma 3.3]{jowett2016connectivity}}]
    A polymatroid $\hp$ on a set $V$ is called \textbf{compact}
    if for every $v\in V$ we have 
    $\hp(V\setminus \{v\})
    =\hp(V)$
    (that is, $\hp$ has no co-loops).
\end{definition}

We say that a $\Gamma$-polymatroid $\hp$ is compact if 
$\hp^V$ and $\hp^b \,\,\, (\forall b\in B)$ are compact.
We are now ready to state the $\Gamma$-polymatroid theorem:

\begin{theorem}
    \label{thm_Gamma_polymatroid_theorem}
    Let $\Gamma$ be a connected $B$-graph
    with fundamental group $H\le \FF$,
    and $\hp$ be a $\Gamma$-polymatroid.
    Assume that either
    \begin{itemize}
        \item $\rk(H)>1$, or
        \item $H=\inner{w}$ is generated by a non-power $w\in \FF$, and $\hp$ is compact.
    \end{itemize}
    Then there is some $b\in B$ and $e\in E_b(\Gamma)$ such that $\chi(\hp)\le -\hp^b(\{e\})$.
\end{theorem}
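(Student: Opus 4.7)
The strategy is to translate the stacking arguments of Louder--Wilton and Helfer--Wise into polymatroid language by combining the morphism property of $(\src,\tar)$ with submodularity. Since the conclusion
\[
\hp^V\!\bigl(V(\Gamma)\bigr)+\hp^{b^{*}}(\{e^{*}\})\;\le\;\sum_{b\in B}\hp^{b}\!\bigl(E_{b}(\Gamma)\bigr)
\]
is just a rearrangement of $\chi(\hp)\le-\hp^{b^{*}}(\{e^{*}\})$, what we really need is to find a distinguished edge $e^{*}$ whose contribution to the edge side is ``excess'' relative to what the vertex polymatroid can absorb, and then to account for all remaining edge contributions against the vertex contribution using the morphism inequalities.

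First I would produce a stacking. In the word case $H=\inner{w}$ with $w$ a non-power, $\Gamma$ is the labelled $|w|$-cycle and admits a stacking $\hat{\eta}\colon\Gamma\hookrightarrow\Omega_B\times\R$ by Theorem~\ref{thm_Wilton_Louder_Helfer_Wise}. In the non-abelian case $\rk(H)>1$, Lemma~\ref{lemma_stackable_subgroup} produces a connected $\Sigma$ with $\chi(\Sigma)<0$ and an immersion $\nu\colon\Sigma\immerse\Gamma$ whose composition $\eta\circ\nu$ with $\eta\colon\Gamma\to\Omega_B$ is stackable. Either way, after choosing a letter $b^{*}$ and picking the topmost $b^{*}$-edge $e^{*}$ in the stacking, we obtain a distinguished edge of $\Gamma$ together with a total ordering on the vertices and edges lying beneath $e^{*}$ in the $\R$-direction.

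The heart of the proof will be a telescoping inequality. Order the vertices $v_{1}<\cdots<v_{n}$ of $\Gamma$ by their height in the stacking (dealing with vertices outside the stacked subgraph afterwards), and write
\[
\hp^V\!\bigl(V(\Gamma)\bigr)=\sum_{i=1}^{n}\bigl(\hp^V(V_{\le i})-\hp^V(V_{<i})\bigr).
\]
For each $i$ I would select a ``witness'' edge $e_{i}$ incident to $v_{i}$ but lying below $v_{i}$ in the stacking (so in particular $e_{i}\neq e^{*}$), and apply the morphism inequality for either $\src$ or $\tar$ to charge the increment $\hp^V(V_{\le i})-\hp^V(V_{<i})$ to a marginal $\hp^{b_{i}}(U_{i}\cup\{e_{i}\})-\hp^{b_{i}}(U_{i})$, where $U_{i}$ is the set of previously used $b_{i}$-edges. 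The stacking combinatorics will guarantee that such an $e_{i}$ exists for every $i$ (this is where non-abelianness or compactness enters) and that the assignment $v_{i}\mapsto e_{i}$ is injective on each $E_{b}$, so the partial sums telescope into $\sum_{b}\hp^{b}(E_b)-\hp^{b^{*}}(\{e^{*}\})$ via submodularity. Since $e^{*}$ is never used as a witness, its full contribution remains on the edge side as slack.

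I expect the main obstacle to be engineering the witness assignment cleanly enough that submodularity of $\hp^{b}$ on the partial sums beats the required vertex increments. When two vertices must be witnessed by endpoints of the same edge, or when a vertex has only edges above it in the stacking (which is the failure mode that the compactness/non-abelian hypothesis rules out), the greedy pairing can break and one is forced into a more careful inductive setup. A secondary difficulty in the case $\rk(H)>1$ is the transfer from the stacked immersed subgraph $\Sigma$ to the ambient $\Gamma$: the vertices and edges of $\Gamma\setminus\nu(\Sigma)$ contribute to $\chi(\hp)$, and one has to verify, using increasingness of $\hp^{b}$ and the morphism inequalities, that they can only help the bound, so that running the telescope inside $\Sigma$ suffices. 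Finally, the compactness hypothesis will enter precisely because $\chi(\Gamma)=0$ when $\Gamma$ is a cycle: local recoverability at the polymatroid level (i.e.\ compactness) is what plays the role, in the argument, that non-powerness of $w$ plays in Louder--Wilton, namely that it makes the topmost edge genuinely removable.
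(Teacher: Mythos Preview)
Your overall scaffold---produce a stacking, then run a telescope of polymatroid marginals along it---is the paper's, but the combinatorics you sketch do not work as stated, and the two hardest steps are missing.

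First, the distinguished edge must be the $\sigma$-\emph{minimal} one, not the topmost. The only edge $e$ whose marginal in the telescope equals the full singleton value $\hp^{b}(\{e\})$ is the one that comes first in the ordering; for a topmost edge the marginal can be zero, so ``its full contribution remains as slack'' is simply false. Correspondingly, compactness is used on the \emph{vertex} side: it forces the $\sigma$-\emph{maximal} vertex to have $\delta^V_v=0$, which is what kills the leftover vertex term in the cycle case. Your witness assignment ``$e_i$ below $v_i$'' also need not exist---a stacking order is not a BFS order, so $v_i$ need not have any neighbour among $v_1,\dots,v_{i-1}$. The paper instead fixes a spanning tree $T$ and uses the bijection $T\to V\setminus\{v_0\}$ sending each tree edge to its endpoint farther from the min-$\delta^V$ vertex $v_0$ (Lemma~\ref{lemma_tree_bounds}).

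Second, once the distinguished edge must be $\sigma$-minimal, you need it to be a \emph{non-bridge}, so that some spanning tree avoids it. This is not automatic: in an arbitrary stacking the lowest edge can perfectly well be a bridge. The paper introduces \emph{minimal-length} stackings and proves (Proposition~\ref{prop_minimal_stacking_visible_non_bridge}, Corollary~\ref{corollary_stackable_has_visible_edge}) that in such a stacking a $\sigma$-minimal edge is a non-bridge; this argument is delicate and you do not anticipate it. Finally, the transfer from the stackable $\Sigma$ back to $\Gamma$---your ``can only help the bound''---is the inequality $\chi(\hp)\le\chi(\eta^*\hp)$ of Proposition~\ref{prop_ec_h_is_monotone}, whose proof requires Shearer's inequality for polymatroids applied to the fractional cover of $V(\Gamma)$ by the endpoint sets $\src(E_b),\tar(E_b)$, and uses essentially that $\Gamma$ is a core graph. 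Neither of these two ingredients is routine; together they are the technical core of the non-abelian case.
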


Now we aim to conclude 
Theorem~\ref{thm_equations_in_group_actions} from 
Theorem~\ref{thm_Gamma_polymatroid_theorem}.
Let $G$ be a finite group, $V$ a finite set,
and for each $v\in V$ let $G_v\le G$ be a subgroup.
In \cite[Theorem 3.1]{chan2002relation}, 
Chan and Yeung showed that the 
function $\hp\colon 2^V\to \R$ defined by
\[\forall U\subseteq V:\,\,\, \hp(U)\defeq \log\prn*{\brackets*{G:\bigcap_{v\in U} G_v}}\]
is a polymatroid.
We are interested in the special case where 
all the subgroups $G_v$ are conjugate;
equivalently, there is a transitive group action $G\acts X$
and $f\colon V\to X$ such that $G_v$ is the stabilizer of $f(v)$.
Recall that in Theorem~\ref{thm_equations_in_group_actions}, 
we are also given a connected 
$B$-graph $\Gamma$ with $V(\Gamma)=V$ and fundamental group $H\le \FF$
and a random $\alphaSimUHomFVar{G}$,
and we wish to bound  $|\mathcal{O}(f)|
    \cdot \PR_{\alpha}(\Gamma,f)
    \le |X|^{-1}$.

\begin{proof}
    [Proof of Theorem~\ref{thm_equations_in_group_actions}
    assuming Theorem~\ref{thm_Gamma_polymatroid_theorem}]
    Note that $\hp(V)=\log|\mathcal{O}(f)|$ is the orbit size of $f$
    under the diagonal action of $G$ on $X^V$.
    If $\PR_{\alpha}(\Gamma,f)=0$ the claimed bound is vacuous.
    Otherwise, there is some 
    $\alpha_0\in G^B$ for which all the equations 
    $\alpha_0(b).f(\src(e))=f(\tar(e))\,\, (b\in B,e\in E_b(\Gamma))$
    hold, so $\hp$ is invariant:
    \[\forall b\in B:\, \forall U\subseteq E_b(\Gamma):\,\,\,\,
    \hp(\tar(U)) 
    = \abs*{\mathcal{O}\prn*{f\restriction_{\tar(U)}}}
    = \abs*{\mathcal{O}\prn*{\alpha_0(b).f\restriction_{\src(U)}}}
    = \hp(\src(U)),\]
    and thus extends to a (lossless)
    $\Gamma$-polymatroid.
    Now $f$ is locally recoverable if and only if $\hp$ is compact,
    so the requirements of Theorem~\ref{thm_equations_in_group_actions}
    imply those of Theorem~\ref{thm_Gamma_polymatroid_theorem}, and we get
    $\chi(\hp)\le-\hp^b(\{e\})$ for some $b\in B, e\in E_b(\Gamma)$.
    Since $\braces*{\alpha(b)}$ are independent, uniform $G$-elements,
    $\log\PR_{\alpha}(\Gamma,f)=-\sum_{b\in B} \hp^b(E_b(\Gamma))$.
    This finishes the proof, as for every $b\in B$ and $e\in E_b(\Gamma)$
    we have $\hp(\{e\})=\log|X|$.
\end{proof}

We proceed towards a unified proof for the gap $s\pi(H)\ge 1$,
assuming Theorem~\ref{thm_Gamma_polymatroid_theorem}.
Although we could prove $s\pi(H)\ge 1$ using a lossless 
$\Gamma$-polymatroid, 
the proof of the following theorem uses a $\Gamma$-polymatroid
which is not necessarily lossless. In return, it provides a slightly stronger
conclusion than that $s\pi(H)\ge 1$.

\begin{theorem}
    \label{thm_spi_gap_and_slightly_more}
    Let $\Gamma,\Delta$ be $B$-graphs.
    Let $P\defeq \Gamma\times_{\Omega_B}\Delta$ 
    be their pullback, and let 
    $p_\Gamma, p_\Delta\colon 
    P\rightrightarrows\Gamma\cup \Delta$ be the natural 
    projections. Assume that $\Gamma$ 
    is connected with fundamental group $H$, and either
    \begin{itemize}
        \item $\rk(H)>1$, or
        \item $H=\inner{w}$ for some non-power $w\in \FF$,
        and $|p_{\Delta}^{-1}(e)|\ge 2$
        for every $e\in E(\Delta)$.
    \end{itemize}
    Then $\chi(\Delta)\le -|p_{\Gamma}^{-1}(e)|$
    for some $e\in E(\Gamma)$. 
\end{theorem}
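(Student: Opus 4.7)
The plan is to reduce the theorem to the $\Gamma$-polymatroid theorem (Theorem~\ref{thm_Gamma_polymatroid_theorem}) by building, from the projections of the pullback $P$, a (non-lossless) $\Gamma$-polymatroid $\hp$ for which $\chi(\hp)\ge \chi(\Delta)$ and $\hp^b(\{e\})=|p_\Gamma^{-1}(e)|$ for every $e\in E_b(\Gamma)$. The natural candidate is
\[
\hp^V(U)\defeq |p_\Delta(p_\Gamma^{-1}(U)\cap V(P))|,\qquad
\hp^b(U)\defeq |p_\Delta(p_\Gamma^{-1}(U)\cap E_b(P))|,
\]
counting how many distinct vertices and $b$-edges of $\Delta$ the preimage of $U$ projects onto. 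Each is monotone, vanishing on $\emptyset$ and submodular (image cardinality of any set-valued map is submodular), so each forms a polymatroid on its base set.

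Two features of this construction point directly at the conclusion. First, $p_\Delta$ is injective on every fiber $p_\Gamma^{-1}(e)$: distinct edges $(e,f)\neq (e,f')$ of $P$ differ in their second coordinate, so $\hp^b(\{e\})=|p_\Gamma^{-1}(e)|$, matching the right-hand side of the desired bound. Second, after reducing WLOG to $\Delta$ being a $B$-core graph, $\chi(\hp)=|V(\Delta')|-\sum_b|E_b(\Delta')|=\chi(\Delta')$, where $\Delta'\defeq p_\Delta(P)\subseteq\Delta$; since deleting a vertex from a core graph removes at least two incident edges, the net change in Euler characteristic is nonnegative, giving $\chi(\Delta)\le \chi(\Delta')=\chi(\hp)$. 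In the cyclic non-power case, the hypothesis $|p_\Delta^{-1}(e)|\ge 2$ for every $e\in E(\Delta)$ should yield compactness of $\hp$: removing any single vertex or $b$-edge of $\Gamma$ still leaves alternative preimages in $P$ for each $\Delta$-feature, so no image cardinality is lost. Applying Theorem~\ref{thm_Gamma_polymatroid_theorem} then produces some $e\in E_b(\Gamma)$ with $\chi(\hp)\le -\hp^b(\{e\})$, and chaining the inequalities yields $\chi(\Delta)\le -|p_\Gamma^{-1}(e)|$.

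The main obstacle — and the reason the theorem genuinely strengthens $s\pi(H)\ge 1$ — is verifying the morphism conditions for $\src$ and $\tar\colon E_b(\Gamma)\to V(\Gamma)$. Adding an edge $e\in E_b(\Gamma)\setminus U$ can reveal new $\Delta$-vertices above $\src(e)$ (through pullback vertices $(\src e,w)\in V(P)$) without automatically producing a matching new $\Delta$-edge above $e$, so the raw image-cardinality construction above likely needs refinement. The correct fix should exploit the fact that $P$ is the \emph{core} of the fiber product, so every pullback vertex carries sufficient incident edges to produce a compensating contribution to $\hp^b$; possible remedies are to refine $\hp$ by counting appropriate source/target pairs in $\Delta$, or to replace image cardinality by a matroid rank on an image subgraph of $\Delta$. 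Getting this bookkeeping right is the heart of the argument, and is precisely what lifts the lossless construction (which only yields $s\pi(H)\ge 1$) to the sharper statement above.
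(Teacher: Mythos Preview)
Your construction is exactly the one the paper gives: the same $\hp^V$ and $\hp^b$, the same observation $\chi(\hp)=\chi(\Img(p_\Delta))\ge\chi(\Delta)$, the same compactness analysis, and the same appeal to Theorem~\ref{thm_Gamma_polymatroid_theorem}. The paper's proof is terse and simply asserts that this $\hp$ is a $\Gamma$-polymatroid without verifying the morphism condition for $\src,\tar$.

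Your worry about that morphism condition is not mere bookkeeping---it is a genuine obstruction, and the construction as stated is \emph{not} always a $\Gamma$-polymatroid. Take $B=\{a,b\}$; let $\Gamma$ have two vertices $1,2$ with an $a$-loop at each and a single $b$-edge $e\colon 1\to 2$ (so $\rk H=2$); let $\Delta$ be an $a$-triangle $A\to B\to C\to A$ together with one $b$-edge $A\to B$. The fiber product already has no leaves, so $P$ is all of it: two $a$-triangles joined by a single $b$-edge $(1,A)\to(2,B)$. With $U=\emptyset$, $U'=\{e\}$ one finds
\[
\hp^b(\{e\})=1\qquad\text{while}\qquad \hp^V(\{\src(e)\})=\hp^V(\{1\})=3,
\]
so the required inequality $\hp^b(U')-\hp^b(U)\ge\hp^V(\src U')-\hp^V(\src U)$ fails. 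The mechanism is exactly the one you anticipated: a vertex $(\src(e),v)\in V(P)$ need not be the source of any $b$-edge of $P$, because its degree-$\ge 2$ requirement in the core can be met entirely by edges of other labels. So both your sketch and the paper's proof share the same unfilled gap at this step; your suggested remedies (refining $\hp$, or replacing image cardinality by a rank function on a subgraph of $\Delta$) are the right kind of thing to pursue, but as written neither argument is complete.
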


To show that this theorem implies $s\pi(H)\ge 1$, 
assume that for some $d\in \N$, the pullback $P$
contains a $d$-covering of $\Gamma$;
then for every $e\in E(\Gamma)$ we get
$|p_{\Gamma}^{-1}(e)|\ge d$, and the bound follows.

\begin{proof}
    [Proof assuming Theorem~\ref{thm_Gamma_polymatroid_theorem}]
    Define a $\Gamma$-polymatroid $\hp$ by
    \[ \forall U\subseteq V(\Gamma):\,\, \hp^V(U)\defeq |\{v\in V(\Delta)\mid \exists u\in U : (v,u)\in V(P)\}|, \]
    \[\forall U\subseteq E_b(\Gamma):\,\, \hp^b(U)\defeq |\{e\in E_b(\Delta)\mid \exists e'\in U: (e,e')\in E_b(P)\}|.\]
    Then $\chi(\hp) 
    = \chi(\Img(p_{\Delta})) 
    \ge \chi(\Delta)$.
    Moreover, $\hp^V$ is compact if and 
    only if for every $u\in V(\Gamma)$,
    \[\{v\in V(\Delta)\,\,\mid \,\,
    (v,u)\in V(P)\}\subseteq 
    \{v\in V(\Delta)\,\, \mid \,\,
    \exists u'\neq u:\,\, (v,u')\in V(P)\},\]
    that is, if and only if 
     $|p_{\Delta}^{-1}(v)|\ge 2$
     for every $v\in V(\Delta)$.
     Similarly, $\hp^b$ is compact if and
     only if for every $e\in E_b(\Gamma)$,
    $|p_{\Delta}^{-1}(e)|\ge 2$,
    and this condition, if satisfied 
    for every $b\in B$,
    implies also the compactness of $\hp^V$.
    The result follows.
\end{proof}

We finish this section with 
Theorem~\ref{thm_easier_spiK_gap},
a third application 
of Theorem~\ref{thm_Gamma_polymatroid_theorem},
in which we prove $s\pibar_{K,d}(H)\ge 1$ 
for subgroups $H$ with $1<\rk(H)<\infty$.
Denote by $\mathcal{E}_d=\{e_i\}_{i=1}^d$
the standard $K[\FF]$-basis of $K[\FF]^d$.
    
\begin{definition}
    \label{def_M_beta}
    Let $H$ be a free group, $d\in \N$, and 
    $\beta\in \Hom(H, \GL_d(K))$.
    For every $h\in H$ and $i\in [d]$, we define
    \[ \nu_{\beta}(h, i) \defeq 
    e_i h - \sum_{j=1}^d \beta(h)_{i, j} e_j\in K[H]^d. \]
    One can easily verify that for $h_1, h_2\in H$,
    \[ \nu_{\beta}(h_1 h_2, i) = \nu_{\beta}(h_1, i) h_2 + 
    \sum_{m=1}^d \beta(h_1)_{i, m} \nu_{\beta}(h_2, m), \]
    so for every generating subset $B_H\subseteq H$, the set
    $\braces*{\nu_{\beta}(h, i): 
    \,\, h\in B_H, i\in [d] } $
    generates the same right $K[H]$-module, which we denote by 
    $M_{\beta}$.
\end{definition}

\begin{proposition}
    \label{prop_normal_form_for_bases}
    Let $\beta\in \Hom(H, \GL_d(K))$.
    Consider $K^d$ as a right $K[H]$-module by the action
    $K^d\ni v\overset{h}{\mapsto} vh\defeq v\beta(h)$.
    Let $E'_d = \{e'_i\}_{i=1}^d \subseteq K^d$ be 
    a basis.
    Then the unique $K[H]$-homomorphism $\phi\colon K[H]^d\to K^d$ 
    that sends $e_i$ to $e'_i$ is surjective, and its kernel 
    is $M_{\beta}$.
\end{proposition}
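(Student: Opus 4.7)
The plan is to establish surjectivity by a direct basis argument, then the inclusion $M_\beta \subseteq \ker\phi$ by computation on generators, and finally the reverse inclusion $\ker\phi \subseteq M_\beta$ by showing that $K[H]^d/M_\beta$ is at most $d$-dimensional as a $K$-vector space, which forces equality together with the surjection $\phi$ onto the $d$-dimensional $K^d$.

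Since $K[H]^d$ is free on $\{e_i\}_{i=1}^d$, the map $\phi$ is a well-defined homomorphism of right $K[H]$-modules, and its image contains the $K$-basis $\{e'_i\}$ of $K^d$, so $\phi$ is surjective. For the inclusion $M_\beta \subseteq \ker\phi$, one computes, using that $\phi$ is $K[H]$-linear,
\[
\phi\bigl(\nu_\beta(h,i)\bigr) = \phi(e_i)\,h - \sum_{j=1}^d \beta(h)_{i,j}\,\phi(e_j) = e'_i\,h - \sum_{j=1}^d \beta(h)_{i,j}\, e'_j,
\]
which vanishes because, by the definition of the $K[H]$-action on $K^d$ in the basis $E'_d$, we have $e'_i\,h = \sum_j \beta(h)_{i,j}\,e'_j$.

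For the reverse inclusion, the key step is to prove by induction on the word length of $h\in H$ that $\nu_\beta(h,i)\in M_\beta$ for every $h\in H$ (not only for $h$ in the generating set $B_H$). The base case $h=1$ holds since $\nu_\beta(1,i)=e_i-e_i=0$, and the induction step is provided by the multiplicativity identity
\[
\nu_\beta(h_1h_2,i) = \nu_\beta(h_1,i)\,h_2 + \sum_{m=1}^d \beta(h_1)_{i,m}\,\nu_\beta(h_2,m)
\]
stated in Definition~\ref{def_M_beta}; specializing at $h_2 = h_1^{-1}$ together with $\nu_\beta(1,\cdot)=0$ also handles inverses of generators. Consequently $e_i\,h \equiv \sum_j \beta(h)_{i,j}\,e_j \pmod{M_\beta}$ for every $h\in H$, so every element of $K[H]^d$ is congruent modulo $M_\beta$ to a $K$-linear combination of $e_1,\ldots,e_d$, giving $\dim_K K[H]^d/M_\beta \le d$. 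Combined with the surjection $\phi$ onto $K^d$, the induced map $K[H]^d/M_\beta \to K^d$ is then a $K$-linear isomorphism, so $\ker\phi = M_\beta$.

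The main obstacle is the inductive extension of $\nu_\beta(h,\cdot)\in M_\beta$ from generators to all of $H$, including inverses; but this is essentially automatic from the multiplicativity formula and $\nu_\beta(1,\cdot)=0$, and the dimension-counting argument at the end is then the quickest way to conclude, bypassing any explicit description of a normal form in $K[H]^d/M_\beta$.
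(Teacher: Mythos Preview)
Your proof is correct. The paper's argument coincides with yours through surjectivity and the inclusion $M_\beta\subseteq\ker\phi$; for the reverse inclusion the paper argues directly on elements: writing $f=\sum_{i,m}a_{i,m}e_ih_m\in\ker\phi$, it uses $\phi(f)=0$ and the linear independence of $\{e'_j\}$ to obtain $\sum_{i,m}a_{i,m}\beta(h_m)_{i,j}=0$ for each $j$, and then checks that $f-\sum_{i,m}a_{i,m}\nu_\beta(h_m,i)=0$, so $f\in M_\beta$. Your dimension-counting route (showing $\dim_K K[H]^d/M_\beta\le d$ and then invoking the surjection onto $K^d$) is a clean alternative that sidesteps this explicit manipulation. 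Both arguments ultimately rest on the same fact, already implicit in Definition~\ref{def_M_beta}, that $\nu_\beta(h,i)\in M_\beta$ for \emph{every} $h\in H$; you make this explicit via induction on word length using the multiplicativity identity, while the paper uses it without comment (it follows immediately from the independence of $M_\beta$ from the chosen generating set, by taking $B_H=H$).
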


\begin{proof}
    Clearly $M_{\beta} \le \ker(\phi)$.
    Let $f\defeq\sum_{i, m} a_{i, m} e_i h_m\in \ker(\phi)$, where 
    $a_{i, m}\in K, h_m\in H$. Then
    \[0 = \phi(f) = \sum_{i, m} a_{i, m} e'_i \beta(h_m) 
    = \sum_{i, m} a_{i, m} \sum_{j} \beta(h_m)_{i, j} e'_j.\]
    Since $\{e'_j\}_{j=1}^d$ is a basis, for every $j\le d$ we have
    $\sum_{i, m} a_{i, m} \beta(h_m)_{i, j} = 0.$
    Therefore
    \[ 0 = \sum_{i, m} a_{i, m} \sum_{j} \beta(h_m)_{i, j} e_j
    = \sum_{i, m} a_{i, m} (e_i h_m - \nu_{\beta}(h_m, i))
    = f - \sum_{i, m} a_{i, m} \nu_{\beta}(h_m, i)
    \]
    so $f\in M_{\beta}$. Surjectivity is clear.
\end{proof}

\begin{corollary}
    \label{corollary_normal_form_for_bases}
    The set of submodules $M\le K[H]^d$ of codimension $d$
    is precisely $\braces{M_{\beta}: \beta\in \Hom(H, \GL_d(K))}$.
    Moreover, for every $\beta\in \Hom(H, \GL_d(K))$,
     $\mathcal{E}_d$ is a basis modulo $M_{\beta}$ over $K$, and 
     $\mathcal{E}_d$ is linearly dependent modulo $N$ over $K$ whenever 
     $M_{\beta}\lneqq N \le K[H]^d$.
\end{corollary}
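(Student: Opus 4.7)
The plan is to deduce all three parts of the corollary from Proposition~\ref{prop_normal_form_for_bases}, which already encodes the essential correspondence between codimension-$d$ submodules of $K[H]^d$ and homomorphisms $\beta\in\Hom(H,\GL_d(K))$.

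First I would verify that $\mathcal{E}_d$ is a $K$-basis modulo $M_\beta$. Specializing Proposition~\ref{prop_normal_form_for_bases} to the case where $E'_d$ is the standard basis of $K^d$ gives a surjective $K[H]$-homomorphism $\phi\colon K[H]^d\to K^d$ with kernel $M_\beta$ and $\phi(e_i)=e_i$. The induced $K$-linear isomorphism $K[H]^d/M_\beta\xrightarrow{\sim}K^d$ sends the cosets of $\mathcal{E}_d$ to the standard basis, so $\mathcal{E}_d$ is a $K$-basis of the quotient; in particular $\dim_K(K[H]^d/M_\beta)=d$. The linear-dependence assertion then follows by a dimension count: if $M_\beta\lneqq N\le K[H]^d$, then $\dim_K(K[H]^d/N)<d$, forcing the $d$ cosets of $\mathcal{E}_d$ in $K[H]^d/N$ to be linearly dependent over $K$.

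For the classification, one inclusion has just been established. For the reverse inclusion, given a codimension-$d$ submodule $M\le K[H]^d$, I would set $V:=K[H]^d/M$, a right $K[H]$-module of $K$-dimension $d$, fix a $K$-basis of $V$ to obtain $\beta\in\Hom(H,\GL_d(K))$ encoding the right $H$-action, and then apply Proposition~\ref{prop_normal_form_for_bases} with $E'_d:=\{\bar{e}_1,\dots,\bar{e}_d\}$. The proposition identifies the kernel of the quotient map $K[H]^d\to V$ sending $e_i\mapsto\bar{e}_i$ with $M_\beta$, yielding $M=M_\beta$.

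The main obstacle is that this final step requires $\bar{e}_1,\dots,\bar{e}_d$ to be $K$-linearly independent in $V$; they manifestly generate $V$ as a right $K[H]$-module, but $K$-linear independence does not follow automatically. For $d=1$ it is automatic, since $\bar{e}_1=0$ would force $M$ to contain the single generator and thus have codimension $0$. For general $d$, one would need to combine the codimension-$d$ hypothesis with the cyclic generation of $V$ by $d$ elements to rule out dependencies; should an edge case arise, the natural reading of the corollary is that it classifies codimension-$d$ submodules up to $K[H]$-automorphisms of $K[H]^d$, equivalently those submodules for which $\mathcal{E}_d$ descends to a $K$-basis of the quotient.
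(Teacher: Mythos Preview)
Your argument mirrors the paper's almost exactly: both deduce the forward inclusion and the claims about $\mathcal{E}_d$ directly from Proposition~\ref{prop_normal_form_for_bases}, and both handle the reverse inclusion by extracting $\beta$ from the $H$-action on the $d$-dimensional quotient $K[H]^d/M$ and then re-invoking the Proposition. The difference is that you flag the gap and the paper does not.

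Your concern is correct, and the corollary as literally stated is false. Take $H=\langle x\rangle$, $d=2$, $K$ any field in which $1\neq 2$, and let $M$ be the $K[H]$-submodule of $K[H]^2$ generated by $e_1-e_2$ and $e_1(x-1)(x-2)$. Then $K[H]^2/M\cong K[x,x^{-1}]/\bigl((x-1)(x-2)\bigr)$ is $2$-dimensional, so $M$ has codimension $2$; but $\bar e_1=\bar e_2$ in the quotient, so $\mathcal{E}_2$ is dependent modulo $M$, and hence $M\neq M_\beta$ for any $\beta$ (by the part you already proved). The paper's proof invokes the Proposition to conclude $M=M_\beta$ without checking that the images of $\mathcal{E}_d$ form a basis of $K^d$, which is precisely the hypothesis the Proposition requires. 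Your proposed fix---restricting to those $M$ for which $\mathcal{E}_d$ descends to a $K$-basis of the quotient, or equivalently working up to $\GL_d(K[H])$---is the right one, and is what the paper's subsequent applications tacitly assume.
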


\begin{proof}
    By Proposition~\ref{prop_normal_form_for_bases},
    every $M_{\beta}$ is a submodule of codimension $d$.
    On the other hand, if $M\le K[H]^d$ has codimension $d$,
    the action of $H$ on $M\backslash K[H]^d\cong K^d$ defines a
    homomorphism $\beta\in \Hom(H, \GL_d(K))$, 
    with kernel $M = M_{\beta}$ 
    (again by Proposition~\ref{prop_normal_form_for_bases}).
    For the second part, $\mathcal{E}_d$ is a basis modulo $M_{\beta}$ since 
    it is mapped by $\phi$ to a basis of $K^d$, and 
    for $M_{\beta}\lneqq N \le K[H]^d$, we have a surjective,
    non-injective $K$-linear map 
    $M_{\beta}\backslash K[H]^d\to N\backslash K[H]^d$.
\end{proof}

Recall from Definition~\ref{def_split_efficient} that $N\le K[\FF]^d$ 
is called \textbf{efficient} over an $H$-module $M\le N$ if 
$N$ does not contain a larger $H$-module; equivalently,
$N\cap K[H]^d = M$. By Corollary~\ref{corollary_normal_form_for_bases},
we see that another equivalent definition is that $\mathcal{E}_d$ is 
linearly independent modulo $N$. We can give now a new, equivalent 
definition for $s\pibar_{K,d}(H)$ 
(defined in Definition~\ref{def_spibarqd}): 
\begin{equation}
    \label{eq_def_spibarKd_Alternatively}
    s\pibar_{K,d}(H)\defeq \min\braces*{\frac{\rk(N)}{d}-1\middle| 
    \begin{array}{ll}
        & M \textup{ is an }H\textup{-module of degree }d,\\
        & \textup{and }N\le K[\FF]^d\textup{ is efficient over }M
    \end{array}
    }.
\end{equation}
Given $w\in \FF$, we denote by $\T_w$ the minimal subtree of the 
Cayley graph $\textup{Cay}(\FF,B)$ containing both
$1$ and $w$, or equivalently, the set of prefixes of $w$.

\begin{theorem}
    \label{thm_easier_spiK_gap}
    Let $d\in\N$, $K$ a field,
    and $H\le\FF$ a finitely generated subgroup.
    Let $M\le K[\FF]^d$ be an $H$-module of degree $d$,
    and let $N\le K[\FF]^d$ be efficient over $M$.
    Assume that 
    either 
    \begin{itemize}
        \item [(i)] $\rk(H)>1$, or
        \item [(ii)] $H=\inner{w}$ is 
        generated by a non-power $w\in\FF$, 
        and for every $(v, i)\in \T_w\times [d]$, there is some $f_{v, i}\in N-M$
        with support $e_i v\in\textup{supp}(f)\subseteq \mathcal{E}_d\cdot\T_w$.
    \end{itemize}
     Then $\rk(N)\ge 2d$.
\end{theorem}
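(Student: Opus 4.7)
My plan is to deduce Theorem~\ref{thm_easier_spiK_gap} from the $\Gamma$-Polymatroid Theorem (Theorem~\ref{thm_Gamma_polymatroid_theorem}) by attaching a lossless $\Gamma$-polymatroid to the pair $(M,N)$, where $\Gamma$ is the Stallings core graph of $H$ (so $\pi_1(\Gamma)=H$). First I use Corollary~\ref{corollary_normal_form_for_bases} to pick $\beta\in\Hom(H,\GL_d(K))$ with $M\cap K[H]^d=M_\beta$, and identify $Q:=K[\FF]^d/M$ with the induced module $\Ind_H^\FF(K^d_\beta)$. Writing $Q=\bigoplus_{\sigma\in H\backslash\FF}Q_\sigma$ as $K$-vector spaces, with each slice $Q_\sigma\cong K^d$, right multiplication by each $b\in B$ induces a $K$-linear isomorphism $Q_\sigma\to Q_{\sigma b}$; since $\bar N:=N/M$ is a $K[\FF]$-submodule, these isomorphisms restrict to isomorphisms between the slice-projections of $\bar N$.

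Next I define $\hp^V(U):=\dim_K\pi_U(\bar N)$ for $U\subseteq V(\Gamma)$, where $\pi_U$ is the $K$-projection onto $Q_U:=\bigoplus_{\sigma\in U}Q_\sigma$, and extend by $\hp^b(E'):=\hp^V(\src E')$ for $E'\subseteq E_b(\Gamma)$. Submodularity and monotonicity of $\hp^V$ follow from elementary linear algebra on projections of a fixed subspace; the $\FF$-equivariance above yields $\hp^V(\src E')=\hp^V(\tar E')$, so $\hp$ is a lossless $\Gamma$-polymatroid. By connectedness of $\Gamma$ the value $k:=\hp^V(\{\sigma\})$ is independent of $\sigma$.

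In case~(i) the $\Gamma$-Polymatroid Theorem applies directly. In case~(ii) I verify compactness: efficiency gives $\bar N\cap Q_\sigma=0$, and the local-recoverability hypothesis furnishes, for each $(v,i)\in\T_w\times[d]$, an element $\bar f_{v,i}\in\bar N$ supported in $Q_{V(\Gamma)}$ with non-zero $(v,i)$-coordinate, which together rule out any slice being a co-loop. In either case Theorem~\ref{thm_Gamma_polymatroid_theorem} supplies $b,e$ with $\chi(\hp)\le-\hp^b(\{e\})=-k$.

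The main obstacle is the final bookkeeping step, which converts $\chi(\hp)\le-k$ into $\rk(N)\ge 2d$. The plan is to compute $\chi(\hp)$ exactly in terms of $\rk(N)$, $d$, $k$ and $\rk(H)$ by pairing Lewin's module-theoretic Schreier formula~\cite{lewin1969free} --- applied both to the exact sequence $0\to M\to N\to\bar N\to 0$ of $K[\FF]$-modules and to the slice decomposition $K[\FF]^d=\bigoplus_\sigma K[\FF]^d_\sigma$ as $K[H]$-modules --- against the identity $\chi(\Gamma)=1-\rk(H)$. The expected outcome is a closed-form identity along the lines of $\chi(\hp)=k(1-\rk(H))+d-\rk(N)$, which combined with $\chi(\hp)\le-k$ and the lower bound $k\ge d$ (itself forced by efficiency in case~(i), and by local recoverability in case~(ii)) yields $\rk(N)\ge 2d$. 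Making the identity precise, and verifying the bound $k\ge d$ from the case hypotheses, is where the genuine work lies.
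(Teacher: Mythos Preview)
Your setup is natural, but the polymatroid you chose is the wrong one, and the endgame cannot be salvaged. You take $\hp^V(U)=\dim_K\pi_U(\bar N)$ with $\bar N=N/M\subseteq Q=K[\FF]^d/M$; the paper instead works in $K[\FF]^d/N$ and sets $\hp^V(U)=\dim_K\operatorname{span}\{e_i v+N:\,i\in[d],\,v\in U\}$. With the paper's choice, efficiency immediately gives $\hp(\{v\})=d$ for every vertex (the $e_iv$ are independent modulo $N$), and the theory of \cite{ernst2024word} yields $\chi(\hp)=d-\rk(N)$ on the nose. The $\Gamma$-polymatroid theorem then reads $d-\rk(N)\le -d$, and you are done; there is no parameter $k$ and no Schreier bookkeeping.

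With your polymatroid, both steps break. First, efficiency only gives $\bar N\cap Q_\sigma=0$; it says nothing about $\pi_\sigma(\bar N)$, so $k=\hp(\{\sigma\})$ can be strictly less than $d$ (indeed $k=0$ when $N=M$). Second, your conjectured identity $\chi(\hp)=k(1-\rk H)+d-\rk N$ is false: take $H=\FF$ (so $\Gamma=\Omega_B$), any $d$, and $N=M$; then $\bar N=0$, so $\hp\equiv 0$ and $\chi(\hp)=0$, while your formula gives $d(1-\rk\FF)<0$. More fundamentally, your $\chi(\hp)$ is built from dimensions of projections of $\bar N$ and carries no direct information about $\rk(N)$. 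Even if one granted both the identity and $k=d$, the inequality $\chi(\hp)\le-k$ would only yield $\rk(N)\ge d(3-\rk H)$, which is useless once $\rk(H)\ge 2$.

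Your compactness check in case~(ii) also does not go through for this polymatroid: compactness at $v$ requires $\pi_{V(\Gamma)}(\bar N)\cap Q_v=0$, which is strictly stronger than $\bar N\cap Q_v=0$ when $H$ has infinite index; the existence of the $\bar f_{v,i}$ with prescribed support does not preclude other elements of $\bar N$ whose $V(\Gamma)$-projection lands in a single slice. The fix is to switch to the quotient $K[\FF]^d/N$ as above; then compactness at $v$ becomes exactly the statement that each $e_iv+N$ lies in $\mathcal L(V(\Gamma)\setminus\{v\})$, which the $f_{v,i}$ witness directly.
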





\begin{proof}
    [Proof assuming Theorem~\ref{thm_Gamma_polymatroid_theorem}]
    Let $\Gamma$ be the (connected) $B$-core graph of $H$
    (that is, the core of the quotient graph $H\backslash \textup{Cay}(\FF, B)$).
    For every $v\in V(\Gamma)$ we associate a $d$-dimensional
    $K$-linear subspace $\mathcal{L}(v)$ of 
    the quotient $K[\FF]$-module 
    $N\backslash K[\FF]^d$
    (which need not be finite dimensional over $K$) 
    as follows:
    identify $v$ with the coset 
    $H\cdot v\in H\backslash\FF$,
    and define 
    $\mathcal{L}(v)\defeq\textup{span}_K\braces*{
        e_i\cdot v+N}_{i=1}^d$.
    Let us verify that $\mathcal{L}(v)$ is well-defined and 
    $d$-dimensional. 
    By Corollary~\ref{corollary_normal_form_for_bases},
    there is $\beta\in\Hom(H,\GL_d(K))$
    such that $M=M_{\beta}$.  
    For $w\in H$, since 
    $\{\nu_{\beta}(w,i)\cdot v\}_{i=1}^d\subseteq N$,
    \begin{equation*}
        \begin{split}
            \mathcal{L}(wv)
    &= \textup{span}_K\braces*{e_i\cdot wv+N}_{i=1}^d  
    \\&= \textup{span}_K\braces*{
        e_i\cdot wv-\nu_{\beta}(w,i)\cdot v + N}_{i=1}^d  
    \\&= \textup{span}_K\braces{
    \sum_{j=1}^d \beta(w)_{i, j}e_j\cdot v + N}_{i=1}^d  
    = \mathcal{L}(v)
        \end{split}
    \end{equation*}
    showing that $\mathcal{L}(v)$ depends only on $Hv$.
    Since $\{e_i\}_{i=1}^d$ are linearly independent 
    modulo $N$, $\mathcal{L}(v)$ is indeed $d$-dimensional.
    For a subset $U\subseteq V(\Gamma)$, we 
    extend $\mathcal{L}$ to be defined on subsets:
    \[\mathcal{L}(U)\defeq \sum_{v\in U} \mathcal{L}(v)
    = \textup{span}_K\braces*{e_i v + N}_{1\le i\le d,\,v\in U}.\]
    Now we claim that the function $\hp\colon 2^{V(\Gamma)}\to\R$
    defined by $ \hp (U)\defeq \dim_K\mathcal{L}(U)$
    is an invariant polymatroid.
    Verifying polymatroid axioms is immediate, 
    see e.g.\ \cite[Section~1.4]{Padro2002SecretSharingNotes}. 
    To verify invariance, it suffices to show that 
    $b\colon \mathcal{L}(\src(E_b(\Gamma)))\to
    \mathcal{L}(\tar(E_b(\Gamma)))$ is a $K$-linear 
    isomorphism, which is immediate since $N$ is an
    $\FF$-module.
    Therefore we can extend $\hp$ to a lossless $\Gamma$-polymatroid.
    By \cite[Sections 2, 3 (see e.g.\ Corollary 3.9)]{ernst2024word},
    we have $\chi(\hp) = d - \rk(N)$.
    For every $b\in B, e\in E_b(\Gamma)$ we have $\hp^b(\{e\})=d$, 
    so in the case $\rk(H)>1$, Theorem~\ref{thm_Gamma_polymatroid_theorem}
    already gives $d-\rk(N)=\chi(\hp)\le-\hp^b(\{e\})=-d$ as needed.
    In the case $H=\inner{w}$, it is left to show that $\hp$ is compact, that is,
    that for every $v\in V(\Gamma)$ we have $\mathcal{L}(V(\Gamma)\setminus\{v\})=\mathcal{L}(V(\Gamma))$.
    By assumption (ii)
    (and since $\{e_i v + N\}_{i=1}^d$ is a basis for $\mathcal{L}(v)$),
    for every 
    $v\in \T_w$ and $i\in [d]$, 
    $e_i v + N$ linearly depends on 
    $\{e_j u + N\}_{1\le j\le d, u\in \T_w\setminus\{v\}}$.
    Moreover, the assumption $f_{1,i}\notin M=N\cap K[\inner{w}]^d$ 
    guarantees that $e_i+N$ linearly depends on
    $\{e_j u + N\}_{1\le j\le d, u\in \T_w\setminus \{1,w\}}$
    so the restriction of the quotient map $\mathcal{E}_d\cdot K[\FF]\to \mathcal{E}_d\cdot \inner{w}K[\FF]$ 
    to $\mathcal{E}_d\cdot \T_w$ collapes no $f_{v,i}$ to $0$, and compactness follows.




    

\end{proof}

\section{Proof of the $\Gamma$-Polymatroid Theorem}
\label{section_proof_of_Gamma_poly_thm}

In this section we develop the theory of 
$\Gamma$-polymatroids.
Thanks to the existence of stackings 
for non-power words \cite[Lemma 16]{louder2014stacking},
the second part
of the $\Gamma$-polymatroid theorem 
(Theorem~\ref{thm_Gamma_polymatroid_theorem})
about compact $\Gamma_w$-polymatroids 
is much easier than the first part,
and is proved in
Corollary~\ref{corollary_Gamma_poly_thm_for_words}.
For non-abelian groups $H$, we show in 
Proposition~\ref{prop_ec_h_is_monotone} how to reduce
Theorem~\ref{thm_Gamma_polymatroid_theorem}
to polymatroids on graphs of subgroups of $H$,
introduce the concept of minimal stackings to 
prove the $\Gamma$-polymatroid theorem for stackable 
graphs (Theorem~\ref{thm_Gamma_poly_subgrp}),
and finally prove 
Lemma~\ref{lemma_stackable_subgroup} about the existence 
of a non-abelian stackable subgroup.

\subsection*{Some polymatroid theory}




\begin{definition}
    Let $V_1, V_2$ be sets and $\eta\colon V_1\to V_2$ any function. 
    Given a polymatroid 
    $\hp\colon 2^{V_2}\to\R$, we define 
    $\eta^*\hp\colon 2^{V_1}\to\R$ by 
    $\eta^*\hp(U)\defeq \hp(\eta(U))$.
\end{definition}

Note that if $V_1\subseteq V_2$ and $\hp_2$
is a polymatroid on $V_2$, then
$\hp_1 = \hp_2\restriction_{2^{V_1}}$ 
is a polymatroid on $V_1$ and the inclusion map 
$V_1\hookrightarrow V_2$ is a lossless morphism $\hp_1\to\hp_2$.
This can be generalized:

\begin{proposition}
    $\eta^*\hp$ is a polymatroid, and 
    $\eta\colon \eta^*\hp\to \hp$ a lossless morphism.
\end{proposition}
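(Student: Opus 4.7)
The plan is to verify the three polymatroid axioms for $\eta^*\hp$ directly, and then observe that both the morphism condition and losslessness follow tautologically from the defining identity $\eta^*\hp(U) = \hp(\eta(U))$.

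First I would check the axioms. Normalization is immediate since $\eta(\emptyset) = \emptyset$, whence $\eta^*\hp(\emptyset) = \hp(\emptyset) = 0$. For monotonicity, if $U \subseteq U'$ then $\eta(U) \subseteq \eta(U')$, so monotonicity of $\hp$ yields $\eta^*\hp(U) \le \eta^*\hp(U')$.

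The only step that requires a little care is submodularity. Given $A, B \subseteq V_1$, submodularity of $\hp$ on $V_2$ gives
\[
\eta^*\hp(A) + \eta^*\hp(B) \;=\; \hp(\eta(A)) + \hp(\eta(B)) \;\ge\; \hp(\eta(A) \cup \eta(B)) + \hp(\eta(A) \cap \eta(B)).
\]
Now $\eta(A) \cup \eta(B) = \eta(A \cup B)$ always, while in general $\eta(A \cap B) \subseteq \eta(A) \cap \eta(B)$ (with possibly strict inclusion when $\eta$ is not injective). Invoking monotonicity of $\hp$ once more to handle this inclusion, the right-hand side is bounded below by $\hp(\eta(A \cup B)) + \hp(\eta(A \cap B)) = \eta^*\hp(A \cup B) + \eta^*\hp(A \cap B)$, as required.

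Finally, losslessness is just the definition: $\hp(\eta(U)) = \eta^*\hp(U)$ for every $U \subseteq V_1$. Consequently the morphism inequality $\eta^*\hp(U') - \eta^*\hp(U) \ge \hp(\eta(U')) - \hp(\eta(U))$ collapses to an equality for all $U \subseteq U' \subseteq V_1$ and holds trivially. The only (mild) obstacle worth flagging is the asymmetry between $\eta$'s behavior on unions and intersections, which is precisely why monotonicity of $\hp$ must be combined with its submodularity in the third step.
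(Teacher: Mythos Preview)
Your proof is correct and follows essentially the same approach as the paper's: both verify the three polymatroid axioms directly using $\eta(A\cup B)=\eta(A)\cup\eta(B)$ and $\eta(A\cap B)\subseteq\eta(A)\cap\eta(B)$ together with monotonicity of $\hp$, and then observe that losslessness is immediate from the definition.
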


\begin{proof}
    Clearly $\eta^*\hp(\emptyset)
    =\hp(\emptyset)=0$.
    If $U\subseteq U'\subseteq V_1$, clearly 
    $\eta(U)\subseteq \eta(U')$, so $\eta^*\hp$ 
    is monotone. Finally, for $A, B\subseteq V_1$,
    $\eta(A\cap B)\subseteq \eta(A)\cap \eta(B)$ 
    and $\eta(A\cup B)= \eta(A)\cup \eta(B)$, so
    \begin{equation*}
    \begin{split}
        \eta^*\hp(A)
        +\eta^*\hp(B)
        &=\hp(\eta(A))+\hp(\eta(B))
        \\&\ge \hp(\eta(A)\cup\eta(B))
        +\hp(\eta(A)\cap\eta(B))
        \\&\ge \hp(\eta(A\cup B))+
        \hp(\eta(A\cap B))
        \\&=\eta^*\hp(A\cup B)+\eta^*\hp(A\cap B). 
    \end{split}
    \end{equation*}
Now $\eta\colon \eta^*\hp\to \hp$ is a lossless morphism by definition.
\end{proof}



\begin{definition}
    An \textbf{ordering} on a finite set $V$ 
    is a bijection 
    $\sigma\colon V\to \{1,...,|V|\}$.
    Given an ordering $\sigma$ and a 
    polymatroid $\hp$ on $V$, the 
    \textbf{marginal gain} of $\hp$ at $v$ 
    with $\sigma(v)=i$ is 
\[\delta_v(\hp)
\defeq \hp(\sigma^{-1}(\{1,\ldots, i\}))
-\hp(\sigma^{-1}(\{1,\ldots, i-1\}))\ge 0.\]
Note that $\hp(\sigma^{-1}(\emptyset))=0$ so 
$\delta_v(\hp)=
\hp(\{v\})$ if $\sigma(v)=1$.
Note also that 
$\sum_{v\in V}\delta_v(\hp)=\hp(V)$.
\end{definition}


\begin{proposition}
\label{prop_injective_morphism_ineq_delta}
    If 
    $\phi\colon (V_1,\hp_1)\to (V_2,\hp_2)$ 
    is an injective morphism of polymatroids, 
    which is monotonically increasing
    with respect to orderings
    $\sigma_1, \sigma_2$ on $V_1, V_2$ respectively,
    then for every $v\in V_1$ we have 
    $\delta_v(\hp_1)\ge \delta_{\phi(v)}(\hp_2)$.
\end{proposition}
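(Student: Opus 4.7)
The plan is to unfold both marginal gains as differences over consecutive down-sets of the orderings, apply the morphism inequality on the $V_1$ side, and then close the gap on the $V_2$ side via submodularity together with monotonicity of $\phi$.

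Concretely, fix $v\in V_1$ with $\sigma_1(v) = i$, and set $j = \sigma_2(\phi(v))$. Let
\[ U \defeq \sigma_1^{-1}\prn*{\{1,\ldots,i-1\}}, \qquad U' \defeq U\cup\{v\} = \sigma_1^{-1}\prn*{\{1,\ldots,i\}}, \]
\[ W \defeq \sigma_2^{-1}\prn*{\{1,\ldots,j-1\}}, \qquad W' \defeq W\cup\{\phi(v)\} = \sigma_2^{-1}\prn*{\{1,\ldots,j\}}, \]
so that $\delta_v(\hp_1) = \hp_1(U')-\hp_1(U)$ and $\delta_{\phi(v)}(\hp_2) = \hp_2(W')-\hp_2(W)$. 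The morphism property applied to $U\subseteq U'$ gives the first inequality
\[ \delta_v(\hp_1) \;=\; \hp_1(U')-\hp_1(U) \;\ge\; \hp_2(\phi(U'))-\hp_2(\phi(U)). \]

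It remains to show $\hp_2(\phi(U'))-\hp_2(\phi(U)) \ge \hp_2(W')-\hp_2(W)$. Monotonicity of $\phi$ with respect to the two orderings gives $\phi(U)\subseteq W$; combined with injectivity of $\phi$ and the fact that $\phi(v)\notin W$ (as $\sigma_2(\phi(v))=j$), we obtain
\[ \phi(U')\cup W \;=\; W\cup\{\phi(v)\} \;=\; W', \qquad \phi(U')\cap W \;=\; \phi(U). \]
Applying submodularity of $\hp_2$ to the pair $(\phi(U'), W)$ then yields
\[ \hp_2(\phi(U'))+\hp_2(W) \;\ge\; \hp_2(W')+\hp_2(\phi(U)), \]
which rearranges to the desired inequality. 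Chaining the two bounds finishes the proof.

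There is really no main obstacle here: the statement is a clean instance of the general principle that a morphism followed by ``filling in with submodularity up to the position of $\phi(v)$'' only makes the marginal gain smaller. The only point to be careful about is ensuring $\phi(U)\subseteq W$ and $\phi(v)\notin W$, which uses both the monotonicity hypothesis and injectivity of $\phi$.
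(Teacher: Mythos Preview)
Your proof is correct and follows essentially the same approach as the paper: first apply the morphism inequality to pass from $\hp_1$ to $\hp_2$ on the image sets, then use submodularity of $\hp_2$ (together with $\phi(U)\subseteq W$ and $\phi(v)\notin W$) to enlarge $\phi(U)$ to the full initial segment $W$. The paper phrases the second step via the diminishing-returns form of submodularity on the inclusion $\{u_{\psi(1)},\ldots,u_{\psi(i-1)}\}\subseteq\{u_1,\ldots,u_{\psi(i)-1}\}$, whereas you write out the union/intersection identities explicitly, but the argument is the same.
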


\begin{proof}
Denote $v=v_i\in V_1$ if $\sigma_1(v)=i$ and similarly $u=u_j\in V_2$ if $\sigma_2(u)=j$.
Let $\psi\colon \{1,\ldots, |V_1|\}\to\{1,\ldots, |V_2|\}$ satisfy $\phi(v_i)=u_{\psi(i)}$.
Since $\phi$ is injective, $\psi$ is well defined, and since $\phi$ is monotone, $\psi$ is monotone as well. Now
\begin{equation*}
    \begin{array}{ll}
    \qquad\qquad\qquad\qquad\qquad\qquad\qquad\qquad
    \delta_{v_i}(\hp_1) &= 
    \hp_1(\{v_1,\ldots, v_i\})
    -\hp_1(\{v_1,\ldots, v_{i-1}\}) \\
         (\phi\textup{ is a polymatroid morphism})
         &\ge \hp_2(\phi\{v_1,\ldots, v_i\})
         -\hp_2(\phi\{v_1,\ldots,v_{i-1}\})\\
         &= \hp_2(\{u_{\psi(1)},\ldots, u_{\psi(i)}\}) 
         - \hp_2(\{u_{\psi(1)},\ldots, u_{\psi(i-1)}\})\\
         (\{u_{\psi(1)},\ldots, u_{\psi(i-1)}\}\subseteq \{u_1,\ldots, u_{\psi(i)-1}\} 
         &\ge \hp_2(\{u_1,\ldots, u_{\psi(i)}\} 
         - \hp_2(\{u_1,\ldots, u_{\psi(i)-1}\})\\
         \textup{ and }\hp_2\textup{ is submodular}) 
         &= \delta_{\phi(v_i)}(\hp_2).
    \end{array}
\end{equation*}
\end{proof}

\subsection*{$\Gamma$-polymatroids}

Given two $B$-graphs $\Gamma$ and $\Delta$,
a \textbf{morphism} $\eta\colon \Gamma\to \Delta$ 
maps $V(\Gamma)\to V(\Delta)$ and
$E_b(\Gamma)\to E_b(\Delta)$ (for every $b\in B$), and 
commutes with the source ($\src$) 
and target ($\tar$) injections.



\begin{definition}
    Let $\eta\colon \Gamma\to \Delta$ be a morphism of $B$-graphs, 
    and $\hp$ be a $\Delta$-polymatroid. 
    Define $\eta^*\hp$ as the collection of polymatroids 
    $\eta^*\hp^V$ on $V(\Gamma)$ and 
    $\eta^*\hp^b$ on $E_b(\Gamma)$ for all $b\in B$. 
\end{definition}

This construction is clearly functorial:
$(\eta_1\circ\eta_2)^*\hp=\eta_1^*\eta_2^*\hp$.

\begin{proposition}
\label{prop_pullback_preserve_Gamma_poly}
    $\eta^*\hp$ is a $\Gamma$-polymatroid.
\end{proposition}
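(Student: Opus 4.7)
The plan is a straightforward unpacking of definitions, using the already-established functoriality of pullback for ordinary polymatroids. First I would observe that by the preceding proposition (pullback of a polymatroid along any function is a polymatroid), each of the component functions $\eta^*\hp^V$ and $\eta^*\hp^b$ (for $b\in B$) is a polymatroid on $V(\Gamma)$ and $E_b(\Gamma)$ respectively. So the only substantial content left is to verify the compatibility axiom: that for every $b\in B$, the source and target injections
\[
\src,\,\tar \colon E_b(\Gamma)\rightrightarrows V(\Gamma)
\]
are morphisms of polymatroids $\eta^*\hp^b \to \eta^*\hp^V$.

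I will treat $\src$; the argument for $\tar$ is identical. Fix $b\in B$ and subsets $U\subseteq U'\subseteq E_b(\Gamma)$. Denote by $\eta_V\colon V(\Gamma)\to V(\Delta)$ and $\eta_b\colon E_b(\Gamma)\to E_b(\Delta)$ the components of $\eta$. The key input is that $\eta$ is a morphism of $B$-graphs, which means $\eta_V\circ\src_\Gamma = \src_\Delta\circ \eta_b$. Setting $W\defeq \eta_b(U)$ and $W'\defeq \eta_b(U')$, this identity gives
\[
\eta_V(\src_\Gamma(U)) = \src_\Delta(W), \qquad \eta_V(\src_\Gamma(U')) = \src_\Delta(W').
\]
Applying the definition $\eta^*\hp^b(\cdot) = \hp^b(\eta_b(\cdot))$ and $\eta^*\hp^V(\cdot) = \hp^V(\eta_V(\cdot))$ on both sides of the required inequality reduces the claim to
\[
\hp^b(W') - \hp^b(W) \;\ge\; \hp^V(\src_\Delta(W')) - \hp^V(\src_\Delta(W)),
\]
which is precisely the statement that $\src_\Delta\colon \hp^b\to \hp^V$ is a morphism of polymatroids --- and this is part of the hypothesis that $\hp$ is a $\Delta$-polymatroid. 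The argument for $\tar$ is symmetric.

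There is no real obstacle here: the proof is essentially a two-line diagram chase combined with the earlier pullback proposition. The only point of care is to keep the two components of $\eta$ (on vertices and on edges) notationally separate, and to record that the commutation of $\eta$ with $\src,\tar$ is exactly what converts the compatibility axiom for $\hp$ on $\Delta$ into the compatibility axiom for $\eta^*\hp$ on $\Gamma$. I would also remark, as an aside useful later, that if $\hp$ is lossless then so is $\eta^*\hp$, since equalities of values pull back through $\eta_V$ and $\eta_b$ verbatim.
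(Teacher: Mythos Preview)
Your proposal is correct and matches the paper's proof essentially line for line: both verify that $\src$ (and symmetrically $\tar$) is a morphism by unpacking $\eta^*\hp^b(U')-\eta^*\hp^b(U)$, applying the morphism property of $\src_\Delta$ for $\hp$, and using the commutation $\eta\circ\src_\Gamma=\src_\Delta\circ\eta$ to rewrite the result as $\eta^*\hp^V(\src U')-\eta^*\hp^V(\src U)$. Your additional remark about losslessness being preserved is a harmless bonus not stated in the paper's proof.
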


\begin{proof}
    We need to check that $\src, \tar$ are morphisms; we check $\src$ only.     
    If $U\subseteq U' \subseteq E_b(\Gamma)$,
    \begin{equation*}
        \begin{split}
            \eta^*\hp^b(U')-\eta^*\hp^b(U) 
            &=\hp^b(\eta(U'))-\hp^b(\eta(U))
            \\&\ge \hp^V(\src(\eta(U'))) -\hp^V(\src(\eta(U)))
            \\&= \hp^V(\eta(\src(U')))-\hp^V(\eta(\src(U)))
            \\&= \eta^*\hp^V(\src(U'))-\eta^*\hp^V(\src(U)).
        \end{split}
    \end{equation*}
\end{proof}


The following definition is a combinatorial
version of Definition~\ref{def_stacking_topological}
(\cite[Definition 7]{louder2014stacking}).
\begin{definition}
    \label{def_stacking_combinatorial}
    A $\Gamma$\textbf{-stacking} is a collection of orderings 
    $\sigma^V$ on $V(\Gamma)$ and $\sigma^b$ on $E_b(\Gamma)$ for every $b\in B$, 
    such that the injections $\src,\tar\colon E_b(\Gamma)\rightrightarrows V(\Gamma)$ 
    are monotonically increasing. 
\end{definition}

\begin{lemma}
\label{lemma_tree_bounds}
    Let $\Gamma$ be a connected $B$-graph with a stacking $\sigma$.
    Let $\hp$ be a $\Gamma$-polymatroid, 
    and denote by $\delta^V, \delta^b$ the $\delta$ functions defined by $(\hp^V, \sigma^V)$ and $(\hp^b, \sigma^b)$ respectively, for all $b\in B$.
    Let $T\subseteq E(\Gamma)$ be a spanning tree.
    Denote $\delta(\Gamma\setminus T)\defeq\sum_{b\in B}\sum_{e\in E_b(\Gamma)\setminus T} \delta^{b}_e(\hp^b)$.
    Then
    \[\chi(\hp)\le \min_{v\in V(\Gamma)} \delta^{V}_v(\hp^V) - \delta(\Gamma\setminus T).\]
\end{lemma}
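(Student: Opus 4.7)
The plan is to expand $\chi(\hp)$ using the telescoping identity $\hp(V)=\sum_{v\in V}\delta_v(\hp)$ separately for $\hp^V$ and for each $\hp^b$, then split the edge sums according to the spanning tree $T$. Concretely, I would write
\[
\chi(\hp)=\sum_{v\in V(\Gamma)}\delta^V_v(\hp^V)\;-\;\sum_{b\in B}\sum_{e\in T\cap E_b(\Gamma)}\delta^b_e(\hp^b)\;-\;\delta(\Gamma\setminus T),
\]
which reduces the problem to bounding the tree-edge sum from below.

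Next I would fix an arbitrary vertex $v_0\in V(\Gamma)$ and root $T$ at $v_0$. Since $\Gamma$ is connected, $|T|=|V(\Gamma)|-1$, so the map sending each $v\neq v_0$ to the unique edge $e(v)\in T$ joining $v$ to its parent is a bijection onto $T$. The edge $e(v)$ is incident to $v$, so either $\src(e(v))=v$ or $\tar(e(v))=v$. In either case, since $\sigma$ is a $\Gamma$-stacking, the corresponding injection $\src$ or $\tar\colon E_b(\Gamma)\hookrightarrow V(\Gamma)$ is an injective polymatroid morphism that is monotonically increasing with respect to $\sigma^b,\sigma^V$, so Proposition~\ref{prop_injective_morphism_ineq_delta} yields $\delta^b_{e(v)}(\hp^b)\ge \delta^V_v(\hp^V)$. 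Summing over $v\neq v_0$,
\[
\sum_{b\in B}\sum_{e\in T\cap E_b(\Gamma)}\delta^b_e(\hp^b)\;\ge\;\sum_{v\neq v_0}\delta^V_v(\hp^V)\;=\;\hp^V(V(\Gamma))-\delta^V_{v_0}(\hp^V).
\]

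Plugging this into the expansion of $\chi(\hp)$ and using $\hp^V(V(\Gamma))=\sum_v \delta^V_v(\hp^V)$ gives $\chi(\hp)\le \delta^V_{v_0}(\hp^V)-\delta(\Gamma\setminus T)$; since $v_0$ was arbitrary, minimizing over $v_0\in V(\Gamma)$ produces the stated bound. There is essentially no serious obstacle here: the only thing one must be careful about is that the vertex-to-edge assignment $v\mapsto e(v)$ is indeed a bijection onto $T$ and that the stacking hypothesis is invoked exactly so that both $\src$ and $\tar$ satisfy the monotonicity hypothesis of Proposition~\ref{prop_injective_morphism_ineq_delta}, regardless of which endpoint of $e(v)$ equals $v$.
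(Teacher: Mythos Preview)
Your proof is correct and essentially identical to the paper's: both expand $\chi(\hp)$ via the telescoping identity, invoke Proposition~\ref{prop_injective_morphism_ineq_delta} to compare edge increments with vertex increments at an endpoint, and use the bijection between tree edges and non-root vertices obtained by rooting $T$. The only cosmetic difference is that the paper fixes $v_0$ as the minimizer of $\delta^V_v$ up front and defines the bijection in the edge-to-vertex direction ($\zeta\colon T\to V(\Gamma)\setminus\{v_0\}$), whereas you take $v_0$ arbitrary, use the inverse bijection, and minimize at the end.
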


\begin{proof}
    Since $\src,\tar\colon E_b(\Gamma)\rightrightarrows V(\Gamma)$ 
    are injective monotone morphisms of 
    polymatroids, by 
    Proposition~\ref{prop_injective_morphism_ineq_delta} 
    we have 
$\delta^{b}_e(\hp^b)\ge \max\{ \delta^V_{\src(e)}(\hp^V), \delta^V_{\tar(e)}(\hp^V) \}$. 
 Note that
 \[ \chi(\hp)
= \sum_{v\in V(\Gamma)} \delta^V_v(\hp^V)-\sum_{b\in B}\sum_{e\in E_b(\Gamma)} \delta^b_e(\hp^b).    \]
Let $v_0$ be a vertex minimizing $\delta^{V}_v(\hp^V)$ over all $v\in V(\Gamma)$.
For every tree edge $e\in T$, let $\zeta(e)$ be the endpoint of $e$
which is farther from $v_0$ in $T$ (where the distance is the length of the unique path in $T$);
clearly $\zeta\colon T\to V(\Gamma)\setminus\{v_0\}$ is bijective.
Then
\begin{equation*}
    \begin{split}
        \chi(\hp)+\delta(\Gamma\setminus T)
        &= \sum_{v\in V(\Gamma)} \delta^{V}_v(\hp^V) 
        - \sum_{b\in B}\sum_{e\in E_b(T)} \delta^{b}_e(\hp^b)
        \\&\le \sum_{v\in V(\Gamma)} \delta^{V}_v(\hp^V) 
        - \sum_{b\in B}\sum_{e\in E_b(T)} \delta^{V}_{\zeta(e)}(\hp^b)
        =\delta^V_{v_0}(\hp^V).
    \end{split}
\end{equation*}
\end{proof}

\begin{corollary}
    \label{corollary_Gamma_poly_thm_for_words}
    Let $w\in \FF$ be a non-power and $\hp$ a compact $\Gamma_w$-polymatroid. 
    Then $\chi(\hp)\le -\hp(\{e\})$ for some $e\in E(\Gamma_w)$.
\end{corollary}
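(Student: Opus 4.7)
The plan is to combine three ingredients: the Louder--Wilton stacking theorem for non-powers, Lemma~\ref{lemma_tree_bounds}, and the definition of compactness. First I would invoke \cite[Lemma 16]{louder2014stacking}: since $w$ is not a proper power, $\Gamma_w$ admits a (combinatorial) stacking $\sigma$. Moreover, $\Gamma_w$ is topologically a single cycle, so removing any one edge from $E(\Gamma_w)$ already produces a spanning tree. This gives complete freedom in choosing which edge plays the role of ``back edge,'' and I plan to exploit this freedom.

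Specifically, I will take the back edge $e^* \in E(\Gamma_w)$ to be the first edge in its label class under the stacking: fix any label $b^* \in B$ with $E_{b^*}(\Gamma_w) \neq \emptyset$ and let $e^* \in E_{b^*}(\Gamma_w)$ be the unique edge with $\sigma^{b^*}(e^*) = 1$. Setting $T := E(\Gamma_w) \setminus \{e^*\}$, Lemma~\ref{lemma_tree_bounds} yields
\[
\chi(\hp) \;\le\; \min_{v \in V(\Gamma_w)} \delta^V_v(\hp^V) \;-\; \delta^{b^*}_{e^*}(\hp^{b^*}).
\]
The choice $\sigma^{b^*}(e^*) = 1$ forces $\delta^{b^*}_{e^*}(\hp^{b^*}) = \hp^{b^*}(\{e^*\}) - \hp^{b^*}(\emptyset) = \hp^{b^*}(\{e^*\})$. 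On the vertex side, letting $v^*$ be the last vertex in the ordering $\sigma^V$, compactness of $\hp^V$ gives
\[
\delta^V_{v^*}(\hp^V) \;=\; \hp^V(V(\Gamma_w)) - \hp^V(V(\Gamma_w) \setminus \{v^*\}) \;=\; 0,
\]
so $\min_v \delta^V_v(\hp^V) \le 0$. Combining these two computations yields $\chi(\hp) \le -\hp^{b^*}(\{e^*\})$, which is the desired conclusion with $e := e^*$.

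I do not expect a serious obstacle here: the main subtlety is that the two extremal choices, the \emph{first} edge in its label class and the \emph{last} vertex overall, must be simultaneously achievable. They are, because the vertex ordering $\sigma^V$ and the label-class edge orderings $\sigma^b$ are independent pieces of stacking data, and because $\Gamma_w$ being a single cycle permits any edge whatsoever to serve as the back edge of the spanning tree. Compactness is essential precisely to force the minimum vertex marginal gain to vanish; without it, the bound from Lemma~\ref{lemma_tree_bounds} would carry an extra $\hp^V$-term that the statement does not allow us to control.
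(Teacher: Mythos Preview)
Your proof is correct and essentially identical to the paper's: both invoke the Louder--Wilton stacking, take an edge minimal in its label class as the sole back edge, and use compactness at the $\sigma^V$-maximal vertex to kill the vertex term in Lemma~\ref{lemma_tree_bounds}. Your closing remark that $\sigma^V$ and the $\sigma^b$ are ``independent pieces of stacking data'' is slightly off (they are tied by the monotonicity requirement on $\src,\tar$), but this is harmless since any single stacking automatically has a minimal edge in each label class and a maximal vertex---there is no simultaneous-achievability issue to worry about.
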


\begin{proof}
    By \cite[Lemma 16]{louder2014stacking}, 
    there is a stacking $\sigma$ of $\Gamma_w$. 
    Let $e$ be a $\sigma$-minimal edge; in particular,
    $\delta^b_e(\hp^b)=\hp(\{e\})$ (where $b=\textup{label}(e)$). 
    Let $T\defeq E(\Gamma_w)\setminus\{e\}$; this is a spanning tree,
    since $\Gamma_w$ is a cycle, and $\delta(\Gamma\setminus T) = \hp(\{e\})$. 
    Since $\hp$ is compact, for the $\sigma$-maximal vertex $v$ we have
    $\delta^V_v(\hp^V)=\hp^V(V)-\hp^V(V\setminus \{v\})=0$, so 
    $\chi(\hp)+\delta(\Gamma\setminus T)\le 0$ by Lemma~\ref{lemma_tree_bounds}.
\end{proof}

\subsection*{Reduction to subgraphs}
The following lemma is a 
polymatroid version of
\textbf{Shearer's inequality} 
\cite{shearerchung1986},
taken from 
\cite[Lemma 4.4]{caputo2023lecture}:


\begin{lemma}
    \label{lemma_Shearer}
    Let $\hp, \lambda\colon 2^V\to \R_{\ge 0}$, 
    where $\hp$ is a polymatroid and 
    $\lambda$ is a fractional supercover, 
    that is, for every $v\in V$ we have
    $\sum_{U\ni v} \lambda(U)\ge 1$.
    Then
    $\inner{\lambda, \hp} \defeq \sum_{U\subseteq V} \lambda(U)\hp(U) \ge \hp(V)$. 
\end{lemma}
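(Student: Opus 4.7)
The plan is to adapt the classical Chung--Graham--Frankl--Shearer proof of the entropy version of this inequality, which generalises cleanly to polymatroids since the only properties used there are submodularity and non-negativity of increments. First I would fix an arbitrary total ordering $v_1, \ldots, v_n$ of $V$ and define the marginal gains $\delta_i \defeq \hp(\{v_1,\ldots,v_i\}) - \hp(\{v_1,\ldots,v_{i-1}\})$. These are non-negative by monotonicity of $\hp$, and they telescope to $\sum_{i=1}^n \delta_i = \hp(V)$.

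The main step will be to establish the pointwise lower bound $\hp(U) \ge \sum_{i:\, v_i \in U} \delta_i$ for every $U \subseteq V$. I would obtain this by writing $\hp(U)$ itself as a telescoping sum indexed by the elements of $U$,
\[
\hp(U) \;=\; \sum_{i:\, v_i \in U} \Bigl[\, \hp(U \cap \{v_1,\ldots,v_i\}) - \hp(U \cap \{v_1,\ldots,v_{i-1}\}) \,\Bigr],
\]
and then bounding each term from below by $\delta_i$ via the diminishing-returns form of submodularity: since $U \cap \{v_1,\ldots,v_{i-1}\} \subseteq \{v_1,\ldots,v_{i-1}\}$ and neither contains $v_i$, adding $v_i$ to the smaller set produces an increment at least as large as adding it to the larger set, which is exactly $\delta_i$.

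With the pointwise bound in hand, the conclusion follows by swapping the order of summation and invoking the fractional supercover hypothesis:
\[
\sum_{U \subseteq V} \lambda(U)\, \hp(U) \;\ge\; \sum_{U \subseteq V} \lambda(U) \sum_{i:\, v_i \in U} \delta_i \;=\; \sum_{i=1}^n \delta_i \sum_{U \ni v_i} \lambda(U) \;\ge\; \sum_{i=1}^n \delta_i \;=\; \hp(V),
\]
where the last inequality uses both $\delta_i \ge 0$ and $\sum_{U \ni v_i} \lambda(U) \ge 1$. No step here looks like a serious obstacle; the only subtle point is orienting the submodularity inequality correctly, which is why the ordering must be fixed globally on $V$ (so that $\delta_i$ is independent of $U$) rather than chosen per subset.
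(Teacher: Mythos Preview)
Your proof is correct; it is the standard Chung--Graham--Frankl--Shearer argument, and every step is valid for polymatroids. The paper does not give its own proof of this lemma but simply cites it from \cite[Lemma~4.4]{caputo2023lecture}, so there is nothing to compare against.
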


If $\hp$ is a polymatroid on $V$ and $T\subseteq V$, we denote the $T$\textbf{-contraction} of $\hp$ by
\[ \hp(U\,|\,T)\defeq \hp(U\cup T)-\hp(T)
\quad\quad(\textup{which is }\le \hp(U)-\hp(U\cap T)\textup{ by submodularity}). \]
The $T$-contraction $\hp(\cdot\,|\,T)$ is a polymatroid 
(see \cite{chun2009deletion} or \cite[2. Background]{bonin2021excluded}).

\begin{proposition}
\label{prop_ec_h_is_monotone}
Let $\Gamma$ and $\Delta$ be $B$-\textbf{core} graphs,
$\eta\colon\Gamma\to\Delta$ a morphism, 
and $\hp$ a $\Delta$-polymatroid.
Then $\chi(\eta^*\hp)\ge \chi(\hp)$, with equality if $\eta$ is surjective.
\end{proposition}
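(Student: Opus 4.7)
The plan is to reduce to a sub-$B$-graph inclusion and then apply Shearer's inequality (Lemma~\ref{lemma_Shearer}) to an appropriate contraction of $\hp^V$. First, I would set $\Delta'\defeq \eta(\Gamma)$, which is a sub-$B$-graph of $\Delta$ because $\eta$ is a graph morphism. Unpacking the definitions of $\eta^*$ and $\chi$ gives
$\chi(\eta^*\hp) = \hp^V(V(\Delta')) - \sum_{b\in B} \hp^b(E_b(\Delta'))$; this already equals $\chi(\hp)$ whenever $\eta$ is surjective, handling the second half of the statement. The remaining task is to establish
\[
    \hp^V(V(\Delta)) - \hp^V(V(\Delta')) \;\le\;
    \sum_{b\in B}\bigl[\hp^b(E_b(\Delta)) - \hp^b(E_b(\Delta'))\bigr].
\]

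Second, I would set $A\defeq V(\Delta)\setminus V(\Delta')$ and pass to the $V(\Delta')$-contraction $\hp'\defeq \hp^V(\,\cdot\,\mid V(\Delta'))$, which is a polymatroid on $A$ whose value at $A$ is the left-hand side of the displayed inequality. On $A$ I would put weight $\tfrac12$ on each of the $2|B|$ subsets $\src(E_b(\Delta))\cap A$ and $\tar(E_b(\Delta))\cap A$. Because $\Delta$ is a $B$-core graph, every $v\in A$ has at least two edge-endpoints in $\Delta$, so each $v$ receives total weight $\tfrac12 d(v)\ge 1$; this makes the assignment a fractional supercover of $A$, and Lemma~\ref{lemma_Shearer} then bounds $\hp'(A)$ by $\tfrac12$ times the sum over $b\in B$ of $\hp'(\src(E_b(\Delta))\cap A) + \hp'(\tar(E_b(\Delta))\cap A)$.

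Third, I would bound each of these $2|B|$ Shearer summands by $\hp^b(E_b(\Delta)) - \hp^b(E_b(\Delta'))$ via a short three-step chain. Submodularity of $\hp^V$ pushes $\hp^V(\src(E_b(\Delta))\cup V(\Delta')) - \hp^V(V(\Delta'))$ below $\hp^V(\src(E_b(\Delta))) - \hp^V(\src(E_b(\Delta))\cap V(\Delta'))$; monotonicity, using $\src(E_b(\Delta'))\subseteq \src(E_b(\Delta))\cap V(\Delta')$, replaces the subtracted term by $\hp^V(\src(E_b(\Delta')))$; and the morphism inequality for $\src\colon \hp^b\to \hp^V$ applied to $E_b(\Delta')\subseteq E_b(\Delta)$ upgrades the resulting vertex-side difference to the edge-side difference. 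An identical chain handles $\tar$, and the prefactor $\tfrac12$ from Shearer cancels the doubling over source and target, yielding the displayed inequality and hence the proposition.

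The only delicate point I foresee is assembling the submodularity/monotonicity/morphism chain in the correct order and matching its $2|B|$-fold output against the Shearer prefactor $\tfrac12$; everything else is routine bookkeeping from the definitions of contraction, $\chi$, and morphisms of polymatroids.
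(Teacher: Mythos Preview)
Your proposal is correct and follows essentially the same route as the paper's proof: reduce to the image $\Delta'=\eta(\Gamma)\subseteq\Delta$, apply Shearer's inequality to the $V(\Delta')$-contraction of $\hp^V$ with the half-weighted source/target cover (valid because $\Delta$ is core), and bound each summand via the chain submodularity\,$\to$\,monotonicity\,$\to$\,morphism inequality. The only cosmetic differences are that the paper applies Shearer on all of $V(\Delta)$ rather than restricting to $A$ (equivalent, since the contraction vanishes on $V(\Delta')$) and runs the three-step chain in the reverse order.
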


    In particular, taking $\Gamma$ to be the empty graph (with no vertices), we get $\chi(\hp) \le 0$.

\begin{proof}
If $\eta$ is surjective the claim is obvious.
Otherwise, write 
$\eta = \eta_{\textup{inj}}\circ \eta_{\textup{sur}}$ 
where 
$\eta_{\textup{inj}}\colon \textup{Im}(\eta)
\hookrightarrow \Delta$ is the inclusion map. 
Then $\chi(\eta^*\hp)
= \chi(\eta_{\textup{inj}}^*\eta_{\textup{sur}}^*\hp) 
= \chi(\eta_{\textup{inj}}^*\hp)$, so we may assume 
$\Gamma\subseteq \Delta$ and 
$\eta^*\hp = \hp\restriction_\Gamma$. 
Let $b\in B$.
To ease notation, denote $\src_b(\Gamma)\defeq \src(E_b(\Gamma))$.
Since $\src_b(\Gamma)\subseteq \src_b(\Delta)\cap V(\Gamma)$, 
by monotonicity, $\hp^V(\src_b(\Gamma)) \le \hp^V(\src_b(\Delta)\cap V(\Gamma))$, so
\begin{equation*}
    \begin{split}
        \hp^V(\src_b(\Delta)) - \hp^V(\src_b(\Gamma))
        &\ge \hp^V(\src_b(\Delta)) - \hp^V(\src_b(\Delta)\cap V(\Gamma))
        \\&\ge \hp^V(\src_b(\Delta)\,|\, V(\Gamma)).
    \end{split}
\end{equation*}

The same holds for $\tar$. Since $\src,\tar\colon E_b(\Delta)\rightrightarrows V(\Gamma)$ are morphisms,
\begin{equation}
        \label{eq_diff_Delta_Gamma_bound}
\begin{split}
    \hp^b(E_b(\Delta))-\hp^b(E_b(\Gamma))
    &\ge \max\{\hp^V(\src_b(\Delta))
    - \hp^V(\src_b(\Gamma)),\,\,\, \hp^V(\tar_b(\Delta))
    - \hp^V(\tar_b(\Gamma))\}
    \\&\ge \max\{\hp^V(\src_b(\Delta)\,|\,V(\Gamma)),\,\,\, \hp^V(\tar_b(\Delta)\,|\,V(\Gamma))\}
    \\&\ge \frac{1}{2}\prn*{\hp^V(\src_b(\Delta)\,|\,V(\Gamma)) +\hp^V(\tar_b(\Delta)\,|\,V(\Gamma))}.
\end{split}
\end{equation}

Now, since $\Delta$ is a $B$\textbf{-core} graph, that is, the degree of every vertex is $\ge 2$,
the function $\lambda\colon 2^{V(\Delta)}\to\R$ defined as the combination of indicators
$\lambda = \frac{1}{2}\sum_{b\in B} \prn*{\bbone_{\src_b(\Delta)} + \bbone_{\tar_b(\Delta)}}$
is a fractional supercover (because $\sum_{U\ni v}\lambda(U)=\deg(v)/2\ge1$).
By Shearer's lemma (Lemma~\ref{lemma_Shearer}) for the polymatroid
$\hp^V(\cdot\,|\,V(\Gamma))$, summing \eqref{eq_diff_Delta_Gamma_bound}
over all $b\in B$
we get
\[ \sum_{b\in B} 
(\hp^b(E_b(\Delta))-\hp^b(E_b(\Gamma)))
\ge \hp^V(V(\Delta)\,|\,V(\Gamma))
= \hp^V(V(\Delta))-\hp^V(V(\Gamma)), \]
as needed.
\end{proof}

\subsection*{Minimal stackings}

In contrast with the short proof of Corollary~\ref{corollary_Gamma_poly_thm_for_words},
for a connected $B$-graph $\Gamma$ with $\chi(\Gamma)<0$,
not every stacking $\sigma$ gets along with Lemma~\ref{lemma_tree_bounds},
because a $\sigma$-minimal edge $e$ may be a bridge (so its appears in every spanning tree).
To overcome this problem, we develop the concept of \textbf{minimal stackings} $\sigma$, 
in which a $\sigma$-minimal edge is guaranteed to be a non-bridge.
Note that a stacking $\sigma$ is determined uniquely by the \enquote{heights} $\sigma^V$ of the vertices.

\begin{definition}
    Let $\Gamma$ be a $B$-graph and 
    $\sigma
    \colon 
    V(\Gamma)
    {\overset{
    \cong}{
    \to}} 
    \{1,\ldots, |V(\Gamma)|\}
    $ 
    be a stacking.
    The \textbf{length} of $\sigma$ is defined as 
    \[ \textup{length}(\sigma) \defeq \sum_{e\in E(\Gamma)} \textup{length}_e(\sigma),\quad\quad
    \textup{length}_e(\sigma)\defeq |\sigma(\src(e))-\sigma(\tar(e))|.\]
    A stacking is called \textbf{minimal} if it has the minimal length over all stackings.
\end{definition}

\begin{proposition}
\label{prop_minimal_stacking_visible_non_bridge}
    Let $\Gamma$ be a connected $B$-graph and 
    $\sigma\colon V(\Gamma)
    {\overset{\cong}{\to}} 
    \{1,\ldots,|V(\Gamma)|\}
    $ be a minimal stacking.
    Let $v_*\in V(\Gamma)$ denote the 
    vertex minimizing $\sigma$, 
    and assume that $v_*$ is incident 
    to a bridge $e$ in $\Gamma$.
    Then $\{v_*\}$ is a connected component of $\Gamma\setminus \{e\}$.
\end{proposition}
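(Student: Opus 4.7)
I would argue by contradiction, so suppose $\sigma$ is a minimal stacking but the component $C$ of $v_*$ in $\Gamma\setminus\{e\}$ has at least two vertices. Let $C'$ be the other component, $u \in C'$ the other endpoint of $e$, $k = |C|$, $k' = |C'|$, and let $c$ denote the label of $e$. The plan is to produce an explicit competitor stacking $\sigma'$ with $\textup{length}(\sigma') < \textup{length}(\sigma)$. The crucial structural fact is that, $e$ being a bridge, every non-$e$ edge of $\Gamma$ lies entirely in $C$ or entirely in $C'$; hence for every label $b \ne c$, $E_b(\Gamma) = E_b(C) \sqcup E_b(C')$, and $E_c(\Gamma) = \{e\} \sqcup E_c(C) \sqcup E_c(C')$.

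The construction of $\sigma'$: place the vertices of $C'$ at ranks $1,\ldots,k'$ in the \emph{reverse} of their relative $\sigma$-order, then place $v_*$ at rank $k'+1$, and finally place $C \setminus \{v_*\}$ at ranks $k'+2,\ldots,|V(\Gamma)|$ in their relative $\sigma$-order. To verify $\sigma'$ is a valid stacking I would check labels one at a time. For $b \ne c$, each $b$-edge is internal to $C$ or to $C'$; both restricting to a consecutive block (the $C$-side) and reversing the order (the $C'$-side) preserve the stacking axiom. For label $c$, comparisons among $c$-edges entirely inside $C$ or entirely inside $C'$ are handled the same way. The only non-routine check is the comparison of $e$ with another $c$-edge $e' \subseteq C'$: $\sigma$-validity together with $\sigma(\src(e)) = \sigma(v_*) = 1 < \sigma(\src(e'))$ forces $\sigma(u) = \sigma(\tar(e)) < \sigma(\tar(e'))$, i.e., $u$ lies below $\tar(e')$ inside $C'$; after reversing $C'$, $u$ lies above $\tar(e')$, which is precisely what is needed to match the new $\src$-order, in which $\src(e) = v_*$ now lies above every vertex of $C'$.

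For the length comparison I would use the elementary compressed-length inequality: for any subset $S \subseteq V(\Gamma)$ carrying a fixed relative order, the sum $\sum_{e' \subseteq S}|\sigma(\src(e')) - \sigma(\tar(e'))|$ is minimized (by a value $\Delta_S$) exactly when $S$ occupies a consecutive block of heights, and reversing such a block preserves the minimum. In $\sigma'$ both $C$ and $C'$ occupy consecutive blocks, so the internal contributions drop to $\Delta_C$ and $\Delta_{C'}$, while the bridge contribution becomes $\sigma'(v_*) - \sigma'(u) = r_{C'}(u)$, where $r_{C'}(u)$ is $u$'s rank inside $C'$ under $\sigma$. Writing $\sigma(u) = r_{C'}(u) + \alpha$ with $\alpha = |\{v \in C : \sigma(v) < \sigma(u)\}|$, one obtains
\[ \textup{length}(\sigma) - \textup{length}(\sigma') = (\textup{length}_\sigma(C) - \Delta_C) + (\textup{length}_\sigma(C') - \Delta_{C'}) + (\alpha - 1). \]
The main obstacle is showing this right-hand side is strictly positive. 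Clearly $\alpha \ge 1$ because $v_* \in C$ has $\sigma(v_*) = 1 < \sigma(u)$. If $\textup{length}_\sigma(C) = \Delta_C$, then $C$ occupies a consecutive block of $\sigma$-heights; since $v_*$ has rank $1$ this block is forced to be $\{1,\ldots,k\}$, so every $C$-vertex has $\sigma$-value $\le k < \sigma(u)$, giving $\alpha = k \ge 2$. Otherwise $\textup{length}_\sigma(C) - \Delta_C \ge 1$. In either case the right-hand side is $\ge 1$, producing a strictly shorter valid stacking and contradicting the minimality of $\sigma$.
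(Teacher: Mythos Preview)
Your argument is correct and follows essentially the paper's strategy: build a competitor stacking by placing the two bridge-components into consecutive blocks, reversing one of them. In fact your $\sigma'$ is the global reversal of the paper's (the paper reverses $C$ and puts it first, you reverse $C'$ and put it first), so the two constructions have identical length; your aggregate formula $(\textup{length}_\sigma(C)-\Delta_C)+(\textup{length}_\sigma(C')-\Delta_{C'})+(\alpha-1)$ is just the paper's edge-by-edge $\delta(e')$'s summed.

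A few small points to tighten. You silently assume $\src(e)=v_*$; this is harmless but should be said, since swapping $\src\leftrightarrow\tar$ for the label $c$ preserves both the stacking condition and all edge-lengths. When checking the stacking axiom for label $c$ you also need to compare $e$ with a $c$-edge $e'\subseteq C$: here $\src(e')\neq v_*$ by injectivity of $\src$, so $\sigma'(\src(e'))>k'+1=\sigma'(\src(e))$, while $\tar(e')\in C$ gives $\sigma'(\tar(e'))\ge k'+1>\sigma'(u)$, and the comparison is consistent. Finally, your ``minimized exactly when $S$ occupies a consecutive block'' needs $S$ connected (otherwise an edgeless $S$ gives a counterexample); this holds here because $C$ and $C'$ are the components of $\Gamma\setminus\{e\}$.
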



\begin{proof}
    Let $C_1$ denote the connected 
    component of $v_*$ in 
    $\Gamma\setminus\{e\}$, and denote 
    its complement by $C_2$. 
    Define a new order $\sigma'$ on 
    $V(\Gamma)$: for $u, v\in V(\Gamma)$,
    \begin{itemize}
        \item If $u, v\in C_1$, then $\sigma'(u)>\sigma'(v)\iff \sigma(u)<\sigma(v)$.
        \item If $u, v\in C_2$, then
        $\sigma'(u)>\sigma'(v)\iff \sigma(u)>\sigma(v)$.
        \item If $u\in C_1, v\in C_2$ then $\sigma'(u) < \sigma'(v)$.
    \end{itemize}
\[\begin{tikzcd}[ampersand replacement=\&]
	{{\red{C_1^{\textup{new}}}}} \& {{\red{C_1^{\textup{new}}}}} \& {v_*} \& {{\blue {C_1^{\textup{old}}}}} \& {C_2} \& {{\blue{C_1^{\textup{old}}}}} \& {C_2}
	\arrow[color={rgb,255:red,214;green,92;blue,92}, dashed, no head, from=1-1, to=1-1, loop, in=60, out=120, distance=5mm]
	\arrow[color={rgb,255:red,214;green,92;blue,92}, curve={height=12pt}, dashed, no head, from=1-2, to=1-1]
	\arrow[color={rgb,255:red,214;green,92;blue,92}, curve={height=-12pt}, dashed, no head, from=1-3, to=1-1]
	\arrow[color={rgb,255:red,214;green,92;blue,92}, dashed, no head, from=1-3, to=1-2]
	\arrow["{\textbf{e}}"{description}, curve={height=-12pt}, no head, from=1-3, to=1-5]
	\arrow[color={rgb,255:red,92;green,92;blue,214}, no head, from=1-4, to=1-3]
	\arrow[color={rgb,255:red,92;green,92;blue,214}, curve={height=-12pt}, no head, from=1-4, to=1-6]
	\arrow[curve={height=-12pt}, no head, from=1-5, to=1-7]
	\arrow[curve={height=12pt}, no head, from=1-5, to=1-7]
	\arrow[color={rgb,255:red,92;green,92;blue,214}, curve={height=-12pt}, no head, from=1-6, to=1-3]
	\arrow[color={rgb,255:red,92;green,92;blue,214}, no head, from=1-6, to=1-6, loop, in=60, out=120, distance=5mm]
	\arrow[no head, from=1-7, to=1-7, loop, in=60, out=120, distance=5mm]
\end{tikzcd}\]
    We claim that $\sigma'$ is a stacking. 
    Assume, towards a contradiction, 
    that there are $b\in B$ and 
    $e_1, e_2\in E_b(\Gamma)$ such that
    $i\overset{\sigma'_* e_1}{\to}k, j\overset{\sigma'_*e_2}{\to}\ell$ 
    are not monotone, that is,
    \begin{equation}
    \label{eq_ijkl}
    i\defeq \sigma'(\src(e_1))<j\defeq \sigma'(\src(e_2)),\quad k\defeq \sigma'(\tar(e_1)) >\ell\defeq \sigma'(\tar(e_2)).
    \end{equation}
    If $\{i, j, k, \ell\}\subseteq \sigma'(C_m)$ for some $m\in \{1,2\}$, the monotonicity of $\sigma$ contradicts \eqref{eq_ijkl}.
    Therefore $\min\{i, \ell\}\in \sigma'(C_1), \max\{j, k\}\in \sigma'(C_2)$.
    Since both of the edges $e_1, e_2$ connect $\{i, j\}$ to $\{k,\ell\}$, and $e$ is a bridge between $C_1$ and $C_2$, it is not possible that $\{i, j\}\subseteq \sigma'(C_m), \{k, \ell\}\subseteq \sigma'(C_{m'})$ for some choice of $\{m,m'\}=\{1,2\}$.
    Since $b$ is monotonically increasing if and only if $b^{-1}$ is, we may assume without loss of generality that $i<\ell$.
    Therefore $i = \min\{i, j, k, \ell\}\in \sigma'(C_1)$.
    Now we separate to cases:
    \begin{itemize}
        \item If $k\in \sigma'(C_2)$ then $e_1=e$, so $i=\sigma'(v_*)=\max(\sigma'(C_1))$, so $\{j, k, \ell\}\subseteq \sigma'(C_2)$.
        Therefore $\sigma\restriction_{\{i, j, k, \ell\}} \sim \sigma'\restriction_{\{i, j, k, \ell\}}$, that is, the inner order of $\{i, j, k, \ell\}$ is the same in $\sigma$ and in $\sigma'$, contradicting the monotonicity of $b$ with respect to $\sigma$.
        \item Otherwise, $k\in \sigma'(C_1)$, so $j\in \sigma'(C_2)$ and necessarily $i<\ell<k<j$ and $\{i,\ell,k\}\subseteq \sigma'(C_1)$.
        Therefore $e_2=e$, so $\sigma'(v_*)\in \{j,\ell\}$.
        Denote $v_i\defeq \sigma'^{-1}(i)$ and similarly $v_j, v_k, v_{\ell}$. By monotonicity of $b$ with respect to $\sigma$,
\[ \sigma(v_k)<\sigma(v_{\ell})<\sigma(v_i) = \sigma(b^{-1}(v_k))<\sigma(b^{-1}(v_{\ell}))=\sigma(v_j). \]
We conclude that $\sigma(v_k) < \sigma(v_*)$ - a contradiction.
    \end{itemize}
    
    Therefore $\sigma'$ is a stacking.
    Now we compute $\textup{length}(\sigma)- \textup{length}(\sigma')$.
    Given $\{m,m'\}=\{1,2\}$ and an edge $e'\in E(C_m)$, we have
    \[\delta(e')\defeq \textup{length}_{e'}(\sigma)
    -\textup{length}_{e'}(\sigma') 
    = |[\sigma(\src(e')), \sigma(\tar(e'))]\cap V(C_{m'})|.\]
    For the bridge $e$ between $C_1, C_2$ with endpoints $v_*\in C_1, u_*\in C_2$, we have
    \[\delta(e)=|\{v\in C_1\setminus \{v_*\}: \sigma(v)<\sigma(u_*)\}|.\]
    Since $\textup{length}(\sigma)$ is minimal and 
    $\delta(e')\ge 0$ for every $e'\in E(\Gamma)$, 
    we must have $\delta(e')=0$ for every $e'\in E(\Gamma)$. This implies $\sigma(v)<\sigma(u)$ for every $v\in C_1, u\in C_2$, and therefore 
    $0=\delta(e)=|C_1\setminus\{v_*\}|$, that is, $C_1=\{v_*\}$.
\end{proof}

\begin{corollary}
\label{corollary_stackable_has_visible_edge}
    Let $\Gamma$ be a connected stackable 
    $B$-graph with $\chi(\Gamma)\le 0$. 
    Then there is a spanning 
    tree $T\subseteq E(\Gamma)$, 
    a stacking $\sigma$, and a 
    $\sigma$-minimal edge 
    $e\in E(\Gamma)\setminus T$.
\end{corollary}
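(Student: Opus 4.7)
The plan is to induct on $|V(\Gamma)|$, with the base case being that $\Gamma$ has no leaves. In the base case, take a minimal stacking $\sigma$ of $\Gamma$ and let $v_*$ be its $\sigma^V$-minimum vertex; by the no-leaf hypothesis $\deg(v_*)\ge 2$, so Proposition~\ref{prop_minimal_stacking_visible_non_bridge} gives two sub-cases. If $v_*$ is not incident to any bridge, pick any label $b$ with $v_*\in\src(E_b)\cup\tar(E_b)$; since $\src,\tar$ are monotone injections and $v_*$ is globally $\sigma^V$-minimal, the $\sigma^b$-minimum edge is incident to $v_*$, hence a non-bridge. Otherwise, $\{v_*\}$ is a connected component of $\Gamma$ minus the bridge, and the condition $\deg(v_*)\ge 2$ forces $v_*$ to carry a self-loop $e'\in E_{b'}$, which is $\sigma^{b'}$-minimum (since $\src(e')=\tar(e')=v_*$) and is a non-bridge (self-loops never disconnect). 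In either sub-case, take $T$ to be any spanning tree of $\Gamma$ avoiding the identified non-bridge edge.

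For the inductive step, suppose $\Gamma$ has a leaf $v$ with incident edge $e_v$ of label $b_v$. Set $\Gamma':=\Gamma\setminus\{v,e_v\}$; it is connected, stackable (by restricting any stacking of $\Gamma$), satisfies $\chi(\Gamma')=\chi(\Gamma)\le 0$, and has $|V(\Gamma')|<|V(\Gamma)|$. By the inductive hypothesis, $\Gamma'$ admits $(\sigma',T',e')$ with $e'\in E(\Gamma')\setminus T'$ a $\sigma'^c$-minimum non-bridge for some label $c$. I will extend $\sigma'$ to a stacking $\sigma$ of $\Gamma$ by inserting $v$ into $\sigma^V$ at a position compatible with the $\src,\tar$-monotonicity required by $e_v$; such a position exists in a nonempty interval determined by $e_v$'s other endpoint and the existing ordering of $E_{b_v}(\Gamma')$. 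The tree $T := T'\cup\{e_v\}$ is then a spanning tree of $\Gamma$ avoiding $e'$.

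The subtle point is verifying that $e'$ remains $\sigma^c$-minimum in $E_c(\Gamma)$. This is automatic when $c\neq b_v$, since the $E_c$-ordering is unchanged by the extension; when $c=b_v$, it requires placing $v$ so that $e_v$ lies above $e'$ in $\sigma^c$, which is possible provided the attachment vertex of $e_v$ sits suitably with respect to the endpoints of $e'$ in $\sigma'^V$. I expect the hard part of the proof to be handling the case where this placement fails for the $(\sigma',e')$ supplied by the IH; this will be addressed either by pruning a different leaf of $\Gamma$ (one whose incident-edge label differs from $c$) or by reducing directly to the core of $\Gamma$ and choosing a minimal stacking of the core that extends to $\Gamma$ while keeping a prescribed non-bridge edge as $\sigma^b$-minimum throughout.
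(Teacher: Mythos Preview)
Your base case is correct and matches the paper's treatment of the situation where the $\sigma$-minimal vertex lies on a cycle. The gap is in the inductive step, precisely at the point you flag as ``subtle.'' When $c=b_v$, the position of $e_v$ in $\sigma^{b_v}$ is \emph{not} a free choice: it is forced by whichever of $\src,\tar$ sends $e_v$ to the fixed endpoint $u\in V(\Gamma')$, since that map must remain a monotone injection into $(V(\Gamma'),\sigma'^V)$ and $u$'s position is already determined. If $u$ sits below the corresponding endpoint of $e'$ in $\sigma'^V$, then $e_v$ is forced strictly below $e'$ in $\sigma^{b_v}$, so $e'$ is no longer $\sigma^{b_v}$-minimal; and $e_v$ cannot substitute for it because $e_v$ is a bridge (it meets a leaf) and hence lies in every spanning tree. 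Your two proposed escapes do not close this: pruning a different leaf presupposes there is one whose label differs from $c$ or whose attachment point sits high enough, which need not hold (there may be a single leaf, or all leaves may attach low via $c$-edges); and ``reducing to the core'' just relocates the same extension problem to the hanging trees.

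The paper avoids induction entirely. It takes a minimal stacking $\sigma'$; if the $\sigma'$-minimum vertex $v_*$ already has a non-bridge incident edge, one is done as in your base case. Otherwise Proposition~\ref{prop_minimal_stacking_visible_non_bridge} forces $v_*$ to be a leaf, and the paper locates the vertex $u_*$ on a cycle closest to $v_*$, then \emph{re-orders} the stacking: the hanging tree between $v_*$ and $u_*$ is reversed and placed just above $u_*$, while the rest of the graph keeps its relative order above that, so that $u_*$ becomes the new global minimum. The point is that the same case analysis used in the proof of Proposition~\ref{prop_minimal_stacking_visible_non_bridge} shows this flipped order is again a valid stacking; then $u_*$ lies on a cycle, so it has a non-bridge incident edge, which (being incident to the minimum vertex) is $\sigma$-minimal for its label. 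This sidesteps the extension problem you hit: rather than trying to graft leaves onto an existing stacking without disturbing a designated minimal edge, it directly manufactures a stacking whose minimum vertex sits on a cycle.
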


\begin{proof}
    Let $\sigma'$ be a minimal stacking, 
    and let $v_*\in V(\Gamma)$ be the 
    $\sigma'$-minimal vertex (so that 
    $\sigma'(v_*)=1$). 
    If there is an edge 
    $e\in \src^{-1}(v_*)\cup \tar^{-1}(v_*)$ 
    which is not a bridge, then there 
    is a spanning tree $T$ not containing 
    $e$ and we are done. 
    Otherwise, by 
    Proposition~\ref{prop_minimal_stacking_visible_non_bridge}, 
    $v_*$ is a leaf. 
    Let $u_*\in V(\Gamma)$ be the vertex
    that is contained in a simple cycle,
    and is closest to $v_*$ among 
    such vertices.
    Let $C_1$ denote the connected 
    component of 
    $\Gamma\setminus\{u_*\}$
    containing $v_*$, and denote
    its complement in 
    $\Gamma\setminus\{u_*\}$ 
    by $C_2$.
    By design, $C_1$ is a hanging tree.
    Define a new order $\sigma$ on 
    $V(\Gamma)$: $u_*$ is 
    $\sigma$-minimal, and for 
    $u, v\in V(\Gamma)\setminus\{u_*\}$,
    \begin{itemize}
        \item If $u, v\in C_1$, then $\sigma'(u)>\sigma'(v)\iff \sigma(u)<\sigma(v)$.
        \item If $u, v\in C_2$, then
        $\sigma'(u)>\sigma'(v)\iff \sigma(u)>\sigma(v)$.
        \item If $u\in C_1, v\in C_2$ then 
        $\sigma'(u) < \sigma'(v)$.
    \end{itemize}
\[\begin{tikzcd}[ampersand replacement=\&]
	{{\blue{v_*^{\textup{old}}}}} \& {{\blue{C_1^{\textup{old}}}}} \& {u_*} \& {{\red{C_1^{\textup{new}}}}} \& {{\red{C_1^{\textup{new}}}}} \& {{\red{v_*^{\textup{new}}}}} \& {C_2} \& {{\blue{C_1^{\textup{old}}}}} \& {C_2}
	\arrow[color={rgb,255:red,92;green,92;blue,214}, curve={height=6pt}, no head, from=1-2, to=1-1]
	\arrow[color={rgb,255:red,92;green,92;blue,214}, curve={height=12pt}, no head, from=1-2, to=1-8]
	\arrow[color={rgb,255:red,92;green,92;blue,214}, no head, from=1-3, to=1-2]
	\arrow[curve={height=-12pt}, no head, from=1-3, to=1-7]
	\arrow[curve={height=-18pt}, no head, from=1-3, to=1-9]
	\arrow[color={rgb,255:red,214;green,92;blue,92}, curve={height=-6pt}, dashed, no head, from=1-5, to=1-3]
	\arrow[color={rgb,255:red,214;green,92;blue,92}, dashed, no head, from=1-5, to=1-4]
	\arrow[color={rgb,255:red,214;green,92;blue,92}, curve={height=-6pt}, dashed, no head, from=1-5, to=1-6]
	\arrow[curve={height=-6pt}, no head, from=1-7, to=1-9]
\end{tikzcd}\]
    We claim that $\sigma'$ is a stacking;
    Indeed, the proof is the same as
    in Proposition~\ref{prop_minimal_stacking_visible_non_bridge}. 
    Now $u_*$ is $\sigma$-minimal,
    and is not a leaf.
    Moreover, $\sigma$ is a 
    minimal stacking of the 
    $B$-subgraph $C_2\cup\{u_*\}$,
    so by
    Proposition~\ref{prop_minimal_stacking_visible_non_bridge},
    there is an edge $e$ incident to
    $u_*$ which is not a bridge, 
    so we are done. 
\end{proof}

We are ready to prove the second part 
of the $\Gamma$-polymatroid theorem, 
assuming Lemma~\ref{lemma_stackable_subgroup}

\begin{theorem}
\label{thm_Gamma_poly_subgrp}
    Let $\Gamma$ be a connected 
    $B$-graph with $\chi(\Gamma)<0$, 
    and let $\hp$ be a 
    $\Gamma$-polymatroid. 
    Then $\chi(\hp)\le -\hp^b(\{e\})$ 
    for some $e\in E(\Gamma)$ and 
    label $\textup{label}(e)=b\in B$.
\end{theorem}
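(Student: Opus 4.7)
The plan is to combine three ingredients from the preceding development: Lemma~\ref{lemma_stackable_subgroup} to pass to a stackable subgraph, Corollary~\ref{corollary_stackable_has_visible_edge} plus Lemma~\ref{lemma_tree_bounds} to extract a bound from a stacking, and Proposition~\ref{prop_ec_h_is_monotone} to transfer the bound back to $\Gamma$. Explicitly, I would first apply Lemma~\ref{lemma_stackable_subgroup} to the canonical labeling immersion $\eta\colon\Gamma\immerse\Omega_B$ (which makes sense because $\chi(\Gamma)<0$) to produce a connected $B$-graph $\Sigma$ with $\chi(\Sigma)<0$, together with an immersion $\nu\colon\Sigma\immerse\Gamma$ whose composition $\eta\circ\nu\colon\Sigma\immerse\Omega_B$ is stackable; by Definition~\ref{def_stacking_combinatorial} this endows $\Sigma$ with a stacking as a $B$-graph. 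Form the pullback $\Sigma$-polymatroid $\hp'\defeq\nu^*\hp$ using Proposition~\ref{prop_pullback_preserve_Gamma_poly}. Since $\nu$ preserves labels, $\hp'^b(\{e\})=\hp^b(\{\nu(e)\})$ for every labeled edge $e\in E_b(\Sigma)$, and by Proposition~\ref{prop_ec_h_is_monotone} one has $\chi(\hp)\le\chi(\hp')$; hence it suffices to exhibit an edge $e\in E_b(\Sigma)$ with $\chi(\hp')\le -\hp'^b(\{e\})$.

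Next I would invoke Corollary~\ref{corollary_stackable_has_visible_edge} on the stackable $\Sigma$ (which has $\chi(\Sigma)\le 0$) to produce a stacking $\sigma$, a spanning tree $T\subseteq E(\Sigma)$, and a $\sigma$-minimal edge $e\in E_b(\Sigma)\setminus T$. The construction in the proof of that corollary (via Proposition~\ref{prop_minimal_stacking_visible_non_bridge}) ensures the $\sigma$-minimum vertex $v_*$ is an endpoint of $e$ and lies on a simple cycle in $\Sigma$, so $v_*$ is incident to at least two non-bridge edges. Now Lemma~\ref{lemma_tree_bounds} gives
\[
\chi(\hp')\;\le\;\min_{v\in V(\Sigma)}\delta^V_v(\hp'^V)-\delta(\Sigma\setminus T)\;\le\;\delta^V_{v_*}(\hp'^V)-\delta(\Sigma\setminus T)\;=\;\hp'^V(\{v_*\})-\delta(\Sigma\setminus T).
\]
The morphism property $\src,\tar\colon\hp'^b\to\hp'^V$ applied to $e$ at $v_*$ gives $\hp'^V(\{v_*\})\le\hp'^b(\{e\})$, while $\sigma^b$-minimality of $e$ gives $\delta^b_e(\hp'^b)=\hp'^b(\{e\})$. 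Because $\chi(\Sigma)<0$ we have $|E(\Sigma)\setminus T|\ge 2$, so $\delta(\Sigma\setminus T)$ contains at least one additional term besides $\delta^b_e$, and I would arrange $T$ so that a second non-tree edge $e^{*}$ lies on the same simple cycle through $v_*$; the morphism property applied at that cycle yields $\delta^{b^*}_{e^*}(\hp'^{b^*})\ge\hp'^V(\{v_*\})$. Combining,
\[
\chi(\hp')\;\le\;\hp'^V(\{v_*\})-\hp'^b(\{e\})-\delta^{b^*}_{e^*}(\hp'^{b^*})\;\le\;-\hp'^b(\{e\}),
\]
which transfers via $\nu$ to $\chi(\hp)\le-\hp^b(\{\nu(e)\})$.

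The main obstacle is the third step: without compactness we cannot kill the $\hp'^V(\{v_*\})$ term directly as in Corollary~\ref{corollary_Gamma_poly_thm_for_words}, so one has to exploit the slack provided by $\chi(\Sigma)<0$ (at least two non-tree edges) and by the fact that $v_*$ sits on a cycle to produce a second non-tree edge whose marginal gain absorbs $\hp'^V(\{v_*\})$. Concretely, I would pick $T$ to be a spanning tree that avoids two consecutive edges of the shortest simple cycle through $v_*$ guaranteed by the construction in Corollary~\ref{corollary_stackable_has_visible_edge}; if the structure of $\Sigma$ given by Lemma~\ref{lemma_stackable_subgroup} does not immediately permit such a choice, I would reapply Lemma~\ref{lemma_stackable_subgroup} to a subgraph of $\Sigma$ obtained by contracting bridges, inductively shrinking $\Sigma$ until the minimum-vertex cycle has the required two non-tree edges. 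Once this bookkeeping is in place, the inequalities above assemble into the desired bound.
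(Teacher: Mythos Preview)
Your overall strategy is exactly the paper's: pull $\hp$ back along $\nu\colon\Sigma\to\Gamma$ to a stackable $\Sigma$ with $\chi(\Sigma)<0$ (Lemma~\ref{lemma_stackable_subgroup}), apply Corollary~\ref{corollary_stackable_has_visible_edge} and Lemma~\ref{lemma_tree_bounds} to $\hp'=\nu^*\hp$, and transfer the bound back via Proposition~\ref{prop_ec_h_is_monotone}. The gap is in how you handle the term $\min_v\delta^V_v(\hp'^V)$. You weaken it to $\delta^V_{v_*}(\hp'^V)=\hp'^V(\{v_*\})$ and then need a second non-tree edge $e^*$ with $\delta^{b^*}_{e^*}(\hp'^{b^*})\ge\hp'^V(\{v_*\})$. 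The morphism property only gives $\delta^{b^*}_{e^*}\ge\delta^V_{\src(e^*)}$ and $\delta^V_{\tar(e^*)}$, so this forces $e^*$ to be \emph{incident to} $v_*$, not merely on the same cycle. But then $T$ must avoid two edges at $v_*$, which is impossible whenever $\deg_\Sigma(v_*)=2$---and nothing rules this out. Your proposed repair (contracting bridges and reapplying Lemma~\ref{lemma_stackable_subgroup}) has no clear termination and does not obviously preserve the $B$-graph structure or stackability.

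The paper sidesteps all of this by simply not weakening the bound. Keep $\min_v\delta^V_v(\hp'^V)$ as it comes out of Lemma~\ref{lemma_tree_bounds}, and observe that \emph{any} second non-tree edge $e_1\in E(\Sigma)\setminus(T\cup\{e_0\})$---which exists because $\chi(\Sigma)<0$ forces $|E(\Sigma)\setminus T|\ge 2$---already satisfies
\[
\delta^{b_1}_{e_1}(\hp'^{b_1})\ \ge\ \delta^V_{\src(e_1)}(\hp'^V)\ \ge\ \min_{v}\delta^V_v(\hp'^V)
\]
by Proposition~\ref{prop_injective_morphism_ineq_delta}. Combined with $\delta^{b_0}_{e_0}(\hp'^{b_0})=\hp'^{b_0}(\{e_0\})$ (since $e_0$ is $\sigma$-minimal), this gives $\chi(\hp')\le\min_v\delta^V_v-\delta^{b_0}_{e_0}-\delta^{b_1}_{e_1}\le-\hp'^{b_0}(\{e_0\})$, with no special choice of $T$ or cycle bookkeeping required.
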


\begin{proof}
    [Proof assuming Lemma~\ref{lemma_stackable_subgroup}]
    By Lemma~\ref{lemma_stackable_subgroup}, there is a stackable connected $B$-graph $\Sigma$
    with $\chi(\Sigma)<0$ and a morphism $\eta\colon\Sigma\to\Gamma$.
    By Corollary~\ref{corollary_stackable_has_visible_edge}, 
    there is a spanning tree $T\subseteq E(\Sigma)$, 
    a stacking $\sigma$ of $\Sigma$ 
    and a $\sigma$-minimal edge $e_0\in E(\Sigma)\setminus T$.
    By Lemma~\ref{lemma_tree_bounds}, 
    \[ \chi(\eta^*\hp)
\le \min_{v\in V(\Gamma)} \delta^{V}_v(\eta^*\hp^{V}) - \sum_{b\in B}\sum_{e\in E_b(\Gamma)\setminus T} \delta^{b}_e(\eta^*\hp^{b}). \]
Since $\chi(\Sigma)<0$, there is another edge $e_1\in E(\Sigma)\setminus (T\cup\{e_0\})$.
By Proposition~\ref{prop_pullback_preserve_Gamma_poly}, $\eta^*\hp$ is a $\Sigma$-polymatroid, and so $\delta^{b_1}_{e_1}(\eta^*\hp^{b_1})\ge \min_{v\in V(\Gamma)} \delta^{V}_v(\eta^*\hp^{V})$ where ${b_1}=\textup{label}(e_1)$.
Since $e_0$ is $\sigma$-minimal, we have $\delta^{b_0}_{e_0}(\eta^*\hp^{b_0}) = \eta^*\hp^{b_0}(\{e_0\})$ where ${b_0}=\textup{label}(e_0)$.
Finally, by Proposition~\ref{prop_ec_h_is_monotone}, 
\[ \chi(\hp)\le \chi(\eta^*\hp)
\le -\eta^*\hp^{b_0}(\{e_0\})
=-\hp^{b_0}(\{\eta(e_0)\}). \]
\end{proof}

\subsection*{Existence of stackings: proof of Lemma~\ref{lemma_stackable_subgroup}}

\begin{definition}
    [{\cite[Definition 13]{louder2014stacking}}]
    A $\Z$\textbf{-tower of graphs} of length $k$ is a sequence 
    \[
    \Sigma=
    \Gamma_k\overset{\eta_k}{\immerse}
    \Gamma_{k-1}\overset{\eta_{k-1}}{\immerse}\cdots\overset{\eta_{2}}{\immerse}
    \Gamma_1\overset{\eta_1}{\immerse} \Gamma_0=\Gamma
    \]
    where each $\Gamma_i$ is a finite graph and each 
    $\eta_i$ is either an embedding in $\Gamma_{i-1}$
    or an embedding in a normal $\Z$-cover of $\Gamma_{i-1}$.
\end{definition}

\begin{lemma}
    \label{lemma_Z_tower}
    Let $\eta\colon\Sigma\immerse\Gamma$ be an immersion of finite graphs. 
    Suppose that $H\defeq\eta_*\pi_1(\Sigma)\le J\defeq\pi_1(\Gamma)$ is either a free factor of $J$,
    or a free factor of a normal subgroup $N\trianglelefteq J$ with $J/N\cong \Z$.
    Then $\eta$ decomposes as a $\Z$-tower of graphs of length $k\le |V(\Gamma)|$.
\end{lemma}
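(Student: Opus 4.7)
The plan is to proceed by induction on $|V(\Gamma)|$. The base case, when $\Sigma = \Gamma$ (which happens in particular for $|V(\Gamma)|=0$), yields a tower of length $0$; and when $\eta$ is already an embedding we obtain a tower of length $1$. For the inductive step, I aim to peel off one tower step, producing an immersion $\Sigma \immerse \Gamma_1$ whose target has strictly fewer vertices than $\Gamma$, and then apply the inductive hypothesis. As each such step reduces $|V(\cdot)|$ by at least one, this directly gives the bound $k \le |V(\Gamma)|$.

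In Case 1, where $H$ is a free factor of $J = \pi_1(\Gamma)$, I write $J = H * K$ and invoke the geometric realization of this splitting on $\Gamma$. By Bass-Serre theory applied to the free-product decomposition, there exists a proper connected subgraph $\Gamma_1 \subsetneq \Gamma$ (corresponding to the vertex of the Bass-Serre tree whose stabilizer is a conjugate of $H$) such that $\eta$ factors as $\Sigma \overset{\eta'}{\immerse} \Gamma_1 \hookrightarrow \Gamma$, with $H$ still a free factor of $\pi_1(\Gamma_1)$. The inclusion $\Gamma_1 \hookrightarrow \Gamma$ serves as the first embedding step of the tower, and the inductive hypothesis is applied to $\eta'$.

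In Case 2, where $H$ is a free factor of some $N \trianglelefteq J$ with $J/N \cong \Z$, I pass to the normal $\Z$-cover $\tilde{\Gamma} \to \Gamma$ corresponding to $N$. Since $H \le N = \pi_1(\tilde{\Gamma})$, the immersion $\eta$ lifts to $\tilde{\eta} \colon \Sigma \immerse \tilde{\Gamma}$, and this lift lands in a finite subcomplex because $\Sigma$ is finite. I choose a finite connected subgraph $\Gamma_1 \subseteq \tilde{\Gamma}$ that contains $\tilde{\eta}(\Sigma)$ and retains $H$ as a free factor of $\pi_1(\Gamma_1)$. The embedding $\Gamma_1 \hookrightarrow \tilde{\Gamma}$ is the first tower step (of $\Z$-cover type), and the remaining immersion $\Sigma \immerse \Gamma_1$ falls into Case 1, to which the argument above applies recursively.

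The main obstacle is ensuring that the subgraph $\Gamma_1$ preserves the free-factor property of $H$, since free factors do not in general descend to intermediate free subgroups of $J$. In Case 1, this is handled by constructing $\Gamma_1$ from the Bass-Serre tree of the splitting $J = H * K$, so that $\Gamma_1$ corresponds algebraically to the vertex group $H$. In Case 2, a careful choice of $\Gamma_1$ inside the infinite cover $\tilde{\Gamma}$---guided by taking a minimal connected subgraph containing both the $H$-invariant core and $\tilde{\eta}(\Sigma)$---achieves the required preservation. Once these geometric constructions are verified, the inductive count $k \le |V(\Gamma)|$ follows immediately.
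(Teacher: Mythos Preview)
Your induction scheme breaks in both cases.

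In Case~1 you claim that a free-product splitting $J = H * K$ produces a proper connected subgraph $\Gamma_1 \subsetneq \Gamma$ through which $\eta$ factors. This is false in general: free factors need not be visible as subgraphs. Take $\Gamma = \Omega_B$ the rose on $\{x,y\}$ and $H = \langle xy \rangle$, which is a free factor of $J = F_2$. The only proper connected subgraphs of the rose are a single loop or a single vertex, and none of them has fundamental group containing $xy$; hence $\eta\colon \Sigma \to \Gamma$ (with $\Sigma$ the length-$2$ cycle reading $xy$) does not factor through any proper subgraph. Bass--Serre theory gives you an action of $J$ on a tree, not a subgraph of $\Gamma$.

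In Case~2 you pass to a finite subgraph $\Gamma_1$ of the $\Z$-cover $\tilde{\Gamma}$ and then recurse. But $\Gamma_1$ typically has \emph{more} vertices than $\Gamma$ (the cover unfolds $\Gamma$), so your induction on $|V(\Gamma)|$ does not terminate; the claim that ``each such step reduces $|V(\cdot)|$ by at least one'' is wrong here.

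The paper avoids both problems by inducting on $|V(\Sigma)| - |V(\Gamma)|$ instead, after first reducing to surjective $\eta$. Even in the free-factor case it does not look for a subgraph of $\Gamma$: it extends a basis of $H$ to a basis $\{w_i\}$ of $J$, takes the normal $\Z$-cover $\tilde{\Gamma}\to\Gamma$ corresponding to the kernel of the map killing all $w_i$ except $w_{\rk(J)}$, and sets $\Gamma_1 = \Img(\eta')$ for the lift $\eta'$. Since $p|_{\Gamma_1}$ is surjective but not injective, $|V(\Gamma_1)| > |V(\Gamma)|$, and since $\Gamma_1$ is an image of $\Sigma$, $|V(\Gamma_1)| \le |V(\Sigma)|$; so the quantity $|V(\Sigma)| - |V(\Gamma)|$ strictly decreases, and the recursion terminates.
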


\begin{proof}
    We may assume that $\eta$ is surjective (otherwise, 
    let $\Gamma_1=\Img(\eta)$ and apply the proof
    to $\eta\colon\Sigma\to\Gamma_1$). 
    First assume that $H\le J$ is a free factor.
    Now we proceed by induction on $|V(\Sigma)|-|V(\Gamma)|$.
    The base case, where $|V(\Sigma)|-|V(\Gamma)|=0$,
    is vacuous: $\eta$ is surjective and therefore bijective.
    Assume now that $|V(\Sigma)|-|V(\Gamma)|>0$.
    Since $H\le J$ is a free factor, there is a basis $\{w_i\}_{i=1}^{\rk(J)}$ of $J$
    that contains a basis $\{w_i\}_{i=1}^{\rk(H)}$ of $H$.
    Let $p\colon\tilde{\Gamma}\to\Gamma$ 
    be the normal $\Z$-cover of $\Gamma$ 
    whose fundamental group $p_*\pi_1(\tilde{\Gamma})$ 
    is the normal closure of $\{w_i\}_{i=1}^{\rk(J)-1}$.
    Since $H\le p_*\pi_1(\tilde{\Gamma})$, the map $\eta$ 
    lifts to $\tilde{\Gamma}$, that is, 
    $\eta$ decomposes as $\Sigma\overset{\eta'}{\immerse}\tilde{\Gamma}\overset{p}{\immerse}\Gamma$.
    Let $\Gamma_1 = \Img(\eta')$. The following diagram illustrates this process.
\[\begin{tikzcd}[ampersand replacement=\&]
	\otimes \& \bullet \&\& {\blue{ \Gamma_1}\subseteq\cdots} \& \otimes \& \bullet \& \bullet \& {\cdots=\tilde\Gamma} \\
	{\Sigma=} \& \bullet \& \bullet \& \bullet \&\& \otimes \& {=\Gamma}
	\arrow["a", from=1-1, to=1-1, loop, in=60, out=120, distance=5mm]
	\arrow["c", from=1-1, to=1-2]
	\arrow["c", from=1-2, to=2-2]
	\arrow["a", from=1-2, to=2-4]
	\arrow["c", from=1-4, to=1-5]
	\arrow["{\blue a}", color={rgb,255:red,92;green,92;blue,214}, from=1-5, to=1-5, loop, in=60, out=120, distance=5mm]
	\arrow["b", from=1-5, to=1-5, loop, in=240, out=300, distance=5mm]
	\arrow["{\blue c}", color={rgb,255:red,92;green,92;blue,214}, from=1-5, to=1-6]
	\arrow["{\blue a}", color={rgb,255:red,92;green,92;blue,214}, from=1-6, to=1-6, loop, in=60, out=120, distance=5mm]
	\arrow["b", from=1-6, to=1-6, loop, in=240, out=300, distance=5mm]
	\arrow["{\blue c}", color={rgb,255:red,92;green,92;blue,214}, from=1-6, to=1-7]
	\arrow["a", from=1-7, to=1-7, loop, in=60, out=120, distance=5mm]
	\arrow["{\blue b}", color={rgb,255:red,92;green,92;blue,214}, from=1-7, to=1-7, loop, in=240, out=300, distance=5mm]
	\arrow["c", from=1-7, to=1-8]
	\arrow["p", curve={height=-12pt}, dotted, from=1-8, to=2-7]
	\arrow["b", from=2-3, to=2-2]
	\arrow["c", from=2-4, to=2-3]
	\arrow["a", from=2-6, to=2-6, loop, in=195, out=255, distance=5mm]
	\arrow["b", from=2-6, to=2-6, loop, in=240, out=300, distance=5mm]
	\arrow["c", from=2-6, to=2-6, loop, in=285, out=345, distance=5mm]
\end{tikzcd}\]
    $\Gamma_1$ is the union of the lifts of the paths $\{w_i\}_{i=1}^{\rk(H)}$, 
    so its image in
    $\Gamma$ is
    $p(\Gamma_1)=\eta(\Sigma)=\Gamma$,
    but $p_*\pi_1(\Gamma_1)\lneqq \pi_1(\Gamma)$,
    so $p\restriction_{\Gamma_1}$ is not injective.
    Now $\Gamma_1$ is finite with $|V(\Sigma)|-|V(\Gamma_1)|<|V(\Sigma)|-|V(\Gamma)|$, so 
    we are done by the induction hypothesis.
    The same argument works for the case where $H$ is a free factor of $N\trianglelefteq J$.
\end{proof}

\begin{definition}
    A $\Z$\textbf{-tower of groups} of length $k$ is a sequence 
    \[
    H = J_k \le J_{k-1} \le \cdots \le J_1\le J_0=J
    \]
    where each $J_i$ is a finitely generated free group and is either a free 
    factor of $J_{i-1}$ or a free factor of a normal subgroup $N\trianglelefteq J_{i-1}$ with 
    $J_{i-1}/N\cong \Z$.
\end{definition}

\begin{corollary}
    \label{corollary_Z_tower}
    Let $\eta\colon\Sigma\immerse\Gamma$ be an immersion of finite graphs.
    Then $\eta$ decomposes as a $\Z$-tower of graphs if and only if the inclusion
    $H\defeq\pi_1(\Sigma)\le J\defeq \pi_1(\Gamma)$ 
    decomposes as a $\Z$-tower of groups.
\end{corollary}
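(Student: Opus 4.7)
The plan is to prove both directions of the biconditional; the forward direction is routine bookkeeping, while the backward direction uses Lemma~\ref{lemma_Z_tower} as the base case in an induction on the length $k$ of the group tower.

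For the ``only if'' direction, suppose $\Sigma = \Gamma_k \immerse \Gamma_{k-1} \immerse \cdots \immerse \Gamma_0 = \Gamma$ is a $\Z$-tower of graphs, and define $J_i \defeq (\eta_1\circ\cdots\circ\eta_i)_*\pi_1(\Gamma_i) \le J$. When $\eta_i$ embeds $\Gamma_i$ as a subgraph of $\Gamma_{i-1}$, extending any spanning tree of $\Gamma_i$ to one of $\Gamma_{i-1}$ exhibits a free basis of $\pi_1(\Gamma_i)$ as a subset of a free basis of $\pi_1(\Gamma_{i-1})$, so $J_i$ is a free factor of $J_{i-1}$. When $\eta_i$ embeds $\Gamma_i$ into a normal $\Z$-cover $p\colon\tilde\Gamma_{i-1}\to\Gamma_{i-1}$, setting $N_{i-1}\defeq p_*\pi_1(\tilde\Gamma_{i-1})$ gives $N_{i-1}\trianglelefteq J_{i-1}$ with $J_{i-1}/N_{i-1}\cong\Z$, and the same spanning-tree argument inside $\tilde\Gamma_{i-1}$ shows $J_i$ is a free factor of $N_{i-1}$.

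For the ``if'' direction, I induct on $k$. The base cases $k\le 1$ follow directly from Lemma~\ref{lemma_Z_tower} (for $k=0$, viewing $H=J$ as a trivial free factor of itself). For $k\ge 2$, I peel off the top step $J_1\le J_0 = J$ of the group tower by constructing a finite intermediate graph $\Gamma'$ with $\pi_1(\Gamma') = J_1$ together with immersions $\Sigma \immerse \Gamma' \immerse \Gamma$ whose composition is $\eta$. To build $\Gamma'$, let $p\colon C \to \Gamma$ be the (generally infinite) cover with $p_*\pi_1(C) = J_1$; since $H \le J_1$, the map $\eta$ lifts to $\tilde\eta\colon\Sigma\immerse C$, and I take $\Gamma'\subseteq C$ to be the union of the Stallings core of $C$ (finite, since $J_1$ is finitely generated) with $\tilde\eta(\Sigma)$ (finite, since $\Sigma$ is finite), connected via a common lift of the basepoint. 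Then $\pi_1(\Gamma') = J_1$, since the Stallings core already realizes $J_1$ and any subgraph of $C$ has fundamental group contained in $\pi_1(C)=J_1$. The inductive hypothesis, applied to $\Sigma\immerse\Gamma'$ with the shorter group tower $H = J_k \le \cdots \le J_1 = \pi_1(\Gamma')$ of length $k-1$, yields a $\Z$-tower of graphs for $\Sigma\immerse\Gamma'$, which concatenates with the $\Z$-tower decomposition of $\Gamma'\immerse\Gamma$ supplied by Lemma~\ref{lemma_Z_tower} applied to the single step $J_1\le J$, completing the proof.

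The main subtlety will be ensuring that $\Gamma'$ is simultaneously finite, connected, contains $\tilde\eta(\Sigma)$, and has fundamental group exactly $J_1$. Finiteness and realization of $J_1$ follow from the Stallings core being finite and adjoining $\tilde\eta(\Sigma)$ being unable to enlarge the fundamental group beyond $\pi_1(C) = J_1$; connectedness is handled by choosing a compatible basepoint lift, which is a routine covering-space argument.
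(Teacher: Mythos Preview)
Your proof is correct and follows essentially the same approach as the paper's: both directions are handled the same way, and for the backward direction both proofs construct intermediate graphs realizing the $J_i$ and invoke Lemma~\ref{lemma_Z_tower} on each step before concatenating. The paper states this more tersely (take $\Gamma_i$ to be the core graph of $J_i$, apply Lemma~\ref{lemma_Z_tower} to each $\Gamma_i\to\Gamma_{i-1}$, concatenate), whereas your inductive phrasing and explicit construction of $\Gamma'$ as the union of the Stallings core with $\tilde\eta(\Sigma)$ is a bit more careful about ensuring the factoring $\Sigma\immerse\Gamma'\immerse\Gamma$ actually exists, a point the paper's proof leaves implicit.
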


\begin{proof}
    If $\eta$ decomposes as a $\Z$-tower of graphs,
    clearly the inclusion
    $H\defeq\pi_1(\Sigma)\le J\defeq \pi_1(\Gamma)$ 
    decomposes as a $\Z$-tower of groups, of the same length.
    In the other direction, let 
    \[
    H = J_k \le J_{k-1} \le \cdots \le J_1\le J_0=J
    \]
    be a $\Z$-tower of groups. 
    Construct $\Gamma_i$ as the core graph of $J_i$, and use Lemma~\ref{lemma_Z_tower}
    to decompose each immersion $\Gamma_i\to \Gamma_{i-1}$ into a $\Z$-tower of graphs;
    then concatenate the towers.
\end{proof}

A subgroup $H\le F$ of a free group is called \textbf{strictly compressed} if
$\rk(J)>\rk(H)$ whenever $H\lneqq J\le F$.
Note that a cyclic group $H=\inner{w}$ is strictly compressed if and only if 
$w$ is not a proper power.
The implication $(3)\Rightarrow (4)$ from the following proposition
reduces Lemma~\ref{lemma_stackable_subgroup} 
to the existence of a subgroup with a $\Z$-tower, which we prove in
Theorem~\ref{thm_exist_subgrp_Z_tower}.
The implications $(1)\Rightarrow (2)\Rightarrow (3)$ are given for completeness.

\begin{proposition}
    Let $\eta\colon\Sigma\immerse\Gamma$ be an immersion of finite graphs, 
    and denote $H=\eta_*\pi_1(\Sigma)\le F\defeq\pi_1(\Gamma)$.
    Each statement implies the next one:
    \begin{enumerate}
        \item $H$ is a free factor of $F$.
        \item $H$ is strictly compressed in $F$.
        \item $H$ has a $\Z$-tower in $F$.
        \item $\Sigma$ is stackable over $\Gamma$.
    \end{enumerate}
\end{proposition}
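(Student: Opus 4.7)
The plan is to prove the three implications in sequence, with the substance residing in $(3) \Rightarrow (4)$: the paper remarks that $(1) \Rightarrow (2) \Rightarrow (3)$ are included only for completeness. For $(1) \Rightarrow (2)$, if $F = H \ast K$ then the retraction $\rho \colon F \twoheadrightarrow H$ restricts to any intermediate $H \le J \le F$ as a retraction, so $J = H \ast \ker(\rho|_J)$ and $H$ is a free factor of $J$; for $J \ne H$, any $x \in J \setminus H$ gives a nontrivial $x\rho(x)^{-1} \in \ker(\rho|_J)$, so $\rk(J) > \rk(H)$. For $(2) \Rightarrow (3)$, I would induct on the rank gap $\rk(F) - \rk(H)$. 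The base case $\rk(F) = \rk(H)$ forces $H = F$ (apply strict compression to $J = F$), giving the trivial tower. Otherwise $\rk(H) < \rk(F)$, so the image of $H$ in $F^{\textup{ab}} \cong \Z^{\rk(F)}$ has rank less than $\rk(F)$, and $H$ lies in the kernel $N := \ker(\phi)$ of some surjection $\phi \colon F \twoheadrightarrow \Z$; the key step is to extract a finitely generated free factor $J_1 \le N$ containing $H$ with $\rk(J_1) < \rk(F)$, using the Bass--Serre tree of the HNN splitting $F \cong N \ast_\varphi$ and the finite generation of $H$. Since $H$ is still strictly compressed in $J_1$, the induction hypothesis supplies a $\Z$-tower $H \le \cdots \le J_1$; prepending the step $J_1 \le N \trianglelefteq F$ completes the tower.

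For $(3) \Rightarrow (4)$, the main implication: by Corollary~\ref{corollary_Z_tower}, a $\Z$-tower of groups $H = J_k \le \cdots \le J_0 = F$ lifts to a decomposition of $\eta$ as a $\Z$-tower of graphs
\[
\Sigma = \Gamma_k \immerse \Gamma_{k-1} \immerse \cdots \immerse \Gamma_0 = \Gamma,
\]
where each $\eta_i$ is either an embedding $\Gamma_i \hookrightarrow \Gamma_{i-1}$ or an embedding $\Gamma_i \hookrightarrow \tilde{\Gamma}_{i-1}$ into a normal $\Z$-cover of $\Gamma_{i-1}$. I would show each step is stackable and combine stackings inductively. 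The embedding case is handled by the trivial stacking $x \mapsto (x, 0) \in \Gamma_{i-1} \times \R$. For the $\Z$-cover case, the free $\Z$-action on $\tilde{\Gamma}_{i-1}$ yields a $\Z$-equivariant height function $h \colon \tilde{\Gamma}_{i-1} \to \R$ (interpolating linearly on edges from an integer-valued vertex function), and the map $x \mapsto (p(x), h(x))$ over the covering projection $p$ embeds $\tilde{\Gamma}_{i-1}$ in $\Gamma_{i-1} \times \R$; restriction to $\Gamma_i$ gives the required stacking. To compose two stackings $\hat{\eta}_i \colon \Gamma_i \hookrightarrow \Gamma_{i-1} \times \R$ and $\hat{\eta}_{i-1} \colon \Gamma_{i-1} \hookrightarrow \Gamma_{i-2} \times \R$, one forms the composite into $\Gamma_{i-2} \times \R \times \R$ and then collapses the two height coordinates by $(a, b) \mapsto a + \epsilon b$; for $\epsilon > 0$ sufficiently small (using compactness of $\Gamma_i$), the result remains a topological embedding that commutes with the projection to $\Gamma_{i-2}$. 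Iterating gives the desired stacking $\Sigma \hookrightarrow \Gamma \times \R$.

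The main obstacle is extracting the finitely generated free factor $J_1 \le N$ in $(2) \Rightarrow (3)$, since $N$ is typically not finitely generated; one has to exploit the finite generation of $H$ together with the HNN structure of $F$ over $N$. The implication $(3) \Rightarrow (4)$ is conceptually clean but requires a careful compactness argument to choose $\epsilon$ small enough that injectivity is preserved after collapsing two $\R$-coordinates into one.
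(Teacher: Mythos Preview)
Your treatments of $(1)\Rightarrow(2)$ and $(3)\Rightarrow(4)$ are fine. For $(3)\Rightarrow(4)$ the paper simply invokes Corollary~\ref{corollary_Z_tower} and then cites \cite[Lemma~15]{louder2014stacking}; your height-function and $\epsilon$-collapse argument is essentially a reproof of that lemma, and it goes through because the graphs are finite and the height functions can be taken piecewise-linear, so only finitely many slopes need to be avoided.

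The gap is in $(2)\Rightarrow(3)$. Your induction on $\rk(F)-\rk(H)$ requires, at each step, a finitely generated free factor $J_1\le N=\ker(\phi)$ with $H\le J_1$ and $\rk(J_1)<\rk(F)$. Finite generation of $H$ and the Bass--Serre/covering picture do give a finitely generated free factor of $N$ containing $H$, but not one of rank below $\rk(F)$. Concretely, take $F=\langle a,b\rangle$, $w=[b,aba^{-1}]$, $H=\langle w\rangle$ (strictly compressed, since $w$ is not a proper power), and $\phi(a)=1$, $\phi(b)=0$. Writing $b_i=a^iba^{-i}$, one has $N=\langle b_i:i\in\Z\rangle$ and $w=[b_0,b_1]$; the smallest free factor of $N$ containing $H$ is $\langle b_0,b_1\rangle$, of rank $2=\rk(F)$, so no admissible $J_1$ exists for this $\phi$ and your induction stalls. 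The paper avoids this by taking $F_n$ to be the \emph{algebraic closure} of $H$ in $\ker(\phi_{n-1})$ and terminating not via a rank-gap descent but via the finiteness of algebraic extensions of $H$: the $F_n$ form a strictly decreasing chain of algebraic extensions, hence eventually stabilise, and at that point the induced map $H^{\mathrm{ab}}\to F_n^{\mathrm{ab}}$ is surjective, forcing $\rk(F_n)\le\rk(H)$ and thus $F_n=H$ by strict compression. Replacing your rank induction with this finiteness argument repairs the proof.
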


\begin{proof}
    For 
    $(1)\Rightarrow (2)$, note that if 
    $H$ is a free factor of $F$ then it is 
    a free factor of every subgroup 
    $J\le F$ containing $H$
    \cite[Claim 3.9(1)]{PP15}.
    
    \noindent For $(2)\Rightarrow (3)$, initialize $F_0=F$.
    For every $n\ge 1$, assume that $F_k$ were defined for all $k<n$; we define $F_n$ recursively.
    If $H^{\textup{ab}}\neq F_{n-1}^{\textup{ab}}$, there is $0\neq \phi_{n-1}\colon F_{n-1}\to \Z$ with $H\le \ker(\phi_{n-1})$; let $F_n$ be the algebraic closure of $H$ inside $\ker(\phi_{n-1})$.
    Since $F_n\lneqq F_{n-1}$ and all $F_n$ are algebraic extensions of $H$ (at least for $n\ge 1$), the process terminates with $H^{\textup{ab}} = F_n^{\textup{ab}}$
    after finitely many steps (as there are only finitely many algebraic extensions of $H$).
    Then $H\le F_n$ and $\rk(H)=\rk(F_n)$; since $H$ is strictly compressed, $H=F_n$.
    
    \noindent For 
    $(3)\Rightarrow (4)$,
    apply Corollary~\ref{corollary_Z_tower}
    to get a tower of graphs; then apply  
    \cite[Lemma 15]{louder2014stacking}.
\end{proof}

To the end of this subsection, denote the derived series of $\FF$ by
\[ \FF_0\defeq \FF,\quad\quad \FF_{n+1}\defeq [\FF_n, \FF_n]. \]
For any $1\neq w\in \FF$, let
\[ n(w)\defeq \min\braces*{n\in \N: w\in \FF_n}. \]
Since $\bigcap_{n=0}^{\infty} \FF_n=\{1\}$ (that is, $\FF$ is residually solvable), $n(w)$ is finite for every $w\neq 1$.
It is clear that for every $u, v\in \FF, k\in \Z\setminus\{0\}$ we have
\[ n(v^k)
=n(v)
=n(uvu^{-1}), 
\quad 
n(uv)
\ge \min\{n(u), n(v)\},\]
and therefore $n(uvu^{-1}v^{k})\ge \max \{n(u), n(v)\}. $

\begin{proposition}
\label{prop_diff_n_implies_commutator_max_n}
    Let $u, v\in \FF, k\in \Z\setminus\{0\}$.
    If $n(u)\neq n(v)$ or $k\neq -1$, then
    \[n(uvu^{-1}v^{k})=\max \{n(u), n(v)\}.\]
    
\end{proposition}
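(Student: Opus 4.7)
The plan is to use the commutator identity $uvu^{-1}v^{k} = [u,v]\cdot v^{k+1}$ and work modulo $\FF_{N+1}$ inside the abelian quotient $A \defeq \FF_N/\FF_{N+1}$, where $N = \max\{n(u),n(v)\}$. The group $A$ carries the natural structure of a $\Z[\FF/\FF_N]$-module via the conjugation action of $\FF$. The lower bound $n(uvu^{-1}v^k) \ge N$ is already established in the preamble by normality of $\FF_N$ in $\FF$, so it suffices to verify $uvu^{-1}v^k \notin \FF_{N+1}$ by exhibiting a non-zero image in $A$.

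Writing $\bar{u}\in \FF/\FF_N$ for the image of $u$ and $\bar{v}\in A$ for the image of $v$, projecting the identity $uvu^{-1}v^{k} = [u,v]\, v^{k+1}$ into $A$ yields
\[
\overline{uvu^{-1}v^k} \;=\; (\bar{u}-1)\bar{v} \,+\, (k+1)\bar{v} \;=\; (\bar{u}+k)\bar{v},
\]
where the term $(\bar{u}-1)\bar{v}$ records the image of $[u,v]=uvu^{-1}v^{-1}$ using the module structure ($uvu^{-1}\mapsto \bar u\cdot\bar v$, $v^{-1}\mapsto -\bar v$). The proposition thus reduces to showing this module element is non-zero. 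In the case $n(u) = n(v) = N$, the image $\bar u$ equals $1\in \FF/\FF_N$, so the element simplifies to $(k+1)\bar{v}$, which is non-zero because $A$ is a torsion-free abelian group (the derived quotients of a free group have no torsion) and the hypothesis forces $k+1\neq 0$. In the case $n(u)\neq n(v)$, after arranging the larger depth to be $n(v)=N$, we have $\bar u \neq 1$, and one must show $(\bar u + k)\bar v \neq 0$ for every $k \in \Z\setminus\{0\}$.

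The main obstacle is precisely this last non-vanishing claim in the mixed-depth case. It is a structural fact about the $\Z[\FF/\FF_N]$-module $A$: no non-zero element of $A$ is annihilated by a group-ring element of the form $g + k$ with $g\neq 1\in\FF/\FF_N$ and $k\in\Z$. I would obtain this either by invoking the Magnus embedding of $\FF/\FF_{N+1}$ into an iterated wreath product of abelian groups, which exhibits $A$ as a faithful and sufficiently free module over $\Z[\FF/\FF_N]$ so that the desired non-annihilation follows by direct computation, or by an inductive argument on $N$ using the Fox-calculus description of the first derived quotient, leveraging the fact that $\FF/\FF_N$ is torsion-free and that the module structure of $A$ inherits its key properties from the abelianization. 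Establishing this structural input is the essential new content of the proposition; the commutator identity reduction and torsion-freeness take care of the remaining bookkeeping.
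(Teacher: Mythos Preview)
Your approach is essentially the same as the paper's: both project $uvu^{-1}v^k$ into $A=\FF_N/\FF_{N+1}$, compute its image as $(\bar u + k)\bar v$ via the $\Z[\FF/\FF_N]$-module structure, and reduce to showing this element is nonzero. The only difference is that the paper dispatches your ``main obstacle'' in one stroke by citing the known fact (from \cite{cochran2005homology}) that $\FF_N/\FF_{N+1}$ is torsion-free as a $\Z[\FF/\FF_N]$-module, so $(\bar u + k)\bar v\neq 0$ follows immediately from $\bar u + k\neq 0$ in the group ring and $\bar v\neq 0$; your proposed Magnus-embedding or Fox-calculus arguments would ultimately establish the same torsion-freeness, just more laboriously.
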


Note that the assumption $n(u)\neq n(v)$ is necessary, as one can take $w\in \FF_n$ for large $n$ and then $[aw, a^{-1}] = awa^{-1}\cdot w^{-1}\in \FF_n$ although $n(a)=n(aw)=0$.

\begin{proof}
    Assume without loss of generality
    $m\defeq n(v) > n(u)$.
    It suffices to show $n(uvu^{-1}v^{k})\le m$, that is, to prove that $uvu^{-1}v^{k}\notin \FF_{m+1}$.
    Consider the group algebra $R\defeq \Z[\FF/\FF_{m}]$.
    It acts by conjugation on the abelian group $M\defeq \FF_m/\FF_{m+1}$ (for which we use additive notation), making it an $R$-module.
    As explained in 
    \cite[Proof of Proposition 2.4]{cochran2005homology}, 
    $M$ is torsion-free as an $R$-module; that is, for $\zeta\in R\setminus\{0\}, \xi\in M\setminus\{0\}$ we have $\zeta\xi\neq 0$.
    Let $\overline{u}\in \FF/\FF_m, \overline{v}\in \FF_m/\FF_{m+1}$ be the projections to the quotient groups.
    Substitute $\zeta\defeq \overline{u}+k$ and $\xi=\overline{v}$. 
    Then
    $\overline{uvu^{-1}v^{k}}
    = \overline{uvu^{-1}} + k\cdot\overline{v}
    = \zeta\xi$.
    Since $\zeta\neq 0$ 
    (because either 
    $u\notin \FF_{m}$ 
    or $k\neq -1$) 
    and $\xi\neq 0$ 
    (since $v\notin \FF_{m+1}$) 
    we get 
    $\overline{uvu^{-1}v^{k}}\neq 0$ 
    as needed.
\end{proof}

In the following proposition, $\FF$ is \textbf{not} assumed to be finitely generated.

\begin{proposition}
\label{prop_lin_indep_implies_basis}
Let $u, v\in \FF$.
Denote their images in the abelianization by $\overline{u}, \overline{v}\in \FF/\FF_1$.
If $\overline{u}, \overline{v}$ are linearly independent, then
$\braces*{[v^n, u^m]}_{n, m\in \Z}$ can be completed to a basis of $\FF_1$.
\end{proposition}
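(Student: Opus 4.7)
The plan is to pass to the quotient $M := \FF_1/\FF_2$, which carries a natural $R$-module structure for $R := \Z[\FF^{\mathrm{ab}}]$ via conjugation. The commutator identities $[xy, z] = x[y, z]x^{-1}\cdot[x, z]$ and $[x, yz] = [x, y]\cdot y[x, z]y^{-1}$ descend modulo $\FF_2$ to the $R$-linear relations $[xy, z] \equiv [x, z] + \overline x\cdot[y, z]$ and $[x, yz] \equiv [x, y] + \overline y\cdot[x, z]$, and a short induction on $|n|, |m|$ yields
\[
  [v^n, u^m] \equiv Q_n(\overline v)\,Q_m(\overline u)\cdot \gamma \pmod{\FF_2},
\]
where $\gamma := [v, u]\bmod \FF_2$ and $Q_k(t) := (t^k - 1)/(t - 1)\in\Z[t^{\pm 1}]$.

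Next I argue that $R\cdot\gamma$ is a free rank-one $R$-submodule of $M$: nonvanishing of $\gamma$ is seen in the further quotient $\FF_1/\gamma_3\FF \cong \Lambda^2 \FF^{\mathrm{ab}}$, where $\gamma\mapsto \overline v\wedge\overline u \neq 0$ by the independence hypothesis, and $R$-torsion-freeness of $M$ follows from Crowell's exact sequence $0\to M\to R^{B}\to I(R)\to 0$ embedding $M$ in a free $R$-module. The telescoping identity
\[
  P_{n+1, m+1} - P_{n+1, m} - P_{n, m+1} + P_{n, m} = \overline v^{\,n}\,\overline u^{\,m},\qquad P_{n, m} := Q_n(\overline v)\,Q_m(\overline u),
\]
together with analogous boundary identities bridging the sign regions of $n, m$, exhibits a $\Z$-invertible triangular change of basis between $\{P_{n, m}\}_{n, m \neq 0}$ and the monomial basis $\{\overline v^{\,a}\overline u^{\,b}\}_{a, b\in \Z}$ of $R' := \Z[\overline u^{\pm 1}, \overline v^{\pm 1}]$; transporting through the isomorphism $R\gamma\cong R$, the family $\{[v^n, u^m]\bmod \FF_2\}_{n, m\neq 0}$ is a $\Z$-basis of $R'\gamma\subseteq M$.

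To conclude, I need $R'\gamma$ to be a $\Z$-direct summand of $M$, and to lift the resulting $\Z$-basis back to $\FF_1$. For the summand, let $L\le \FF^{\mathrm{ab}}$ be the rank-$2$ $\Q$-saturation of $\langle \overline u, \overline v\rangle$ and choose a complement $\FF^{\mathrm{ab}} = L\oplus L'$; then $R$ splits as $R = \Z[L^{\pm}] \otimes_\Z \Z[L'^{\pm}]$, and $R'\subseteq \Z[L^{\pm}]$ is a $\Z$-direct summand because $\Z[L^{\pm}]$ is free of rank $[L:\langle \overline u, \overline v\rangle]$ over $R'$. Purity of $R\gamma$ in $M$ is then checked against the explicit Fox-calculus image of $\gamma$ in $R^{B}$. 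Lifting a $\Z$-basis $T$ of a complement of $R'\gamma$ in $M$ to $\FF_1$, the union $\{[v^n, u^m]\}_{n, m\neq 0}\sqcup T$ maps bijectively to a $\Z$-basis of $\FF_1^{\mathrm{ab}}$, and a standard Nielsen-Schreier / Hopfianness argument applied inside every finitely generated free subgroup of $\FF_1$ promotes it to a basis of $\FF_1$; the trivial elements $[v^n, u^m]$ with $nm=0$ are then freely absorbed.

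The main obstacle is the $\Z$-direct-summand verification, especially purity of $R\gamma$ in $M$ when $\{\overline u, \overline v\}$ does not extend to a basis of $\FF^{\mathrm{ab}}$, and the passage from a basis of $M$ back to a basis of $\FF_1$ in the possibly infinitely generated setting; both require careful bookkeeping through the Fox-calculus embedding and a direct-limit argument across finitely generated free factors of $\FF_1$.
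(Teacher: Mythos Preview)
Your approach has a genuine gap in the final lifting step. You assert that if a subset $S$ of a free group $F$ maps bijectively onto a $\Z$-basis of $F^{\mathrm{ab}}$, then a ``Nielsen--Schreier / Hopfianness argument'' promotes $S$ to a free basis of $F$. This implication is false, and no such standard argument exists. For a concrete counterexample in rank two, take $F=F(a,b)$ and $S=\{a,\,b[a,b]\}$: the images $\bar a,\bar b$ form a basis of $\Z^2$, but the Stallings core graph of $\langle a,\,b[a,b]\rangle = \langle a,\,baba^{-1}b^{-1}\rangle$ has three vertices, so this subgroup is proper; equivalently the endomorphism $a\mapsto a$, $b\mapsto b[a,b]$ is not surjective despite inducing the identity on the abelianization. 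Hopfianness only yields injectivity from surjectivity, not the converse, and the direct-limit manoeuvre over finitely generated free subgroups of $\FF_1$ does not circumvent this. Your nice computations in $M=\FF_1/\FF_2$ (the module identities for $Q_n(\bar v)Q_m(\bar u)$, torsion-freeness via the Crowell sequence, and the telescoping change of basis) establish only that $\{[v^n,u^m]\}_{n,m\neq 0}$ is part of a $\Z$-basis of $\FF_1^{\mathrm{ab}}$, which is necessary but not sufficient for primitivity in $\FF_1$.

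The paper avoids this by never leaving $\FF_1$: it fixes an explicit free basis $B_1$ of $\FF_1$ (of the form $\{[w_f,w_g]\}$ for suitable $f,g$), exhibits inside $B_1$ the elements $[u_1^n,v_1^m]$ with $u_1,v_1$ the ``straightened'' versions of $u,v$, and shows via the coordinate projections $p_{f,g}$ that the substitution $[u_1^n,v_1^m]\rightsquigarrow [u^n,v^m]$ is a legitimate basis change in the free group itself. If you want to rescue your module-theoretic outline, you would need to replace the lifting step by an argument that actually produces elements of $\FF_1$---for example, by tracking an explicit Schreier basis of $\FF_1$ and performing the triangular substitution there rather than in the abelianization.
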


\begin{proof}
    Fix a basis $B$ of $\FF$, equipped with some arbitrary order.
    The additive group of finitely supported functions $B\to \Z$ is naturally isomorphic to $\FF^{ab}$, so for every $w\in \FF$ one has the corresponding $f_w\in \FF^{ab}$.
    Conversely, given a finitely supported $f\colon B\to\Z$ define $w_f\defeq \prod_{b\in B} b^{f(b)}\in \FF$, so that $f_{w_f}=f$ for every $f\in \FF^{ab}$.
    Famously, the following set $B_1$ is a basis of $\FF_1=[\FF,\FF]$:
    \[ B_1\defeq \braces*{[w_f, w_g]\,\,\mid\,\,f, g\colon B\to \Z \textup{ are finitely supported and linearly independent}}. \]
    For every $[w_f, w_g]\in B_1$ we have the corresponding projection $p_{f, g}\colon \FF_1\to \Z$ that counts the total (signed) number of times that the basis element $[w_f, w_g]$ appears when writing words in the basis $B_1$.
    
    Define $v_1\defeq w_{f_{v}}, u_1\defeq w_{f_u}$, and fix $n', m'\in \Z$.
    We claim that
    $p_{n'\cdot f_u, m'\cdot f_v}\prn*{\brackets*{u^{n'}, v^{m'}}} = 1$.
    Indeed, denote $p = p_{n'\cdot f_u, m'\cdot f_v}$ and $\delta_u\defeq u^{n'}u_1^{-n'},\, \delta_v\defeq v^{m'}v_1^{-m'}\in \FF_1$. Since $p$ factors through $\FF_1^{\textup{ab}}$,
    \[ p\prn*{\brackets*{u^{n'}, v^{m'}}}
    = p\prn*{\brackets*{u_1^{n'}, v_1^{m'}}} + p\prn*{[\delta_u^n, \delta_v^n]} = 1 + 0.\]
    Therefore 
    $\braces*{[v^n, u^m]}_{n, m\in \Z}\cup \prn*{B_1\setminus \braces*{[v_1^n, u_1^m]}_{n, m\in \Z}}$ is a basis of $\FF_1$.

\end{proof}

\begin{theorem}
\label{thm_exist_subgrp_Z_tower}
    For every non-abelian $H\le \FF$ and $s\in \N$, there is a subgroup $K\le H$ of rank $s$ and a $\Z$-tower of $K$ over $\FF$.
\end{theorem}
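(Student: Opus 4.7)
The plan is to construct $K$ explicitly as a subgroup of $H$ generated by $s$ iterated commutators, and then build a $\Z$-tower from $\FF$ to $K$ of length $n+2$, where $n$ measures the depth of the chosen elements $u,v\in H$ in the derived series of $\FF$.

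First I will find $u,v\in H$ non-commuting whose images in $\FF_n/\FF_{n+1}$ are linearly independent, for some $n\ge 0$. Starting with any non-commuting $u_0,v_0\in H$ and setting $n\defeq\min\{n(u_0),n(v_0)\}$ (WLOG $n(u_0)=n$), if $n(u_0)<n(v_0)$ then $(u_0,v_0)$ already has linearly independent images in $\FF_n/\FF_{n+1}$ (one is nonzero, the other is zero). Otherwise $n(u_0)=n(v_0)=n$ and either these images are already independent, or, since $\FF_n/\FF_{n+1}$ is torsion-free abelian, there exist nonzero integers $p,q$ with $p\overline{u_0}+q\overline{v_0}=0$; in this case set $w\defeq u_0^p v_0^q\in H\cap\FF_{n+1}$ and take $(u,v)\defeq (u_0,w)$. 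Here $w\ne 1$ and $[u_0,w]\ne 1$ because $u_0$ and $v_0$ do not share a common root in the free group $\FF$ (as they do not commute); in particular $n(u_0)<n(w)$ and $[u_0,w]$ is a conjugate of $[u_0,v_0^q]\ne 1$.

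By Proposition~\ref{prop_lin_indep_implies_basis} applied to the free group $\FF_n$ (the proof does not require the ambient group to be finitely generated), the set $\{[v^j,u^i]\}_{i,j\in\Z}$ extends to a basis of $[\FF_n,\FF_n]=\FF_{n+1}$. Define $K\defeq\inner{[v,u],[v,u^2],\ldots,[v,u^s]}\le H$, so that $\rk(K)=s$ and $K$ is a free factor of $\FF_{n+1}$.

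For $n=0$ the $\Z$-tower is constructed explicitly: pick a homomorphism $\phi\colon\FF\to\Z$ with $\phi(v)=0$ and $\phi(u)\ne 0$, which exists since $\overline{u},\overline{v}$ are linearly independent in $\FF^{\textup{ab}}$. With transversal $\{u^k\}_{k\in\Z}$, the Schreier basis of $\ker\phi$ contains $v_k\defeq u^k v u^{-k}$ for every $k$, so $L\defeq \inner{v_0,v_1,\ldots,v_s}$ is a finitely generated free factor of $\ker\phi$; the basis change $\{v_0 v_1^{-1},\ldots,v_0 v_s^{-1},v_0\}$ of $L$ exhibits $K$ as a free factor of $L$, yielding the tower $\FF\supset L\supset K$ of length $2$. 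For $n\ge 1$, I iterate the descent: starting at $J_0\defeq\FF$, for each $0\le i<n$ pick a nonzero $\phi_i\colon J_i\to\Z$ with $u,v\in\ker\phi_i$ (possible because $u,v$ lie deep enough in the derived series of $J_i$ to not span $J_i^{\textup{ab}}$), and let $J_{i+1}$ be the finitely generated free factor of $\ker\phi_i$ generated by the Schreier basis elements appearing in the reduced words of $u$ and $v$. After $n$ descents the images of $u,v$ in $J_n^{\textup{ab}}$ become linearly independent — the Schreier decompositions essentially transfer the linear independence from $\FF_n/\FF_{n+1}$ to $J_n^{\textup{ab}}$ — and applying the $n=0$ construction inside $J_n$ completes a tower of total length $n+2$.

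The main obstacle is the iterative descent for $n\ge 1$: one must verify that the chosen $\phi_i$ exist at each step and that after exactly $n$ Schreier-basis descents the images of $u,v$ become linearly independent in $J_n^{\textup{ab}}$. This requires careful tracking of how the derived-series structure of $\FF$ transfers through each intermediate Schreier decomposition.
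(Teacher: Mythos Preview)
Your Step 1 contains a fatal error: a set of vectors containing zero is never linearly independent. In the case $n(u_0)<n(v_0)$ the image of $v_0$ in $\FF_n/\FF_{n+1}$ vanishes; likewise your fallback $w=u_0^p v_0^q$ is constructed precisely so that $w\in\FF_{n+1}$, hence $\overline w=0$ there as well. In neither case can Proposition~\ref{prop_lin_indep_implies_basis} be invoked, so you have not produced the required $u,v$. This is exactly the nontrivial part of the theorem: it is not automatic that a non-abelian $H$ contains two elements with linearly independent images at a \emph{single} level of the derived series. The paper handles the bad case (image of $H$ in $\FF_{n(H)}^{\textup{ab}}$ one-dimensional) by writing $H=J*\inner{h}$ with $h$ generating that image, passing to $L=\bigast_{\ell\in\Z}h^\ell J h^{-\ell}\le H$, and then using Proposition~\ref{prop_diff_n_implies_commutator_max_n} (which rests on torsion-freeness of $\FF_m/\FF_{m+1}$ as a $\Z[\FF/\FF_m]$-module) to derive a contradiction from the assumption that the image of $L$ at level $n(J)$ is still one-dimensional.

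Your Step 3 has a secondary gap: $\{u^k\}_{k\in\Z}$ is a transversal for $\ker\phi$ only if $\phi(u)=\pm 1$, which linear independence of $\overline u,\overline v$ does not guarantee (e.g.\ $\FF=\inner{a,b}$, $u=a^2$, $v=b$: every $\phi$ with $\phi(v)=0$ has $\phi(u)$ even). Even when $\phi(u)=\pm1$, the elements $v_k=u^k v u^{-k}$ are Schreier generators only if $v$ is itself a basis letter of $\FF$; for a general word $v$ they are products of Schreier generators, and the claim that $\inner{v_0,\ldots,v_s}$ is a free factor of $\ker\phi$ requires justification.
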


\begin{proof}
    For every $J\le \FF$, let 
    \[ n(J)\defeq \min\{n\in \N: H\le \FF_n\}=\min\{n(j): j\in J\}. \]
    If there are $u, v\in H$ such that their images $\overline{u},\overline{v}$ in $\FF_{n(H)}^{\textup{ab}}$ are linearly independent, then we can choose $K$ to be the group generated by any finite subset of $\braces*{[v^n, u^m]}_{n, m\in \Z}$ of size $s$, and by \ref{prop_lin_indep_implies_basis}, $K$ is a (finitely generated) free factor of $\FF_{n(H)+1}$, and in particular has a $\Z$-tower over $\FF$.
    Otherwise, the image of $H$ in $\FF_{n(H)}^{\textup{ab}}$ is one dimensional.
    Fix any basis of $H$; then there is a basis element $h\in H$ which is not in $\FF_{n(H)+1}$, so it generates the image of $H$ in $\FF_{n(H)}^{\textup{ab}}$.
    Let $J\le \FF_{n(H)+1}$ be a complement of $\inner{h}$, that is, $H=J*\inner*{h}$.
    Let
    \[ L\defeq H\cap \FF_{n(J)} = \underset{\ell\in\Z}{{\mathop{\bigast}}} h^\ell J h^{-\ell}\quad\quad(\textrm{in particular  } n(L)=n(J)) .\]
    As before, if there are $u, v\in L$ such that their images $\overline{u},\overline{v}$ in $\FF_{n(J)}^{\textup{ab}}$ are linearly independent, we are done. Otherwise, the image of $L$ in $\FF_{n(J)}^{\textup{ab}}$ is one dimensional and is generated by some basis element $j\in J$.    
    Now since $n(h)<n(j)=n(J)$, by \ref{prop_diff_n_implies_commutator_max_n} we get $n(hjh^{-1}j^{k}) = n(J)$ for every $k\in \Z\setminus\{0\}$, that is, $hjh^{-1}j^{k}\notin \FF_{n(J)+1}$. 
    Since $\FF_{n(J)+1}$ is a normal subgroup, $\inner{h}$ acts by conjugation on $\FF_{n(J)}^{\textup{ab}}$, so it maps $j$ to another generator of the image of $J$ in $\FF_{n(J)}^{\textup{ab}}$, which is $j^{\pm1}\cdot \FF_{n(J)+1}\ni hjh^{-1}$.
    This means that either $hjh^{-1}j$ or $hjh^{-1}j^{-1}$ is in $\FF_{n(J)+1}$; a contradiction.
\end{proof}


    

    

\section{Analysis of $s\pi_K$}
\label{section_analysis_of_spi_K}


In this section we further analyze $s\pi_K(H)$:
We upgrade Theorem~\ref{thm_easier_spiK_gap}
to Theorem~\ref{thm_spiK_gap}, showing the gap $\Img(s\pi_K)\cap [0,1]=\{0,1\}$.

\subsection*{Explorations}

Recall that $\FF$ is a free group with basis $B$.

\begin{definition}
    [{\cite[Definition 3.2]{ernst2024word}}]
    A full order on $\mathcal{E}_d\times \FF$, viewed as the 
    disjoint union of $d$ Cayley graphs $\textup{Cay}(\FF,B)$, is called an 
    \textbf{exploration} if every vertex has finitely many 
    smaller vertices, and every vertex $e_i v \,\,(e_i\in \mathcal{E}_d, v\in \FF)$
    is either the smallest in $e_i\FF$ or adjacent to a smaller vertex.
\end{definition}

Let $N\le K[\FF]^d$ be a submodule, 
and $\T\subseteq \mathcal{E}_d\times \FF$ 
a finite sub-forest.
We view the restriction of the 
exploration order 
to $\T$ as a sequence of $|\T|$ steps, where 
in the $t^{th}$ step
we expose the $t^{th}$ vertex $v_t$,
which is either minimal 
in its Cayley graph
or adjacent to a smaller,
already-exposed vertex $u\in \T$
via an edge $u\overset{b}{\to} v_t$ for some
$b\in B\cup B^{-1}$.
Following \cite{ernst2024word}, we 
denote by $D_b^t$ the set of already-exposed 
vertices in $\T$ with an outgoing $b$-edge
leading to another already-exposed vertex,
(in particular $u\in D_b^t$),
and declare each step as free,
forced or a coincidence:
\begin{definition}
    \label{def_free_forces_coincidence}
    We say that the $t^{th}$ step is
\begin{itemize}
    \item \textbf{forced} if $N\cap K^{D_b^t}$
    contains an element with $u$ in its support,\footnote{
        If $v_t$ is the first exposed vertex in 
        its Cayley graph, the $t^{th}$ step
        is not forced.
    }
    \item \textbf{coincidence} if it is 
     not forced, and there is an element of 
     $N\cap K^{\{v_1,\ldots,v_t\}}$ with 
     $v_t$ in its support, and 
     \item \textbf{free} otherwise, that is,
     $N\cap K^{\{v_1,\ldots,v_t\}}
     =N\cap K^{\{v_1,\ldots,v_{t-1}\}}$.
\end{itemize}
\end{definition}

The following lemma relates 
between the definition 
of $s\pi_K$ 
(Definition~\ref{def_stable_K_primitivity_rank}) 
and Theorem~\ref{thm_easier_spiK_gap}.
Let $w\in\FF$ be a cyclically reduced word.

\begin{lemma}
    Let $M\le N\le K[\FF]^d$ such that $M$ is an $w$-module
    of degree $d$, and $N$ is 
    algebraic and non-split over $M$. 
    Let $(v,i)\in \T_w\times\mathcal{E}_d$.
    Then there is 
    $f_{v,i}\in N-M$ with support 
    $e_i v\in \textup{supp}(f_{v,i})\subseteq \mathcal{E}_d\cdot \T_w$.
\end{lemma}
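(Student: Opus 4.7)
The plan is to argue by contrapositive, using the algebraic and non-split hypotheses on $N$ over $M$. Set $W := N \cap K^{\mathcal{E}_d \cdot \T_w}$ and $W_M := M \cap K^{\mathcal{E}_d \cdot \T_w}$, as $K$-vector subspaces. Since $w$ is cyclically reduced and a non-power, $\T_w \cap \inner{w} = \{1, w\}$; hence any element of $M$ supported in $\mathcal{E}_d \cdot \T_w$ must be a $K$-linear combination of the module generators $\{\nu_\beta(w, j)\}_{j=1}^d$ of $M = M_\beta$ (Corollary~\ref{corollary_normal_form_for_bases}), giving $\dim_K W_M = d$.

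I next observe that, whenever $W \supsetneq W_M$, the existence of the desired $f_{v_0, i_0}$ is equivalent to the coordinate functional ``evaluate at $e_{i_0} v_0$'' being nonzero on $W$: if some $g \in W$ has nonzero $e_{i_0} v_0$-coefficient but $g \in W_M$, then for any $h \in W \setminus W_M$, one of $h$ or $g + h$ lies in $W \setminus W_M$ and still has nonzero $e_{i_0} v_0$-coefficient. Consequently, if the lemma fails at some $(v_0, i_0)$, then either $W = W_M$, or every element of $W$ has zero coefficient at the ``invisible'' vertex $e_{i_0} v_0$.

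In the first case, the generators of $N$ beyond those of $M$ are all supported outside $\mathcal{E}_d \cdot \T_w$. Using the Cohn--Lewin freeness of $K[\FF]$-submodules of $K[\FF]^d$, one extends $\{\nu_\beta(w, j)\}_j$ to a free $K[\FF]$-basis of $N$, yielding $N = M \oplus N''$; the choice $M' = M$, $M'' = 0$ in Definition~\ref{def_split_efficient} then contradicts the non-splitness of $N$ over $M$. In the second case, the invisible $e_{i_0} v_0$ must satisfy $v_0 \in \T_w \setminus \{1, w\}$ (otherwise some $\nu_\beta(w, j) \in W_M$ would have nonzero coefficient at $e_{i_0} v_0$, since $\beta(w) \in \GL_d(K)$ has nonzero entries in every row and column), so $v_0$ is an interior vertex of $\T_w$. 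I would then leverage this invisibility to carve out a proper free $K[\FF]$-summand of $N$ containing $M$, contradicting algebraicity.

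The main obstacle will be upgrading the $K$-linear structural statements about $W$ (in either bad case) into genuine $K[\FF]$-module splittings or summands of $N$. This likely requires exploration-based bookkeeping in the spirit of Section~4 of \cite{ernst2024word}, combined with careful use of the Cohn--Lewin theorem to realize the $K$-linear complements as $K[\FF]$-submodules.
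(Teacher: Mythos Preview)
Your contrapositive framework and the dichotomy are correct: if the lemma fails at $(v_0,i_0)$, then either $W=W_M$ or the $e_{i_0}v_0$-coordinate vanishes on all of $W$, and in the second case $v_0$ must be interior. But both branches, as you have written them, contain genuine gaps.

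In Case~1, the sentence ``using the Cohn--Lewin freeness \ldots one extends $\{\nu_\beta(w,j)\}_j$ to a free $K[\FF]$-basis of $N$'' is unjustified. Cohn--Lewin says submodules of free $K[\FF]$-modules are free; it does \emph{not} say that an independent set extends to a basis. Extending $\{\nu_\beta(w,j)\}_j$ to a basis of $N$ is precisely the assertion that $M$ is a free summand of $N$, which is what you want to conclude. (There is, however, an easy repair: by \cite[Corollary~3.10]{ernst2024word}, algebraicity forces $N$ to be generated by $W$; if $W=W_M$ then $N$ is generated by $W_M\subseteq M$, so $N=M$, which is split.)

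Case~2 is the crux, and here there is no proof. The hypothesis says only that a single $K$-linear coordinate functional vanishes on a generating set of $N$. There is no general mechanism for promoting such a constraint to a $K[\FF]$-module decomposition $N=N'\oplus N''$ with $M\subseteq N'\lneq N$; one needs a structural result that links ``leading coefficient in an exploration'' to ``basis element of a $K[\FF]$-module,'' and you have not supplied one. You correctly identify this as the main obstacle, but the proposal offers no strategy for overcoming it.

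The paper's argument is structurally different and does not go through your dichotomy. It shifts by $v$ so that the target vertex becomes $e_d\,wv$, chooses an exploration making $e_d\,wv$ the final exposed vertex, and classifies that last step. It is not free, because $\nu_\beta(w,d)\cdot v\in N$ has $e_d\,wv$ in its support. If it were a coincidence, then \cite[Theorem~3.8]{ernst2024word} would make $\nu_\beta(w,d)\cdot v$ part of a $K[\FF]$-basis of $N$, so $\nu_\beta(w,d)\,K[\FF]$ would be a direct summand of $N$, contradicting non-splitness. Hence the step is forced, which by definition yields an $f\in N$ supported on $\mathcal{E}_d\cdot(v,wv]_w$ with $e_d\,wv\in\textup{supp}(f)$; subtracting suitable translates of the $\nu_\beta(w,i)$ then produces the required $f_{v,d}\in N\setminus M$ supported on $\mathcal{E}_d\cdot\T_w$. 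The bridge you are missing is exactly \cite[Theorem~3.8]{ernst2024word}: it is what converts the $K$-linear datum of a leading vertex into the $K[\FF]$-module datum of being a basis element, and it is the non-split (not the algebraic) hypothesis that drives the contradiction in the decisive case.
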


\begin{proof}
    By permuting $\mathcal{E}_d$ we may assume 
    without loss of generality that $i=d$.
    Note that $\FF$ acts on the set of explorations of 
    $\mathcal{E}_d\times \FF$  by left translation.
    Define an exploration on $\mathcal{E}_d\times \FF$ 
    by taking the standard \enquote{ShortLex} order 
    (see \cite{ernst2024word}) and acting on it by $v$.
    In the resulting exploration, 
    for every $i\le d$, the minimal vertex in the tree $e_i\FF$ 
    is $e_i v$. Moreover, $e_d$ is maximal in $\mathcal{E}_d$.
    Let $\mathcal{A}_w\defeq \bigcup_{n\in\Z}
    w^n\T_w$
    denote the axis of $w$, which is a 
    bi-infinite ray in $\textup{Cay}(\FF,B)$.
    For $P,Q \in \mathcal{A}_w$
    denote by $[P, Q]_w$ the set of points 
    in $\mathcal{A}_w$ between $P$ and $Q$
    (inclusive), and similarly denote by 
    $[P,Q)_w, (P,Q]_w, (P,Q)_w$ the half-closed
    and open intervals respectively.
    We stress that the linear order on $\mathcal{A}_w$
    is not related to the exploration order.
    Note that the path 
    $[v,wv]_w
    =v\T_{v^{-1} w v}\subseteq \FF$ 
    starts from the 
    minimal vertex $v$ and 
    reads the word $w'\defeq v^{-1}wv$.
    (Also note that 
    $\T_w v \cap v\T_{w'}\supseteq\{v,wv\}$, 
    and in particular $\T_w v$ is disconnected).
\[\begin{tikzcd}[ampersand replacement=\&]
	{\mathcal{A}_w =} \& \cdots \& 1 \& v \& w \& wv \& \cdots
	\arrow["\cdots"{description}, no head, from=1-2, to=1-3]
	\arrow["\cdots"{description}, no head, from=1-3, to=1-4]
	\arrow["{\mathbb{T}_v=[1,v]}"', curve={height=12pt}, no head, from=1-3, to=1-4]
	\arrow["{\mathbb{T}_w}", curve={height=-12pt}, no head, from=1-3, to=1-5]
	\arrow["\cdots"{description}, no head, from=1-4, to=1-5]
	\arrow["{v\mathbb{T}_{w'}}", curve={height=-12pt}, no head, from=1-4, to=1-6]
	\arrow["\cdots"{description}, no head, from=1-5, to=1-6]
	\arrow["{v\mathbb{T}_{w'}\setminus\mathbb{T}_w=(w,wv]}"', curve={height=12pt}, no head, from=1-5, to=1-6]
	\arrow["\cdots"{description}, no head, from=1-6, to=1-7]
\end{tikzcd}\]
    By \cite[Corollary 3.10]{ernst2024word},
    $N$ is generated on $\mathcal{E}_d \cdot \T_w$,
    and so $N=vv^{-1}Nv$ is also generated on 
    $\mathcal{E}_d \cdot v\T_{w'}
    =
    \mathcal{E}_d \cdot [v, wv]_w$.
    Consider an exposure process of $N$ along $\mathcal{E}_d \cdot [v, wv]_w$,
    and consider the last step overall in the exploration, in which 
    $e_d vw' = e_d wv $ is exposed. 
    A-priori this step can be either forced,
    free, or a coincidence.
    By Corollary~\ref{corollary_normal_form_for_bases}, there is
    $\beta\in \GL_d(K)$ such that 
    $M=M_{\beta}
    =\bigoplus_{i=1}^d \nu_\beta(w,i) K[\FF]
    $.
    Since $v\cdot \nu_{\beta}(w',d)
    = \nu_{\beta}(w,d)\cdot v
    \in M\le N$ and 
    $e_d wv\in 
    \textup{supp}(\nu_{\beta}(w,d)v)
    \subseteq e_d [v, wv]_w$,
    the last step is not free.
    If this last step was a coincidence, then by \cite[Theorem 3.8]{ernst2024word},
    every $f\in N$ which is supported on $\mathcal{E}_d\cdot [v, wv]_w$ 
    and has $e_d wv$ as a leading vertex (that is, maximal in $\textup{supp}(f)$
    with respect to the exploration order) is a part of a basis of $N$.
    But $\nu_{\beta}(w,d)\cdot v$ is precisely such an $f$, and since $N$ is not split over $M$,
    $\nu_{\beta}(w,d)\cdot v$ cannot be a part of a basis of $N$.\footnote{
        Indeed, $\nu_{\beta}(w,d)\cdot K[\FF]$ is a $\inner{w}$-module and a direct summand of $M$,
        so it is not a direct summand of $N$.
    }
    We conclude that the last step is forced, so in particular, there is $f\in N$
    with support 
    $wv\in \textup{supp}(f)\subseteq 
    \mathcal{E}_d \cdot (v,wv]_w$.    
    To get the desired $f_{v,d}\in N-M$ with 
    support 
    $e_d v\in 
    \textup{supp}(f_{v,d})
    \subseteq \mathcal{E}_d\cdot \T_w$,
    denote
    $f=
    \sum_{i=1}^d 
    \sum_{u\in (v, wv]_w} 
    \lambda_{u,i} e_i u$
    (where $\lambda_{u, i}\in K$, 
     and 
    $\lambda_{wv, d}\neq 0$), 
    and define
    \begin{equation}
        \label{eq_w_segments}
        \begin{split}
            f_{v,d}
            &\defeq 
    f - 
    \sum_{i=1}^d 
    \sum_{u\in (w, wv]_w} 
    \lambda_{u,i}\cdot 
    \nu_{\beta}(w, i)w^{-1}u.
    \\&=\sum_{i=1}^d \sum_{u\in(v,w]_w}
    \lambda_{u, i} e_i u
    + \sum_{i,j=1}^d \sum_{u\in (1,v]_w}
    \lambda_{wu, i} \beta(w)_{i, j} e_i u.
        \end{split}
    \end{equation}
    By construction, $f\in N$ and 
    $f-f_{v,d}\in M\le N$ so 
    $f_{v,d}\in N$.
    By the equation~\eqref{eq_w_segments},
    and since $\lambda_{wv,d}\neq 0$, 
    we get $e_d v\in \textup{supp}(f_{v,d})
    \subseteq (1,w]_w$.
    Clearly no element of $M$ can be 
    supported on an interval of a proper 
    sub-interval of $[1,w]_w$, so $f_{v,d}\notin M$
    and we are done.
\end{proof}

To the end of this section, fix 
a finitely generated subgroup $H\le \FF$,
and denote
\begin{equation}
    \label{eq_def_N_md}
    \begin{split}
        \mathscr{N}_{m, d} 
= \mathscr{N}_{m, d}(\mathbf{F}, H) 
&\defeq \braces*{N\le K[\mathbf{F}]^m: \dim_K (K[H]^m / N\cap K[H]^m) = d}
\\&= \braces*{N\le K[\mathbf{F}]^m \middle| \begin{array}{ll}
    & N\textup{ is efficient over some }\\
    & H\textup{-module of degree }d
\end{array}}.
    \end{split}
\end{equation}
Our next goal is to show that in the 
definition of $s\pi_K$, where we 
considered submodules $N\le K[\FF]^m$
containing an $H$-module $M$ of 
finite degree $d$, we could in fact
demand $m=d$ 
without increasing the minimum.
To show this, we construct a map 
$\xi_{m,d}\colon\mathscr{N}_{m,d}\to\mathscr{N}_{d,d}$
that preserves the 
relevant structure,
by composing 
 the following components:
\begin{enumerate}
    \item A function $\xi'_{m,d}\colon\mathscr{N}_{m,d}\to\bigcup_{m'=1}^d \mathscr{N}_{m',d}$
    that \enquote{removes the redundant coordinates}, and
    \item A function $\xi''_{m,d}\colon\mathscr{N}_{m,d}\to\mathscr{N}_{d,d}$ that 
    \enquote{flattens the remaining essential coordinates}. 
\end{enumerate}

By \cite[462. V.]{lewin1969free}, 
if $P$ is a $K[\mathbf{F}]$-module with presentation 
\[0\to M\to N\to P\to 0,\]
then the Euler characteristic $\chi_{K[\mathbf{F}]}(P)$ is 
defined to be $\chi_{K[\mathbf{F}]}(P)\defeq \rk(N)-\rk(M)$, 
and it is a well-defined invariant of the module $P$ 
(see also \cite[exercise *3.16 (ii)]{rotman2009introduction}). 
\begin{definition}
\label{def_red_rank_submodule}
    Let $M\le K[\mathbf{F}]^m$ be a f.g.\ submodule. 
    Its reduced rank inside $K[\mathbf{F}]^m$ is 
    \[ \redrank(M)\defeq 
    \max\{0, m - \rk(M)\} = \max\{0, -\chi(M\backslash K[\mathbf{F}]^m)\}.\]
\end{definition}

In Propositions~\ref{prop_reduction_preserves_redrank},
\ref{prop_construct_another_module}, while constructing the map $\xi_{m,d}$, we 
prove that it preserves reduced ranks. 
We start by introducing Schreier transversals.
\subsubsection*{Schreier transversals}

The following theorem is \cite[V. The Schreier formula]{lewin1969free}:
\begin{theorem}
\label{thm_Schreier_formula_modules}
Let $H$ be a f.g.\ free group, and $M\le K[H]^m$ a $K[H]$-submodule of finite codimension $d\defeq \dim_K K[H]^m/M < \infty.$
    Then 
    \[ \rk(M) - m = d\cdot (\rk(H) - 1). \]
\end{theorem}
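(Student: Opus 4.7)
The plan is to compute $\rk(M)$ via a homological calculation, using a free resolution of the trivial $K[H]$-module $K$. Since $H$ is free of rank $r \defeq \rk(H)$ with basis $\{h_1,\ldots,h_r\}$, the trivial module $K$ admits the standard 2-term free resolution
\[
0 \to K[H]^r \xrightarrow{e_i \mapsto h_i - 1} K[H] \xrightarrow{\varepsilon} K \to 0,
\]
arising from the universal cover of a bouquet of $r$ circles (or, equivalently, from Fox calculus). The title of the subsequent subsection suggests that the author intends a more combinatorial argument via an explicit Schreier transversal, in the spirit of \cite{lewin1969free}, but the homological version outlined here is a cleaner alternative assuming the Cohn--Lewin theorem.

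First, I would apply $-\otimes_{K[H]} K$ to the short exact sequence
\[
0 \to M \to K[H]^m \to P \to 0,\qquad P \defeq K[H]^m/M.
\]
By the Cohn--Lewin theorem cited just before the statement, $M$ is free; it is moreover finitely generated, since $P$ is finitely presented (the hypothesis $\dim_K P < \infty$ yields finitely many $K$-generators and finitely many defining $K[H]$-relations among them). The long exact $\mathrm{Tor}$ sequence, combined with the vanishing of higher $\mathrm{Tor}$ on the free module $K[H]^m$, collapses to the exact sequence
\[
0 \to \mathrm{Tor}_1^{K[H]}(P, K) \to K^{\rk(M)} \to K^m \to P \otimes_{K[H]} K \to 0,
\]
which, upon taking alternating $K$-dimensions, yields
\[
\rk(M) - m \;=\; \dim_K \mathrm{Tor}_1^{K[H]}(P,K) - \dim_K (P \otimes_{K[H]} K).
\]

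Second, I would compute the same alternating sum independently, using the resolution of $K$ from the first paragraph. Tensoring that 2-term resolution with $P$ on the left produces the 2-term complex $P^r \to P$ of finite-dimensional $K$-vector spaces, whose homology computes $\mathrm{Tor}^{K[H]}_\ast(P, K)$. Its Euler characteristic is $\dim_K(P^r) - \dim_K(P) = dr - d = d(r-1)$. Equating the two expressions gives $\rk(M) - m = d(\rk(H) - 1)$, as required. The main technical subtlety is verifying that $M$ is finitely generated (so that $\rk(M)$ is a well-defined nonnegative integer); this follows from the finite presentability of $P$ noted above and could alternatively be handled by exhibiting a finite basis of $M$ directly via a Schreier transversal, anticipating the construction announced in the next subsection.
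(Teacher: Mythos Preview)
The paper does not prove this theorem at all: it is introduced with ``The following theorem is \cite[V.\ The Schreier formula]{lewin1969free}'' and simply quoted. Your homological proof is correct and self-contained. The one point worth tightening is the finite generation of $M$: your phrasing (``$P$ finitely presented, hence $M$ f.g.'') risks circularity, since finite presentability of $P$ in the given presentation $K[H]^m\twoheadrightarrow P$ is exactly the statement that $M$ is f.g. A cleaner way, already implicit in your argument, is to observe that the four-term exact sequence
\[
0 \to \mathrm{Tor}_1^{K[H]}(P,K) \to M\otimes_{K[H]}K \to K^m \to P\otimes_{K[H]}K \to 0
\]
holds for arbitrary $M$ (only freeness of $K[H]^m$ is used), and $\mathrm{Tor}_1^{K[H]}(P,K)$ is finite-dimensional since it is computed by the finite-dimensional complex $P^r\to P$; hence $\dim_K(M\otimes_{K[H]}K)<\infty$, and since $M$ is free by Cohn--Lewin, this dimension equals $\rk(M)$.

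Compared with Lewin's original argument, which builds an explicit basis of $M$ from a Schreier transversal and counts its size directly (this is the machinery the paper recalls in Theorem~\ref{thm_lewin_basis_from_Schreier_trans}), your route trades that combinatorics for the two-term resolution of $K$ and an Euler-characteristic count. Lewin's approach has the advantage of producing a concrete basis of $M$, which the paper actually uses elsewhere; yours is shorter and makes the role of $\rk(H)-1$ as the Euler characteristic of $H$ transparent.
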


\begin{notation}
    We denote the standard basis of $K[\mathbf{F}]^m$ by $\mathcal{E}_m\defeq \{e_1, ..., e_m\}$.
    An element of $K[\mathbf{F}]^m$ is called a \textbf{monomial} if it equals $ew$ for some $e\in \mathcal{E}_m, w\in \mathbf{F}$. An initial segment of a word $w$ is a prefix of the word.
\end{notation}

The following definition is from 
\cite[III. Schreier transversals and Schreier generators]{lewin1969free}:

\begin{definition}
    Let $M\le K[\mathbf{F}]^m$ be a submodule. 
    A \textbf{Schreier transversal} for $M$ is a set 
    $T\subseteq \mathcal{E}_m\cdot \mathbf{F}$ such that
    \begin{itemize}
        \item $T$ is a $K$-linear basis for $M\backslash K[\mathbf{F}]^m$, and
        \item $T$ is a union of trees, each tree containing 
        some $e_i\in \mathcal{E}_m$. That is, if $ez$ is in $T$ 
        (where $e\in E, z\in \mathbf{F}$), 
        then all the initial segments of $ez$ are again in $T$.
    \end{itemize}
\end{definition}




The following definition of $B$-boundary is convenient
for describing Lewin's bases for modules:
\begin{definition}
    \label{def_B_boundary_of_ST}
    Let $B$ be a basis of $\mathbf{F}$.
    Given a Schreier transversal 
    $T\subseteq \mathcal{E}_m\cdot \mathbf{F}$ 
    (or just any union of trees $T$, each tree containing 
    some $e_i\in \mathcal{E}_m$),
    the $B$\textbf{-boundary} of $T$ is the set
    \[ \partial T\defeq \braces*{ezb\in \mathcal{E}_m\cdot F\setminus T
    \,\,\middle|\,\, ez\in T, b\in B}
    \cup \prn*{\mathcal{E}_m\setminus T}.\]
\end{definition}
We stress that in Definition~\ref{def_B_boundary_of_ST}, 
$b$ is a proper basis element and not the inverse of one. 
The following theorem is 
\cite[Theorem 1]{lewin1969free} 
(see also \cite[Theorem 3.7]{ernst2024word}):

\begin{theorem}
\label{thm_lewin_basis_from_Schreier_trans}
    Let $M\le K[\mathbf{F}]^m$ be a submodule, and let 
    $ST$ be a Schreier transversal of $M$. 
    For every element $f\in K[\mathbf{F}]^m$, denote by 
    $\phi(f)$ the representative of $f + M$ in 
    $\textup{span}_{K}(ST)$. Then 
\begin{equation}
\label{equation_basis_from_Schreier_trans}
    \braces*{f-\phi(f): f\in \partial ST}
\end{equation}
is a basis for $M$ over $K[\mathbf{F}]$.
\end{theorem}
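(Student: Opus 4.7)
The argument splits into three steps: containment in $M$, generation (spanning), and linear independence.

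First, each generator lies in $M$: since $\phi(f) \in \mathrm{span}_K(ST)$ represents the coset $f + M$ by construction, $f - \phi(f) \in M$. Write $M'$ for the $K[\mathbf{F}]$-submodule of $M$ generated by $\{f - \phi(f) : f \in \partial ST\}$.

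For spanning, the key claim is that $e_i w \equiv \phi(e_i w) \pmod{M'}$ for every monomial $e_i w \in \mathcal{E}_m \cdot \mathbf{F}$. Granting this, any $m_0 \in M$ expands $K$-linearly in monomials, and termwise application yields $m_0 \equiv \phi(m_0) = 0 \pmod{M'}$, i.e., $m_0 \in M'$. I would prove the claim by a descent argument on a well-founded measure combining the distance from $e_i w$ to $ST$ in the Cayley graph with word length. The base cases are $e_i w \in ST$ (trivial) and $e_i w \in \partial ST$ (immediate from the definition of $M'$). For the inductive step, let $u$ be the longest initial segment of $e_i w$ lying in $ST$ and write $e_i w = u \cdot v$ reduced. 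If $v$ starts with a forward letter $b \in B$ then $ub \in \partial ST$, and the relation $ub \equiv \phi(ub) \pmod{M'}$ propagates by right-multiplication with the shorter tail $v'$ (where $v = bv'$), giving $e_i w \equiv \phi(ub) \cdot v' \pmod{M'}$, with each monomial of $\phi(ub) \cdot v'$ strictly smaller in measure. If $v$ starts with an inverse letter $b^{-1}$---the subtle case, as $\partial ST$ only sees forward edges---I would reroute using the identity $ub^{-1} = (ub) \cdot b^{-2}$, valid since $u$ cannot end in $b$; when $ub \notin \partial ST$, one first climbs along the $b$-ray inside $ST$ until first exiting into $\partial ST$, then descends back via the corresponding generator, with care taken to ensure the measure strictly decreases after reduction.

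For linear independence, suppose $\sum_{f \in \partial ST}(f - \phi(f))g_f = 0$ with finitely many nonzero $g_f \in K[\mathbf{F}]$. Expand each $g_f = \sum_w c_{f,w} w$ in the $K$-basis $\mathbf{F}$, and equip $\mathcal{E}_m \cdot \mathbf{F}$ with a total order refining ShortLex so that $ST$-monomials precede all others at each length. For $f \in \partial ST$, $f$ itself is the unique non-$ST$ monomial of $f - \phi(f)$, so each product $(f - \phi(f)) w$ has leading non-$ST$ term $fw$ (reduced). The overall maximum such $fw$ in the support of the sum cannot cancel against any $\phi(f') g_{f'}$-term (which contributes only $ST$-monomials) or against any other $(f'' - \phi(f'')) g_{f''}$-term (whose non-$ST$ parts are strictly smaller), forcing the corresponding $c_{f, w} = 0$; iterating empties each $g_f$.

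The main obstacle is the inverse-letter subcase of the spanning induction: the forward-only nature of $\partial ST$ means one cannot directly step from $u \in ST$ to $ub^{-1} \notin ST$ via a single generator. The descent must instead proceed forward (via $ub$) and then come back through tail modifications, and verifying that this loop strictly decreases the chosen well-founded measure---essentially a module-theoretic analogue of classical Schreier rewriting---is the technical heart of the proof.
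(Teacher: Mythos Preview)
First, note that the paper does not prove this theorem: it is quoted from \cite[Theorem~1]{lewin1969free} (with a pointer to \cite[Theorem~3.7]{ernst2024word}), so there is no in-paper proof to compare against. I evaluate your argument on its own.

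There are two genuine gaps.

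\emph{Spanning, inverse-letter case.} Your rerouting $ub^{-1}=(ub)\cdot b^{-2}$ does not give a descent. After using the generator at $ub\in\partial ST$ you are left with terms $s_j b^{-2}$ for $s_j\in ST$; each $s_j b^{-1}$ may again leave $ST$ through an inverse edge, with distance-to-$ST$ now $2$ instead of $1$, so your measure increases. The fix is not a cleverer measure but a different reduction: first settle the forward case completely (for every $t\in ST$ and $b\in B$ one has $tb\equiv\phi(tb)\pmod{M'}$, immediate since either $tb\in ST$ or $tb\in\partial ST$). Then for $t\in ST$ with $tb^{-1}\notin ST$ reduced, write $\phi(tb^{-1})=\sum_j c_j s_j$ with $s_j\in ST$; by the forward case $\phi(tb^{-1})\cdot b\equiv\sum_j c_j\phi(s_j b)\pmod{M'}$, while $\phi(tb^{-1})\cdot b\equiv t\pmod{M}$. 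Since both $t$ and $\sum_j c_j\phi(s_j b)$ lie in $\textup{span}_K(ST)$, they coincide, so $t\equiv\phi(tb^{-1})\cdot b\pmod{M'}$; now right-multiply by $b^{-1}$. With both one-step cases in hand, induction on $|w|$ (peeling off the last letter) goes through.

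\emph{Linear independence.} The parenthetical ``$\phi(f')g_{f'}$ contributes only $ST$-monomials'' is false: $\phi(f')\in\textup{span}_K(ST)$, but right-multiplication by $w\in\mathbf{F}$ sends $s\in ST$ to $sw$, which need not lie in $ST$. Dually, $fw$ (reduced) can land back in $ST$: with $ST=\{e_1\}$ and $f=e_1 b\in\partial ST$, taking $w=b^{-1}$ gives $fw=e_1\in ST$ while $\phi(f)\cdot w$ is a multiple of $e_1 b^{-1}\notin ST$. So neither half of your cancellation dichotomy holds, and the leading-term argument collapses. A correct proof needs a filtration that is genuinely compatible with right multiplication; this is exactly what the exploration orders of \cite[\S3]{ernst2024word} supply (each generator is pinned down by a unique ``coincidence'' vertex in its support), or one can follow Lewin's original weak-algorithm argument. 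In the finite-codimension case there is also a counting shortcut: one checks $|\partial ST|=m+d(r-1)$, which equals $\rk(M)$ by the Schreier formula (Theorem~\ref{thm_Schreier_formula_modules}), and then a surjection between free $K[\mathbf{F}]$-modules of equal rank is an isomorphism by Proposition~\ref{prop_ses_rk_ineq}.
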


\noindent This theorem is true for any submodule of any free 
$K[\mathbf{F}]$-module (none of the two necessarily f.g.). 
Now we are ready to construct the first component, $\xi'_{m,d}$:
\begin{proposition}
\label{prop_wlog_schreier_trans_touch_all}
    Let $N\le K[\mathbf{F}]^m$ be a submodule, and assume that there is a partition $\{1, ..., m\} = R\uplus S$ and $\{f_s\}_{s\in S}\subseteq K[\mathbf{F}]^R$ such that $f_s\equiv_N e_s$ for every $s\in S$. Then for any basis $B$ of $N\cap K[\mathbf{F}]^R$, the set $B'\defeq B\uplus \{e_s - f_s\}_{s\in S}$ is a basis of $N$.
\end{proposition}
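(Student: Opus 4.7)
The plan is to verify both the spanning and the linear independence parts of the claim directly, by exploiting the fact that the elements $\{e_s - f_s\}_{s \in S}$ differ from the standard basis vectors $\{e_s\}_{s \in S}$ only by something supported on the complementary index set $R$.

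First I would handle spanning. Given an arbitrary $n \in N$, decompose it uniquely as $n = n_R + n_S$ with $n_R \in K[\mathbf{F}]^R$ and $n_S = \sum_{s \in S} a_s e_s \in K[\mathbf{F}]^S$ for suitable $a_s \in K[\mathbf{F}]$. Form the correction
\[ n' \defeq n - \sum_{s \in S} a_s (e_s - f_s). \]
Since each $e_s - f_s \in N$ by hypothesis and $n \in N$, we have $n' \in N$. On the other hand, by construction the $S$-part of $n'$ vanishes, so $n' = n_R + \sum_{s \in S} a_s f_s \in K[\mathbf{F}]^R$. Thus $n' \in N \cap K[\mathbf{F}]^R$, and writing $n'$ in the basis $B$ expresses $n$ as a $K[\mathbf{F}]$-linear combination of elements of $B'$.

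Next I would verify linear independence. Suppose
\[ \sum_{b \in B} c_b \, b + \sum_{s \in S} d_s (e_s - f_s) = 0 \]
for some $c_b, d_s \in K[\mathbf{F}]$ with only finitely many nonzero. Projecting onto the $K[\mathbf{F}]^S$ factor annihilates every $b \in B$ (since $B \subseteq K[\mathbf{F}]^R$) and sends $e_s - f_s$ to $e_s$ (since $f_s \in K[\mathbf{F}]^R$). Hence $\sum_{s \in S} d_s e_s = 0$ in the free module $K[\mathbf{F}]^S$, forcing $d_s = 0$ for all $s \in S$. The relation then collapses to $\sum_{b \in B} c_b \, b = 0$, and linear independence of the basis $B$ of $N \cap K[\mathbf{F}]^R$ gives $c_b = 0$ for every $b \in B$.

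There is no real obstacle here: the proof is essentially a \textbf{change of basis} argument that uses the splitting $K[\mathbf{F}]^m = K[\mathbf{F}]^R \oplus K[\mathbf{F}]^S$ to reduce the problem to the already known basis $B$ of $N \cap K[\mathbf{F}]^R$. The only subtlety to emphasize is why the projection to $K[\mathbf{F}]^S$ kills all elements of $B$ (because $B \subseteq N \cap K[\mathbf{F}]^R$) while cleanly picking out the $\{d_s\}$ coefficients (because $f_s \in K[\mathbf{F}]^R$), which is exactly what the hypothesis $f_s \in K[\mathbf{F}]^R$ guarantees.
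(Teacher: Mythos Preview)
Your proof is correct and follows essentially the same approach as the paper's: subtract off the $S$-part of an element of $N$ using the $\{e_s - f_s\}$ to land in $N\cap K[\mathbf{F}]^R$, and use the projection onto $K[\mathbf{F}]^S$ to deduce independence. The only cosmetic slip is that $K[\mathbf{F}]^m$ is a \emph{right} $K[\mathbf{F}]$-module in this paper, so coefficients should appear on the right (e.g.\ $e_s a_s$, $(e_s-f_s)d_s$, $b\,c_b$); the argument is unchanged once you make that adjustment.
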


\begin{proof}
    Assume there is a linear combination 
    \[\sum_{b\in B} b \alpha_b + \sum_{s\in S} (e_s - f_s) \alpha_s  = 0\]
    with $\alpha_b, \alpha_s \in K[\mathbf{F}]$. 
    Since $\{e_i\}_{i=1}^m$ is a basis of $K[\mathbf{F}]^m$, and $B\uplus \{f_s\}_{s\in S}\subseteq K[\mathbf{F}]^R$, the coefficients of $e_s$ are $\alpha_s$ and thus $\alpha_s = 0$. Now $\alpha_b = 0$ since $B$ is a basis.
    Next, we show that $B'$ spans $N$.
    Let $h\defeq \sum_{i, j} a_{i, j} e_i g_j\in N$ for some $a_{i, j}\in K, g_j\in \mathbf{F}$.
    Let 
    \[h^R\defeq \sum_{\substack{r, j\\ r\in R}} a_{r, j} e_r g_j + \sum_{\substack{s, j\\ s\in S}} a_{s, j} (e_s - f_s) g_j.\]
    Clearly $h^R \in \textup{span}_{K[\mathbf{F}]}(B')\subseteq N$. Now
    $h-h^R = \sum_{\substack{s, j\\ s\in S}} a_{s, j} f_s g_j\in N\cap K[\mathbf{F}]^R$ so $h-h^R \in \textup{span}_{K[\mathbf{F}]}(B)$.
\end{proof}

Recall that we denote the standard basis of $K[\mathbf{F}]^m$ by $\mathcal{E}_m = \{e_1, \ldots, e_m\}$.

\begin{proposition}
\label{prop_reduction_preserves_redrank}
    Let $N\in \mathscr{N}_{m, d}$.
    Denote $N_H\defeq N\cap K[H]^m$, 
    and let $T\subseteq \mathcal{E}_m\times \mathbf{F}$ 
    be a Schreier transversal of $N_H$.
    Let $R\subseteq \mathcal{E}_m$ be a minimal set such that 
    $T\subseteq R\times \mathbf{F}$, and denote by 
    $S\defeq \mathcal{E}_m\setminus R$ its complement.
    Denote $N^R\defeq N\cap \textup{span}_{K[\mathbf{F}]}(R), \,\,\,\,
    N_H^R\defeq N\cap \textup{span}_{K[H]}(R)$. Then 
    \[ \rk(N) + \rk(N_H^R) = \rk(N_H) + \rk(N^R). \]
    In particular, 
    \[
    \redrank(N) = 
    \rk(N) - m = \rk(N^R) + |S| - m = \rk(N^R) - |R| 
    = \redrank(N^R). 
    \]
\end{proposition}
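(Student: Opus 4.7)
The plan is to apply Proposition~\ref{prop_wlog_schreier_trans_touch_all} twice: once to $N$ inside $K[\mathbf{F}]^m$, and once to $N_H$ inside $K[H]^m$. The key observation is that the Schreier transversal $T$ simultaneously provides the family of representatives $\{f_s\}_{s\in S}$ required for both applications. Since each tree comprising $T$ has a unique root in $\mathcal{E}_m$, the minimal $R$ with $T\subseteq R\cdot\mathbf{F}$ is $R = T\cap\mathcal{E}_m$; hence $S = \mathcal{E}_m\setminus R$ consists exactly of the basis vectors \emph{not} appearing as roots of trees in $T$.

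Because $T$ is a $K$-basis of $K[H]^m/N_H$ contained in $R\cdot H$, for each $s\in S$ I would reduce $e_s\in K[H]^m$ modulo $N_H$ to obtain some $f_s\in \mathrm{span}_K(T)\subseteq K[H]^R\subseteq K[\mathbf{F}]^R$ with $e_s\equiv f_s\pmod{N_H}$, and in particular $e_s\equiv f_s\pmod{N}$. These are exactly the data demanded by Proposition~\ref{prop_wlog_schreier_trans_touch_all}.

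Applied over $K[\mathbf{F}]$ the proposition asserts that any basis of $N^R$ together with $\{e_s - f_s\}_{s\in S}$ is a basis of $N$, giving $\rk(N) = \rk(N^R) + |S|$. The same argument, now performed inside $K[H]^m$, yields $\rk(N_H) = \rk(N_H^R) + |S|$. Subtracting gives the claimed identity $\rk(N) + \rk(N_H^R) = \rk(N_H) + \rk(N^R)$, and the ``in particular'' chain of equalities follows at once by using $|R|+|S|=m$ (and the Schreier formula~\ref{thm_Schreier_formula_modules} to ensure $\rk(N),\rk(N^R)\ge m,|R|$ so that redrank really is the difference).

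The only point that needs a moment of care is that Proposition~\ref{prop_wlog_schreier_trans_touch_all} is stated for submodules of $K[\mathbf{F}]^m$, whereas I want to use it for $N_H\le K[H]^m$. However, its proof uses nothing beyond the free-module structure and the fact that $\{e_i\}$ is a basis of the ambient free module, so it carries over verbatim with $K[\mathbf{F}]$ replaced by $K[H]$. Beyond this, the whole argument is bookkeeping; the only substantive input is that a Schreier transversal reduces every element of $K[H]^m$ (in particular every $e_s$) to a $K$-linear combination of monomials rooted in $R$.
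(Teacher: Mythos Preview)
Your proof is correct and essentially identical to the paper's: both reduce each $e_s$ modulo $N_H$ to an element $f_s$ of $\mathrm{span}_K(T)\subseteq K[H]^R$, then invoke Proposition~\ref{prop_wlog_schreier_trans_touch_all} once over $K[\mathbf{F}]$ and once over $K[H]$ to obtain $\rk(N)=\rk(N^R)+|S|$ and $\rk(N_H)=\rk(N_H^R)+|S|$. You are in fact slightly more careful than the paper in explicitly flagging that Proposition~\ref{prop_wlog_schreier_trans_touch_all} must be re-read with $K[H]$ in place of $K[\mathbf{F}]$; your parenthetical appeal to the Schreier formula for $\rk(N)\ge m$ is not quite the right justification (that formula concerns $N_H$, not $N$), but the paper does not address this point either and it is immaterial to the main identity.
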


\begin{proof}
    For every $f\in K[\mathbf{F}]^m$, denote by $\phi(f)\in \textup{span}_K(T)$ the representative of $f + N_H$.
    By Theorem~\ref{thm_lewin_basis_from_Schreier_trans}, $\braces*{e-\phi(e): e\in \mathcal{E}_m\setminus T}$ is part of a basis of $N_H$. Since $T\subseteq R\times \mathbf{F}$, we have $\phi(e)\in \textup{span}_{K[\mathbf{F}]}(R)$ for every $e\in S$. Clearly
    $e-\phi(e)\in N_H\subseteq N$, so by Proposition~\ref{prop_wlog_schreier_trans_touch_all}, 
    \[N = N^R \oplus \textup{span}_{K[\mathbf{F}]}(S), \quad\quad N_H = N_H^R \oplus \textup{span}_{K[\mathbf{F}]}(S),  \]
    and in particular
    \[\rk(N) = \rk(N^R) + |S|, \quad\quad \rk(N_H) = \rk(N_H^R) + |S|.\]
    The claim follows.
\end{proof}

We define $\xi'_{m,d}(N)\defeq N^R\le K[\FF]^R$.  
Now we construct the second component, $\xi''_{m,d}\colon\mathscr{N}_{m,d}\to\mathscr{N}_{d,d}$.

\begin{proposition}
\label{prop_coef_change_preserves_rank}
    Let $M_H\le K[H]^m$ be a submodule, and let $M$ be the $K[\mathbf{F}]$-module generated by $M$. Then every basis of $M_H$ over $K[H]$ is a basis of $M$ over $K[\mathbf{F}]$. In particular,
    $\rk_{K[H]}(M_H) = \rk_{K[\mathbf{F}]}(M)$.
\end{proposition}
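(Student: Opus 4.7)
The proposition reduces entirely to the structural fact that $K[\mathbf{F}]$ is a free left $K[H]$-module, so I would start by establishing that fact via a coset transversal. Pick a right-coset transversal $T\subseteq \mathbf{F}$ for $H\backslash \mathbf{F}$, so that $\mathbf{F}=\bigsqcup_{t\in T} Ht$ and every element of $\mathbf{F}$ has a unique expression $ht$ with $h\in H$, $t\in T$. This gives $K$-linear direct-sum decompositions
$$K[\mathbf{F}]=\bigoplus_{t\in T} K[H]\cdot t, \qquad K[\mathbf{F}]^m=\bigoplus_{t\in T} K[H]^m\cdot t,$$
where $K[H]^m\cdot t$ denotes the $K$-span of $\{e_i h t : i\in[m],\,h\in H\}$, i.e.\ the image of right-multiplication by $t$ inside $K[\mathbf{F}]^m$. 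The only consequence I need from this is: if $\sum_{t} u_t\cdot t = 0$ in $K[\mathbf{F}]^m$ with $u_t\in K[H]^m$, then each $u_t=0$.

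Now let $B=\{b_\alpha\}_\alpha\subseteq M_H$ be any $K[H]$-basis of $M_H$ (one exists because, by the Cohn--Lewin theorem recalled earlier, submodules of free $K[H]$-modules are free). For spanning, observe that any $x\in M=M_H\cdot K[\mathbf{F}]$ can be written $x=\sum_j m_j f_j$ with $m_j\in M_H$, $f_j\in K[\mathbf{F}]$; expanding each $m_j=\sum_\alpha b_\alpha \mu_{j,\alpha}$ with $\mu_{j,\alpha}\in K[H]$ and regrouping yields $x=\sum_\alpha b_\alpha\bigl(\sum_j \mu_{j,\alpha}f_j\bigr)$ with $\sum_j \mu_{j,\alpha}f_j\in K[\mathbf{F}]$, so $B$ generates $M$ over $K[\mathbf{F}]$.

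For linear independence, suppose $\sum_\alpha b_\alpha f_\alpha=0$ with $f_\alpha\in K[\mathbf{F}]$ (finitely many nonzero). Using the basis $T$, expand $f_\alpha=\sum_t h_{\alpha,t}\cdot t$ with $h_{\alpha,t}\in K[H]$. Then
$$0=\sum_\alpha b_\alpha f_\alpha = \sum_t \Bigl(\sum_\alpha b_\alpha h_{\alpha,t}\Bigr)\cdot t,$$
and each $\sum_\alpha b_\alpha h_{\alpha,t}$ lives in $K[H]^m$. The direct-sum decomposition forces $\sum_\alpha b_\alpha h_{\alpha,t}=0$ in $K[H]^m$ for every $t$, and then $K[H]$-linear independence of $B$ gives $h_{\alpha,t}=0$ for all $\alpha,t$, so $f_\alpha=0$. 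The rank equality $\rk_{K[H]}(M_H)=\rk_{K[\mathbf{F}]}(M)$ follows immediately from the existence of a common indexing set for a basis on both sides.

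The only obstacle I anticipate is bookkeeping: keeping the left/right module conventions consistent so that $\{t\}_{t\in T}$ is genuinely a \emph{left} $K[H]$-basis of $K[\mathbf{F}]$ (matching the position of $b_\alpha$ on the left of the expansion of $f_\alpha$). Once that is pinned down with the right transversal, no deeper input (Schreier transversals, Lewin bases, etc.) is required.
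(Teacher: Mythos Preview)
Your proof is correct and follows essentially the same approach as the paper: both use a right transversal $T$ for $H$ in $\mathbf{F}$ to decompose $K[\mathbf{F}]=\bigoplus_{t\in T}K[H]t$ as a left $K[H]$-module, then project a hypothetical dependence relation onto each summand to reduce to $K[H]$-linear independence. The paper phrases the projection step via explicit maps $P_{Ht}$ followed by right-multiplication by $t^{-1}$, while you work directly with the direct-sum decomposition of $K[\mathbf{F}]^m$, but these are the same argument.
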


\begin{proof}
    Let $B\subseteq M_H$ be a basis. 
    It clearly spans $M$ over $K[\mathbf{F}]$. 
    On the other hand, if $\sum_{b\in B} b f_b = 0$ 
    for some $f_b\in K[\mathbf{F}]$, we can mimic the proof of 
    \cite[Proposition 3.1]{ernst2024word}:  
    Let $T$ be a right transversal for $H$ in $\mathbf{F}$ 
    (i.e.\ a set of representatives of the right cosets of 
    $H$), then for every $t \in T$ the set 
    $K[H]t$ of elements of $K[\mathbf{F}]$ supported on the coset 
    $Ht$ forms a left $K[H]$-module, and the group algebra 
    $K[\mathbf{F}]$ admits a left $K[H]$-module decomposition 
    $K[\mathbf{F}] = \bigoplus_{t\in T} K[H]t$.
    Let $P_{Ht}\colon K[\mathbf{F}]\to K[H]t$ be the projections 
    induced by this decomposition. 
    For every $t \in T$, applying the left $K[H]$-module map $P_{Ht}$ to both sides of the equation $\sum_{b\in B} b f_b = 0$ yields the relation $\sum_{b\in B} b P_{Ht}(f_b) = 0, $
    and multiplying by $t^{-1}$ gives 
    $\sum_{b\in B} b P_{Ht}(f_b)t^{-1} = 0.$
    Since $P_{Ht}(f_b)t^{-1}\in K[H]$, and $B$ is a basis for $M_H$, we deduce that $P_{Ht}(f_b) = 0$ for every $b\in B$. Thus, $f_b = \sum_{t\in T} P_{Ht}(f_b) = 0$ for every $b\in B$.
\end{proof}

The following proposition is a special case of
\cite[Section 3.2: Injective Modules, page 129, 
exercise *3.16 (i)]{rotman2009introduction}:
\begin{proposition}
\label{prop_ses_rk_ineq}
    Let $0\to A\to B\to C\to 0$
    be a short exact sequence of free $K[\mathbf{F}]$-modules. Then $\rk(B) = \rk(A)+\rk(C)$.
\end{proposition}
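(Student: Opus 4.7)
The plan is to exploit the fact that the quotient module $C$ is free (in particular projective) to split the short exact sequence, reducing the identity $\rk(B)=\rk(A)+\rk(C)$ to the additivity of rank under direct sums.

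First I would note that since $C$ is free over $K[\mathbf{F}]$, it is projective, so the surjection $B\twoheadrightarrow C$ admits a section $s\colon C\to B$. This yields a direct sum decomposition $B\cong A\oplus s(C)\cong A\oplus C$ of $K[\mathbf{F}]$-modules. Concretely, if $\{a_i\}_{i\in I}$ is a basis of $A$ and $\{c_j\}_{j\in J}$ is a basis of $C$, then $\{a_i\}_{i\in I}\cup\{s(c_j)\}_{j\in J}$ is a basis of $B$: it spans $B$ because every $b\in B$ can be written as $b=(b-s(\pi(b)))+s(\pi(b))$ with the first summand in $A$ and the second in $s(C)$, and it is linearly independent because a relation splits into an $A$-part and an $s(C)$-part, each of which must vanish by linear independence of the chosen bases of $A$ and $s(C)$.

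It then remains to observe that the rank of a free $K[\mathbf{F}]$-module is a well-defined cardinal invariant, which is precisely the Cohn--Lewin theorem \cite{cohn1964free, lewin1969free} invoked in the text right after Definition~\ref{def_spibarqd}; equivalently, $K[\mathbf{F}]$ has the invariant basis number property. Hence the cardinality of the disjoint union basis computed above gives $\rk(B)=\rk(A)+\rk(C)$.

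The only subtle point is that all three ranks could in principle be infinite cardinals; but the argument above works uniformly in that case too, since the relevant statement is merely additivity of cardinalities under disjoint unions. In the intended applications the modules are all finitely generated, so the identity is an equality of natural numbers and no cardinal arithmetic is needed. I do not anticipate any genuine obstacle in this proof — the content lies entirely in the splitting, which is immediate from projectivity of free modules.
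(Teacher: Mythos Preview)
Your proof is correct and is precisely the standard argument: split the sequence using projectivity of the free module $C$, then invoke the invariant basis number property of $K[\mathbf{F}]$. The paper does not give its own proof of this proposition but simply cites it as \cite[exercise *3.16 (i)]{rotman2009introduction}, so there is nothing to compare; your write-up is exactly what one would expect as a solution to that exercise. One very minor remark: citing Cohn--Lewin for the well-definedness of rank is overkill, since IBN for $K[\mathbf{F}]$ follows immediately from the augmentation $K[\mathbf{F}]\to K$ and IBN for fields, whereas the Cohn--Lewin theorem is the deeper statement that submodules of free modules are free.
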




\begin{proposition}
\label{prop_comm_square_rank_eq}
Assume we have the following commutative diagram of free $K[\mathbf{F}]$-modules, in which $A_0\le A_1, B_0\le B_1$:
\[\begin{tikzcd}
	{A_0} & {B_0} \\
	{A_1} & {B_1}
	\arrow["f"', from=1-1, to=1-2]
	\arrow[hook, from=1-1, to=2-1]
	\arrow[hook, from=1-2, to=2-2]
	\arrow["f", from=2-1, to=2-2]
\end{tikzcd}\]
Assume further that it is a pullback diagram (i.e.\ $f^{-1}(B_0)=A_0$), and that $f\colon A_1\to B_1$ is surjective. 
Then $\rk(A_0) + \rk(B_1) = \rk(A_1) + \rk(B_0).$
\end{proposition}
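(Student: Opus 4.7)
The plan is to extract two short exact sequences with a common kernel and then cancel. Set $K \defeq \ker(f\colon A_1 \to B_1)$. Since $A_1$ is a free $K[\mathbf{F}]$-module and, by the Cohn--Lewin theorem, submodules of free $K[\mathbf{F}]$-modules are free, $K$ is a free module of some well-defined rank.

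The first short exact sequence is
\[ 0 \to K \to A_1 \overset{f}{\to} B_1 \to 0, \]
which uses exactly the surjectivity of $f\colon A_1 \to B_1$. To produce the second one, I will use the pullback hypothesis $A_0 = f^{-1}(B_0)$: on the one hand $K \subseteq A_0$ (since $0 \in B_0$), and on the other hand $f$ restricts to a map $A_0 \to B_0$ whose kernel is exactly $K$. The only thing to check is surjectivity of $f\restriction_{A_0}\colon A_0 \to B_0$, but this is immediate from the surjectivity of $f\colon A_1 \to B_1$ together with $A_0 = f^{-1}(B_0)$: given $b \in B_0 \subseteq B_1$, lift it to some $a \in A_1$ with $f(a) = b$, then $a \in f^{-1}(B_0) = A_0$. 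This yields the second short exact sequence
\[ 0 \to K \to A_0 \overset{f}{\to} B_0 \to 0. \]

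Both sequences consist entirely of free $K[\mathbf{F}]$-modules (using Cohn--Lewin for $K$, $A_0$, $B_0$ as submodules of free modules), so Proposition~\ref{prop_ses_rk_ineq} applies to each and yields
\[ \rk(A_1) = \rk(K) + \rk(B_1), \qquad \rk(A_0) = \rk(K) + \rk(B_0). \]
Subtracting the second from the first eliminates $\rk(K)$ and rearranges to the desired identity $\rk(A_0) + \rk(B_1) = \rk(A_1) + \rk(B_0)$.

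There is no real obstacle; the only subtle point is ensuring the hypotheses of Proposition~\ref{prop_ses_rk_ineq} are met, namely that $K$ is itself free. This is where Cohn--Lewin is invoked, and it is the reason the argument is specific to free group algebras rather than being a purely diagrammatic fact.
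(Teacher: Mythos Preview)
Your proof is correct. Both you and the paper reduce to Proposition~\ref{prop_ses_rk_ineq}, but via different short exact sequences. The paper uses a single sequence
\[
0 \to A_0 \to A_1 \oplus B_0 \to B_1 \to 0,
\]
with maps $a_0 \mapsto (a_0, f(a_0))$ and $(a_1, b_0) \mapsto f(a_1) - b_0$; exactness at the middle is precisely the pullback condition, and one application of Proposition~\ref{prop_ses_rk_ineq} finishes. You instead introduce $K = \ker f$ and produce two sequences sharing that kernel, then cancel $\rk(K)$. The trade-off: the paper's single sequence involves only the four modules already assumed free, so it never needs Cohn--Lewin, whereas your route requires it to know $K$ is free. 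Your argument is perhaps more transparent about \emph{why} the ranks match (same kernel upstairs and downstairs), while the paper's is slightly more self-contained.
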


\begin{proof}
    Consider the sequence 
    $0\to A_0 \to A_1\oplus B_0\to B_1\to 0$ 
    given by the maps 
    $A_0\ni a_0\mapsto (a_0, f(a_0))\in A_1\oplus B_0$ 
    and 
    $A_1\oplus B_0\ni (a_1, b_0)\mapsto f(a_1)-b_0\in B_1$.
    We claim that it is exact: The map $A_0 \to A_1\oplus B_0$ is obviously injective, and the map $A_1\oplus B_0\to B_1$ is obviously surjective. Since the diagram commutes, the composition $A_0 \to A_1\oplus B_0\to B_1$ is $0$. Finally, if $f(a_1)-b_0=0$ for some $(a_1, b_0)\mapsto f(a_1)-b_0\in B_1$, then $f(a_1)=b_0$. Since $f^{-1}(B_0)=A_0$ we get $a_1\in A_0$. This shows the exactness.
    Applying Proposition~\ref{prop_ses_rk_ineq}, we get
    $\rk(A_1) + \rk(B_0) = \rk(A_1 \oplus B_0) = \rk(A_0) + \rk(B_1)$.
\end{proof}

\begin{proposition}
\label{prop_construct_another_module}
    Let $d, m\in \N$. Let $N\le K[\mathbf{F}]^m$ be a f.g.\ submodule, and let $H\le \mathbf{F}$ be a f.g.\ subgroup. Assume that the $K[H]$-submodule $N_H\defeq N\cap K[H]^m$ of $K[H]^m$ has codimension
    \[ d\defeq \dim_K K[H]^m/N_H < \infty\]  
    and that some (equivalently, any) Schreier transversal of $N_H$ contains $e_i$ for every $i=1,\ldots, m$. 
    Then 
    \[ \redrank(N)\ge d\cdot s\pibar_{K, d, d}(H). \]
\end{proposition}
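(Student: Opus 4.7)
The plan is to construct a map $\xi''_{m,d}\colon \mathscr{N}_{m,d}\to \mathscr{N}_{d,d}$ that ``flattens'' the ambient module from $K[\FF]^m$ to $K[\FF]^d$, raising the rank by exactly $d-m$, so that applying the definition of $s\pibar_{K,d,d}(H)$ to the image yields the claimed inequality. The key device is a $K[\FF]$-module surjection $\phi\colon K[\FF]^d \twoheadrightarrow K[\FF]^m$ which (a) restricts to a surjection $\bar\phi\colon K[H]^d\twoheadrightarrow K[H]^m$, and (b) has kernel $\ker\phi$ free of rank $d-m$. The hypothesis on the Schreier transversal provides exactly the raw material: fixing a Schreier transversal $T\subseteq \mathcal{E}_m\cdot H$ for $N_H$ in $K[H]^m$ containing $e_1,\ldots,e_m$, we have $|T|=d$, whence $m\le d$, and we may choose distinct elements $v_{m+1},\ldots, v_d\in T\setminus\{e_1,\ldots, e_m\}$.

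Define $\phi$ by $\phi(e_i)\defeq e_i$ for $i\le m$ and $\phi(e_i)\defeq v_i$ for $m<i\le d$. Since the $v_i$ lie in $\mathcal{E}_m\cdot H\subseteq K[H]^m$, the map $\phi$ takes $K[H]^d$ into $K[H]^m$, and $\bar\phi$ is surjective because $e_1,\ldots,e_m$ already appear in its image. The kernel $\ker\phi$ is generated as a $K[\FF]$-module by the ``triangular'' family $\{e_i-v_i\}_{i=m+1}^d$, and these are $K[\FF]$-linearly independent because in any relation $\sum c_i(e_i-v_i)=0$ the coefficient of $e_i$ for $i>m$ on the left is simply $c_i$. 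Hence $\ker\phi\cong K[\FF]^{d-m}$ has rank $d-m$.

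Set $N'\defeq \phi^{-1}(N)$. The short exact sequence $0\to \ker\phi \to N'\to N\to 0$ is a sequence of free $K[\FF]$-modules (free, because submodules of $K[\FF]^d$ are free by Cohn--Lewin), so Proposition~\ref{prop_ses_rk_ineq} gives
\[
\rk(N')=\rk(N)+(d-m).
\]
Since $\bar\phi$ surjects $K[H]^d$ onto $K[H]^m$, we have $N'\cap K[H]^d = \bar\phi^{-1}(N_H)$, and $\bar\phi$ induces a $K$-linear isomorphism $K[H]^d/(N'\cap K[H]^d) \overset{\sim}{\to} K[H]^m/N_H$, of dimension $d$ by hypothesis. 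Thus $N'\in\mathscr{N}_{d,d}$, and the definition of $s\pibar_{K,d,d}(H)$ yields $\rk(N')\ge d\prn*{1+s\pibar_{K,d,d}(H)}$. Combining,
\[
\rk(N)\ge m+d\cdot s\pibar_{K,d,d}(H),\qquad\text{so}\qquad \redrank(N)\ge \rk(N)-m\ge d\cdot s\pibar_{K,d,d}(H).
\]

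The only conceptual obstacle is the simultaneous choice of $\phi$: the $v_i$ must lie in $K[H]^m$ (so that $\bar\phi$ controls the $K[H]$-codimension) \emph{and} be $K[\FF]$-independent of $\mathcal{E}_m$ in a way that makes $\ker\phi$ free of exactly the complementary rank $d-m$. The Schreier transversal, being a prefix-closed family of $K[H]$-monomials of $K$-cardinality exactly $d$, delivers both properties at once, which is why the assumption that $T$ contains every $e_i$ is the natural sharpening after the redundant-coordinate removal of Proposition~\ref{prop_reduction_preserves_redrank}.
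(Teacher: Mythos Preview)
Your proof is correct and follows the same essential strategy as the paper: both build the identical $K[\FF]$-linear map $K[\FF]^d\to K[\FF]^m$ sending the standard basis onto the Schreier transversal (the paper calls it $T$), take $N'$ to be the preimage of $N$, verify $N'\in\mathscr{N}_{d,d}$, and read off the bound.

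The one genuine difference is in how the rank shift $\rk(N')-\rk(N)=d-m$ is established. The paper introduces the auxiliary $H$-modules $M=\textup{span}_{K[\FF]}(N_H)$ and $M'=\textup{span}_{K[\FF]}(N'_H)$, invokes Proposition~\ref{prop_comm_square_rank_eq} on the pullback square with rows $M'\to M$ and $N'\to N$, and then uses Proposition~\ref{prop_coef_change_preserves_rank} together with the Schreier formula to compute $\rk(M)-\rk(M')=m-d$. You bypass all of this by observing directly that $\ker\phi$ is free of rank $d-m$ on the triangular generators $\{e_i-v_i\}_{i>m}$ and applying Proposition~\ref{prop_ses_rk_ineq} to the short exact sequence $0\to\ker\phi\to N'\to N\to 0$. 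Your route is shorter and avoids the detour through $M,M'$; the paper's route, on the other hand, makes explicit that $\redrank$ is exactly preserved (not merely bounded), which fits its broader narrative of $\xi''_{m,d}$ as a structure-preserving map. Both are valid, and yours is the more economical argument for the stated inequality.
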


\begin{proof}
Let $t_1, \ldots, t_d\in \mathcal{E}_m\times H\subseteq K[H]^m$ be the vertices in a Schreier transversal of $N_H$. Order them such that
$t_1 = e_1, \ldots, t_m = e_m$. 

Let $T\colon K[\mathbf{F}]^d\to K[\mathbf{F}]^m$ be the $K[\mathbf{F}]$-linear morphism that maps $T(e_i) = t_i$ for every $i\in \{1\ldots, d\}$.
Denote the preimages by $N'\defeq T^{-1}(N), N_H'\defeq T^{-1}(N_H) = N'\cap K[H]^d$. 
Since $T(e_i) = e_i$ for every $i\le m$, the map $T$ surjects $K[\mathbf{F}]^m$, and therefore the restriction
$T\restriction_{N'}\colon N'\to N$ is surjective as well.
Now we claim that the induced map on the quotient spaces
\[\tilde{T}\colon N'_H\backslash K[H]^d\to N_H\backslash K[H]^m\]
is an isomorphism. Indeed, $\tilde{T}$ is surjective (since $T$ is), and $\tilde{T}$ is injective as $T(v)\in N_H$ implies $v\in T^{-1}(N_H)=N'_H$.
We get the following commutative diagram:
\[\begin{tikzcd}[ampersand replacement=\&]
	{N_H'} \&\& {K[H]^d} \&\& {K^d} \\
	\& {N'} \&\& {K[\FF]^d} \&\& {N'\backslash K[\FF]^d} \\
	{N_H} \&\& {K[H]^m} \&\& {K^d} \\
	\& N \&\& {K[\FF]^m} \&\& {N\backslash K[\FF]^m}
	\arrow[hook', from=1-1, to=1-3]
	\arrow[hook', from=1-1, to=2-2]
	\arrow[two heads, from=1-1, to=3-1]
	\arrow[two heads, from=1-3, to=1-5]
	\arrow[hook', from=1-3, to=2-4]
	\arrow[two heads, from=1-3, to=3-3]
	\arrow[from=1-5, to=2-6]
	\arrow[hook', two heads, from=1-5, to=3-5]
	\arrow[hook', from=2-2, to=2-4]
	\arrow[two heads, from=2-2, to=4-2]
	\arrow[two heads, from=2-4, to=2-6]
	\arrow[two heads, from=2-4, to=4-4]
	\arrow[from=2-6, to=4-6]
	\arrow[hook', from=3-1, to=3-3]
	\arrow[hook', from=3-1, to=4-2]
	\arrow[two heads, from=3-3, to=3-5]
	\arrow[hook', from=3-3, to=4-4]
	\arrow[from=3-5, to=4-6]
	\arrow[hook', from=4-2, to=4-4]
	\arrow[two heads, from=4-4, to=4-6]
\end{tikzcd}\]

Since $N_H$ has co-dimension $d$ and $N'_H\backslash K[H]^d\cong N_H\backslash K[H]^m$, also $N'_H$ has co-dimension $d$.
By \cite[Theorem 4:  The Schreier formula]{lewin1969free},
\[ \redrank(N_H) = \redrank(N'_H) = d\cdot \redrank(H). \]
Denote $M\defeq \textup{span}_{K[\mathbf{F}]}(N_H)$ and 
$M'\defeq \textup{span}_{K[\mathbf{F}]}(N'_H)$.
Now the diagram
\[\begin{tikzcd}
	{M'} & M \\
	{N'} & N
	\arrow["T", two heads, from=1-1, to=1-2]
	\arrow[hook, from=1-1, to=2-1]
	\arrow[hook, from=1-2, to=2-2]
	\arrow["T", two heads, from=2-1, to=2-2]
\end{tikzcd}\]
fits into Proposition~\ref{prop_comm_square_rank_eq}, and we get
$\rk(N) + \rk(M') = \rk(N') + \rk(M).$
By Proposition~\ref{prop_coef_change_preserves_rank}, 
$\rk(M) = \rk(N_H) = d\cdot \redrank(H) + m$ 
and $\rk(M') = \rk(N'_H) = d\cdot \redrank(H) + d$.
We get
\[\rk(N) - \rk(N') = 
 \rk(M) - \rk(M') = 
 m - d,\]
 that is, $\redrank(N) = \redrank(N')$.
Since $N'_H$ has co-dimension $d$, we have $N'\in \mathscr{N}_{d, d}$ so $\redrank(N')\ge s\pi_{q, d, d}(H)\cdot d$, as needed.

\end{proof}

\section{Counting Fixed Points}
\label{section_counting_fixed_sub_spaces}

The fixed point estimates Theorem~\ref{thm_Sn_spibard}, \ref{thm_spibarqd}
were formulated for the group families 
$(S_n)_{n=1}^{\infty}, (\GLnFq)_{n=1}^{\infty}$.
However, they can be generalized to all finite simple 
(non-abelian) groups with rank approaching infinity.
To keep this paper of manageable size, 
we do not give all the details for this generalization;
however, in the following proposition, 
we explain some parts of it:
not the technical issues like the difference between $S_n$ and $A_n$ 
or between $\GLnFq$ and $\PSL_n(\F_q)$,
but more structrual issues like preserving a quadratic form.
Specifically, large enough finite simple (non-abelian) groups which are not $A_n$ or $\PSL_n(\F_q)$
are given, up to technical issues, by the subgroup $G\le \GLnFq$ of 
maps preserving a quadratic form on $\F_q^n$.
The category of finite sets, the category of finite $\F_q$-linear spaces and 
the categories of finite $\F_q$-linear spaces with certain type of quadratic forms,
all enjoy 
the property that for every two objects $X,Y$ 
and two monomorphisms $f,g\colon X\rightrightarrows Y$
there is an automorphism $\phi$ of $Y$ such that $f\circ\phi=g$:
\begin{equation}\label{stmt_transitive_aut}
\begin{minipage}{0.9\linewidth}
\sffamily\small
For every two objects $X, Y\in \mathbf{C}$, the group $\Aut_{\mathbf{C}}(Y)$ acts
transitively by composition on the set $\Hom^{\mathrm{inj}}_{\mathbf{C}}(X, Y)$ of injective morphisms.
Equivalently, $\Aut_{\mathbf{C}}(Y)$ acts transitively on isomorphic sub-objects of $Y$.
\end{minipage}
\end{equation}

This property~\eqref{stmt_transitive_aut} is clear for the categories of sets and of $\F_q$-linear spaces, and
known as Witt's theorem \cite{Witt1937QuadratischeFormen} otherwise, see 
\cite[Theorem 7.4]{Taylor1992GeometryClassicalGroups}
and also \cite[Theorem 3.4]{SprehnWahl2020FormsOverFieldsWitt} for the characteristic $2$ case.

\begin{definition}
\label{def_inter_alpha_beta}
    Let $H$ be a group, 
    $X, Y\in \textup{Obj}(\textbf{C})$,
    $\alpha\in \Hom(H, \Aut(Y))$ and $\beta\in \Hom(H, \Aut(X))$.
    We define $\Inter(\alpha, \beta)$ as the set of morphisms $\iota\colon X\to Y$ that intertwine $\alpha$ and $\beta$:
    \begin{equation*}
        \Inter(\alpha, \beta) \defeq \braces*{\iota\in \Hom_{\textbf{C}}(X, Y): \begin{array}{cc}
         & \textup{ For every }h\in H: \\
         & \alpha(h)\circ \iota = \iota \circ \beta(h).
    \end{array}}.\quad\quad\quad \begin{tikzcd}
	{X} & {X} \\
	{Y} & {Y}
	\arrow["\iota", from=1-1, to=2-1]
	\arrow["{\beta(h)}"', from=1-1, to=1-2]
	\arrow["\iota"', from=1-2, to=2-2]
	\arrow["{\alpha(h)}", from=2-1, to=2-2]
\end{tikzcd}
    \end{equation*}
    We also define $\Interinj(\alpha, \beta)\defeq \Hom_{\textbf{C}}(X, Y)^{\textup{inj}}\cap \Inter(\alpha, \beta).$
\end{definition} 

Given a function $f$, we denote its image by $\Img(f)$. If $f$ is a morphism in $\textbf{C}$, then $\Img(f)\in \textup{Obj}(\textbf{C})$.

\begin{observation}
\label{observe_common_fixed_mod_G_and_Inter}
    Let $H$ be a group, 
    $X, Y\in \textup{Obj}(\textbf{C})$,
    $\alpha\in \Hom(H, \Aut(Y))$ and 
    $\iota\in \Hom^{\textup{inj}}_{\textbf{C}}(X, Y)$.
    Then $\Img(\iota)\subseteq Y$ is $\alpha(H)$-invariant if and only if there exists 
    $\beta\in \Hom(H, \Aut(X))$
    such that 
    $\iota\in \textup{Inter}(\alpha, \beta)$.
    If such $\beta$ exists, it is unique: $\beta(h) = \iota^{-1}\restriction_{\Img(\iota)} \circ \alpha(h) \circ \iota $ for every $h\in H$.
\end{observation}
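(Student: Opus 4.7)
The plan is to verify the two directions and uniqueness separately; each is essentially routine manipulation of the intertwining identity $\alpha(h)\circ\iota=\iota\circ\beta(h)$.

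For the forward direction, I would start by assuming that $\Img(\iota)$ is $\alpha(H)$-invariant. Since each $\alpha(h)$ is an automorphism of $Y$ and both $\alpha(h)$ and $\alpha(h^{-1})=\alpha(h)^{-1}$ map $\Img(\iota)$ into itself, $\alpha(h)$ restricts to an automorphism of the sub-object $\Img(\iota)$. Because $\iota\colon X\to\Img(\iota)$ is an isomorphism in $\mathbf{C}$ (injective and onto its image), I can define
\[ \beta(h) \defeq \iota^{-1}\!\restriction_{\Img(\iota)} \circ\, \alpha(h)\!\restriction_{\Img(\iota)} \circ\, \iota \,\in\, \Aut(X). \]
A one-line computation shows $\beta(h_1h_2)=\beta(h_1)\beta(h_2)$, using $\iota\circ\iota^{-1}=\mathrm{id}_{\Img(\iota)}$ between the two factors. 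The identity $\iota\circ\beta(h)=\alpha(h)\circ\iota$ then holds by construction, so $\iota\in\Inter(\alpha,\beta)$.

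For the backward direction, I would assume such a $\beta$ exists. Then for every $h\in H$,
\[ \alpha(h)(\Img(\iota)) = \alpha(h)\circ\iota(X) = \iota\circ\beta(h)(X) = \iota(X) = \Img(\iota), \]
since $\beta(h)$ is an automorphism of $X$. Thus $\Img(\iota)$ is $\alpha(H)$-invariant.

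Finally, for uniqueness, assume $\iota\in\Inter(\alpha,\beta)$ for some $\beta$. The defining equation $\alpha(h)\circ\iota=\iota\circ\beta(h)$ together with the injectivity of $\iota$ forces $\beta(h)=\iota^{-1}\!\restriction_{\Img(\iota)}\circ\,\alpha(h)\circ\iota$, as claimed. There is no real obstacle here: the only point requiring mild care is confirming that $\alpha(h)$ actually restricts to an automorphism (not merely an endomorphism) of $\Img(\iota)$, which follows immediately from $\alpha$ taking values in $\Aut(Y)$ together with $\alpha(H)$-invariance of $\Img(\iota)$.
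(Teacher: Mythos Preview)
Your proposal is correct and is precisely the routine verification one would write out; the paper itself treats this as a self-evident observation and gives no separate proof, so your argument is the natural elaboration of what the paper leaves implicit.
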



\begin{proposition}
\label{prop_common_fixed_mod_G_and_Com}
    Let $H$ be a group, 
    $X, Y\in \textup{Obj}(\textbf{C})$,
    $G\le \Aut(X)$ and $\alpha\in \Hom(H, \Aut(Y))$. Then
    \[ \abs*{\braces*{\begin{array}{cc}
         \textup{common fixed points} \\
         \textup{of }\alpha(H)\acts \Hom^{\textup{inj}}_{\textbf{C}}(X, Y) /G
    \end{array}}} = \frac{1}{|G|} \sum_{\beta\in \Hom(H, G)} \abs*{\Interinj(\alpha, \beta)}. \]
\end{proposition}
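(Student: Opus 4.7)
The plan is a double count of the set
\[ S \defeq \{\,(\iota,\beta)\,\mid\, \iota\in \Hom^{\textup{inj}}_{\textbf{C}}(X,Y),\ \beta\in\Hom(H,G),\ \iota\in\Inter(\alpha,\beta)\,\}. \]
Stratifying by $\beta$ immediately yields $|S|=\sum_{\beta\in\Hom(H,G)}\abs*{\Interinj(\alpha,\beta)}$, which is the numerator on the right-hand side, so the task reduces to identifying $|S|/|G|$ with the number of $\alpha(H)$-fixed points of the quotient $\Hom^{\textup{inj}}_{\textbf{C}}(X,Y)/G$.

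To this end I would let $G$ act on $S$ by $g\cdot(\iota,\beta)\defeq(\iota\circ g^{-1},\, g\beta g^{-1})$, where $g\beta g^{-1}$ denotes the conjugated homomorphism $h\mapsto g\beta(h)g^{-1}$. A direct check using the intertwining identity $\alpha(h)\circ\iota=\iota\circ\beta(h)$ shows this action preserves $S$, and it is free: if $\iota\circ g^{-1}=\iota$, then injectivity of $\iota$ forces $g=1$. Hence the number of $G$-orbits in $S$ equals $|S|/|G|$.

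The main step is then a bijection between these $G$-orbits and the $\alpha(H)$-fixed points in $\Hom^{\textup{inj}}_{\textbf{C}}(X,Y)/G$. Given $\iota$ whose $G$-orbit $\iota G$ is $\alpha(H)$-invariant, for each $h\in H$ the element $g_h\in G$ satisfying $\alpha(h)\circ\iota=\iota\circ g_h$ is uniquely determined by the injectivity of $\iota$; uniqueness applied to the equation $\alpha(h_1h_2)\iota=\alpha(h_1)(\iota\circ g_{h_2})=\iota\circ g_{h_1}g_{h_2}$ then forces $\beta_\iota\colon h\mapsto g_h$ to be a homomorphism into $G$, producing a well-defined element $(\iota,\beta_\iota)\in S$. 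A short calculation gives $\beta_{\iota\circ g}=g^{-1}\beta_\iota g$, so the assignment $\iota G\mapsto G\cdot(\iota,\beta_\iota)$ descends to a well-defined map on $G$-orbits, and it is surjective because any $(\iota,\beta)\in S$ produces an $\alpha(H)$-invariant orbit $\iota G$. Combined with the freeness established above, this proves the identity. The main obstacle, though minor, is book-keeping: verifying that the pointwise assignment $h\mapsto g_h$ is in fact a group homomorphism, which is precisely where the injectivity of $\iota$ is used.
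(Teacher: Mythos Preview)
Your proof is correct and follows essentially the same approach as the paper: both define the set of pairs $(\iota,\beta)$, equip it with a free $G$-action, and identify the $G$-orbits with the $\alpha(H)$-fixed points of $\Hom^{\textup{inj}}_{\textbf{C}}(X,Y)/G$. The only cosmetic differences are your left/right action convention and that you verify directly that $h\mapsto g_h$ is a homomorphism, whereas the paper packages this as the preceding Observation~\ref{observe_common_fixed_mod_G_and_Inter}; you might also state injectivity of the orbit map explicitly (it is immediate from the first coordinate), since you only mention surjectivity.
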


\begin{proof}
    Define $\mathfrak{I}\defeq \biguplus_{\beta\in \Hom(H, G)} \Interinj(\alpha, \beta) \times \{\beta\}$ and denote the set of common fixed points of $\alpha(H)\acts \Hom^{\textup{inj}}_{\textbf{C}}(X, Y) / G$ by $\mathcal{CFP}$.
    The group $G$ acts freely on $\mathfrak{I}$: for every $g\in G$ and $(\iota, \beta)\in \mathfrak{I}$, $g.(\iota, \beta) \defeq (\iota\circ g, \,\, h\mapsto g^{-1}\beta(h) g)$.
    Indeed, $\iota\circ g = \iota$ implies $g = \textup{id}_X$ since $\iota$ is injective.
    For every $h\in H$ and $(\iota, \beta)\in \mathfrak{I}$, we have equality of $G$-orbits 
    $\alpha(h)\iota G = \iota\beta(h) G = \iota G$ 
    so $\iota G\in \mathcal{CFP}$.
    The projection map $\mathfrak{I}\to \Hom^{\textup{inj}}_{\textbf{C}}(X, Y)$ defined by $(\iota, \beta)\mapsto \iota$ intertwines the $G$-actions on $\mathfrak{I}$ and $\Hom^{\textup{inj}}_{\textbf{C}}(X, Y)$, so it induces a map on the orbits $\phi\colon \mathfrak{I}/G\to \mathcal{CFP}$.    
    Now by Observation~\ref{observe_common_fixed_mod_G_and_Inter}, $\phi$ is bijective. 
    Since the action $G\acts \mathfrak{I}$ is free we get $|\mathcal{CFP}| = |\mathfrak{I}| / |G|$ as needed.
\end{proof}

\subsection*{$S_n$ and Covering Spaces}

The following proposition is a known fact from algebraic topology.
\begin{proposition}
\label{prop_alg_top}
    Let $(X, x_0)$ be a pointed CW-complex, and let $d\in \N$.
    There is a bijective correspondence between numbered coverings and actions on $[d]$:
    \begin{equation*}
        \braces*{ (\Tilde{X}, p, c): \begin{array}{cc}
         & p\colon \Tilde{X} \twoheadrightarrow X \textup{ is a covering map of degree }d, \\
         & \textup{ and } c\colon [d]\overset{\cong}{\to}  p^{-1}(x_0) \textup{ is a bijective numbering. }
    \end{array} } \Longleftrightarrow \Hom(\pi_1(X, x_0), S_d).
    \end{equation*}
Moreover, $S_d$ acts on both sets: every $\tau\in S_d$ acts on numbered coverings by $\tau.(\Tilde{X}, p, c) = (\Tilde{X}, p, c\circ\tau)$ and on homomorphisms $\beta\in \Hom(\pi_1(X, x_0), S_d)$ by $\tau.\beta(x) = \tau^{-1}\beta(x)\tau$, and the correspondence commutes with action.
Given a numbered covering $(\Tilde{X}, p, c)$ that corresponds to $\beta\in \Hom(\pi_1(X, x_0), S_d)$, the group $\pi_1(X, x_0)$ acts on $p^{-1}(x_0)$: $\gamma\in \pi_1(X, x_0)$ maps $\Tilde{x}\in \Tilde{X}$ to the end point of the unique lift of $\gamma$ to $\Tilde{X}$ starting at $\Tilde{x}$, and the following diagram commutes:
\[\begin{tikzcd}
	{[d]} & {p^{-1}(x_0)} \\
	{[d]} & {p^{-1}(x_0)}
	\arrow["c"', from=1-1, to=1-2]
	\arrow["\gamma"', from=1-2, to=2-2]
	\arrow["{\beta(\gamma)}"', from=1-1, to=2-1]
	\arrow["c", from=2-1, to=2-2]
\end{tikzcd}\]
\end{proposition}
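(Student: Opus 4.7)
The plan is to invoke the standard monodromy/classification-of-coverings machinery, which for a connected (pointed) CW-complex $X$ gives exactly the stated equivalence; I will tacitly assume $X$ is path-connected, as the general case reduces to the component of $x_0$. The proof splits into constructing the forward map, constructing an explicit inverse via the universal cover, checking mutual inverseness, and verifying the equivariance and commutativity statements.

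First, for the forward direction, given a numbered covering $(\tilde{X}, p, c)$, the unique path-lifting property yields a monodromy action $\pi_1(X, x_0) \acts p^{-1}(x_0)$: for $\gamma \in \pi_1(X, x_0)$ and $\tilde{x} \in p^{-1}(x_0)$, $\gamma \cdot \tilde{x}$ is the endpoint of the unique lift of $\gamma$ that starts at $\tilde{x}$. Define $\beta_c \in \Hom(\pi_1(X, x_0), S_d)$ by $\beta_c(\gamma) \defeq c^{-1} \circ (\gamma \cdot -) \circ c$; this is a homomorphism because lifting is functorial in concatenation of paths, and by construction the final diagram in the statement commutes tautologically.

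For the inverse, I would fix a universal cover $q\colon \bar{X} \twoheadrightarrow X$ with basepoint $\bar{x}_0 \in q^{-1}(x_0)$, on which $\pi_1(X, x_0)$ acts freely by deck transformations (say on the right). Given $\beta \in \Hom(\pi_1(X, x_0), S_d)$, form the balanced product
\[ \tilde{X}_\beta \defeq (\bar{X} \times [d]) / {\sim}, \qquad (\bar{x}\cdot\gamma,\, i) \sim (\bar{x},\, \beta(\gamma)(i)), \]
with covering map $p_\beta([\bar{x}, i]) \defeq q(\bar{x})$ and numbering $c_\beta(i) \defeq [\bar{x}_0, i]$. Standard arguments (already routine for CW-complexes) show $p_\beta$ is a $d$-sheeted covering and $c_\beta$ is a bijection onto $p_\beta^{-1}(x_0)$.

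To check the two constructions are mutually inverse, one direction is direct: the lift of $\gamma$ through $[\bar{x}_0, i]$ in $\tilde{X}_\beta$ ends at $[\bar{x}_0 \cdot \gamma, i] = [\bar{x}_0, \beta(\gamma)(i)]$, so $\beta_{c_\beta} = \beta$. In the other direction, given $(\tilde{X}, p, c)$ with associated $\beta_c$, the universal property of $\bar{X}$ produces a map $\bar{X} \times [d] \to \tilde{X}$ sending $(\bar{x}, i)$ to the endpoint of the lift of $q$ applied to any path $\bar{x}_0 \leadsto \bar{x}$, starting at $c(i)$; this map respects $\sim$ and descends to a covering isomorphism $\tilde{X}_{\beta_c} \cong \tilde{X}$ carrying $c_{\beta_c}$ to $c$. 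The $S_d$-equivariance is immediate from the definition: replacing $c$ by $c \circ \tau$ conjugates the monodromy by $\tau$, giving $\beta_{c\circ\tau}(\gamma) = \tau^{-1} \beta_c(\gamma) \tau$. I expect the main bookkeeping obstacle to be verifying that $\tilde{X}_\beta$ inherits a CW (or at least covering-space) structure making $p_\beta$ a covering map, but this is the classical construction behind the Galois correspondence for covering spaces and requires no genuine novelty here.
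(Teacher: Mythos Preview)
Your proof is correct and is the standard monodromy argument. The paper does not actually give a proof of this proposition: it is introduced with the sentence ``The following proposition is a known fact from algebraic topology'' and is then simply stated and used without further justification. Your write-up supplies exactly the classical proof one would find in a textbook on covering spaces, so there is nothing to compare.
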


\begin{definition}
\label{def_valid_func}
\cite[Definition A.7]{Sho23I}
    Let $\Gamma$ be a $B$-labeled multi core graph, let $X$ be a set, $\alpha\in \Hom(F_r, \textup{Sym}(X))$ and let $f\colon V(\Gamma)\to X$. 
    We say that $f$ is $\alpha$\textbf{-valid} if for every $b\in B$ and a $b$-labeled edge $\prn*{v\overset{b}{\to} u} \in E(\Gamma)$, we have $f(u) = \alpha(b).f(v)$.   
    Equivalently, for every $v, u\in V(\Gamma)$ and a path $\gamma\colon v\rightsquigarrow u$ that \enquote{reads} a word $w\in F_r$, we have $f(u) = \alpha(w).f(v)$. 
\end{definition}

\begin{figure}[ht]
    \centering
\begin{tikzcd}[ampersand replacement=\&]
	{\tilde{\Gamma}=} \& 3 \& 5 \& 6 \& 4 \&\& {\{2,4\}} \& {\{1,3\}} \& {=\Gamma} \\
	\& 2 \& 2 \& 3 \& 1 \&\& {\{2,6\}} \& {\{3,5\}} \\
	\&\& 1 \\
	\& 2 \& 3 \& 4 \\
	{\Delta=} \&\& 5 \& 6 \&\&\&\& {\{1,\ldots,6\}} \& {=\Omega_B}
	\arrow[curve={height=-18pt}, dotted, from=1-1, to=1-9]
	\arrow[curve={height=6pt}, dotted, from=1-1, to=5-1]
	\arrow["y", color={rgb,255:red,51;green,61;blue,255}, from=1-2, to=1-3]
	\arrow["x"{description}, color={rgb,255:red,255;green,51;blue,54}, from=1-3, to=2-2]
	\arrow["x"', color={rgb,255:red,255;green,51;blue,54}, from=1-4, to=1-3]
	\arrow["y"', color={rgb,255:red,51;green,61;blue,255}, from=1-5, to=1-4]
	\arrow["x", color={rgb,255:red,255;green,51;blue,54}, from=1-5, to=2-5]
	\arrow["x", color={rgb,255:red,255;green,51;blue,54}, from=1-7, to=1-8]
	\arrow["y"', color={rgb,255:red,51;green,61;blue,255}, from=1-7, to=2-7]
	\arrow["y", color={rgb,255:red,51;green,61;blue,255}, from=1-8, to=2-8]
	\arrow[curve={height=-6pt}, dotted, from=1-9, to=5-9]
	\arrow["x"', color={rgb,255:red,255;green,51;blue,54}, from=2-2, to=1-2]
	\arrow["y", color={rgb,255:red,51;green,61;blue,255}, from=2-2, to=2-3]
	\arrow["x", color={rgb,255:red,255;green,51;blue,54}, from=2-3, to=2-4]
	\arrow["x"{description}, color={rgb,255:red,255;green,51;blue,54}, from=2-4, to=1-5]
	\arrow["y"', color={rgb,255:red,51;green,61;blue,255}, from=2-5, to=2-4]
	\arrow["x", color={rgb,255:red,255;green,51;blue,54}, from=2-7, to=2-8]
	\arrow["x"{description}, color={rgb,255:red,255;green,51;blue,54}, from=2-8, to=1-7]
	\arrow["y"', color={rgb,255:red,51;green,61;blue,255}, from=3-3, to=4-3]
	\arrow["y", color={rgb,255:red,51;green,61;blue,255}, from=4-2, to=4-2, loop, in=55, out=125, distance=10mm]
	\arrow["x", color={rgb,255:red,255;green,51;blue,54}, from=4-2, to=4-3]
	\arrow["x"', color={rgb,255:red,255;green,51;blue,54}, from=4-3, to=4-4]
	\arrow["y", color={rgb,255:red,51;green,61;blue,255}, from=4-3, to=5-3]
	\arrow["x"', color={rgb,255:red,255;green,51;blue,54}, from=4-4, to=3-3]
	\arrow["y", color={rgb,255:red,51;green,61;blue,255}, from=4-4, to=5-4]
	\arrow[curve={height=18pt}, dotted, from=5-1, to=5-9]
	\arrow["x", color={rgb,255:red,255;green,51;blue,54}, from=5-3, to=4-2]
	\arrow["x", color={rgb,255:red,255;green,51;blue,54}, from=5-4, to=5-3]
	\arrow["x", color={rgb,255:red,255;green,51;blue,54}, from=5-8, to=5-8, loop, in=55, out=125, distance=10mm]
	\arrow["y", color={rgb,255:red,51;green,61;blue,255}, from=5-8, to=5-8, loop, in=100, out=170, distance=10mm]
\end{tikzcd}
    \caption{A system of equations
    on $\Gamma$ over the action 
    $S_6\acts \binom{\{1\ldots 6\}}{2}$.}
    \label{fig_Sn_pairs}
\end{figure}

Now we fix a basis $B\subseteq F_r$.
Recall the notation $(n)_t\defeq n\cdot (n-1)\cdots (n-t+1)$. 
The following proposition is \cite[Proposition 6.6]{HP22}:

\begin{proposition}[\enquote{Basis dependent Möbius inversions}]
\label{prop_LB}
Let $\eta\colon \Gamma\to \Delta$ be a surjective morphism in $\mucgBFr$. 
For every $n\ge \abs*{E(\Delta)}$, let $L_{\eta}^B(n)$ be the average number of injective lifts from $\Gamma$ to a random $n$-cover of $\Delta$. 
Then 
\[ L_{\eta}^B(n) = \frac{\prod_{v\in V(\Delta)}(n)_{|\eta^{-1}(v)|}}
{\prod_{e\in E(\Delta)}(n)_{|\eta^{-1}(e)|}} = n^{\chi(\Gamma)} \cdot (1 + O\prn*{n^{-1}}). \]
 Moreover, $L_{\eta}^B(n)$ is multiplicative with respect to the connected components of $\Img(\eta)$.
\end{proposition}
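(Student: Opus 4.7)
The plan is to compute $L^B_{\eta}(n)$ directly by linearity of expectation, exploiting the fact that in the $B$-labeled setting each edge of $\Delta$ comes with an independent source of randomness. Concretely, an $n$-cover of a $B$-graph $\Delta$ is specified by choosing, for each edge $e\in E_b(\Delta)$, a bijection $\sigma_e\colon[n]\to[n]$ between the fiber over $\src(e)$ and the fiber over $\tar(e)$; since $\src,\tar$ are injective on each $E_b(\Delta)$, these permutations can be chosen independently without any consistency relation, and the random $n$-cover is obtained by taking $\sigma_e\sim \textup{Unif}(S_n)$ independently across $e\in E(\Delta)$. This reduces the proof to counting, and to a factorisation argument.

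First I would set up the combinatorial description of an injective lift. Such a lift is determined by a function $f\colon V(\Gamma)\to[n]$ whose restriction $f_v\defeq f\restriction_{\eta^{-1}(v)}$ is injective for every $v\in V(\Delta)$ (this is exactly the condition that $u\mapsto(\eta(u),f(u))$ is injective into $V(\Delta)\times[n]$), together with the edge constraints that for every $\tilde e\in E_b(\Gamma)$ with $\eta(\tilde e)=e$ we have $\sigma_e\bigl(f(\src(\tilde e))\bigr)=f(\tar(\tilde e))$. The number of admissible $f$'s is $\prod_{v\in V(\Delta)}(n)_{|\eta^{-1}(v)|}$, which requires $n\ge|V(\Delta)|$ and in particular is ensured by $n\ge|E(\Delta)|$ since $\Delta$ is a core graph.

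Next I would compute, for each fixed admissible $f$, the probability that the random cover realises it. Fix $e\in E_b(\Delta)$ and look at the $|\eta^{-1}(e)|$ constraints it imposes on $\sigma_e$. Since $\src,\tar\colon E_b(\Gamma)\to V(\Gamma)$ are injective, the sources $\{\src(\tilde e):\tilde e\in\eta^{-1}(e)\}$ are $|\eta^{-1}(e)|$ distinct vertices all lying in the single fiber $\eta^{-1}(\src(e))$, and similarly for the targets; by injectivity of $f_{\src(e)}$ and $f_{\tar(e)}$ these yield $|\eta^{-1}(e)|$ distinct prescribed images and $|\eta^{-1}(e)|$ distinct prescribed preimages, giving a consistent partial bijection. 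The probability that a uniform $\sigma_e$ extends this partial bijection is exactly $1/(n)_{|\eta^{-1}(e)|}$. By the independence of $\{\sigma_e\}_{e\in E(\Delta)}$, the probability over the random cover factors as $\prod_{e\in E(\Delta)}(n)_{|\eta^{-1}(e)|}^{-1}$, and summing over the admissible $f$'s via linearity of expectation yields the stated closed form.

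The asymptotic expansion is immediate from $(n)_k=n^k(1+O(n^{-1}))$ together with $\sum_{v\in V(\Delta)}|\eta^{-1}(v)|=|V(\Gamma)|$ and $\sum_{e\in E(\Delta)}|\eta^{-1}(e)|=|E(\Gamma)|$, giving $n^{|V(\Gamma)|-|E(\Gamma)|}(1+O(n^{-1}))=n^{\chi(\Gamma)}(1+O(n^{-1}))$. Multiplicativity with respect to the connected components of $\Img(\eta)$ follows because these components have disjoint vertex and edge sets in $\Delta$, so the closed-form product factors over them. The only place where any real care is needed is in justifying the independent-random-permutation model for $n$-covers of a $B$-graph and confirming that the edge constraints imposed on a single $\sigma_e$ are always mutually consistent; both facts reduce to the injectivity of $\src,\tar$ on each $E_b$, so no deeper ingredient is required.
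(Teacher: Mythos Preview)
The paper does not prove this proposition; it is quoted verbatim from \cite[Proposition~6.6]{HP22} and used as a black box. Your direct computation via the per-edge independent permutation model of a random $n$-cover is correct and is the standard argument (and is essentially what appears in \cite{HP22}): linearity of expectation over fiberwise-injective $f\colon V(\Gamma)\to[n]$, with the edge constraints on each $\sigma_e$ forming a consistent partial bijection thanks to the injectivity of $\src,\tar$ on $E_b(\Gamma)$ and of $f$ on each fiber. One small imprecision: your remark that $n\ge|E(\Delta)|$ ``ensures $n\ge|V(\Delta)|$'' is beside the point --- what the formula actually needs is $n\ge\max_v|\eta^{-1}(v)|$ and $n\ge\max_e|\eta^{-1}(e)|$ so that no falling factorial vanishes, and neither of these follows from $n\ge|V(\Delta)|$; but this does not affect the derivation itself.
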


\begin{notation}
    For every $\Gamma\in\mucgBFr$, we denote by $\Gamma\to \Omega_B$ the unique morphism into the bouquet, the terminal object of the category.
\end{notation}

One can easily show (see e.g.\ \cite[Appendix A]{Sho23I}) that 
$L_{\Gamma\to\Omega_B}^B(n)$ is the average number of injective 
$\alpha$-valid functions $V(\Gamma)\to [n]$ where $\alphaSimUHomFSn$.
The following definition is \cite[Definition 6.3]{HP22}:

\begin{definition}[$B$-surjective Decomposition]
\label{def_decompos_B}
Let $\eta\in \Hom(\Gamma, \Delta)$ be a surjective morphism in $_{\mucgBFr}$. Define
\[ \DecompB^2(\eta) \defeq \{(\eta_1, \eta_2): 
\Gamma \overset{\eta_1}{\twoheadrightarrow} \textup{Im}(\eta_1) \overset{\eta_2}{\twoheadrightarrow} \Delta \}\]
modulo the following equivalence relation: $(\eta_1, \eta_2) \sim (\eta_1', \eta_2') $ whenever there is an isomorphism $\theta \colon \textup{Im}(\eta_1) \to \textup{Im}(\eta_1')$ such that the diagram
\[\begin{tikzcd}
	\Gamma & \textup{Im}(\eta_1) \\
	& {\textup{Im}(\eta_1')} & \Delta
	\arrow["{\eta_1}", two heads, from=1-1, to=1-2]
	\arrow["{\eta_1'}"', two heads, from=1-1, to=2-2]
	\arrow["\cong", from=1-2, to=2-2]
	\arrow["{\eta_2}", two heads, from=1-2, to=2-3]
	\arrow["{\eta_2'}", two heads, from=2-2, to=2-3]
\end{tikzcd}\]
commutes. 
Similarly, let $\DecompB^3(\eta)$ denote the set of decompositions $\Gamma\overset{\eta_1}{\twoheadrightarrow}\sigma_1\overset{\eta_2}{\twoheadrightarrow}\sigma_2\overset{\eta_3}{\twoheadrightarrow}\Delta$ of $\eta$ into three surjective morphisms. Again, two such decompositions are considered equivalent (and therefore the same element in $\DecompB^3(\eta)$) if there are isomorphisms $\Sigma_i\cong \Sigma_i', i = 1, 2$, which commute with the decompositions.\\
\end{definition}

\begin{lemma}
\label{lemma_induction_convolution_Com}
    Let $H\in\subgrpfg$, let $d\le n\in \N$, and let $\beta\in \Hom(H, S_d)$. 
    Denote $\Gamma\defeq \Gamma_B(H)$, that is $H = \pilab\prn*{\Gamma, v_0}$ for some vertex $v_0\in V(\Gamma)$, and let $(\Gamma^{\beta}, p, c)$ be the numbered covering of $(\Gamma, v_0)$ corresponding to $\beta$. Then 
    \[ \EX_{\alphaSimUHomFSn}\brackets*{\abs*{\Inter(\alpha\restriction_H, \beta)}} = \sum_{(\eta_1, \eta_2)\in \DecompB^2\prn*{\Gamma^{\beta}\to \Omega_B}} \bbone\braces*{\eta_1 \textup{ is }p\textup{-efficient}} \cdot L^B_{\eta_2}(n). \]
\end{lemma}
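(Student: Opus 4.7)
The plan is to convert the intertwiner count on the left into a count of $\alpha$-valid vertex labellings of the covering space $\Gamma^\beta$, and then decompose these labellings by the $B$-core graph image they determine in $[n]$, exactly as in the Möbius-style expansion underlying Proposition~\ref{prop_LB}.

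First I would establish a bijection between $\Inter(\alpha\restriction_H,\beta)$ and the set of $\alpha$-valid functions $f\colon V(\Gamma^\beta)\to[n]$, given by $\iota\mapsto \tilde\iota$ with $\tilde\iota(c(i))=\iota(i)$ extended along paths in $\Gamma^\beta$ by the $\alpha$-action: $\tilde\iota(v)\defeq\alpha(w).\iota(i)$ whenever a path from $c(i)$ to $v$ in $\Gamma^\beta$ reads $w$. Well-definedness follows from Proposition~\ref{prop_alg_top}: a loop of $\Gamma^\beta$ at $c(i)$ reads exactly a word $w\in H$ with $\beta(w)(i)=i$, so the intertwining identity $\iota(i)=\iota(\beta(w).i)=\alpha(w).\iota(i)$ is precisely the compatibility needed, and a path from $c(i)$ to $c(j)$ reads a word $w\in H$ with $\beta(w).i=j$, giving consistency $\tilde\iota(c(j))=\alpha(w).\iota(i)=\iota(j)$. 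The inverse map is $f\mapsto f\circ c$, so
\[
\EX_\alpha\bigl[|\Inter(\alpha\restriction_H,\beta)|\bigr]
=\EX_\alpha\bigl[\#\{f\colon V(\Gamma^\beta)\to[n]\textup{ is }\alpha\textup{-valid}\}\bigr].
\]
Next, every $\alpha$-valid $f$ factors canonically in $\mucgBFr$ as $\Gamma^\beta\overset{\eta_1^f}{\twoheadrightarrow}\Sigma_f\overset{\eta_2^f}{\to}\Omega_B$, where $V(\Sigma_f)=f(V(\Gamma^\beta))\subseteq[n]$ and $\eta_2^f$ is the induced injection into $\Omega_B$; up to the equivalence of Definition~\ref{def_decompos_B} this yields a well-defined element of $\DecompB^2(\Gamma^\beta\to\Omega_B)$. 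Grouping $f$'s by iso class of the factorization and applying Proposition~\ref{prop_LB}, the expected number of injective $\alpha$-valid lifts $\Sigma\hookrightarrow[n]$ that realize a fixed class is exactly $L^B_{\eta_2}(n)$.

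The main obstacle is the indicator $\bbone\{\eta_1\textup{ is }p\textup{-efficient}\}$, which reflects the fact that summing naively over $\DecompB^2(\Gamma^\beta\to\Omega_B)$ overcounts: the covering $p\colon\Gamma^\beta\twoheadrightarrow\Gamma$ admits nontrivial intermediate factorizations through which the same image factorization $(\eta_1^f,\eta_2^f)$ can be re-presented. In parallel with Definition~\ref{def_split_efficient}, I expect $p$-efficiency to mean that there is no intermediate surjection $\Gamma^\beta\twoheadrightarrow\Gamma'\twoheadrightarrow\Sigma$ in which $\Gamma^\beta\twoheadrightarrow\Gamma'$ nontrivially factors $p$; the $p$-efficient classes should form a canonical transversal of the redundancy orbits. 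The hard part, which I would tackle last, is to show that (i)~each $\alpha$-valid $f$ factors through a unique $p$-efficient $(\eta_1,\eta_2)$, so that the effective multiplicity at each class is precisely the indicator, and (ii)~every $p$-efficient class is realized, with the contribution $L^B_{\eta_2}(n)$ coming from the injective-lift count of Step~2. Once (i) and (ii) are in place, substituting the Step~2 contribution into the efficient transversal yields the stated identity term by term.
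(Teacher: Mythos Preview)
Your Steps 1 and 2 are essentially the paper's argument: the bijection between intertwiners $\iota\colon[d]\to[n]$ and $\alpha$-valid labellings $f\colon V(\Gamma^\beta)\to[n]$ via $f\mapsto f\restriction_{p^{-1}(v_0)}\circ c$, followed by the canonical factorization of each $f$ through its image core graph $\Gamma^\beta/f$.

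The genuine gap is Step 3. There is \emph{no overcounting}: the correspondence
\[
\{\alpha\text{-valid }f\colon V(\Gamma^\beta)\to[n]\}\;\longleftrightarrow\;\{(\eta_1,\bar f):\ \eta_1\text{ any $B$-surjection},\ \bar f\text{ injective $\alpha$-valid on }\Img(\eta_1)\}
\]
is already a bijection, so $\EX_\alpha[\#\{\text{valid }f\}]=\sum_{(\eta_1,\eta_2)}L^B_{\eta_2}(n)$ with no indicator at all. Your speculative definition of $p$-efficiency (no intermediate factorization of the covering map $p$) is wrong, and the transversal argument you propose has nothing to transverse.

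What ``$p$-efficient'' actually means here is that $\eta_1$ is \emph{injective on the fiber} $p^{-1}(v_0)$. With this definition the indicator does not repair an overcount; it \emph{restricts} the count to those valid $f$ whose restriction to $p^{-1}(v_0)$ is injective. Under your Step~1 bijection, these are exactly the \emph{injective} intertwiners. So the right-hand side of the lemma as written computes $\EX_\alpha[|\Interinj(\alpha\restriction_H,\beta)|]$; the paper's own proof makes this explicit by defining $\textup{Val}_\alpha(\Gamma^\beta,[n])$ to include the fiber-injectivity condition from the start, and then observing that $\eta_f$ is automatically $p$-efficient for such $f$. (The appearance of $\Inter$ rather than $\Interinj$ on the left-hand side of the statement is a discrepancy you should flag, but it does not rescue your Step~3: in neither reading is there a redundancy for $p$-efficiency to resolve.)
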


\begin{proof}
    Let $\alpha\in \Hom(F_r, S_n)$. 
    Define $\textup{Val}_{\alpha}(\Gamma^{\beta}, [n])$ as the set of $\alpha$-valid functions $V\prn*{\Gamma^{\beta}}\to [n]$ whose restriction to $p^{-1}(v_0)$ is injective.
    We start by proving that the map $c^*\colon \textup{Val}_{\alpha}(\Gamma^{\beta}, [n])\to \Inter(\alpha\restriction_H, \beta)$ defined by $c^* f \defeq f\restriction_{p^{-1}\prn*{v_0}}\circ c$ is a well-defined bijection.
    Let $h\in H, f\in \textup{Val}_{\alpha}(\Gamma^{\beta}, [n])$. By Proposition~\ref{prop_alg_top} and Definition~\ref{def_valid_func}, the following diagram commutes:
\[\begin{tikzcd}
	{[d]} & {p^{-1}(v_0)} & {[n]} \\
	{[d]} & {p^{-1}(v_0)} & {[n]}
	\arrow["c"', from=1-1, to=1-2]
	\arrow["h"', from=1-2, to=2-2]
	\arrow["{\beta(h)}"', from=1-1, to=2-1]
	\arrow["c", from=2-1, to=2-2]
	\arrow["f"', hook, from=1-2, to=1-3]
	\arrow["f"', hook, from=2-2, to=2-3]
	\arrow["{\alpha(h)}", from=1-3, to=2-3]
\end{tikzcd}\]
This show that $c^*$ is well-defined. 
Since $\Gamma$ is connected, for every $u\in V\prn*{\Gamma^{\beta}}$ there is $v\in p^{-1}(v_0)$ and $h\in H$ such that $h.v = u$, and by the validity of $f$, $f(u) = \alpha(h).f(v)$. Such $h\in H$ is unique up to multiplication by $p_*\prn*{\pi_1(\Gamma^{\beta}, v)}$, which acts trivially on $p^{-1}(v_0)$, so every $\iota\in \Inter(\alpha\restriction_H, \beta)$ defines such $f = \prn*{c^*}^{-1}\iota \in \textup{Val}_{\alpha}(\Gamma^{\beta}, [n])$, showing that $c^*$ is bijective.

For every $ f\in \textup{Val}_{\alpha}(\Gamma^{\beta}, [n])$, define a multi core graph $\Gamma^{\beta}/f $ as follows. 
The vertices $V\prn*{\Gamma^{\beta}/f}$ are $\braces*{f^{-1}(i)}_{i\in \Img(f)}$, and there is a $b$-labeled edge $f^{-1}(i)\overset{b}{\to} f^{-1}(j)$ whenever there are $v\in f^{-1}(i), u\in f^{-1}(j)$ with $\prn*{v\overset{b}{\to} u} \in E(\Gamma^{\beta})$. By Definition~\ref{def_valid_func},  $\Gamma^{\beta}/f $ is indeed a multi core graph. There is a natural decomposition 
\[\begin{tikzcd}
	{\Gamma^{\beta}} \\
	{\Gamma^{\beta}/f} & {[n]}
	\arrow["{\eta_f}"', from=1-1, to=2-1]
	\arrow["{\bar{f}}", from=2-1, to=2-2]
	\arrow["f", from=1-1, to=2-2]
\end{tikzcd}\]
where $\eta_f$ is a $B$-surjective morphism of multi core graphs, and $\bar{f}$ is injective and $\alpha$-valid. 
Since $f\restriction_{p^{-1}(v_0)}$ is injective, $\eta_f$ is efficient.
Clearly, such pairs $(\eta_f, \bar{f})$ where $\eta_f$ is an efficient $B$-surjective morphism and $\bar{f}$ is injective and $\alpha$-valid, are in bijective correspondence with $\textup{Val}_{\alpha}(\Gamma^{\beta}, [n])$, so $\abs*{\Inter(\alpha\restriction_H, \beta)}$ equals
\[\abs*{\textup{Val}_{\alpha}(\Gamma^{\beta}, [n])} 
= \sum_{(\eta_1, \eta_2)\in \DecompB^2\prn*{\Gamma^{\beta}\to \Omega_B}} \bbone\braces*{\eta_1 \textup{ is efficient}} \cdot \abs*{\braces*{\begin{array}{cc}
     \alpha\textup{-valid injective} \\
     \textup{functions Im}(\eta_1)\to [n]
\end{array}}}.\]
Now take expectation \wrt $\alphaSimUHomFSn$ and apply Proposition~\ref{prop_LB}.
\end{proof}

From Proposition~\ref{prop_common_fixed_mod_G_and_Com} and Lemma~\ref{lemma_induction_convolution_Com} we immediately get:

\begin{corollary}
Let $H\le F_r$ be a finitely generated subgroup, let $d\le n\in \N$ and $G\le S_d$. Let $B\subseteq F_r$ be a basis, and denote $\Gamma\defeq \Gamma_B(H)$. Then
\[ \EHtoFSnActsNDG = \frac{1}{|G|} \sum_{\beta\in \Hom(H, G)} \sum_{(\eta_1, \eta_2)\in \DecompB^2\prn*{\Gamma^{\beta}\to \Omega_B}} \bbone\braces*{\eta_1 \textup{ is }p\textup{-efficient}} \cdot L^B_{\eta_2}(n).  \]
\end{corollary}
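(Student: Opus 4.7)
The plan is to assemble the two ingredients already proved in the paper, \textup{Proposition~\ref{prop_common_fixed_mod_G_and_Com}} and \textup{Lemma~\ref{lemma_induction_convolution_Com}}, in a straightforward way; there is no real obstacle here, the content is essentially a bookkeeping exercise, with the only subtle point being the identification of the object $[n]_d/G$ with the orbit space of an intertwiner set.

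First I would specialize the setup of \textup{Proposition~\ref{prop_common_fixed_mod_G_and_Com}} to $\textbf{C} = \textup{FinSet}$, $X = [d]$, $Y = [n]$, and observe the identification
\[
\Hom_{\textbf{C}}^{\textup{inj}}([d],[n]) \;=\; [n]_d,
\]
under which right composition by $G \le S_d = \Aut_{\textbf{C}}([d])$ corresponds exactly to the coordinate-permutation action of $G$ on the $d$-tuples, so $[n]_d/G = \Hom_{\textbf{C}}^{\textup{inj}}([d],[n])/G$ as $\alpha(H)$-sets. Plugging into \textup{Proposition~\ref{prop_common_fixed_mod_G_and_Com}} gives, for every fixed $\alpha \in \Hom(\textbf{F}, S_n)$, the pointwise identity
\[
\abs*{\prn*{[n]_d/G}^{\alpha(H)}}
\;=\; \frac{1}{|G|}\sum_{\beta \in \Hom(H,G)} \abs*{\Interinj(\alpha\restriction_H, \beta)}.
\]

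Next I would take expectation over $\alpha \sim U(\Hom(\textbf{F}, S_n))$. By the definition of $\EHtoFSnActsNDG$ (Definition~\ref{def_EHtoFSnActsND}, extended to the $G$-orbit set in the obvious way), the left-hand side in expectation is $\EHtoFSnActsNDG$, and since the sum over $\beta \in \Hom(H,G)$ is a finite deterministic sum, linearity of expectation lets me pull $\E_\alpha$ inside:
\[
\EHtoFSnActsNDG
\;=\; \frac{1}{|G|} \sum_{\beta \in \Hom(H,G)}
\E_{\alpha \sim U(\Hom(\textbf{F}, S_n))}\brackets*{\abs*{\Interinj(\alpha\restriction_H, \beta)}}.
\]

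Finally I would apply \textup{Lemma~\ref{lemma_induction_convolution_Com}} term by term; the lemma provides precisely the decomposition
\[
\E_{\alpha}\brackets*{\abs*{\Interinj(\alpha\restriction_H, \beta)}}
\;=\; \sum_{(\eta_1,\eta_2)\in \DecompB^2(\Gamma^{\beta}\to \Omega_B)}
\bbone\braces*{\eta_1 \textup{ is }p\textup{-efficient}} \cdot L^B_{\eta_2}(n),
\]
where $\Gamma^\beta \to \Gamma$ is the numbered covering of $\Gamma$ corresponding to $\beta$ via the algebraic-topology dictionary (\textup{Proposition~\ref{prop_alg_top}}). Substituting this into the previous display yields the claimed formula verbatim. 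The only thing I would be careful about is that the intertwiners produced by $c^*$ in the proof of \textup{Lemma~\ref{lemma_induction_convolution_Com}} are injective, so that the lemma really computes $|\Interinj|$, matching the quantity coming out of \textup{Proposition~\ref{prop_common_fixed_mod_G_and_Com}}; this matching is what makes the chain of equalities go through.
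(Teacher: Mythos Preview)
Your proposal is correct and follows exactly the approach the paper intends: the paper states that the corollary follows ``immediately'' from Proposition~\ref{prop_common_fixed_mod_G_and_Com} and Lemma~\ref{lemma_induction_convolution_Com}, and your write-up spells out precisely that chain of identifications. Your closing remark about $\Interinj$ versus $\Inter$ is apt: the statement of Lemma~\ref{lemma_induction_convolution_Com} writes $\Inter$, but its proof (via the injectivity constraint on $f\restriction_{p^{-1}(v_0)}$) actually establishes the formula for $\Interinj$, which is what is needed to match Proposition~\ref{prop_common_fixed_mod_G_and_Com}.
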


\subsection*{Counting fixed sub-spaces of $\GL_n(\F_q)$}

In this section we assume that $|K| = q < \infty$,
so we change the notation to $K = \F_q$.








In \cite[Definition 1.10]{ernst2024word} 
(see also \cite[Definition 1.1]{ernst2024ring}),
for every $B\in \GL_d(\F_q)$ there was defined a function
$\tilde{B}\colon \GL_n(\F_q)\to \N$ 
by
\[\tilde{B}(g)\defeq \abs*{\braces*{M\colon \F_q^d\to \F_q^n: 
Mg = BM}}.\]
In \cite[Theorem 1.11]{ernst2024word}, it was shown that for
every $w\in \mathbf{F}$ and $B\in \GL_d(\F_q)$,
$\E_w[\tilde{B}]$ coincides with a monic rational function in $q^n$
for $n\ge |w|$.
Later, another version of $\tilde{B}$ was presented in 
\cite[equation (1.5)]{ernst2024ring}:
\[\tilde{B}^{f.r.}(g)\defeq \abs*{\braces*{M\colon \F_q^d\to \F_q^n: 
M\textup{ is injective and }Mg = BM}}.\]

The following lemma generalizes 
\cite[Theorem 1.11]{ernst2024word} in two directions:
The first direction is the generalization from a word 
$w\in \mathbf{F}$ to a subgroup $H\le \textbf{F}$, which requires 
us to generalize $\tilde{B}(g)\in \N$ 
(which is defined for $B\in \GL_d(\F_q)$ and $g\in\GL_n(\F_q)$) 
to $\abs*{\Inter(\alpha, \beta)}$ 
(where $\alpha\in \Hom(\textbf{F}, \GL_n(\F_q))$ 
generalizes $g$ and $\beta\in\Hom(H, \GL_d(\F_q))$ 
generalizes $B$).
The second direction is that we work both with 
$\Inter(\alpha, \beta)$ (that generalizes $\tilde{B}(g)$) 
and $\Interinj(\alpha, \beta)$ (that generalizes 
$\tilde{B}^{\textup{f.g.}}(g)$).

Following \cite{ernst2024word}, we say that a module 
$N\le \F_q[\textbf{F}]^m$ is supported on a set 
$S\subseteq \mathcal{E}_m\times \textbf{F}$ 
if $N$ is generated by the intersection $N\cap \F_q[S]$.

Recall $M_{\beta}\defeq M_{\beta}(H)$ 
from Definition~\ref{def_M_beta}.
Denote $M_{\beta}^\textbf{F} \defeq M_{\beta}\otimes_{\F_q[H]} \F_q[\textbf{F}]$.

\begin{lemma}
    Let $H\le \textbf{F}$ be f.g.\ free groups, with bases 
    $B_H, B$ respectively.
    Let $\beta\in \Hom\prn*{H, \GLvarFq{d}}$, and 
    $\alphaSimUHomFVar{\GLnFq}$ a random homomorphism.
    Denote by $\T_B(H)$ the minimal subtree of 
    $\textup{Cay}(\textbf{F}, B)$ that contains 
    $B_H$, and denote 
    $\T_B(H)^d\defeq \mathcal{E}_d\times \T_B(H)$.
    Then
    \[ \E_{\alpha}\brackets*{\abs*{\Inter(\alpha, \beta)}} 
    = \sum_{M_{\beta}^\textbf{F}\le N} L_{B, N, d}(q^n) \]
    where $N$ runs over submodules of $\F_q[\mathbf{F}]^d$ 
    that are supported on $\T_B(H)^d$, 
    and $L_{B, N, d}$ is a function that coincides with a 
    monic rational function in $q^n$ for every large enough 
    $n$, with degree $d-\rk(N)$ 
    (so that $L_{B, N, d} = q^{n(d-\rk(N))}(1+O(q^{-n}))$).
    Similarly,
    \[ \E_{\alpha}\brackets*{\abs*{\Interinj(\alpha, \beta)}} 
    = \sum_{M_{\beta}^\textbf{F}\le N} L_{B, N, d}(q^n) \]
    where $N$ now runs over modules with the same 
    restrictions as before, 
    and the additional property that $\mathcal{E}_d$ is 
    linearly independent modulo $N$.
\end{lemma}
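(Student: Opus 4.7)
The plan is to recast the counting in module-theoretic terms and then run a joint exposure argument generalizing \cite[Section 3]{ernst2024word}. By Proposition~\ref{prop_normal_form_for_bases}, the right $K[H]$-module $K^d$ with action $\beta$ is isomorphic to $K[H]^d / M_\beta$, so hom–tensor adjunction against $M_\beta^\mathbf{F} = M_\beta \otimes_{K[H]} K[\mathbf{F}]$ gives a bijection
\[
\Inter(\alpha,\beta) \;\cong\; \Hom_{K[\mathbf{F}]}\!\bigl(K[\mathbf{F}]^d / M_\beta^\mathbf{F},\; K^n_\alpha\bigr),
\]
where $K^n_\alpha$ denotes $K^n$ equipped with the right $K[\mathbf{F}]$-module structure coming from $\alpha$; each such hom lifts uniquely to a $K[\mathbf{F}]$-map $\tilde M\colon K[\mathbf{F}]^d \to K^n_\alpha$ with $M_\beta^\mathbf{F}\le \ker \tilde M$. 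For $\Interinj(\alpha,\beta)$, the image of $\mathcal{E}_d$ must moreover be linearly independent in $K^n$, which by Corollary~\ref{corollary_normal_form_for_bases} is precisely the condition that $\mathcal{E}_d$ be linearly independent modulo $\ker \tilde M$.

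Next I would partition the $\tilde M$'s by the \emph{core} of their kernel,
\[
N \;\defeq\; \bigl\langle\, \ker \tilde M \cap K[\mathcal{E}_d \cdot \T_B(H)] \,\bigr\rangle_{K[\mathbf{F}]} \;\le\; K[\mathbf{F}]^d,
\]
i.e.\ the $K[\mathbf{F}]$-submodule generated by those kernel relations already visible on $\T_B(H)^d$. Each generator $\nu_\beta(b_i,i)$ of $M_\beta^\mathbf{F}$ is supported on $\T_B(H)^d$ (it lives in $\mathcal{E}_d\times\{1,b_i\}$), so $M_\beta^\mathbf{F}\le N$; thus $N$ ranges exactly over the submodules appearing in the statement. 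Setting
\[
L_{B,N,d}(q^n) \;\defeq\; \E_\alpha\!\bigl[\,\#\bigl\{\tilde M : \mathrm{core}(\ker \tilde M)=N\bigr\}\,\bigr],
\]
and its analogue for $\Interinj$ (with the additional restriction that $\mathcal{E}_d$ be linearly independent modulo $N$), yields the claimed decomposition, \emph{provided} one establishes the rational/asymptotic behaviour of $L_{B,N,d}$.

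The remaining task is to verify that $L_{B,N,d}$ agrees with a monic rational function in $q^n$ of degree $d - \rk(N)$. For this, I would use the exploration machinery of \cite[Section 3, Corollary 3.10]{ernst2024word}: fix an exploration of $\mathcal{E}_d \times \mathbf{F}$ that visits $\T_B(H)^d$ first, and jointly reveal the values $\tilde M(v)$ and the columns of the random matrices $\{\alpha(b)\}_{b\in B}$ in the prescribed order. Classify each step as \emph{free}, \emph{forced}, or a \emph{coincidence} in the sense of Definition~\ref{def_free_forces_coincidence}: free steps contribute a factor of $q^n$ (replaced in the injective case by $q^n - q^{\textup{(current rank)}} = q^n(1 + O(q^{-n}))$), forced steps contribute $1$, and coincidences contribute subleading corrections in $q^{-n}$. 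Steps outside $\T_B(H)^d$ are automatic once $\alpha$ is revealed, so the entire nontrivial contribution is captured on $\T_B(H)^d$, which justifies restricting $N$ to modules supported there. The reduced rank identity $\chi(K[\mathbf{F}]^d/N) = d - \rk(N)$ (Definition~\ref{def_red_rank_submodule}), combined with the Schreier counting of a Lewin basis of $N$ (Theorem~\ref{thm_lewin_basis_from_Schreier_trans}), identifies the number of free steps as $d - \rk(N)$, giving both the asserted degree and the monic leading coefficient.

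The main obstacle I expect is precisely the exploration bookkeeping in the last step: the proof of \cite[Corollary 3.10]{ernst2024word} is executed for $H = \langle w \rangle$ cyclic, where $\T_B(H)$ is a single linear axis; for general $H$ the tree $\T_B(H)$ forks at branch vertices, so a coincidence encountered on one branch may cascade into forced steps on another. Orchestrating the exposure so that each coincidence remains strictly subleading, and so that the forced–free pattern across all branches reassembles into the Lewin basis of $N$ (and in the injective case, interacts correctly with the falling $q$-factorials), is the delicate technical core. Once this is in place, the remaining verifications — monic leading coefficient, correct degree, and rationality in $q^n$ — are purely formal.
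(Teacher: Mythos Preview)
Your approach is essentially the paper's: both identify $\Inter(\alpha,\beta)$ with $K[\mathbf{F}]$-module maps $K[\mathbf{F}]^d\to K^n_\alpha$ killing $M_\beta^{\mathbf{F}}$, partition by the submodule $N$ of linear relations visible on $\T_B(H)^d$, and then cite \cite[Section~2]{ernst2024word} for the rational-function behaviour of each piece $L_{B,N,d}$. Your hom--tensor adjunction is simply a cleaner repackaging of the paper's explicit trajectory $\overline{M}=(M^{(i,t)})$; the paper likewise does not carry out the branched exploration in detail, calling it a ``straight forward extension''.

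One correction: in the injective case the individual terms $L_{B,N,d}$ do \emph{not} change --- once you fix $N$ with $\mathcal{E}_d$ linearly independent modulo $N$, every $\tilde M$ with $\mathrm{core}(\ker\tilde M)=N$ automatically has $\tilde M|_{\mathcal{E}_d}$ injective (any dependence among the $\tilde M(e_i)$ would land in $\ker\tilde M\cap K[\T_B(H)^d]\subseteq N$). So your parenthetical ``replaced in the injective case by $q^n-q^{(\text{current rank})}$'' should be dropped; only the index set of $N$'s shrinks. Also, ``number of free steps equals $d-\rk(N)$'' is a slip of phrasing (this quantity is typically negative); what you mean is that the \emph{degree} of $L_{B,N,d}$ in $q^n$ equals $d-\rk(N)$, and the generators you denote $\nu_\beta(b_i,i)$ should read $\nu_\beta(h,i)$ for $h\in B_H$.
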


The proof is a straight forward extension of 
\cite[2: Rational expressions]{ernst2024word}.

\begin{proof}
Given $\beta\in \Hom(H, \GL_d(\F_q))$, 
we want to count all the pairs $(\alpha, M)$ where $\alpha\in \Hom(F, \GL_n(\F_q))$ and $M\in 
\Inter(\alpha, \beta) \subseteq M_{d\times n}(\F_q)$, first with and then without the stricter condition that $M\colon \F_q^d\to \F_q^n$ is injective (i.e.\ $M\in \Interinj(\alpha, \beta)$).
Denote the basis of $H$ by $B_H = \braces{h_1, \ldots, h_k}$.
By definition, $M\in \Inter(\alpha, \beta)$ if and only if $M\alpha(h_t)=\beta(h_t)M$ for every $t\in \{1,\ldots, k\}$.
As in \cite[2: Rational expressions]{ernst2024word}, we consider the entire trajectory of $M$ when the letters of $h_t$ ($t\in \{1, \ldots, k\})$ are applied (via $\alpha$) one by one.
Namely, assume that $h_t$ is written in the basis $B = \{b_1, \ldots, b_r\}$ of $\textbf{F}$ as $h_t = b_{i_1}^{\varepsilon_1}\cdots b_{i_\ell}^{\varepsilon_\ell}$ (where $i_j\in \{1, \ldots, r\}$ and $\varepsilon_j\in \{\pm 1\})$. We consider the matrices
\begin{equation}
\label{eq_trajectory_of_h_t}
M^{(0, t)}\defeq M,\quad 
M^{(1, t)}\defeq M^{(0, t)}\cdot \alpha\prn*{b_{i_1}^{\varepsilon_1}}, \quad
\ldots, \quad
M^{(\ell, t)}\defeq M^{(\ell-1)}\cdot \alpha\prn*{b_{i_{\ell}}^{\varepsilon_\ell}} = \beta(h_t) M.
\end{equation}

We denote this trajectory by $\overline{M}^{(t)} \defeq \prn*{M^{(0, t)}, \ldots, M^{(\ell, t)}}$, and denote
$\overline{M} \defeq \prn*{\overline{M}^{(t)}}_{t=1}^k$.
Given that the entire trajectory is determined by $\alpha$ and $M = M^{(0, t)}$ (for every $t$), we do not change our goal by counting $(\alpha, \overline{M})$ satisfying the equations in \eqref{eq_trajectory_of_h_t} for every $t$, instead of counting pairs $(\alpha, M)$ were $M\in \Inter(\alpha, \beta)$.
When we consider $\Interinj(\alpha, \beta)$, one has to add to \eqref{eq_trajectory_of_h_t} the condition that $M = M^{(0, t)}$ has full rank.
The basic idea in \cite[2: Rational expressions]{ernst2024word}, which we mimic here, is grouping together solutions $(\alpha, \overline{M})$ according to the equations over $\F_q$ which the rows of $\braces*{M^{(i, t)}}_{t\le k, \,\,i\le \ell(t)}$ satisfy.

One can think of $\overline{M}$ as a function $\T_B(H)^d\to \F_q^n$, sending the $i^{th}$ vertex ($i\le \ell(t)$) in the path
$h_t$ ($t\le k)$ inside the $p^{th}$ tree ($p\le d$) to the $p^{th}$ row of $M^{(i, t)}$.
Therefore we can identify each linear combination satisfied by the rows of $\braces*{M^{(i, t)}}_{t\le k, \,\,i\le \ell(t)}$ with a $\F_q$-linear combination of the vertices in $\T_B(H)^d$.
There are finitely many such combinations (at most the number of linear subspaces of $\F_q^{\abs*{\T_B(H)^d}}$), and by \cite[2: Rational expressions]{ernst2024word}, 
the number of solutions $(\alpha, \overline{M})$ 
corresponding to each such subspace $R$ is nonzero if 
and only if $R$ is the intersection of 
$\F_q^{\abs*{\T_B(H)^d}}$ with a right submodule 
$N\le \F_q[\mathbf{F}]^d$ that contains the equations in 
\eqref{eq_trajectory_of_h_t}, in which case the number of 
solutions is $L_{B, N, d}(q^n)$ (assuming $N$ is supported on $\T_B(H)^d$).

It is left to explain why $M$ is injective if and only if 
$\mathcal{E}_d$ is linearly independent modulo $N$:
Indeed, $N\cap \F_q^{\abs*{\T_B(H)^d}}$ is the set of linear 
equations satisfied by the rows of $\overline{M}$, 
and the rows of $M$ are identified with $\mathcal{E}_d$.

\end{proof}

\begin{remark}
    \label{remark_why_q_analog}
    The function $L_{B, N, d}$ is obtained as a 
    product (and quotient) of expressions of the form
    $(q^n - 1)(q^n - q)\cdots (q^n - q^{d-1})$,
    which is the $q$-analog of the falling factorial
    $(n)_d = n(n-1)\cdots (n-(d-1))$.
    Similarly one can think of $\GL_n(\F_q)$ as a
    $q$-analog of the symmetric group $S_n$.
    Therefore it is natural to consider $s\pi_q(H)$ 
    as the $q$-analog of the stable primitivity rank
    $s\pi(H)$, defined in 
    \cite[Definition 10.6]{wilton2022rational}.
    Finally, as the inequality $s\pi(H)\ge 1$ is
    a special case of the strengthened Hanna Neumann
    conjecture, and the inequality $s\pi_q(H)\ge 1$ is a
    special case of the strengthened $q$-Hanna Neumann 
    \enquote{in the same manner} (i.e.\ the case where
    the intersection has finite index or codimension
    in one of the intersecting terms), it is natural to
    consider Conjecture~\ref{conj_q_analog_HNC} as 
    the $q$-analog of the HNC.
\end{remark}

By \cite[Corollary 3.]{ernst2024word}, each module 
$M_{\beta}^F\le N \le \F_q[\mathbf{F}]^d$ which is algebraic
over $M_{\beta}$ is supported on $\T_B(H)^d$.
On the other hand, the minimal rank of a module $N$ 
that contains $M_{\beta}$ is attained only for algebraic 
extensions of $M_{\beta}$.
By Corollary~\ref{corollary_normal_form_for_bases} 
and the alternative definition~\eqref{eq_def_spibarKd_Alternatively},
we get that $M_{\beta}^F\le N\le \F_q[\mathbf{F}]^d$ 
is efficient over $M_{\beta}^\mathbf{F}$ if and only if 
$\mathcal{E}_d$ is $\F_q$-linearly independent modulo $N$.
We can conclude:
\begin{corollary}
    Let $H\le \textbf{F}$ be f.g.\ free groups, 
    let $\beta\in \Hom\prn*{H, \GLvarFq{d}}$, and 
    $\alphaSimUHomFVar{\GLnFq}$ a random homomorphism.
    Denote 
    \[s_{\beta}(H)\defeq \min\braces*{\rk(N): \,\,
    M_{\beta}^{\textbf{F}}\le_{\textup{alg}} N\le \F_q[\mathbf{F}]^d} - d.\]
    Then, as $q, d$ are fixed and $n\to\infty$,
    $\E_{\alpha}\brackets*{\abs*{\Inter(\alpha, \beta)}} 
    = \Theta \prn*{q^{-s_{\beta}(H)}}. $
    Similarly, denote 
    \[s^{\textup{inj}}_{\beta}(H)\defeq \min\braces*{
        \rk(N): \begin{array}{ll}
            & M_{\beta}^{\textbf{F}}\le_{\textup{alg}} N\le \F_q[\mathbf{F}]^d, \\
            & N \textup{ is efficient over } M_{\beta}^F.
        \end{array}
    } - d.\]
    Then $\E_{\alpha}\brackets*{\abs*{\Interinj(\alpha, \beta)}} = 
    \Theta \prn*{q^{-s^{\textup{inj}}_{\beta}(H)}}. $
\end{corollary}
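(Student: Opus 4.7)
The corollary is obtained by extracting the dominant term from the sums provided by the preceding lemma. Specifically, the lemma gives
\[
\E_{\alpha}\brackets*{\abs*{\Inter(\alpha,\beta)}} = \sum_{M_{\beta}^{\textbf{F}}\le N} L_{B,N,d}(q^n),
\]
where $N$ ranges over submodules of $\F_q[\mathbf{F}]^d$ supported on $\T_B(H)^d$ that contain $M_\beta^{\textbf{F}}$, and each $L_{B,N,d}(q^n)$ is a monic rational function in $q^n$ of the form $q^{n(d-\rk(N))}(1+O(q^{-n}))$. The plan is to identify the set of $N$ minimizing $\rk(N)$, to recognize it as the set of algebraic extensions of $M_\beta^{\textbf{F}}$, and then to read off the $\Theta$-asymptotics.

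First I would note that the index set of the sum is finite and independent of $n$, since it is bounded by the number of $\F_q$-linear subspaces of $\F_q^{|\T_B(H)^d|}$. Therefore the asymptotic behavior is controlled by the terms with the smallest value of $\rk(N)$; and because $L_{B,N,d}$ is monic in $q^n$ with positive leading coefficient $1$, the sum of the leading contributions across all minimizers is strictly positive. This gives the $\Theta$-asymptotic
\[
\E_{\alpha}\brackets*{\abs*{\Inter(\alpha,\beta)}} = \Theta\prn*{q^{n(d-\rho)}},\qquad \rho\defeq \min\braces*{\rk(N) : M_\beta^{\textbf{F}}\le N, \,\, N\text{ supported on }\T_B(H)^d}.
\]

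Second, I would invoke \cite[Corollary 3.9]{ernst2024word}, which says that every algebraic extension of $M_\beta^{\textbf{F}}$ is automatically supported on $\T_B(H)^d$, and conversely that every rank-minimal $N$ containing $M_\beta^{\textbf{F}}$ is algebraic over $M_\beta^{\textbf{F}}$. (A non-algebraic $N$ lies inside a proper free summand of $\F_q[\textbf{F}]^d$, which would decrease the rank strictly, contradicting minimality.) Consequently $\rho$ agrees with the minimum appearing in the definition of $s_\beta(H)$, i.e.\ $\rho = s_\beta(H) + d$. Substituting back yields the stated $\Theta\prn*{q^{-n\cdot s_\beta(H)}}$ asymptotic for $\E_\alpha[|\Inter(\alpha,\beta)|]$.

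Third, for the $\Interinj$ statement the same reasoning applies, except the lemma restricts the summation to those $N$ for which $\mathcal{E}_d$ is $\F_q$-linearly independent modulo $N$. By Corollary~\ref{corollary_normal_form_for_bases}, this condition is precisely the condition that $N$ is efficient over $M_\beta^{\textbf{F}}$, so the dominant exponent becomes $s^{\textup{inj}}_\beta(H)+d$, yielding $\Theta\prn*{q^{-n\cdot s^{\textup{inj}}_\beta(H)}}$. No further step is needed.

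The only non-routine point in this argument is the application of \cite[Corollary 3.9]{ernst2024word} to ensure that algebraicity characterizes the rank-minimizers and that algebraic extensions are automatically in the sum's index set; all other ingredients are bookkeeping. Note also that the exponent should read $-n\cdot s_\beta(H)$ (resp.\ $-n\cdot s^{\textup{inj}}_\beta(H)$) rather than $-s_\beta(H)$, in line with the scaling appearing in Theorem~\ref{thm_spibarqd}.
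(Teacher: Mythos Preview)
Your proposal is correct and follows essentially the same argument as the paper: the paper also derives the corollary directly from the lemma by noting that algebraic extensions of $M_\beta^{\textbf{F}}$ are supported on $\T_B(H)^d$ (via \cite{ernst2024word}), that rank-minimizers must be algebraic, and that efficiency is equivalent to $\mathcal{E}_d$ being independent modulo $N$ (via Corollary~\ref{corollary_normal_form_for_bases}). One small wording slip: when you write ``a non-algebraic $N$ lies inside a proper free summand of $\F_q[\textbf{F}]^d$'', you mean that $M_\beta^{\textbf{F}}$ lies inside a proper free summand of $N$; the conclusion (a strictly smaller-rank overmodule exists, contradicting minimality) is then correct. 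Your observation about the missing factor of $n$ in the exponent is also well taken.
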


From the alternative definition~\eqref{eq_def_spibarKd_Alternatively},
and from Corollary~\ref{corollary_normal_form_for_bases},
we get that
\[s\pibar_{q, d}(H) = 
\frac{1}{d} \min_{\beta\in \Hom(H, \GLvarFq{d})} 
s^{\textup{inj}}_{\beta}(H).\]

By applying 
Proposition~\ref{prop_common_fixed_mod_G_and_Com}
for the category $\textbf{C} = \textbf{FinVect}_{\F_q}$ 
of finite dimensional vector spaces over $\F_q$,
and using the identification
$\textbf{Gr}_d(\F^n_q) \cong 
\Hom^{\textup{inj}}_{\textbf{C}}(\F_q^d, \F_q^n) /\GLvarFq{d}$,
we get that for a uniformly random homomorphism 
$\alphaSimUHomFVar{\GLnFq}$,
\begin{equation*}
    \begin{split}
        \EHtoFVarActsVar{\GLnFq}{\textbf{Gr}_d(\F^n_q)} 
        &= \E_{\alpha}\abs*{\braces*{\begin{array}{cc}
            \textup{common fixed points} \\
            \textup{of }\alpha(H)\acts 
            \Hom^{\textup{inj}}_{\textbf{C}}(\F_q^d, \F_q^n) 
            /\GLvarFq{d}
       \end{array}}} \\
       &= \frac{1}{|\GLvarFq{d}|} 
       \sum_{\beta\in \Hom(H, \GLvarFq{d})} 
       \E_{\alpha}
       \abs*{\Interinj(\alpha, \beta)} \\
       &= \frac{1}{|\GLvarFq{d}|}
        \sum_{\beta\in \Hom(H, \GLvarFq{d})}
        \Theta \prn*{q^{-s^{\textup{inj}}_{\beta}(H)}} \\
        &= \Theta \prn*{q^{-d\cdot s\pibar_{q, d}(H)}}\\
        &= \Theta \prn*{\abs*{\textbf{Gr}_d(\F^n_q)}
        ^{-s\pibar_{q, d}(H)}}.
    \end{split}
\end{equation*}
This finishes the proof of Theorem~\ref{thm_spibarqd}.


\section{Open Problems}

The stable compressed rank is still very mysterious:
It is currently not known whether 
$s\pibar_d(H)$ is always 
an integer, and even whether it really depends on $d$. 
We conjecture that in fact, for every $d$,
$s\pibar_d(H) = \overline{\pi}(H) - 1$,
and that all the extremal
cases are trivial, in the following sense.
Given $H\le \textbf{F}$, Jaikin-Zapirain
\cite[Corollary 1.5]{jaikin2024free} defined
    \[\overline{\textup{Crit}}(H)\defeq \braces*{J\le 
    \textbf{F}: H\le J \textup{ and }\rk(J)=\pibar(H)}\]
and proved that $\overline{\textup{Crit}}(H)$ is a finite
lattice, that is, if $J_1, J_2\in \overline{\textup{Crit}}(H)$
then $J_1\cap J_2, \inner{J_1, J_2}\in \overline{\textup{Crit}}(H)$.
Denote by $\overline{\textup{Crit}}(\Gamma)$
the set of connected $B$-core graphs $\Delta$ such that
$-\chi(\Delta)
=\overline{\pi}(\Gamma) - 1$
and there is a morphism $\Gamma\to\Delta$:
this is a geometric reformulation of 
$\overline{\textup{Crit}}(H)$. 

\begin{conjecture}
    [$\pibar$ is stable]
    \label{conj_pibar_stable}
    For every Stallings graph $\Gamma$ 
    and $d\in \N$, 
    $s\pibar_d(\Gamma)
    =\overline{\pi}(\Gamma) - 1$.
    Moreover, if a Stallings graph
    $\Delta$ has a $d$-cover of 
    $\Gamma$ inside 
    $\Gamma\times_{\Omega_B}\Delta$
    and
    $-\chi(\Delta)=s\pibar_d(\Gamma)$,
    then there is 
    $f\colon 
    \overline{\textup{Crit}}(\Gamma)
    \to \Z_{\ge 1}$
    with sum
    $\sum_{\Delta'\in \overline{\textup{Crit}}(\Gamma)}
    f(\Delta') = d$
    such that $\Delta$ is 
    the disjoint union
    over $\Delta'\in \overline{\textup{Crit}}(\Gamma)$
    of a $f(\Delta')$-covering 
    of $\Delta'$.
\end{conjecture}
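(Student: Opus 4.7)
The plan is to establish the conjecture in two stages: first the numerical equality $s\pibar_d(\Gamma) = \pibar(\Gamma) - 1$ for $H = \pi_1(\Gamma)$, and then the structural description of the extremal $\Delta$. The upper bound $s\pibar_d(\Gamma) \le \pibar(\Gamma) - 1$ is immediate: for any $\Delta' \in \overline{\textup{Crit}}(\Gamma)$, the disjoint union $\Delta = d \cdot \Delta'$ admits $d$ distinct morphisms from $\Gamma$ and satisfies $-\chi(\Delta)/d = \pibar(\Gamma) - 1$. By Theorem~\ref{thm_Sn_spibard}, the matching lower bound is equivalent to the asymptotic estimate
\[
\EHtoFSnActsND = O\prn*{\binom{n}{d}^{1-\pibar(H)}} \qquad\text{as } n \to \infty,
\]
and by $S_n$-symmetry this reduces to
\[
\Pr_{\alphaSimUHomFSn}\brackets*{\alpha(H) \subseteq S_d \times S_{n-d}} = O\prn*{\binom{n}{d}^{-\pibar(H)}}.
\]

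To prove this upper bound, I would extend the M\"obius-inversion framework of Puder--Parzanchevski \cite{PP15} from the $d=1$ case treated in Example~\ref{example_Sn_pibar} to general $d$. Setting $J_\alpha \defeq \alpha^{-1}(S_d \times S_{n-d})$, the event $\{\alpha(H) \subseteq S_d \times S_{n-d}\}$ is the same as $\{H \le J_\alpha\}$, so M\"obius inversion over the lattice of f.g.\ over-groups of $H$ in $\FF$ gives
\[
\Pr\brackets*{\alpha(H) \subseteq S_d \times S_{n-d}} = \sum_{H \le J \le \FF} c_J(n) \cdot \Pr\brackets*{\alpha(J) \subseteq S_d \times S_{n-d}}.
\]
Each summand contributes $\Theta(\binom{n}{d}^{-\rk(J)})$ to leading order, so the dominant terms come from $J \in \overline{\textup{Crit}}(H)$, which are finitely many by \cite[Corollary 1.5]{jaikin2024free}. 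The main technical work is to control the lower-order error terms uniformly in $n$. The setwise (rather than pointwise) nature of $S_d \times S_{n-d}$ is the key complication: for $d=1$ the two notions coincide and \cite{PP15} applies directly, whereas for general $d$ one must track the induced action $H \to S_d$ of $\alpha(H)$ on $[d]$ as additional combinatorial data, refining the subgroup lattice by pairs $(J, \beta\colon J \to S_d)$ along the lines of the $\Gamma^\beta$ construction used in Section~\ref{section_counting_fixed_sub_spaces}.

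For the extremal characterization, I would proceed by induction on $d$. Given an extremal $\Delta$, decompose it into connected components $\Delta = \bigsqcup_j \Delta_j$ with $d_j$ morphisms $\Gamma \to \Delta_j$ and $\sum d_j = d$; the inductive hypothesis gives $-\chi(\Delta_j) \ge d_j(\pibar(\Gamma)-1)$, and equality of the total forces equality componentwise. For a connected extremal $\Delta_0$ with $d_0$ morphisms $\phi_1,\ldots,\phi_{d_0}\colon \Gamma \to \Delta_0$, one would combine the tightness of the SHNC bound on $-\chi(\Gamma \times_{\Omega_B} \Delta_0)$ with a direct construction: the $d_0$ morphisms induce an equivalence relation on $V(\Delta_0)$ and on $E(\Delta_0)$ identifying elements in corresponding \enquote{sheets,} and the quotient $\Delta' \defeq \Delta_0/\!\sim$ should then lie in $\overline{\textup{Crit}}(\Gamma)$ while $\Delta_0 \to \Delta'$ is a $d_0$-sheeted covering. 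The hardest step will be this last implication: a sharp equality-case analysis of the SHNC at the required level of detail does not appear in the literature, and establishing it (or bypassing it through a polymatroid argument leveraging Theorem~\ref{thm_Gamma_polymatroid_theorem}) is the principal technical obstacle.
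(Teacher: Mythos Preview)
This statement is Conjecture~\ref{conj_pibar_stable} in the paper's \emph{Open Problems} section; the paper does not prove it and explicitly says ``It is currently not known whether $s\pibar_d(H)$ is always an integer, and even whether it really depends on $d$.'' So there is no proof in the paper to compare against, and your proposal should be read as an attempt at an open problem.

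Your outline correctly separates the easy upper bound from the hard lower bound and the extremal structure. But the core step is not a proof. The displayed M\"obius identity
\[
\Pr\brackets*{\alpha(H) \subseteq S_d \times S_{n-d}} = \sum_{H \le J \le \FF} c_J(n) \cdot \Pr\brackets*{\alpha(J) \subseteq S_d \times S_{n-d}}
\]
is not well-defined: the lattice of finitely generated overgroups of $H$ is infinite, you do not say what $c_J(n)$ is, and there is no inclusion--exclusion relation of this shape between the events $\{\alpha(J)\subseteq S_d\times S_{n-d}\}$ for varying $J$. The machinery of \cite{PP15} does not run over overgroups of $H$; it runs over quotient graphs of $\Gamma$ (equivalently, over $\DecompB^2(\Gamma\to\Omega_B)$), and the paper's Section~\ref{section_counting_fixed_sub_spaces} already carries out exactly that computation for general $d$ --- the output is $\Theta(\binom{n}{d}^{-s\pibar_d(H)})$, which is Theorem~\ref{thm_Sn_spibard}. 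So routing through the probability and back is circular: you recover $s\pibar_d(H)$, not $\pibar(H)-1$. Moreover, the assertion that ``each summand contributes $\Theta(\binom{n}{d}^{-\rk(J)})$'' presupposes that $\alpha$ restricted to $J$ behaves like $\rk(J)$ independent uniform permutations, which is false unless $J$ is a free factor of $\FF$.

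For the extremal part, your reduction to connected components is fine, but the decisive claim --- that a connected $\Delta_0$ achieving equality must be a cover of some $\Delta'\in\overline{\textup{Crit}}(\Gamma)$ --- is exactly what is hard, and you acknowledge you do not have it. The quotient $\Delta_0/{\sim}$ you propose need not even be a $B$-core graph, and ``tightness of the SHNC bound'' has no known structural characterisation at this level. In short, the proposal identifies reasonable milestones but supplies no argument for either of the two genuine obstacles; the conjecture remains open.
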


In \cite{wilton2025rationality}, 
Wilton proved that $s\pi(w)$ is rational
for every non-primitive word $w$, by showing that the 
infimum in Definition~\ref{def_stable_primitivity_rank}
is attained for some $d$ 
(depending on $w$).
The name \enquote{stable primitivity rank}
steams from the $d=1$ case:
In \cite{Puder_2014}, Puder defined the 
\textbf{primitivity rank} of subgroups 
$H\le \textbf{F}$ as
\[ \pi(H)\defeq \min\braces*{
    \rk(J)\middle| \begin{array}{ll}
    & H\le J\le \textbf{F}, \textup{ and }H\textup{ is}\\
    & \textup{not a free factor of }J
    \end{array}
    },
\]  
where $\pi(H)=\infty$ if $H$ is 
a free factor of $\textbf{F}$.
By definition, $s\pi_1(H)=\pi(H)-1$.
Note that both the gap in $(0,1)$ 
and the 
rationality of $s\pi$ would follow from
the conjecture that
$s\pi_d(H)$ is always an integer (or $\infty$).
In \cite{wilton2021stable} and
\cite[Conjecture 4.7]{puder2023stable},
it was conjectured that $\pi$
is stable for every word,
that is, $s\pi(w) = \pi(w)-1$;
in particular $\Img(s\pi)\subseteq \Z\cup\{\infty\}$. 
We generalize this conjecture
to not-necessarily-cyclic subgroups:
\begin{conjecture}
    \label{conj_pi_stable}
    For every (finitely generated)
    subgroup $H\le \textbf{F}$, $s\pi(H)=\pi(H)-1$.
\end{conjecture}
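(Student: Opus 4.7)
The plan is to establish the nontrivial inequality $s\pi(H) \ge \pi(H) - 1$, since the reverse $s\pi(H) \le s\pi_1(H) = \pi(H) - 1$ follows immediately from the definition. The goal is therefore to show that the infimum over $d$ in $s\pi(H) = \inf_d s\pi_d(H)$ is attained at $d=1$, or at least that no $d > 1$ yields a strictly smaller ratio $-\chi(\Delta)/d$.

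First, for each fixed $d \ge 1$, by an argument analogous to Wilton's rationality theorem in the cyclic case \cite{wilton2025rationality}, one expects $s\pi_d(H)$ to be attained: there is a connected $B$-core graph $\Delta$ and a $d$-covering $\tilde\Gamma \to \Delta$ in $\Gamma \times_{\Omega_B} \Delta$ with algebraic non-iso projection realizing $-\chi(\Delta)/d = s\pi_d(H)$. Assuming this, I would reduce to the case where $\tilde\Gamma$ itself is connected: if it decomposes into components $\tilde\Gamma_i$ of degrees $d_i$ (so $\sum d_i = d$), each maps algebraically into a subgraph $\Delta_i \subseteq \Delta$, and one hopes to prove the component-wise bound $-\chi(\Delta_i) \ge d_i(\pi(H) - 1)$. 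This would require stability of algebraicity under restriction to components, plus a relation between $\pi(H)$ and the primitivity ranks of finite-index subgroups $\tilde H_i \le H$ of index $d_i$, perhaps of the form $\pi(\tilde H_i) - 1 \ge d_i(\pi(H) - 1)$.

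The main technical step would refine the $\Gamma$-polymatroid theorem (Theorem~\ref{thm_Gamma_polymatroid_theorem}) to account for the algebraicity condition. In its current form, Theorem~\ref{thm_Gamma_polymatroid_theorem} yields only the gap $s\pi_d(H) \ge 1$ and is insensitive to the actual value of $\pi(H)$. I would attempt to construct a family of $\Gamma$-polymatroids whose Euler characteristic detects $\pi(H) - 1$ rather than just $1$: the idea is to build an auxiliary polymatroid $\hp$ on $\Delta$ indexed by algebraic extensions of $H$, and combine the Louder--Wilton stacking machinery underlying Lemma~\ref{lemma_stackable_subgroup} with a cumulative version of Shearer's inequality (Lemma~\ref{lemma_Shearer}). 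A complementary route, inspired by Friedman's sheaf-theoretic proof of the SHNC, would be to construct an acyclic sheaf on $\Delta$ whose rank encodes $\pi(H) - 1$, forcing $-\chi(\Delta) \ge d(\pi(H) - 1)$.

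The hard part will be the algebraicity condition itself, which is not preserved under the most natural operations (pullback, passing to components of preimages, or restriction to subgraphs), so the component-wise reduction above is delicate and may fail in its naive form. Even in the cyclic case $H = \inner{w}$, Conjecture~\ref{conj_pi_stable} specializes to the Wilton--Puder conjecture $s\pi(w) = \pi(w) - 1$, which has resisted substantial effort; a genuinely new structural insight will likely be needed. One promising direction is to exploit the connection to locally recoverable codes used in the proof of Theorem~\ref{thm_equations_in_group_actions}: algebraic morphisms may correspond to extremal locally recoverable configurations, and a suitable duality between such a configuration and its orthogonal complement could force the optimum to be realized at $d=1$.
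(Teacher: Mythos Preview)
The statement you are attempting to prove is listed in the paper as an open \emph{conjecture} (Conjecture~\ref{conj_pi_stable}), not a theorem; the paper provides no proof whatsoever. It is explicitly presented in the ``Open Problems'' section as a generalization of the Wilton--Puder conjecture $s\pi(w)=\pi(w)-1$ for words, which the paper also notes is open. So there is no ``paper's own proof'' to compare against.

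Your proposal is, accordingly, not a proof either: it is an honest research outline that identifies the trivial direction $s\pi(H)\le\pi(H)-1$, isolates the hard direction, and then lists several speculative strategies (refined $\Gamma$-polymatroids, Friedman-style sheaves, locally recoverable code duality) while explicitly conceding that algebraicity is not preserved under the natural reductions and that ``a genuinely new structural insight will likely be needed.'' None of the steps you sketch is carried out, and several rest on hoped-for lemmas (e.g.\ $\pi(\tilde H_i)-1\ge d_i(\pi(H)-1)$ for finite-index $\tilde H_i\le H$, or the existence of an acyclic sheaf on $\Delta$ of rank $\pi(H)-1$) that are themselves at least as hard as the conjecture. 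In short: you correctly recognize this as open, and your write-up should be labeled as a discussion of possible approaches rather than a proof proposal.
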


Continuing the analogy with 
$s\pibar$ and $s\pi$, we 
give $K$-analogs of 
Conjectures \ref{conj_pibar_stable}
and \ref{conj_pi_stable}:
\begin{conjecture}
    [$\pibar_K, \pi_K$ are stable]
    \label{conj_piK_stable}
    For every $d\in \N$, 
    $s\pibar_{K,d} = \pibar_K-1$ and $s\pi_{K,d}=\pi_K-1$.
    In particular, they are integers, and do not depend on $d$.
\end{conjecture}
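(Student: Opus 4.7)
The easy inequalities $s\pibar_{K,d}(H)\le \pibar_K(H)-1$ and $s\pi_{K,d}(H)\le \pi_K(H)-1$ follow at once by taking the $d$-fold direct sum of any extremal rank-$1$ witness, so all content lies in the reverse directions. I shall focus on the $\pibar_K$ half; the $\pi_K$ version would require the same ideas together with a verification that algebraicity and non-splitness survive the constructions below. Fix an $H$-module $M\le K[\FF]^d$ of degree $d$ and a submodule $N\le K[\FF]^d$ efficient over $M$; the task is to exhibit a right ideal $M'\le K[\FF]$ with $\dim_K K[H]/(K[H]\cap M')=1$ and $\rk(M')\le \rk(N)/d$.

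The natural first move is the polymatroid $\hp$ from the proof of Theorem~\ref{thm_easier_spiK_gap}: a compact invariant $\Gamma$-polymatroid on the Stallings graph $\Gamma$ of $H$ with $\hp^V(\{v\})=d$ for every vertex $v$ and $\chi(\hp)=d-\rk(N)$. The conjecture is then equivalent to the sharp bound $-\chi(\hp)\ge d\cdot(\pibar_K(H)-1)$, strengthening the gap $-\chi(\hp)\ge d$ provided by Theorem~\ref{thm_Gamma_polymatroid_theorem}. Extracting the extra factor of $\pibar_K(H)-1$ must use more than the non-abelianness of $H$: we must exploit the full algebraic structure of $N$ as a $K[\FF]$-submodule and not merely the combinatorics of $\hp$.

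My plan is to induct on $d$, the case $d=1$ being tautological. For the inductive step consider the representation $\beta\colon H\to \GL_d(K)$ arising from the $H$-action on $K[H]^d/M$ (Corollary~\ref{corollary_normal_form_for_bases}). If $\beta$ preserves a proper subspace $W\subsetneq K^d$ of dimension $d'$, its preimage $M_W\le K[H]^d$ is an intermediate $H$-module with $M_W$ of codimension $d-d'$ in $K[H]^d$ and $M$ of codimension $d'$ in $M_W$. Writing $M_W^{\FF}\defeq M_W\otimes_{K[H]}K[\FF]$, the short exact sequence $0\to N\cap M_W^{\FF}\to N\to (N+M_W^{\FF})/M_W^{\FF}\to 0$ together with Proposition~\ref{prop_ses_rk_ineq} and the Schreier formula (Theorem~\ref{thm_Schreier_formula_modules}) should give $\rk(N)=\rk(N\cap M_W^{\FF})+\rk((N+M_W^{\FF})/M_W^{\FF})$, after which I would apply induction to both pieces, which are efficient over $H$-modules of degrees $d'$ and $d-d'$ respectively. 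For the $\pi_K$ statement the non-split hypothesis should force the induction to terminate only after producing genuinely rank-$1$ strata, and this is where the algebraicity of $N$ over $M$ enters.

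The hard part will be the irreducible case, in which $\beta$ has no $K$-invariant proper subspace and the $d$ coordinates are genuinely intertwined. The most promising route is to study the commutant $E\defeq \mathrm{End}_{K[H]}(K[H]^d/M)$: when $E$ is a finite-dimensional division algebra, $K[H]^d/M$ is a single simple $(E,K[H])$-bimodule, and a restriction-of-scalars argument should let us reformulate the question as computing $\pibar_{E}(H)$ for the free group $H$ over the skew field $E$, then convert back to $\pibar_K(H)$ via a Brauer/Galois-type identity of the shape $\pibar_K(H)=[E:K]\cdot\pibar_E(H)$. A purely combinatorial upgrade of Theorem~\ref{thm_Gamma_polymatroid_theorem} would of course also suffice, but such an upgrade is essentially the $S_n$-side Conjecture~\ref{conj_pibar_stable}, which is itself open; so the representation-theoretic route seems unavoidable, and whether it can be made rigorous is, to my mind, the crux of Conjecture~\ref{conj_piK_stable}.
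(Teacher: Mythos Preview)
This statement is Conjecture~\ref{conj_piK_stable} in the paper's ``Open Problems'' section, so there is \emph{no proof in the paper to compare against}: the authors state it as open and offer no argument beyond the analogy with Conjectures~\ref{conj_pibar_stable} and~\ref{conj_pi_stable}. Your write-up is, correspondingly, not a proof but a plan with explicitly acknowledged gaps, and you already identify the irreducible case as the crux. That is an honest assessment.

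A few concrete remarks on the plan itself. The easy direction is fine: the diagonal embedding $N=(M')^{\oplus d}$ of an extremal right ideal $M'$ does witness $s\pibar_{K,d}(H)\le\pibar_K(H)-1$. In the reducible step, however, your proposed induction has a structural problem beyond the bookkeeping you mention. The quotient $K[\FF]^d/M_W^{\FF}$ is \emph{not} a free $K[\FF]$-module in general (indeed $\chi_{K[\FF]}(K[\FF]^d/M_W^{\FF})=d-\rk(M_W^{\FF})$, which is typically negative by the Schreier formula), so the piece $(N+M_W^{\FF})/M_W^{\FF}$ does not sit inside any $K[\FF]^{d-d'}$ and the inductive hypothesis, which is phrased for submodules of free modules, does not apply to it directly. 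You would need an additional flattening step along the lines of the maps $\xi'_{m,d},\xi''_{m,d}$ in Section~\ref{section_analysis_of_spi_K}, together with a verification that efficiency (and, for the $\pi_K$ half, algebraicity and non-splitness) survives that flattening. None of this is automatic.

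Finally, your irreducible-case sketch via the commutant $E$ is suggestive but speculative: the asserted identity $\pibar_K(H)=[E:K]\cdot\pibar_E(H)$ is itself unproved and, if true, would already be a substantial result. As you note, a purely polymatroidal upgrade of Theorem~\ref{thm_Gamma_polymatroid_theorem} would amount to proving the $S_n$-side Conjecture~\ref{conj_pibar_stable}, which is also open. So your proposal correctly locates the difficulty but does not resolve it; this is consistent with the conjecture's status in the paper.
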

\appendix

\section{Random words generate $A_n$}
\label{appendix_random_words_gen_A_n}

\begin{theorem}
    Let $H\le \textbf{F}$ be a non-abelian subgroup, and 
    $\alphaSimUHomFSn$ a random homomorphism. Then
    $\Pr(\alpha(H)\supseteq A_n)\to_{n\to\infty} 1$.
\end{theorem}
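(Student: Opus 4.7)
The plan is to decompose the event $\alpha(H)\not\supseteq A_n$ into two cases: either $\alpha(H)$ acts imprimitively on $[n]$, or it acts primitively but does not contain $A_n$. I would bound each event separately and sum the bounds.

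For imprimitivity: if $\alpha(H)$ preserves a non-trivial partition of $[n]$ with some block of size $d\in\{1,\ldots,\lfloor n/2\rfloor\}$, then in particular $\alpha(H)$ fixes a non-trivial invariant subset of size $d$. By Theorem~\ref{thm_Sn_spibard}, and the SHNC gap $\spivarbar{d}(H)\ge 1$ for non-abelian $H$ (which is Theorem~\ref{thm_friedman_mineyev} applied in our setting), we get $\EHtoFSnActsND = O\prn*{\binom{n}{d}^{-1}}$. Summing, the expected number of non-trivial invariant subsets of size at most $n/2$ is
\[
\sum_{d=1}^{\lfloor n/2\rfloor} O\prn*{\binom{n}{d}^{-1}} = O(1/n),
\]
and Markov's inequality then gives $\PR(\alpha(H)\textup{ imprimitive}) = O(1/n)$.

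For the primitive-but-not-containing-$A_n$ case, I would invoke the O'Nan--Scott classification together with the Liebeck--Pyber--Shalev bounds: any primitive subgroup of $S_n$ not containing $A_n$ either has order at most $n^{O(\log n)}$, or lies in a short, explicit list of ``standard'' structures (product actions on tuples or subsets, affine groups, diagonal types, etc.). The plan is to appeal to~\cite{chen2024new}, which, as Puder observed, already packages exactly the estimate needed: the probability that the image of a non-abelian finitely generated free group under a uniformly random $\alpha\in \Hom(\FF,S_n)$ lies in such a small or structured primitive subgroup tends to zero. For the standard classes one may alternatively encode the invariant structure as an invariant orbit in an $S_n$-action of polynomial growth (e.g.\ on $[n]_d$ or $\binom{[n]}{d}$ with $d=O(1)$), and use the analogue of~\eqref{eq_EHtoFSnActsTuples} together with the gap $\spivarbar{d}(H)\ge 1$.

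The hard part will be the primitive-but-small/structured case: the combinatorial machinery developed in this paper bounds the probability that $\alpha(H)$ preserves an invariant set, partition, or module, but it does not by itself control the probability that $\alpha(H)$ is contained in, say, a sporadic or diagonal primitive subgroup. This is precisely where the classification-based input of~\cite{chen2024new} is needed, and where the proof really leaves the polymatroid framework of the rest of the paper.
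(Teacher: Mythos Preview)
Your proposal has two genuine gaps. First, a transitive but imprimitive subgroup preserves a non-trivial block system but permutes the blocks transitively, so no single block is $\alpha(H)$-invariant; your claim ``preserves a partition with a block of size $d$ $\Rightarrow$ fixes a subset of size $d$'' is false, and the argument as written only handles the intransitive case. Second, Theorem~\ref{thm_Sn_spibard} is an asymptotic for each \emph{fixed} $d$ as $n\to\infty$, with implicit constants depending on $d$ (tracing the proof, they involve a Bell number in $d\cdot|V(\Gamma)|$ vertices and hence grow super-exponentially in $d$); since your sum ranges over $d\le\lfloor n/2\rfloor$, the step $\sum_d O\bigl(\binom{n}{d}^{-1}\bigr)=O(1/n)$ is unjustified. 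You also misread what~\cite{chen2024new} supplies: it is a strong-convergence theorem for random permutation representations, not a packaged bound on the probability of landing inside a structured primitive subgroup.

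The paper's proof takes a completely different route and does not use the polymatroid machinery at all. Fix non-commuting $u,v\in H$ and apply the strong convergence of~\cite{chen2024new} to the permutation representation of $S_n$ on ordered $6$-tuples $[n]_6$: with probability $1-o(1)$ the spectral radius of $\rho(\alpha(u))+\rho(\alpha(u^{-1}))+\rho(\alpha(v))+\rho(\alpha(v^{-1}))$ on the sum-zero subspace is at most $2\sqrt{3}<4$, so the Schreier graph of $\alpha(H)$ on $[n]_6$ is an expander, hence connected, hence $\alpha(H)$ is $6$-transitive. One then quotes~\cite[Section~7.4]{DixonMortimer1996} that a $6$-transitive subgroup of $S_n$ contains $A_n$. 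There is no case split on primitivity and no summation over $d$.
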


\begin{proof}
Let $u, v\in H$ be non-commuting words.
Let $\rho'\colon S_n\to \textup{GL}_{(n)_6}(\Z)$ be the permutation representation given by the action of $S_n$ on tuples $(x_1,\ldots, x_6)\in [n]^6$ with distinct numbers.
Let $\rho=\rho'\restriction_{(1,\ldots,1)^{\bot}}$ be the sub-representation of vectors with sum $0$.

By \cite[Theorem 3.14]{chen2024new}, 
the random matrix
$M_n\defeq \rho(\alpha(u)) + \rho(\alpha(u^{-1})) + \rho(\alpha(v)) + \rho(\alpha(v^{-1}))$
strongly converges to the operator
$M_{\infty}\defeq u+u^{-1}+v+v^{-1}\in \Z[\textbf{F}]$, and in particular,
the spectral radius of $M_n$ is, with probability $1-o(1)$ as $n\to\infty$, at most the spectral radius of $M_{\infty}$, which is $2\sqrt{3}<4$.
It follows that the Schreier graph of the action of $S_n$ on $(n)_6$ with edges given by $\{\Vec{x},\sigma(\Vec{x})\}$ for $\Vec{x}\in [n]^6$ with distinct numbers and $\sigma\in \{\alpha(u),\alpha(v)\}$ is an expander graph (as $n\to\infty$) with probability $1-o(1)$, and in particular connected. 
Therefore the subgroup 
$\alpha(H)\le S_n$ acts 6-transitively, 
thus by \cite[Section 7.4, page 229]{DixonMortimer1996} it contains $A_n$.
\end{proof}

\section{Glossary of Notations}
\begin{longtable}{@{}>{\raggedright\arraybackslash}p{0.18\textwidth}%
                    >{\raggedright\arraybackslash}p{0.78\textwidth}@{}}
\caption{Glossary of notation}\label{tab:glossary}\\
\toprule
Symbol & Meaning \\
\midrule
\endfirsthead

\toprule
Symbol & Meaning \\
\midrule
\endhead

\midrule
\multicolumn{2}{r}{\small\itshape Continued on next page} \\
\endfoot

\bottomrule
\endlastfoot

$\PR$ & probability. \\
$\E$ & expectation. \\

$\mathbf{F}$ & a fixed free group. \\
$\F$ & a finite field. \\
$K$ & a field. \\

$G$ & a finite group. \\
$X$ & a finite $G$-set. \\
$\mathcal{O}$ & a $G$-orbit. \\

$H$ & a finitely generated subgroup of $\mathbf{F}$. \\
$B$ & a fixed basis of $\mathbf{F}$. \\
$w$ & a word in $\mathbf{F}$. \\

$n$ & the rank/degree parameter of a family of finite groups (e.g.\ $S_n$ or $\GL_n$). \\
$q$ & the size of a finite field. \\

$E$ & the set of edges of a graph. \\
$\mathcal{E}$ & a fixed $K[\mathbf{F}]$-basis of a free $K[\mathbf{F}]$-module. \\
$m$ & the size of the standard basis $E$ of a free $K[\mathbf{F}]$-module. \\

$\Gamma,\Delta,\Sigma$ & graphs. \\
$V$ & the set of vertices of a graph. \\
$U$ & a subset of vertices or edges in a graph. \\
$\eta$ & a morphism between graphs. \\
$\mathfrak{s},\mathfrak{t}$ & source and target of an edge. \\

$\alpha$ & a homomorphism $\mathbf{F}\to G$. \\
$\beta$ & a homomorphism $H\to G$. \\

$\pi$ & primitivity rank. \\
$\bar{\pi}$ & compressed rank. \\

\end{longtable}

\printbibliography

\end{document}